\documentclass[12pt]{article}

\topmargin 0cm
\headheight 0cm
\headsep 0cm
\textheight 22cm
\oddsidemargin 0,7cm
\setlength{\textwidth}{15cm}
\setlength{\footnotesep}{2cm}
\overfullrule 0pt 

\usepackage{fullpage}
\usepackage{amsmath,amsfonts,mathtools,enumerate}
\usepackage{amsthm}
\usepackage{amssymb}
\usepackage{amscd}
\usepackage{xypic}
\usepackage{verbatim}
\usepackage[plainpages=false,colorlinks,hyperindex,pdfpagemode=None,bookmarksopen,linkcolor=red,citecolor=blue,urlcolor=blue]{hyperref}
\usepackage{pdflscape}
\usepackage{stmaryrd}
\usepackage{mathabx}
\usepackage{multirow}
\usepackage{appendix}
\usepackage{graphicx}

\usepackage{color}

\theoremstyle{plain}
 
\renewcommand{\marginpar}[1] {  }
\renewcommand{\comment}[1] {  }

\usepackage[all]{xy}
\overfullrule 0pt 

\hfuzz=25pt
\tolerance5000
\hfuzz 40pt
\catcode`\é=\active\def é{\'e}
\catcode`\è=\active\def è{\`e}
\catcode`\à=\active\def à{\`a}
\catcode`\ù=\active\def ù{\`u}

\catcode`\ê=\active\def ê{\^{e}}
\catcode`\î=\active\def î{\^{i}}
\catcode`\ô=\active\def ô{\^{o}}
\catcode`\û=\active\def û{\^{u}}
\catcode`\ç=\active\def ç{\c{c}}

\newtheorem{theo}{Theorem}[section]

\newtheorem*{theo-nn}{Theorem}
\newtheorem{theoa}{Theorem}

\newtheorem{lem}[theo]{Lemma}
\newtheorem{prop}[theo]{Proposition}
\newtheorem{cor}[theo]{Corollary}
\newtheorem{defi}[theo]{Definition}
\newtheorem{example}[theo]{Example}

\theoremstyle{remark}
\newtheorem{rem}[theo]{Remark}
\newtheorem{rema}[theoa]{Remark}
\numberwithin{equation}{section}

\def \ga{\mathfrak{a}}
\def \gb{\mathfrak{b}}
\def\gg{\mathfrak{g}}
\def\gk{\mathfrak{k}}
\def\gh{\mathfrak{h}}
\def\gu{\mathfrak{u}}
\def\gl{\mathfrak{l}}
\def\gj{\mathfrak{j}}
\def\gt{\mathfrak{t}}
\def\gm{\mathfrak{m}}
\def\gn{\mathfrak{n}}
\def\gs{\mathfrak{s}}
\def\gr{\mathfrak{r}}
\def\gq{\mathfrak{q}}

\def\gZ{\mathfrak{Z}}



\def\la{\lambda}

\def\Si{\Sigma}

\def\tbeta{\tilde{\beta}}


\def\Id{{\operatorname{Id}}}
\def\ad{\operatorname{ad}}
\def\Ad{\operatorname{Ad}}
\def\gr{\operatorname{gr}}
\def\conv{\operatorname{conv}}

\def\Gr{\operatorname{Gr}}

\def\cA{{\mathcal A}}

\def\cC{{\mathcal C}}

\def\cF{{\mathcal F}}

\def\cH{{\mathcal H}}
\def\cI{{\mathcal I}}
\def\cJ{{\mathcal J}}

\def\cL{{\mathcal L}}
\def\cM{{\mathcal M}}
\def\cO{{\mathcal O}}

\def\cQ{{\mathcal Q}}

\def\cU{{\mathcal U}}

\def\cW{{\mathcal W}}

\def\cZ{{\mathcal Z}}

\DeclareFontFamily{OT1}{rsfs}{}
\DeclareFontShape{OT1}{rsfs}{n}{it}{<-> rsfs10}{}
\DeclareMathAlphabet{\mathscr}{OT1}{rsfs}{n}{it}

\def\hat{\widehat}

\def\C{\mathbb C}
\def\D{\mathbb D}
\def\Q{\mathbb Q}
\def\R{\mathbb R}

\def\N{\mathbb N}
\def\Q{\mathbb Q}
\def\R{\mathbb R}
\def\X{\mathbb X}
\def\Y{\mathbb Y}

\def\Z{\mathbb Z}


\def\bD{\mathbf D}

\def\bW{\mathbf W}

\def\bv{\mathbf{v}}
\def\bw{\mathbf{w}}
\def\bs{\mathbf s}


\def\sA{\underline{ A}}

\def\sG{\underline{ G}}
\def\sGL{\underline{ GL}}
\def\SL{\operatorname{SL}}

\def\sH{\underline{ H}}

\def\sJ{\underline{ J}}
\def\sK{\underline{ K}}
\def\sL{\underline{ L}}
\def\sM{\underline{ M}}
\def\sN{\underline{ N}}
\def\sO{\underline{ O}}
\def\sP{\underline{ P}}

\def\sQ{\underline{ Q}}
\def\sR{\underline{ R}}

\def\sU{\underline{ U}}

\def\sZ{\underline{ Z}}

\def\Reel{\operatorname{Re}\,}

\def\Lie{\mathrm{Lie}\,}
\def\Ker{\operatorname{Ker}\,}
\def\dim{\operatorname{dim}\,}
\def\Spec{\operatorname{Spec}}
\def\Pol{\operatorname{Pol}}

\def\la{\langle}
\def\ra{\rangle}

\DeclareMathOperator{\Symm}{\mathrm{Symm}}



 \def\beq{\begin{equation}}
\def\eeq{\end{equation}}

\newenvironment{res}
               {\begin{equation}\begin{minipage}{0.85\textwidth}}
               {\end{minipage}\end{equation}}
\def\ber{\begin{res}}
\def\eer{\end{res}}
\newenvironment{res-nn}
               {$$\begin{minipage}{0.85\textwidth}}
               {\end{minipage}$$}

\def\qed{{\null\hfill\ \raise3pt\hbox{\framebox[0.1in]{}}\break\null}}


\newcommand{\Hom}{\operatorname{Hom}}
\newcommand{\End}{\operatorname{End}}


\title{The constant term of tempered functions on a real spherical space}

\author{Patrick {Delorme}\thanks{The first author was supported by a grant of Agence Nationale de la Recherche 
with reference ANR-13-BS01-0012 FERPLAY.}  \and Bernhard Kr\"otz \and Sofiane {Souaifi} \and (with an Appendix by Rapha\"el Beuzart-Plessis)}

\begin{document}

\maketitle

\begin{abstract}
Let $Z$ be a unimodular real spherical space. 
We develop a theory of constant terms for tempered functions on $Z$ which parallels the work of Harish-Chandra. 
The constant terms $f_I$ of an eigenfunction $f$ are parametrized by subsets $I$ of the set $S$ of 
spherical roots which determine the fine geometry of $Z$ at infinity. 
Constant terms are transitive i.e., $(f_J)_I=f_I$ for $I\subset J$, and our main result is a quantitative bound of the difference 
$f-f_I$, which is uniform in the parameter of the eigenfunction.
\end{abstract}

\addcontentsline{toc}{section}{Introduction}
\tableofcontents

\section*{Introduction}

Real spherical spaces are the natural algebraic homogeneous structures $Z=G/H$ attached to 
a real reductive group $G$. Formally, one defines {\em real spherical}  by the existence of a minimal parabolic subgroup $P\subset G$ with $PH$ open in $G$.  On a more informal level, 
one could define real spherical spaces as the class of algebraic homogeneous spaces $Z=G/H$ which allow a uniform treatment of spectral theory, i.e., admit explicit Fourier analysis for $L^2(Z)$.  

\par  Real spherical spaces provide a wide class of algebraic homogeneous spaces. 
Important examples are the group $G$ itself, viewed as a homogeneous space under its both sided symmetries $G\simeq G\times G/ \operatorname{diag} (G)$, and, more generally, all symmetric spaces.
In case $H$ is reductive, a classification of all infinitesimal real spherical pairs $(\Lie G, \Lie H)$ was recently obtained 
and we refer to \cite{kkps, kkps2} for the tables.

\par Harmonic analysis on spherical spaces was initiated by Sakellaridis and Venkatesh in the context of $p$-adic 
absolutely spherical varieties of wavefront type \cite{sak-venk}. In particular, they developed 
a theory of asymptotics for smooth functions generalizing Harish-Chandra's concept of constant term for real 
reductive groups.

\par Harish-Chandra's approach to the Plancherel formula for $L^2(G)$, a cornerstone of 20th century mathematics (cf.~\cite{hc3}),  was based on his theory of the constant term \cite{hc1,hc2}
and his epochal work on the determination of the discrete series  \cite{hcd1,hcd2}. In more precision, 
constant terms were introduced in \cite{hc1} and then made uniform in the representation parameter 
in \cite{hc2} by using the strong results on the discrete series in \cite{hcd1,hcd2}.  Also in Harish-Chandra's approach 
towards the Plancherel formula for  $p$-adic reductive groups, the constant term concept played an important 
role and we refer to Waldspurger's work \cite{wald} for a complete account (the constant term in \cite{wald} is called weak constant term). Likewise, the Plancherel theory 
of Sakellaridis and Venkatesh for $p$-adic spherical spaces is founded on their more general theory of asymptotics. 

\par Carmona introduced a theory of constant term for real symmetric spaces \cite{carmona} parallel 
to \cite{hc1, hc2}, with the uniformity in the representation parameter relying on the description 
of the discrete series by Oshima-Matsuki \cite{OM}.
This concept of constant term then crucially entered the proofs
of  Delorme \cite{Delorme} and van den Ban--Schlichtkrull \cite{vdBS} of the Plancherel formula for real symmetric spaces.

\par Motivated by \cite{sak-venk}, we develop in this paper a complete theory of constant term for real spherical spaces generalizing the works 
of Harish-Chandra \cite{hc1,hc2} and Carmona  \cite{carmona}.  Let us point out that our results hold in 
full generality for all real spherical spaces, i.e., in contrast to \cite{sak-venk}, we are not required 
to make any limiting geometric assumptions on $Z$ such as absolutely spherical or of wavefront type. Further, we 
do not need to make any assumptions on the discrete spectrum as in \cite{sak-venk}. This is because of 
the recently obtained  spectral gap theorem for the discrete series on a real spherical space \cite{kkos},
which then implies the uniformity of the constant term approximation in the representation parameter. The results of this paper then will be applied in the forthcoming paper 
\cite{planch-sph}, where we derive the Bernstein decomposition of $L^2(Z)$, which is a major step towards the Plancherel formula for $Z$. 

\par Let us describe the results more precisely. In this introduction, $G$ is the group of real points of 
a connected reductive algebraic group $\sG$ defined over $\R$, and $H=\sH(\R)$ 
for an algebraic subgroup $\sH$ of $\sG$ defined over $\R$.
Furthermore, we assume that $Z$ is unimodular, i.e., $Z$ carries a positive $G$-invariant Radon measure.
We will say that $A$ is a split torus of $G$ if $A=\sA(\R)$, where $\sA$ is a split 
$\R$-torus of $\sG$. 

\par Central to the geometric theory of real spherical spaces $Z=G/H$ 
is the local structure theorem (cf.~\cite[Theorem~2.3]{kks0} and Subsection 
\ref{lst-sect}), which associates a parabolic subgroup {$Q\supset P$}, 
said $Z$-adapted to $P$, with Levi decomposition $Q=LU$.

\par Let $A$ be a maximal split torus of $G$ which we choose to be contained in $L$ and set $A_H:=A \cap H$. 
We define $A_Z$ to be the identity component of $A/A_H$ and recall the spherical roots $S$ as defined in e.g., \cite[Section~3.2]{kks} or Subsection \ref{sect-1.2} below. Suitably normalized the spherical roots are the simple roots
for a certain root system on $\ga_Z=\Lie A_Z$ and give rise to the co-simplicial compression cone 
$\ga_Z^-:=\{ X \in \ga_Z\mid \alpha(X)\leq 0,\alpha\in S \}$, see \cite{kk}. Set $A_Z^-:=\exp (\ga_Z^-) \subset A_Z$. 

\par We move on to boundary degenerations $\gh_I$ of $\gh$, which are parametrized by subsets 
$I\subset S$. These geometric objects show up naturally in the compactification 
theory of $Z$ (see \cite{kkss}, \cite{kk} and Section \ref{sect-2}), which in turn is closely related 
to the polar decomposition (see \eqref{polar-intro} below). In more detail, let
$\ga_I=\bigcap_{\alpha\in I}\Ker \alpha\subset \ga_Z$  and pick $X\in\ga_I^{--}=\{X\in\ga_I\mid\alpha(X)<0,\alpha\in S\setminus I\}$. Let $H_I$ be the analytic subgroup of $G$ with Lie algebra 
$$\Lie H_I=\lim_{t\to +\infty}e^{t\,\mathrm{ad}\,X}\Lie H\,,$$
where the limit is taken in the Grassmannian $\operatorname{Gr}(\gg)$ of $\gg=\Lie G$ and does not depend on $X$. 
If we denote by $z_0=H$ the standard base point of $Z$, then one can view $H_I$ 
(up to components) as the invariance group of the asymptotic directions
$\gamma_X(t):=\exp(tX)\cdot z_0$ for $t\to \infty$ and $X\in \ga_I^{--}$. Phrased  more geometrically, $Z_I:=G/H_I$ is (up to cover) asymptotically tangent to $Z$ in direction of the curves $\gamma_X$, $X\in \ga_I^{--}$.

\par As a deformation of $Z$, the space $Z_I$ is real spherical. Further, one has 
$A_{Z_I}=A_{Z}$ naturally, but the compression cone 
$\ga_{Z_I}^-=\{ X \in \ga_Z\mid \alpha(X)\leq 0,\,\alpha\in I\}$ becomes larger. 
In particular, $\ga_I$ is the edge of the cone $\ga_{Z_I}^-$, which translates into the fact that $A_I=\exp(\ga_I)$
acts on $Z_I$ from the right, commuting with the left action of $G$.

\par The general concept of ``constant term'' is to approximate functions $f$ on $Z$ in directions
$\gamma_X$, $X\in \ga_I^{--}$, by functions $f_I$, called constant terms,  on $Z_I$.  The notion ``constant'' then refers to the fact 
that $f_I$ should transform finitely under the right action of $A_I$. 

\par The  appropriate class of functions for which this works are tempered eigenfunctions on $Z$. 
In order to define them, we need to recall the polar decomposition 
which asserts
\begin{equation}\label{polar-intro}
Z=\Omega A_Z^- \cW\cdot z_0\, ,\tag{1}
\end{equation}  
for a compact subset $\Omega \subset G$ 
and a certain finite subset $\cW$ of $G$ which parametrizes 
the open $P$-orbits  in $Z$ (see Lemma~\ref{lem-decomp-cW} and Remark \ref{rem-cc} below for more explicit expressions of the elements of $\cW$).

\par
Let $\rho_Q$ be the half sum of the roots of $\ga$ in $\Lie U$. Actually, as $Z$ is unimodular, $\rho_Q\in\ga_Z^*$.

\par For $f\in C^\infty(Z)$ and $N\in \N$, we set 
$$q_N(f)=\sup_{g\in\Omega,w\in\cW,a\in A_Z^-}a^{-\rho_Q}(1+\Vert\log a\Vert)^{-N}\vert f(g aw\cdot z_0)\vert$$
and define $C_{temp,N}^\infty(Z)$ as the space of all $f\in C^\infty(Z)$ 
such that, for all $u$ in the enveloping algebra $\cU(\gg)$ of the complexification $\gg_\C$ of $\gg$,
$$q_{N,u}(f):=q_N(L_uf)$$
is finite. The semi-norms $q_{N,u}$  induce a Fr\'echet structure on $C_{temp,N}^\infty(Z)$ for which the $G$-action is smooth and of moderate growth  (in \cite{bk}, these are called $SF$-representations). 
We define the space of {\em tempered functions} $C_{temp}^\infty(Z)=\bigcup_{N\in\N} C_{temp,N}^\infty(Z)$ and endow it with the inductive 
limit topology.

\par  We denote by $\cZ(\gg)$ the center of $\cU(\gg)$ and define $\cA_{temp}(Z)$, resp. 
$\cA_{temp,N}(Z)$, 
as the subspace of $C_{temp}^\infty(Z)$, resp. $C_{temp,N}^\infty(Z)$, consisting of $\cZ(\gg)$-finite functions. 

\begin{rema} Functions $f\in \cA_{temp}(Z)$ can be described suitably in terms of representation theory. 
A variant of  Frobenius reciprocity implies that elements $f\in \cA_{temp}(Z)$ can be expressed as generalized matrix coefficients 
$$f(gH) =  m_{\eta, v}(gH):=\eta (\pi(g)^{-1}v)\,,$$
for $v\in V^\infty$, where $(\pi, V^\infty)$ is a $\cZ(\gg)$-finite $SF$-representation of $G$ and $\eta: V^\infty\to \C$ 
a $H$-invariant continuous functional. If $V^{-\infty}$ denotes the continuous dual of $V^\infty$, then 
an element $\eta\in (V^{-\infty})^H$  is called {\em $Z$-tempered} provided $m_{\eta, v}\in\cA_{temp}(Z)$
for all $v\in V^\infty$. We denote the corresponding subspace by 
$(V^{-\infty})^H_{temp}$.
\end{rema}

\par For $I\subset S$, we choose a set $\cW_I\subset G$ parametrizing the open $P$-orbits on $Z_I$.
Then it is the content of Section \ref {sect-2} that there is a map ${\bf m}: \cW_I \to \cW$ 
obtained from a natural matching of open $P$-orbits on $Z_I$ with open $P$-orbits on $Z$. 
As $Z_I$ is a real spherical space, we can define $C_{temp,N}^\infty(Z_I)$ and $\cA_{temp,N}(Z_I)$ as before.

\par For $\cJ$ a finite codimensional ideal of the center $\cZ(\gg)$ of $\cU(\gg)$ we denote 
by  $\cA_{temp,N}(Z:\cJ)$ the space of elements of $\cA_{temp,N}(Z)$ which are annihilated by $\cJ$. Note that, by definition, there exists for each $f\in \cA_{temp,N}(Z)$ a co-finite ideal $\cJ$ such that $\cA_{temp,N}(Z:\cJ)$. 
\par The main result of this paper is the following (cf.~Remark~\ref{rem-exppolyfI} and Theorem~\ref{theo-constterm} for 
(i) - (iii) and Theorem \ref{theo-uniform} for (iv)).  

\begin{theoa}\label{theo-intro} Let $\cJ\subset \cZ(\gg)$ be an ideal of finite co-dimension. Let $I\subset S$. 
Then there exists 
an $N_\cJ\in\N$ such that for all $N\in \N$ there exists a continuous $G$-equivariant linear map

$$ \operatorname{CT}_{I,N}: \cA_{temp, N}(Z: \cJ) \to \cA_{temp, N+N_\cJ}(Z_I: \cJ), \ \ f\mapsto f_I$$
with the following properties for all $g\in G$ and $X_I\in \ga_I^{--}$:

\begin{enumerate}
\item[(i)] If we interpret 
$f$, resp.~$f_I$, as functions on $G$ which are right invariant under $H$, resp.~$H_I$, then 
$$\lim_{t\to +\infty} e^{-t\rho_Q(X_I)}\left(f(g\exp(tX_I))-f_I(g\exp(tX_I))\right)=0\, .$$
\item[(ii)] The assignment 
$$\R\ni t\mapsto e^{-t\rho_Q(X_I)}f_I(g\exp(tX_I))$$
defines an exponential polynomial with unitary characters, 
i.e., it is of the form $\sum_{j=1}^np_j(t)e^{i\nu_j t}$, where the $p_j$'s are polynomials and the $\nu_j$'s are real numbers. 
\end{enumerate} 
Conditions (i) and (ii) determine the constant term morphism $\operatorname{CT}_{I,N}$ uniquely. Moreover, 
\begin{enumerate} 
\item[(iii)] For  each $w_I\in\cW_I$ with  $w={\bf m}(w_I)\in \cW$, and any compact subsets $\cC_I\subset \ga_I^{--}$, 
$\Omega\subset G$, there exists $\varepsilon>0$ 
and a continuous semi-norm $p$ on $\cA_{temp,N}(Z)$ such that for all $f\in \cA_{temp, N}(Z: \cJ)$ one has: 
$$
\begin{array}{l}
\vert (a_Z\exp(tX_I))^{-\rho_Q}\left( f(g a_Z\exp(tX_I)w\cdot z_0)-f_I(g a_Z\exp(tX_I)w_I\cdot z_{0,I})\right)\vert\\
\leq e^{-\varepsilon t}(1+\Vert\log a_Z\Vert)^Np(f),\quad a_Z\in A_Z^-,X_I\in\cC_I, g\in\Omega, t\geq 0\,.
\end{array}
$$
\item[(iv)] The bound in (iii) is uniform for all $\cJ$ of codimension 1, i.e., $\cJ = \ker \chi$ for a character $\chi$ of $\cZ(\gg)$. 
\end{enumerate}
\end{theoa}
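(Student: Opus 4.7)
The plan is to adapt the Harish-Chandra strategy (and Carmona's extension to real symmetric spaces) to the real spherical setting: reduce to an asymptotic ODE analysis on rays in $A_Z$, extract $f_I$ as the leading term of the expansion, and invoke the spectral gap theorem of \cite{kkos} to obtain uniformity in the infinitesimal character. Fix $g\in\Omega$, $w\in\cW$, $a_Z\in A_Z^-$, $X_I\in \ga_I^{--}$, and consider $\phi(t):=f(g a_Z\exp(tX_I)w\cdot z_0)$. Since $f$ is annihilated by the finite-codimensional ideal $\cJ\subset\cZ(\gg)$, the function $\phi$ satisfies a holonomic system of linear ODEs on the ray $t\geq 0$. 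Pulling the system back through the local structure theorem and conjugating by the modular factor $a^{\rho_Q}$, the coefficients admit an asymptotic expansion in exponentials $e^{-\alpha(X_I)t}$ with $\alpha\in S\setminus I$; since $\alpha(X_I)>0$ here, the system frozen at $t=+\infty$ is constant-coefficient, and its characteristic exponents $\xi$ organize $\phi$ into a formal expansion $\phi(t)\sim \sum_{\xi} p_\xi(t) e^{\xi(X_I)t}$.

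\emph{Construction of $f_I$ and the qualitative properties (i)--(iii).} Temperedness forces $\Reel \xi(X_I)\leq \rho_Q(X_I)$ for every contributing exponent, and $f_I$ is defined to be the sum of the contributions lying on the critical hyperplane $\Reel \xi(X_I)=\rho_Q(X_I)$; uniqueness is then immediate from (i). That $f_I$ descends from $G$ to $Z_I=G/H_I$ follows from the defining relation $\Lie H_I=\lim_{t\to +\infty} e^{t\,\ad X}\gh$ in $\Gr(\gg)$, via comparison of the frozen leading ODEs with the equations characterising functions on $Z_I$. The exponential-polynomial form with purely unitary characters in (ii) comes from $\cZ(\gg)$-finiteness (which produces the shape $p_j(t)e^{i\nu_j t}$) together with temperedness along the right $A_I$-action on $Z_I$ commuting with $G$, which rules out any non-real frequency on the critical line. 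The matching of open $P$-orbits ${\bf m}:\cW_I\to\cW$ from Section~\ref{sect-2} lets one run the construction orbit by orbit, producing the matched expression in (iii); continuity of $f\mapsto f_I$ and the exponential decay rate $\varepsilon>0$ then follow from a closed-graph argument in the $SF$-topology, where $\varepsilon$ is the gap between the critical hyperplane and the next layer of indicial exponents compatible with $\cJ$.

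\emph{The uniform bound (iv), which is the main obstacle.} For each fixed $\cJ$ one has a positive gap $\varepsilon(\cJ)$, but as the character $\chi$ ranges over $\Spec\cZ(\gg)$ the indicial exponents move and could a priori accumulate on the critical hyperplane, destroying any uniform bound on $\varepsilon$ or on the seminorm $p$. The resolution is exactly the spectral gap theorem of \cite{kkos}: it yields a uniform positive separation between the $L^2$-exponents that can occur in the discrete series on $Z_I$ (and its relevant sub-spherical pieces arising through the polar decomposition) and the rest of the indicial roots, independently of $\chi$. Transported through the Harish-Chandra isomorphism and the matching ${\bf m}$, this uniform gap delivers a single $\varepsilon>0$ and a single continuous seminorm $p$ valid simultaneously for every codimension-one $\cJ=\ker\chi$, proving (iv). This upgrade is the direct analogue of Harish-Chandra's passage from \cite{hc1} to \cite{hc2} using the work \cite{hcd1, hcd2} on the discrete series, and it is the deepest ingredient of the proof.
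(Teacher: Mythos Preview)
Your outline has the right shape but skips the two places where the proof actually bites.

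The step you dispose of via a ``closed-graph argument''---that $f_I\in\cA_{temp}(Z_I)$ and that the bound in (iii) holds for \emph{every} $w_I\in\cW_I$---is flagged in the introduction as a main difficulty. The ODE analysis only delivers the estimate at the base point $w_I=w=1$; transporting it to general $w_I$ requires the consistency relation $(f^w)_I=(f_I)^{w_I}$ (Proposition~\ref{prop-changeorbit}), whose proof hinges on the \emph{rapid} convergence $u_s\to 1$, $a_sb_s^{-1}\to 1$, $m_s\to m_{w_I}$ of Proposition~\ref{prop-converg-asbs-1-us}. That proposition needs smooth compactification theory together with an argument via $H$- and $K$-spherical representations, and the exponential rate is essential: without it the polynomial growth of tempered functions would swamp the comparison in Lemma~\ref{lem-majorconvLgs}. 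An abstract closed-graph argument does not produce this. (You also bypass the construction of the ODE itself: one needs the injective algebra map $\mu_I:\D(Z)\to\D(Z_I)$ of Appendix~\ref{app-RBP} to relate $\cZ(\gg)$-finiteness on $Z$ to a finite $\ga_I$-system on $A_Z$ with controlled remainder $\Psi_{f,X}$; without $\mu_I$ the argument was previously restricted to wavefront spaces.)

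For (iv), the mechanism is not a separation between ``discrete-series exponents on $Z_I$'' and ``the rest of the indicial roots''. The actual obstruction is that, as the character $\Lambda$ varies, the spectral projections $E_\lambda$ onto generalized $\ga_I$-eigenspaces of $\Gamma_\Lambda$ could blow up. What \cite{kkos} supplies is that $\Reel\Lambda$ lies in a fixed rational $W_\gj$-stable lattice whenever $\cA_{temp}(Z:\Lambda)\neq 0$ (Lemma~\ref{lem-lattice}); restricted to $\ga_I$ and evaluated on generators $X_i$ of the dual lattice, distinct real parts of eigenvalues of $\Gamma_\Lambda(X_i)$ then differ by at least $1$. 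The matrix lemma (Lemma~\ref{matrix lemma}) converts this into $\|E_\lambda\|\leq C(1+\|\Lambda\|)^N$ (Proposition~\ref{prop-projection-bound}), and the polynomial in $\|\Lambda\|$ is absorbed by raising the Sobolev order of the seminorm. Your description conflates this quantitative projection-norm control with a qualitative spectral gap and, as stated, would not yield the uniform bound.
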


Given $f\in \cA_{temp}(Z)$ and $I\subset S$, we call $f_I\in \cA_{temp}(Z_I)$ the {\em constant term} of $f$ 
associated to $I$. Note that properties (i) and (ii) in Theorem \ref{theo-intro} determine $f_I$ uniquely as a smooth function on $Z_I$. 
Furthermore, we may interpret Theorem \ref{theo-intro}(iii) in such a way that $f_I$ controls the normal 
asymptotics of $f$ in direction of $\ga_I^{--}$ emanating from the base points $w\cdot z_0$ for certain $w\in\cW$.

\begin{rema} (a) 
Theorem \ref{theo-intro} can be phrased differently in the language of representation theory and it is worthwhile to mention this reformulation.  Let $V$ be a Harish-Chandra module and $V^\infty$ its unique $SF$-completion. 
The subgroups $H, H_I$ being real spherical implies that the invariant spaces $(V^{-\infty})^H$ and 
$(V^{-\infty})^{H_I}$ are both finite dimensional (cf.~\cite{ks}). Inside, we find the subspaces of tempered functionals
$(V^{-\infty})^H_{temp}$ and $(V^{-\infty})^{H_I}_{temp}$.  Then Theorem \ref{theo-intro} defines a linear map 
$$ (V^{-\infty})^H_{temp} \to (V^{-\infty})^{H_I}_{temp} , \ \ \eta\mapsto \eta_I$$
such that, for all $v\in V^\infty$, the matrix coefficient $f=m_{\eta,v}$ is approximated by $f_I=m_{\eta_I, v}$ 
in the sense of Theorem \ref{theo-intro}(iii).  As $A_I$ normalizes $H_I$, we obtain an action of $\ga_I$ 
on the finite dimensional space $(V^{-\infty})^{H_I}_{temp}$. It is easy to see that temperedness implies that 
$$\Spec_{\ga_I} (V^{-\infty})^{H_I}_{temp}\subset \rho_Q\big|_{\ga_I} + i\ga_I^*\,,$$
which in turn translates into the behavior of $f_I$ as exponential polynomial as recorded in Theorem \ref{theo-intro}(ii). 
\par (b) It is possible to develop a constant term theory for all $\cZ(\gg)$-finite eigenfunctions on $Z$, i.e. the assumption of temperedness
is not really necessary. This was carried out in \cite[Th. 7.1]{geoc} in case where $Z$ is wavefront.

\end{rema}

Parts (i) - (iii)  of Theorem \ref{theo-intro} generalize the work of Harish-Chandra in the group case (see \cite[Sections~21 to 25]{hc1}, also 
the work of Wallach \cite[Chapter~12]{wallach2}, where the constant term is called leading term) and the one of Carmona for symmetric spaces (see \cite{carmona}). The uniformity in (iv) generalizes  the uniform results  of Harish-Chandra in the group case 
(cf.~\cite[Section~10]{hc2}) and Carmona for symmetric spaces (cf.~\cite[Section~5]{carmona}). 

\par As a corollary of Theorem \ref{theo-intro}, we obtain a characterization of tempered eigenfunctions $f$
in the discrete series by the vanishing of their constant terms $f_I$, $I\subsetneq S$ (see Theorem~\ref{theo-ds} below). 
Again it is analogous to a result of Harish-Chandra. For this, we use in a crucial manner some results on discrete series from \cite[Section~8]{kks}.

\par The proof of Theorem \ref{theo-intro} is inspired by the work of Harish-Chandra for real reductive groups $G$, \cite{hc1,hc2},
who associates to a tempered eigenfunction $f$ on $G$ certain systems of linear differential equations. 
The main technical difficulty here is to set up the correct first order 
system \eqref{eq-phiPhi} of differential equations on $A_I$ associated to a function $f\in \cA_{temp}(Z)$. 
This is based on novel insights on the algebra of invariant differential operators on $Z$. 
With the solution formula for the first order differential system \eqref{eq-phiPhi}, one then obtains, as in \cite{hc1},
for each $f\in \cA_{temp}(Z)$, a unique smooth function $f_I\in C^\infty(Z_I)$ 
with properties (i), (ii) in Theorem \ref{theo-intro} and also (iii) for $w_I=w=1$. A main difficulty in this paper 
was to show that $f_I$ is in fact tempered, which translates into the assertions in (iii) for all $w_I\in \cW_I$. This, 
we deduce from Proposition \ref{prop-converg-asbs-1-us} on geometric asymptotics related to the natural matching map 
${\bf m}: \cW_I\to \cW$.  
Let us point out further that our treatment in Section \ref{section uniform} of the uniformity in Theorem \ref{theo-intro}(iv) constitutes a simplification to the so far existing state of the art in \cite[Chapter 12]{wallach2}.

\bigskip Earlier versions of this article needed the assumption that $Z$ is of wavefront type. This was mainly due to the lack of a better understanding of the algebra $\D(Z)$ of $G$-invariant differential operators on $Z$ and their behavior under boundary degenerations, i.e., overlooking that there is a natural map $\D(Z)\to \D(Z_I)$ originating from Knop's work \cite{knop-hc}. This was observed 
by Rapha\"el Beuzart-Plessis and is now recorded in Appendix \ref{app-RBP}. With this insight, we could remove the wavefront 
assumption and make the paper valid in the full generality of real spherical spaces.

\bigskip
{\it Acknowledgement}: We thank Rapha\"el Beuzart-Plessis and Friedrich Knop for their generous help on certain
technicalities of the paper. We appreciate comments of Yiannis Sakellaridis regarding the exposition of an earlier version of this article. 
Finally we thank a professional team of three referees for their careful and complete screening of 
our work.

\section{Notation}\label{sect-not}

In this paper, we will denote (real) Lie groups by upper case Latin letters and their Lie algebras by lower case German letters. 
If $R$ is a real Lie group, then $R_0$ will denote its identity component.
Furthermore, if $\sZ$ is an algebraic variety defined over $\R$ and $k$ is any field containing $\R$, then we denote 
by $\sZ(k)$ the $k$-points of $\sZ$. 

\par 
Let $\sG$ be a connected reductive algebraic group defined over $\R$ and let $G:=\sG(\R)$ be its group of real points. 

\begin{rem} More generally, we could define $G$ as an open subgroup of the real Lie group $\sG(\R)$.
The main analytic result of this paper (i.e., Theorem \ref{theo-intro}) is not affected by this more general assumption 
but  we do not supply a complete proof here. 
\end{rem}

For an $\R$-algebraic subgroup $\sR$ of $\sG$, we set $R:=\sR(\R)$ and note that $R\subset G$ is a closed subgroup.

\par Let now $\sH\subset \sG$ be an $\R$-algebraic subgroup. 
Having $\sG$ and $\sH$, we form the homogeneous 
variety $\sZ= \sG/\sH$. We note that $\sZ(\C)=\sG(\C)/\sH(\C)$ and denote  by $z_0=\sH(\C)$ the 
standard base point of  $\sZ(\C)$. Set $Z=G/H$ and record the $G$-equivariant embedding 
$$Z  \to \sZ(\C), \ \ gH\mapsto g\cdot z_0\, .$$ 
In the sequel, we consider $Z$ as a submanifold of $\sZ(\C)$ and, in particular, identify $z_0$ with the standard base point $H$ of $Z=G/H$ as well.
\begin{rem} 
Note that $Z$ is typically strictly smaller than $\sZ(\R)$, which is a finite union 
of $G$-orbits. An instructive example is the space of invertible symmetric matrices 
$\sZ= \sGL_n/ \sO_n$, which features 
$\sZ(\R)= \bigcup_{p+q=n}  \operatorname{GL}(n,\R)/\operatorname{O}(p,q)$. In particular, $\sZ(\R)\supsetneq Z=G/H
={\operatorname{GL}(n,\R)/\operatorname{O}(n)}$. 
\end{rem}

As a further piece of notation, we use, for an algebraic subgroup $\sR\subset \sG$ defined over $\R$, the 
notation $\sR_{\sH} := \sR \cap \sH$ and, likewise, $R_H:= R \cap H$.
In the sequel, we use the letter $\sP$ to denote a minimal $\R$-parabolic subgroup of $\sG$. The unipotent 
radical of $\sP$ is denoted by $\sN$.

\subsection{The local structure theorem}\label{lst-sect}
From now on, we assume that $Z$ is real spherical, that is, there is a choice of $\sP$ such that 
$P\cdot z_0$ is open in $Z$. 
\begin{rem} Notice that $\sP(\C) \sH(\C)$ is Zariski open and hence dense in $\sG(\C)$ as $\sG$ was assumed to 
be connected. Thus, any other choice $\sP'$ of $\sP$, with $P'\cdot z_0$ open, is  conjugate to $\sP$ under 
$\sH$. 
\end{rem}
We now recall the local structure theorem for real spherical varieties (cf.~\cite[Theorem~2.3]{kks0} or \cite[Corollary~4.12]{kk}; 
see also \cite{brion-luna,knop} for preceding versions in the complex case), which asserts 
that there is a unique parabolic subgroup $\sQ \supset \sP$ endowed with a Levi decomposition 
$\sQ=\sL \ltimes \sU$, defined over $\R$,  such that 

\begin{subequations}\label{lst}
\begin{align}\sQ\sH &= \sP\sH,\label{lst1} \\
\sQ_{\sH}&=\sL_{\sH}, \label{lst2}\\
\sL_{\sH} &\supset  \sL_{\rm n}, \label{lst3}
\end{align}
\end{subequations}
where $\sL_{\rm n}$ denotes the connected normal subgroup of $\sL$ generated by all unipotent elements defined over $\R$.

\begin{rem}\label{rem LST} (a) The Lie algebra $\gl_{\rm n}$ is the sum 
of  all  non-compact simple non-abelian ideals of $\gl$. \par 
(b) As mentioned  above, $\sQ$ is the unique parabolic subgroup containing $\sP$ 
with properties  (\ref{lst1}) - (\ref{lst3}).   Slightly differently, we could have defined $\sQ$ via 
\cite[Lemma 3.7]{kkss} which asserts 
$$ \sQ(\C)=\{g\in \sG(\C)\mid g \sP(\C)\cdot z_0 =\sP(\C)\cdot z_0\}\, .$$
The group $\sL_{\sH}$ is uniquely determined 
by $\sQ$ and we recall from \cite[Lemma 13.5]{kk} that  $\sL_{\sH}$ is an invariant 
of $\sZ$, i.e., its $\sH$-conjugacy class is defined over $\R$.  In contrast to 
$L_H$, the Levi subgroup $L$  is only unique up to conjugation with elements from $U$ 
which stabilize $L_H$. In this regard, we note that it is quite frequent that $L_H$ 
is trivial and then $L$ could be an arbitrary Levi of $Q$. 
For later purposes of compactifications, we will only use those choices of $L$ which are obtained 
from the constructive proof of the local structure theorem (cf.~\cite[Sect. ~2.1]{kks0} or \cite[Sect. 4]{kk}).  In case 
$\sZ$ is quasi-affine, this means that $\gl$ is defined as the centralizer of a generic hyperbolic element of $\gg^*$ which is contained in $(\gh +\gn)^\perp$ (see  the  constructive proof of the local structure theorem in \cite{kks0} or \cite{kk}).
\end{rem}

\begin{example}\label{triple 1} {\rm (The triple space)} Let $\sG_o:=\SL(2)$ with $G_o=\SL(2,\R)$ and form 
$\sG:= \sG_o\times \sG_o\times \sG_o$.  With $\sH:=\operatorname{diag} \sG_o$ we obtain a real spherical space
$Z=G/H$, the so-called triple space. It  features $Z=\sZ(\R)$ as one deduces from  the standard 
identification 
$$\sZ=\sG/\sH\to \sG_o\times \sG_o, \ \ (g_1, g_2, g_3)H\mapsto (g_1g_3^{-1}, g_2 g_3^{-1})$$
which a $\sG$-isomorphism of varieties where $\sG$ acts  on $\sG_o\times \sG_o$ as 
$$ (g_1, g_2, g_3)\cdot (x,y)= (g_1 xg_3^{-1}, g_2 yg_3^{-1}) \qquad (g_1, g_2, g_3, x, y\in \sG_o)\, .$$
Let $\gg_o=\gk_o +\gs_o$ be the standard Cartan decomposition with $\gk_o=\mathfrak{so}(2,\R)$. 
Let further $X_1, X_2\in \gs_o$ be linearly independent elements, set $X_3:= - (X_1 +X_2)$ and define 
$$ \ga_1:= \R X_1, \quad \ga_2 = \R X_2, \quad \ga_3=\R X_3\, .$$
Then $\ga:= \ga_1 \times \ga_2 \times \ga_3$ defines a Cartan subspace of $\gg=\gg_o\times \gg_o\times \gg_o$. 
We set $A_i=\exp(\ga_i)$ and then $A=A_1 \times A_2 \times A_3$. 
Observe that 
$$\gh^\perp:= \{ (Y_1, Y_2, Y_3)\in \gg \mid Y_1 + Y_2 +Y_3 =0\}$$ and thus 
$$X_0:= (X_1, X_2, X_3) \in \ga \cap \gh^\perp\, .$$
Let now $P_i\subset G_o$ be any minimal parabolic subgroup of $G_o$ above $A_i$. With 
$P=P_1 \times P_2 \times P_3$ we then obtain a minimal parabolic of $G$ such that 
$PH\subset G$ is open.  For later purpose we record $P_1 \cap P_2 \cap P_3=\{\pm 1\}$ equals the center $Z(H)$ 
of $H$. This 
entails
that $\sP \sH / \sH\simeq \sP/Z(\sH)$ and thus $\sQ=\sP$. 

If we write $P_i=M_iA_i N_i$, then $M_i=Z(G_o)$ with $N= N_1\times N_2 \times N_3$ and therefore $X_0\in (\gh +\gn)^\perp$ is in accordance with the constructive proof of the local structure theorem. 
\end{example}

A parabolic subgroup $Q$ as above in \eqref{lst} will be called $Z$-adapted to $P$. 

\par Let $\sA_{\sL}$ be the maximal split torus of the center of $\sL$ and 
$\sA$ be a maximal split torus of $\sP\cap \sL$. Note that $\sA_{\sL}\subset \sA$. 
Define $\sA_{\sZ}:= \sA/ \sA_{\sH}$ and let (by slight abuse of notation) $A_Z:=(A/A_H)_0\simeq A_0/(A_H)_0$. 
From the fact that $\sL_{\rm n}\subset \sL_{\sH}$ and $\sA= \sA_{\sL} (\sA\cap \sL_{\rm n})$, we obtain $A_Z\simeq (A_L)_0 /  (A_L)_0\cap H$ with 
$\ga_Z\simeq \ga_L/\ga_L\cap\gh$.  
\begin{res}\label{eq-tildea}
We choose a section $\bs:A_Z\to (A_L)_0$ of the projection $(A_L)_0 \to  A_Z$, which is a morphism of Lie groups. 
We will often use $\tilde{a}$ instead of $\bs(a)$, $\tilde{\ga}_Z$ instead of $\bs(\ga_Z)$ etc.
\end{res}
Note that $Z_{\sG}(\sA)= \sM  \sA$ where $\sM \subset\sP$ is a maximal  anisotropic subgroup.
Moreover, 
$\sM\sA$, as a Levi 
of $\sP$, is connected (recall that Levi subgroups of connected algebraic groups are connected). 
Notice that $\sM$ commutes with $\sA$ and $P=MAN$. Observe that $M\cap A$ equals the $2$-torsion points $A_2$ of $A$.

\par From \eqref{lst1} - \eqref{lst2}, we obtain $\sP\sH/ \sH=\sQ \sH/ \sH\simeq \sU \times \sL/ \sL_{\sH}$, and, taking real points, 
we get 
$$[\sP\cdot z_0](\R)\simeq U \times (\sL/\sL_{\sH})(\R)\, .$$

Next, we collect some elementary facts about $(\sL/\sL_{\sH})(\R)$.
To begin with, we define 
$$\hat M_H:=\{m \in M \mid  m \cdot z_0 \subset \sA_{\sZ}(\R)\}$$
and note that $M_H$ is a cofinite normal $2$-subgroup of $\hat M_H$, see Proposition \ref{prop-B}. 
We denote by $F_M:=\hat M_H/ M_H$ this finite $2$-group. Since the action of  the $\sP$-Levi $\sM  \sA \subset 
\sL$ on 
$\sL/\sL_{\sH}$ is transitive, we obtain for the real points, by Proposition \ref{prop-B},
\begin{equation}\label{LST0} (\sL/ \sL_{\sH})(\R) =  [M/ M_H]  \times^{F_M}  \sA_{\sZ}(\R)\, .\end{equation}

From that, we derive the local structure theorem in the form 
\begin{equation}\label{LST1}  
[\sP\cdot z_0] (\R)=  U\times  \big[[M/M_H]  \times^{F_M}  \sA_{\sZ}(\R)\big]\,, \end{equation}
which we will use later.  Let us denote by $\sA_{\sZ}(\R)_2 \simeq\{-1,1\}^r$ the $2$-torsion elements in $\sA_{\sZ}(\R)
\simeq (\R^\times)^r $ and note that 
 $\sA_{\sZ}(\R)_2$ naturally parametrizes the connected components of $\sA_{\sZ}(\R)$, that is, the $A_Z$-orbits in 
 $\sA_{\sZ}(\R)$. In particular, we record the natural isomorphism of Lie groups
$$\sA_{\sZ}(\R)\simeq A_Z \times \sA_{\sZ}(\R)_2\, .$$

Observe that $F_M$ naturally acts on $\sA_{\sZ}(\R)_2$.  Hence, if we denote by 
$(P\backslash \sZ(\R))_{\rm open}$ the set of open $P$-orbits in $\sZ(\R)$, then we obtain from \eqref{LST1}
and Corollary \ref{cor-B} that: 

\begin{lem} \label{open orbit 1}
The map 
$$\sA_{\sZ}(\R)_2/ F_M \to (P\backslash \sZ(\R))_{\rm open}, \ \ F_M a_Z\mapsto   Pa_Z$$
is a bijection. 
\end{lem}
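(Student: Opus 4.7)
My plan is to exploit the $P$-equivariant product decomposition \eqref{LST1} and reduce the count of open $P$-orbits in $\sZ(\R)$ to the (easier) count of $MA$-orbits on the twisted product $[M/M_H] \times^{F_M} \sA_{\sZ}(\R)$.

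First I would establish surjectivity. Since $\sP \cdot z_0$ is the unique open $\sP(\C)$-orbit in $\sZ$, every open $P$-orbit of $\sZ(\R)$ must lie in $[\sP \cdot z_0](\R)$. Using \eqref{LST1}, I act successively by $U$ and $M$ (both contained in $P$) to send a generic point of this set into the $\sA_{\sZ}(\R)$-slice $\{(1_U,[1_{M/M_H},a_Z]):a_Z\in\sA_\sZ(\R)\}$, and then by $A\subset P$, whose action on $\sA_{\sZ}(\R)$ factors through its image $A_Z$, to translate $a_Z$ into the transversal $\sA_{\sZ}(\R)_2$. Consequently every open $P$-orbit contains a point of the form $a_Z\cdot z_0$ with $a_Z\in \sA_{\sZ}(\R)_2$.

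For injectivity, I would argue that $(P\cap L)$-orbits on $(\sL/\sL_\sH)(\R)$ coincide with $MA$-orbits. The key input is that the unipotent radical $N\cap L$ of the minimal parabolic $P\cap L$ of $L$ is contained in $\sL_{\rm n}$ (being a connected unipotent subgroup of $L$), and by \eqref{lst3} this sits inside $L_H$; since $\sL_{\rm n}$ is normal in $\sL$, the inclusion is preserved under conjugation by $L$, so $N\cap L$ acts trivially on $\sL/\sL_\sH$. With the explicit $MA$-action $(m,a)\cdot[m',a_Z]=[mm',\bar a\cdot a_Z]$, where $\bar a$ is the image of $a$ in $A_Z$, and using the transitivity of $M$ on $M/M_H$, a short computation shows that two classes $[m_1,a_1]$ and $[m_2,a_2]$ are $MA$-equivalent iff the images of $a_1, a_2$ in $\sA_{\sZ}(\R)/A_Z=\sA_{\sZ}(\R)_2$ lie in a common $F_M$-orbit. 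This gives simultaneously the well-definedness on $F_M$-cosets and the injectivity.

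The main obstacle I anticipate is justifying that $N\cap L$ acts trivially on $\sL/\sL_\sH$, a step easy to overlook; its resolution via $\sN\cap\sL\subset \sL_{\rm n}\subset \sL_\sH$ is clean but genuinely relies on the normality of $\sL_{\rm n}$ in $\sL$ together with hypothesis \eqref{lst3} of the local structure theorem. A secondary book-keeping point is verifying that the $F_M$-action on $\sA_{\sZ}(\R)$---translation by the elements $\hat m\cdot z_0$ for $\hat m\in\hat M_H$---indeed restricts to an action on $\sA_{\sZ}(\R)_2$; this follows since $F_M$ is a finite $2$-group by Proposition \ref{prop-B}, so its image lands in the $2$-torsion $\sA_{\sZ}(\R)_2$.
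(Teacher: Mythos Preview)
Your proof is correct and takes essentially the same approach as the paper: both use \eqref{LST1} to reduce the open $P$-orbits to $MA$-orbits on $(\sL/\sL_\sH)(\R)$ and then parametrize these by $\sA_{\sZ}(\R)_2/F_M$. The only difference is that the paper invokes Corollary~\ref{cor-B} for the elementary group $\sM\sA$, whereas you carry out that computation directly and make explicit the step that $N\cap L\subset \sL_{\rm n}\subset \sL_\sH$ acts trivially.
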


If we intersect \eqref{LST1}  with $Z$, we obtain 
\begin{equation}\label{LST1a}  
[\sP\cdot z_0] (\R)\cap Z =  U\times  \big[[M/ M_H]  \times^{F_M}  A_{Z,\R}\big]\ 
\end{equation}
with $A_{Z,\R}:= \sA_{\sZ}(\R)\cap Z$. Observe that $A_{Z,\R}$ might not be a group and is in general only a
$A_Z$-set. With $A_{Z,2}:=\sA_{\sZ}(\R)_2\cap A_{Z,\R}$, we then obtain 
$$A_{Z,\R}= A_Z A_{Z,2} \simeq A_Z\times A_{Z,2}\, .$$
Note that $F_M$ acts on $A_{Z,2}$ and thus we obtain, in analogy to Lemma \ref{open orbit 1}, that the map 
$$A_{Z,2}/ F_M \to (P\backslash Z)_{\rm open}, \ \  F_M a_Z\mapsto P a_Z$$
is a bijection. 
Next, we wish to find suitable representatives of the open $P$-orbits of $Z$ in $G$, i.e., find, for each  $F_M a_Z$ with $a_Z\in A_{Z,2}$, an element $w\in G$ such that 
$Pa_Z= Pw\cdot z_0$.
For that, we consider the exact sequence
$$ 1 \to \sA_{\sH} \to \sA\to \sA_{\sZ}= \sA/\sA_{\sH}\to 1\, .$$
Now, note that this sequence stays, in general, only left exact when taking real points
$$ 1 \to \sA_{\sH}(\R)  \to \sA(\R)\to \sA_{\sZ}(\R)\, . $$
In particular, we typically do not find a preimage of a torsion element $t\in A_{Z,2}\subset \sA_{\sZ}(\R)$ in
$A=\sA(\R)$.  However, if we set $T:=\exp(i\ga)\subset \sA(\C)$ and $T_Z:=\exp(i\ga_Z)\subset 
\sA_{\sZ}(\C)$, then $T\to T_Z$ is surjective. In particular, each $t\in A_{Z,2}$ has a lift 
$\tilde t\in T$, which can even be chosen in $\exp(i\tilde \ga_Z)\subset T$.   In this way we extend the 
lift $\bs: A_Z\to (A_L)_0$ from  \eqref{eq-tildea} to a map $\sA_Z(\R)\to A_L \exp(i\tilde \ga_Z)$, also denoted by 
$\bs$ in the sequel. Thus, we have shown that 

\begin{lem}\label{lem-decomp-cW} 
There exists a set $\cW\subset G$ of representatives 
of $(P\backslash Z)_{\rm open}$ such that 
any element 
$w\in\cW$ has a factorization in $\sG(\C)$  of the form
\begin{res-nn}
\begin{center}
$w=\tilde{t}h$, where $\tilde{t}\in\exp(i\tilde{\ga}_Z)$ and $h\in \sH(\C)$ such that $t:=\tilde{t}\cdot z_0\in  A_{Z,2}$.
\end{center}
\end{res-nn}
In particular, if $a\in A_H$, $aw\cdot z_0=w\cdot z_0$.
\end{lem}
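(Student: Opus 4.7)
The plan is to construct $\cW$ one orbit at a time by exhibiting, for each representative $t\in A_{Z,2}$ given by the bijection $A_{Z,2}/F_M\to (P\backslash Z)_{\rm open}$ recorded just before the statement, an element $w\in G$ lifting $t$ together with its factorization. The crux is that $t$ admits two distinct lifts, one into $\sA(\C)$ and one into $G$, and comparing them produces the $\sH(\C)$-factor $h$.

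First I would lift $t$ into the torus. The section $\bs$ realizes $\tilde{\ga}_Z$ as a section of the projection $\ga\to\ga_Z$, so the restricted map $\exp(i\tilde{\ga}_Z)\to T_Z=\exp(i\ga_Z)$ is bijective. Since $A_{Z,2}\subset\sA_{\sZ}(\R)_2\subset T_Z$ (the torsion elements of $\sA_{\sZ}(\C)\simeq(\C^\times)^r$ being of the form $\exp(i\pi\,\cdot)$), this provides a unique $\tilde{t}\in\exp(i\tilde{\ga}_Z)$ whose image in $\sA_{\sZ}(\C)$ equals $t$. Via the embedding $\sA_{\sZ}(\C)\hookrightarrow\sZ(\C)$ this means $\tilde{t}\cdot z_0=t$ in $\sZ(\C)$.

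Next I would lift $t$ into $G$. By definition $t\in A_{Z,2}\subset A_{Z,\R}=\sA_{\sZ}(\R)\cap Z\subset Z=G/H$, so there exists $w\in G$ with $w\cdot z_0=t$; I pick one such $w$ for each $F_M$-orbit in $A_{Z,2}$ and collect these into $\cW$. Comparing the two lifts, $\tilde{t}$ and $w$ project to the same element $t\in\sG(\C)/\sH(\C)$, so $h:=\tilde{t}^{-1}w$ stabilizes $z_0$ and hence lies in $\sH(\C)$, yielding the factorization $w=\tilde{t}h$ in $\sG(\C)$.

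For the last assertion, given $a\in A_H$, I would use that $\sA(\C)$ is abelian and hence $a$ commutes with $\tilde{t}$: $aw=a\tilde{t}h=\tilde{t}h\cdot(h^{-1}ah)$. Since $a\in H\subset\sH(\C)$ and $h\in\sH(\C)$, the element $h^{-1}ah$ lies in $\sH(\C)$ and therefore fixes $z_0$, giving $aw\cdot z_0=\tilde{t}h\cdot z_0=w\cdot z_0$. The whole argument is essentially a bookkeeping exercise built around the two lifts of $t$; no substantive obstacle is anticipated beyond verifying the inclusion $A_{Z,2}\subset T_Z$, which is immediate from the structure of $\sA_{\sZ}$ as a split torus.
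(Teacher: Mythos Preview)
Your proposal is correct and follows essentially the same approach as the paper: the paper's argument (given in the paragraph immediately preceding the lemma) also lifts each $t\in A_{Z,2}$ into $\exp(i\tilde{\ga}_Z)$ via the surjection $T\to T_Z$, and the comparison with a $G$-lift $w$ of $t$ yields the $\sH(\C)$-factor exactly as you describe. One small remark: you assert that $\exp(i\tilde{\ga}_Z)\to T_Z$ is \emph{bijective}, whereas only surjectivity is actually needed (and is what the paper uses); bijectivity would require the section $\bs$ to be compatible with the cocharacter lattices, which is not guaranteed, but this does not affect your argument.
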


In the sequel, $\cW\subset G$ is a choice of representatives of $(P\backslash Z)_{\rm open}$ as in Lemma \ref{lem-decomp-cW}, 
assumed to contain $1$ as a representative of $P\cdot z_0$.
\begin{example} (a) {\rm (Group case)} Let $G=G'\times G'$. In the group case, i.e. 
$$Z=G'\times G'/ \operatorname{diag} G'\simeq G'\, ,$$
one has 
only one open $P=P'\times P'$-orbit in $Z$ by the Bruhat decomposition of $G'$. Hence $\cW=\{1\}$ in this case. 
\par (b) {\rm (Triple case)}  We recall the triple space $Z = G_o\times G_o\times G_o/ \operatorname{diag}G_o$ from Example \ref{triple 1}.  We recall that $\sP\cdot z_0 \simeq \sP/ Z(\sH)$ with $Z(\sH)=\{\pm 1\}$ the center of $\sH$. 
Further we have $F_M=M/M_H$ in this case so that $\sA_Z(\R)_2/ F_M$ consists in fact of two elements. Hence 
$\cW=\{1, w\}$ has two elements in this case. If we were to consider disconnected $\operatorname{PGL}(2,\R)$ instead of $G_o=\SL(2,\R)$, then $\cW=\{1\}$ would be trivial. 
\end{example}

\subsection{Spherical roots and polar decomposition}\label{sect-1.2}

Let $K\subset G$ be a maximal compact subgroup associated to a Cartan involution 
$\theta$ of  $\gg$ with $\theta(X)=-X$ for all $X\in \ga$. Furthermore, let 
$\kappa$ be an $\Ad G$ and $\theta$-invariant bilinear form on $\gg$ such that the quadratic form 
$X\mapsto\Vert X\Vert^2=-\kappa(X,\theta X)$ is positive definite. We will denote by $(\,\cdot\,,\,\cdot\,)$ the corresponding scalar 
product on $\gg$. 
It defines a quotient scalar product and a quotient norm on $\ga_Z$ that we still denote by $\Vert\,\cdot\,\Vert$.\par
For later reference, we record  that $K$ is algebraic, i.e., $K=\sK(\R)$, and further, $M\subset K$ 
as we requested $\theta\big|_{\ga}=-\operatorname{id}_\ga$.

\par
Let $\Sigma$ be the set of roots of $\ga$ in $\gg$. 
If $\alpha\in\Sigma$, let $\gg^\alpha$ be the corresponding weight space for $\ga$. 
We write $\Sigma_\gu$ (resp.~$\Sigma_\gn$) $\subset\Sigma$ for the set of $\ga$-roots in $\gu$ (resp.~$\gn$) 
and set $\gu^-=\sum_{\alpha\in\Sigma_\gu}\gg^{-\alpha}$, i.e., the nilradical of the parabolic subalgebra $\gq^-$ opposite 
to $\gq$ with respect to $\ga$.

\par
Let $(\gl\cap \gh)^{\perp_\gl}$ be the orthogonal complement of $\gl\cap\gh$ in $\gl$ with respect to the scalar product $(\,\cdot\,,\,\cdot\,)$. One has:
\begin{equation}\label{eq-decomph}
\gg=\gh\oplus (\gl\cap \gh)^{\perp_\gl}\oplus \gu\,.
\end{equation}
Let $T$ be the restriction to $\gu^-$ of minus the projection from $\gg$ onto $(\gl\cap \gh)^{\perp_\gl}\oplus \gu$ parallel to $\gh$.
Let $\alpha\in\Si_\gu$ and $X_{-\alpha}\in\gg^{-\alpha}$. Then (cf.~\cite[equation~{(3.3)}]{kks})
\begin{equation}\label{eq-defXab}
T(X_{-\alpha})=\sum_{\beta\in\Si_\gu\cup\{0\}}X_{\alpha,\beta}\,,
\end{equation}
with $X_{\alpha,\beta}\in\gg^\beta\subset \gu$ if $\beta\in\Si_\gu$ and $X_{\alpha,0}\in(\gl\cap \gh)^{\perp_\gl}$.

\par
Let $\cM\subset\N_0[\Sigma_\gu]$ be the monoid generated by:
\begin{equation}\label{eq-cM}
\{\alpha+\beta\mid \alpha\in\Si_\gu,\beta\in\Si_\gu\cup\{0\}\textrm{ such that there exists } 
X_{-\alpha}\in\gg^{-\alpha}\textrm{ with }X_{\alpha,\beta}\neq 0\}.
\end{equation}
The elements of $\cM$ vanish on $\ga_H$ so $\cM$ is identified with a subset of $\ga_Z^*$. We define 
$$
\begin{array}{rcl}
& \ga_Z^{--}=\{X\in\ga_Z\mid \alpha(X)< 0,\,\alpha\in\cM\}\\
\textrm{and} & \ga_Z^{-}=\{X\in\ga_Z\mid \alpha(X)\leq 0,\,\alpha\in\cM\}.
\end{array}
$$

Following e.g., {\cite[Section~3.2]{kks}}, we recall that $\ga_Z^-$ is a co-simplicial cone, and our choice of spherical roots $S$ consists 
of the irreducible elements of $\cM$ which are extremal in $\R_{\geq 0}\cM$. Here, an element of $\cM$ is called 
irreducible if it cannot be expressed as a sum of two nonzero elements in $\cM$. 

\begin{example}{\rm  (Triple case continued)} In the triple case of Example \ref{triple 1} we had 
$\ga=\ga_1 \times \ga_2 \times \ga_3$ and accordingly $\Sigma_\ga= \Sigma_1\amalg\Sigma_2\amalg \Sigma_3$
with $\Sigma_i=\{ \pm \alpha_i\}$ and  $\alpha_i$ corresponding to $N_i\subset P_i \subset G_o$. 
Let now $0\neq Y\in \gg_o^{-\alpha_1}$ and expand it  as $Y=Y_0 + Y^+  +Y^-$ for $Y_0\in \ga_i$ and $Y^\pm \in \gg_o^{\pm \alpha_i}$ for $i=2,3$. Then a simple computation shows that $Y_0\neq 0$ and thus $\alpha_1\in \cM$. 
Likewise we obtain $\alpha_2, \alpha_3\in\cM$ and thus $S=\{\alpha_1, \alpha_2, \alpha_3\}$. 
\end{example}

Later, we will also need the edge of $\ga_Z$  
$$\ga_{Z,E}:=\ga_Z^- \cap (-\ga_Z^-)=\{X\in\ga_Z \mid \alpha(X)=0,\, \alpha\in S\}\, .$$
Note that $\ga_{Z,E}$ (more precisely ${\bf s} (\ga_{Z,E})$) normalizes $\gh$ and, likewise, $A_{Z,E}:=\exp(\ga_{Z,E})\subset A_Z$.

\par
We turn to the polar decomposition for $Z$.  Set $A_Z^-:=\exp(\ga_Z^-)$ and 
$A_{Z,\R}^-=A_{Z,2} A_Z^{-}\subset A_{Z,\R}$. By the definition of $\cW$, we then record that 
$$ A_{Z,\R}^-=  A_Z^-\cW\cdot z_0\, .$$ 

\begin{lem}[Polar decomposition]\label{lem-polar}
There exists a compact subset $\Omega \subset G$ such that 
\begin{equation}\label{polar} Z=\Omega A_{Z,\R}^-\,.\end{equation}
\end{lem}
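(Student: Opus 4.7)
The plan is to reduce the decomposition to its essential ingredients: the Iwasawa decomposition, the local structure theorem, and the characterization of ``directions to infinity'' by the compression cone.

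First, I would use the Iwasawa decomposition $G=KP$ to write any point $z\in Z$ as $z=kp\cdot z_0'$ for some $k\in K$, $p\in P$ and $z_0'$ in some $P$-orbit of $Z$. Since $K$ is compact, it suffices to produce a compact $\Omega_0\subset P$ such that $Z\subset K\Omega_0 A_{Z,\R}^-$, and then take $\Omega=K\Omega_0$.

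Second, restrict attention to the open $P$-orbits $Pw\cdot z_0$, $w\in\cW$, whose union is open and dense in $Z$. By the local structure theorem in the form \eqref{LST1a}, each open orbit factors as
\[ Pw\cdot z_0 \;=\; UM\cdot \bs(A_Z)\cdot w\cdot z_0 \;\subset\; UM\cdot A_{Z,\R}\,\cW\cdot z_0. \]
Since $M$ is compact (as $M\subset K$), one is reduced to showing that, for each fixed $w$, there exists a compact $\Omega_U\subset U$ such that every point of $UA_{Z,\R}\,w\cdot z_0$ can be written (modulo a already-controlled remainder) in $\Omega_U A_{Z,\R}^-\,w\cdot z_0$.

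Third, this last step is where the compression cone enters. The definition of $\cM$ through \eqref{eq-defXab}--\eqref{eq-cM} is tailored precisely so that the map $(u,a)\mapsto u a\cdot z_0$ on $U\times A_Z$ ``opens up'' only in the directions $\ga_Z^{-}$: for $X\in\ga_Z\setminus\ga_Z^-$, trajectories $u\exp(tX)\cdot z_0$ either remain in a $K$-bounded subset or enter regions already covered by translates of $A_Z^-\cW\cdot z_0$, via conjugation $\exp(tX) u\exp(-tX)$ which contracts along $\ga_Z^{--}$. Making this quantitative is the content of the polar decomposition for real spherical spaces (cf.~\cite{kkss,kk}); one may invoke the standard smooth $G$-compactifications of $Z$ in which the boundary $G$-orbits correspond to subsets $I\subset S$ and are reached exactly along the directions of $\ga_Z^-$. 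Combined with $G=KP$, one obtains the desired compact $\Omega_0\subset UM$.

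Fourth, assembling the pieces: $\Omega:=K\Omega_0$ is compact, and by construction $Z=\Omega A_Z^-\cW\cdot z_0=\Omega A_{Z,\R}^-$.

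\textbf{Main obstacle.} The hard part is the third step: proving that \emph{no additional directions beyond} $A_Z^-$ are required to reach infinity in $Z$. This is not formal and ultimately rests on the compactification theory of real spherical spaces developed in \cite{kkss,kk}, where the cone $\ga_Z^-$ appears as the natural fan parametrizing boundary $G$-orbits. Matching $\cW$ with the connected components of $\sA_\sZ(\R)\cap Z$ via the discussion preceding Lemma \ref{lem-decomp-cW} is then a routine bookkeeping.
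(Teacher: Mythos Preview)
Your approach is correct in spirit and ultimately rests on the same key input---the compactification/polar-decomposition results of \cite{kkss,kk}---but it takes an unnecessarily circuitous route. The paper's proof is a one-liner: it invokes \cite[Theorem~13.2 with Remark~13.3(ii)]{kk} directly to obtain $\sZ(\R)=\Omega\cdot\sA_{\sZ}(\R)_2 A_Z^-$ for some compact $\Omega\subset G$, and then simply intersects with $Z$, noting that $A_Z^-\sA_{\sZ}(\R)_2\cap Z=A_{Z,\R}^-$. Your Iwasawa reduction $G=KP$ and the subsequent local-structure-theorem bookkeeping are redundant, since the cited theorem already delivers the polar decomposition at the level of $\sZ(\R)$ without passing through $P$-orbits first. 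In particular, your step of ``restricting attention to the open $P$-orbits, whose union is open and dense'' is a potential gap if taken literally (density alone does not give a decomposition of all of $Z$); you implicitly patch this by deferring to the compactification theory in step three, but at that point you are effectively quoting the same theorem the paper quotes, so the preceding reduction buys nothing. The cleaner route is to cite the result for $\sZ(\R)$ once and do the component-matching with $A_{Z,\R}^-$, which is exactly the ``routine bookkeeping'' you mention at the end.
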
 

\begin{proof} Recall the group of $2$-torsion points $\sA_{\sZ}(\R)_2$ of $\sA_{\sZ}(\R)$. 
According to \cite[Theorem~13.2 with {Remark~13.3(ii)}]{kk} (building up on 
the earlier work \cite[Theorem~5.13]{kkss}), we have $\sZ(\R)= \Omega\cdot \sA_{\sZ}(\R)_2 A_Z^-$, for some compact subset $\Omega$ of $G$. 
Note that $A_Z^- \sA_{\sZ}(\R)_2 \cap Z= A_{Z,\R}^-$ and the assertion follows. 
\end{proof}

\begin{rem}[Passage to $H$ connected]\label{rem-cc} 
An analytically more general setup would be to work with connected $H$, i.e., with $Z_0=G/H_0$ instead of $Z=G/H$. For that, only some 
adjustments are needed. In detail, by right-enlarging $\cW$ with a set $F_H$  of representatives for $H/H_0 (H\cap M)$, we obtain with $\cW_0:=\cW F_H$ a set which is in bijection with the set of the open $P\times H_0$-double cosets in $G$. Similarly, one obtains a polar decomposition for $Z_0$ as  
$Z_0 = \Omega A_Z^- \cW_0\cdot z_0$ with $z_0=H_0$, now denoting the base point of $Z_0$. 

\par 
In order not to introduce further notation and maintain readability, the main text is kept in the algebraic framework. 
At various places, we will comment on the necessary adjustments needed for $H$ connected.
\end{rem}

The polar decomposition is closely related to compactification theory of $Z$, which we summarize in the next section. 

\begin{example} In some cases it is possible to have a very specific choice of $\Omega$, for example 
in the group case $Z= G'\times G'/ \operatorname{diag}G'\simeq G'$ one can take $\Omega = K'\times K'$. 
Interesting is also the triple case. Here one has in fact $Z=KA_Z$ for $K=K_o\times K_o\times K_o$ by 
\cite[Th. 3.2]{dks}, but 
$ Z\supsetneq K A_{Z,\R}^-$ as a consequence of \cite[Th. 4.1]{dks}.

\end{example}


\section{Boundary degenerations and quantitative geometry at infinity}\label{sect-2}

For a real spherical subalgebra $\gh\subset \gg$ and any subset $I\subset S$, there is natural deformation of $\gh_I$ of $\gh$, see \eqref{eq-Gr-limit} 
below for the straightforward definition. 
We define $H_I=\langle \exp(\gh_I)\rangle$ as the analytic subgroup of $G$ with Lie algebra $\gh_I$ and define 
the boundary degenerations of $Z$ as $Z_I: = G/ H_I$. Let us mention 
that $Z_I$ identifies (up to cover) with an open cone-subbundle in the normal bundle of a certain 
$G$-boundary orbit in a smooth compactification of $Z$. This more elaborate point of view
will be taken in the forthcoming work \cite{planch-sph}, but is not the topic of this paper.

The compactification theory is reviewed here shortly in Subsection \ref{subsection smooth compactification},
but only as a tool to give a short proof of  Proposition \ref{prop-converg-asbs-1-us}, which is 
the main result of this section. In more detail, let $\cW_I$ be the set of open $P$-orbits 
in the deformed space $Z_I$. We introduce a natural matching map ${\bf m}: \cW_I\to \cW$ for 
open $P$-orbits. The definition of ${\bf m}$ involves certain sequences  and the contents 
of  Proposition \ref{prop-converg-asbs-1-us} is about the rapid (i.e., exponentially fast) convergence of these
sequences.

\subsection{Boundary degenerations of $Z$}\label{sect-degenZ}

Let $I$ be a subset of $S$ and set:
$$\begin{array}{llllll}
\ga_I & = & \{X\in\ga_Z\mid \alpha(X)=0,\,\alpha\in I\},& A_I & = & \exp(\ga_I) \subset A_Z\,,\\
\ga_I^{--} & = & \{X\in\ga_I\mid \alpha(X)<0,\,\alpha\in S\setminus I\},& A_I^{--} & = & \exp (\ga_I^{--})\,.\\
\end{array}
$$
Then there exists an algebraic Lie subalgebra $\gh_I$ of $\gg$ such that, for all $X\in\ga_I^{--}$, one has: 
\begin{equation}\label{eq-Gr-limit} 
\gh_I=\lim_{t\to +\infty}e^{\mathrm{ad}\,tX}\gh
\end{equation} 
in the Grassmannian $\Gr_d(\gg)$ of $\gg$, where $d:=\dim(\gh)$ (cf.~{\cite[equation~(3.9)]{kks}}).

\par Notice that $\tilde\ga_I$ normalizes $\gh_I$, and hence
$$\hat \gh_I := \gh_I +\tilde\ga_I$$
defines a subalgebra of $\gg$ that does not depend on the section $\bs$.

\par
Let $H_I$ be the analytic subgroup of $G$ with Lie algebra $\gh_I$ and set $Z_I=G/H_I$. Then $Z_I$ is a real spherical space for which:
\begin{enumerate}
\item[(i)] $P H_I$ is open,
\item[(ii)] $Q$ is $Z_I$-adapted to $P$,
\item[(iii)] $\ga_{Z_I}=\ga_Z$ and $\ga_{Z_I}^-=\{X\in\ga_Z\mid \alpha(X)\leq 0,\,\alpha\in I\}$ contains $\ga_Z^-$
\end{enumerate}
(cf.~\cite[Proposition~3.2]{kks}). 
Similarly to \eqref{eq-decomph}, one has: 
$$\gg = \gh_I\oplus(\gl\cap\gh)^{\perp_\gl}\oplus\gu\,.$$
Let $T_I: \gu^- \to (\gl\cap\gh)^{\perp_\gl}\oplus\gu$ 
be the restriction to $\gu^-$ of minus the projection of $\gg$ onto $(\gl\cap\gh)^{\perp_\gl}\oplus\gu$ parallel to $\gh_I$.
Furthermore, let $\langle I\rangle\subset \N_0[S]$ be the monoid generated by $I$. 
Let $X_{\alpha,\beta}^I=X_{\alpha,\beta}$ if $\alpha +\beta\in \langle I\rangle$ and zero otherwise. It follows from \cite[equation~{(3.12)}]{kks} that $X_{-\alpha}+\sum_{\beta\in\Sigma_\gu\cup\{0\}}X_{\alpha,\beta}^I\in\gh_I$. This implies that, for $\alpha\in\Sigma_\gu$,
$$T_I(X_{-\alpha})=\sum_{\beta\in\Sigma_{\gu}\cup\{0\}}X_{\alpha,\beta}^I\,.$$
Let $A_{Z_I}^-=\exp\ga_{Z_I}^-$. Similarly to $Z$, the real spherical space $Z_I$ has a polar decomposition:
\begin{equation}\label{eq-polarZI}
Z_I=\Omega_I A_{Z_I}^-\cW_I\cdot z_{0,I}\,,
\end{equation}
where $z_{0,I}=H_I$, $\Omega_I\subset G$ compact and $\cW_I\subset G$ finite 
(cf.~Lemma \ref{lem-polar} and Remark \ref{rem-cc} for the choice of $\cW_I$ as $H_I$ is defined to be connected).
In more detail, the Lie algebra $\gh_I$ is algebraic and we let $\sH_I$ be the corresponding connected algebraic subgroup of $\sG$.  Using Lemma \ref{lem-decomp-cW} applied to the real spherical space 
$\sG(\R)/ \sH_I(\R)$,  we can make, using Remark \ref{rem-cc}, a choice for $\cW_I$  such that elements $w_I\in\cW_I$ are of the form 
\begin{equation}\label{eq-decompwI}
w_I=\tilde{t}_Ih_I,\quad\textrm{for some }\tilde{t}_I\in\exp(i\tilde{\ga}_Z)\textrm{ and } h_I\in \sH_I(\C)\, .
\end{equation}
We fix such a choice in the following, requesting in addition that $1\in\cW_I$.

\subsection{Quantitative escape to infinity}\label{subsection escape}

Let $I\subset S$. Let us pick $X_I\in \ga_I^{--}$, i.e., $X_I\in\ga_I$ and $\alpha(X_I)<0$ for all $\alpha\in S\setminus I$. 
For $s\in\R$, let
\begin{equation}\label{eq-as}
a_{s}:=\exp(sX_I)\,.
\end{equation}
Fix $w_I={\tilde t}_I h_I \in\cW_I$.
According to \cite[Lemma 3.9]{kks}, there exists $w\in\cW$ (uniquely determined 
by $X_I$) and $s_0>0$ with
\begin{equation}\label{eq-wIw}
P w_I\tilde{a}_{s}H=P wH,\quad s\geq s_0\,.
\end{equation}
Note that (cf.~Lemma~\ref{lem-decomp-cW}):
\begin{equation}\label{eq-decompw}
w=\tilde{t}h,\quad\textrm{for some }\tilde{t}\in\exp(i\tilde{\ga}_Z)\textrm{ and }h\in \sH(\C) \, .
\end{equation}
A priori, $w$ might depend on $X_I$, say $w(X_I)$. On the other hand, the limit \eqref{eq-Gr-limit} is locally uniform in compact subsets of $\ga_I^{--}$. In particular, the set of $Y\in\ga_I^{--}$ such that $w(Y)=w(X_I)$ is open and closed. Hence, $w$ is independent of $X_I$. 
Given $w_I\in \cW_I$ and $w\in \cW$ such that \eqref{eq-wIw} holds, we say that  $w$ {\em corresponds to} $w_I$
and note that this correspondence sets up a natural map ${\bf m}: \cW_I \to \cW$.

\par
According to \cite[Lemma 3.10]{kks}, there exist elements 
 $u_s\in U$, $b_s\in A_Z$ and $m_s\in M$ such that: 

\begin{equation}\label{eq-decompwIan}
\begin{array}{rcl}
w_I\tilde{a}_{s}\cdot z_0 & = & u_sm_s\tilde{b}_sw\cdot z_0 \quad s\geq s_0\,,\\
\displaystyle{\lim_{s\to +\infty}({a}_{s}{b}_s^{-1})} & = & 1,\\
\displaystyle{\lim_{s\to +\infty}u_s} & = & 1\,,\\
\displaystyle{\lim_{s\to +\infty}m_s} & = & m_{w_I},\textrm{ for some }m_{w_I}\in M\, .
\end{array}
\end{equation}
Notice that in case $w_I=1$ we have $w=1$ by \eqref{eq-wIw} and our request that $1\in \cW$; also 
note $m_{w_I}=1$ for $w_I=1$. 

\par
The goal of this section is to give a quantitative version of the convergence in \eqref{eq-decompwIan}. 
For that, we first refer to Appendix \ref{sect-appA} for the definition and basic properties of rapid convergence.

\par
Recall the finite $2$-group $F_M=\hat M_H/ M_H$ defined before \eqref{LST1} and fix with $\widetilde{F_M}\subset M$ 
a set of its representatives containing $1$. Then we have the following result: 

\begin{prop}\label{prop-converg-asbs-1-us}
The families $(a_sb_s^{-1})$ and $(u_s)$ converge rapidly to $1$ and 
one can choose the family $(m_s)$ such that $(m_s)$ converges rapidly to $m_{w_I}\in \widetilde{F_M}$.
\end{prop}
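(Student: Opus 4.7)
The strategy is to refine the argument of \cite[Lemma~3.10]{kks} by extracting quantitative rates from the Grassmannian limit $\gh_I=\lim_{s\to+\infty}e^{s\,\ad\,\tilde{X}_I}\gh$. The key observation is that this convergence is exponential at rate at least
\[
\varepsilon_0:=\min\{-(\alpha+\beta)(X_I)\mid\alpha+\beta\in\cM\setminus\langle I\rangle\}>0,
\]
which is strictly positive because $X_I\in\ga_I^{--}$: applying $e^{s\,\ad\,\tilde{X}_I}$ to the rescaled basis vector $e^{s\alpha(\tilde{X}_I)}(X_{-\alpha}+T(X_{-\alpha}))\in\gh$ gives $X_{-\alpha}+\sum_{\beta}e^{s(\alpha+\beta)(\tilde{X}_I)}X_{\alpha,\beta}$, converging to the $\gh_I$-basis vector $X_{-\alpha}+T_I(X_{-\alpha})$ with error $O(e^{-\varepsilon_0 s})$.

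Using this, I construct a real-analytic family $h_s\in\sH(\C)$ with $\delta_s:=h_I(\tilde{a}_sh_s\tilde{a}_s^{-1})^{-1}\to 1$ rapidly in $\sG(\C)$: set $h_s:=\exp(Y_s)$ where $Y_s\in\gh$ is obtained from $Y_I:=\log h_I\in\gh_I$ by decomposing $Y_I$ in the basis $\{X_{-\alpha}+T_I(X_{-\alpha})\}_\alpha\cup(\text{basis of }\gl\cap\gh)$ and substituting $X_{-\alpha}+T_I(X_{-\alpha})\mapsto e^{s\alpha(\tilde{X}_I)}(X_{-\alpha}+T(X_{-\alpha}))$; Campbell--Baker--Hausdorff transfers the rate $\varepsilon_0$ to the group level. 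Using $h_s\cdot z_0=z_0$ (since $h_s\in\sH(\C)$) and commutativity of $\tilde{a}_s$ and $\tilde{t}_I$ in the complex torus, one unfolds
\[
w_I\tilde{a}_s\cdot z_0 = \tilde{t}_Ih_I\tilde{a}_s\cdot z_0 = \tilde{t}_I\delta_s\tilde{a}_sh_s\cdot z_0 = \delta_s'\,\tilde{a}_s\cdot t_I,
\]
where $\delta_s':=\tilde{t}_I\delta_s\tilde{t}_I^{-1}\to 1$ rapidly.

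By \eqref{eq-wIw} the left side lies in $Pw\cdot z_0=Pt$ with $t:=\tilde{t}\cdot z_0\in A_{Z,2}$, while $\tilde{a}_s\cdot t_I\in Pt_I$; disjointness of open $P$-orbits (Lemma~\ref{open orbit 1}) combined with $\delta_s'\to 1$ forces $Pt=Pt_I$, yielding $m_{w_I}\in\widetilde{F_M}$ with $m_{w_I}\cdot t=t_I$ and, using $[M,A]=1$, the identity
\[
w_I\tilde{a}_s\cdot z_0 = \delta_s'\,m_{w_I}\tilde{a}_sw\cdot z_0.
\]
Since $\delta_s'm_{w_I}\tilde{a}_s\in\sG(\C)$ lies close to $m_{w_I}\tilde{a}_s\in P$, and the $PwH$-decomposition is a smooth local diffeomorphism near $m_{w_I}\tilde{a}_s$ (as $PwH$ is open in $G$), one factors $\delta_s'm_{w_I}\tilde{a}_s=p_s\cdot h_s^*$ with $p_s\in P\to m_{w_I}\tilde{a}_s$ and $h_s^*\in w\sH(\C)w^{-1}\to 1$, both rapidly. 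Writing $p_s(m_{w_I}\tilde{a}_s)^{-1}=u'm'\tilde{b}'$ in the $UMA$ decomposition of a fixed neighborhood of $1$ in $P$, and commuting $m_{w_I}$ past $\tilde{b}'$ (via $[M,A]=1$), one reads off $u_s=u'$, $m_s=m'm_{w_I}$, $\tilde{b}_s=\tilde{b}'\tilde{a}_s$, which gives all three rapid convergences; the ambiguity in $UMA$ modulo $A_2\subset M\cap A$ is used to place $m_{w_I}$ in the chosen lift $\widetilde{F_M}$.

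\textbf{Main obstacle.} The principal technical point is the construction of $h_s$ in Step~1: for $\alpha\in\Sigma_\gu$ with $\alpha(\tilde{X}_I)>0$ the vector $Y_s$ grows exponentially, but this is harmless because subsequent steps use only the bounded conjugate $\tilde{a}_sh_s\tilde{a}_s^{-1}$ (rapidly close to $h_I$) and the stabilizer identity $h_s\cdot z_0=z_0$. The real/complex bookkeeping of $\tilde{t},\tilde{t}_I,h_s$ also requires care, but the non-real contributions cancel once one works in the manifestly real open orbit $Pt\subset Z$.
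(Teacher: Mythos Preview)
Your strategy—lifting the exponential Grassmannian convergence $e^{s\ad\tilde X_I}\gh\to\gh_I$ to the group level via an explicit family $h_s\in\sH(\C)$ and then reading off the $UMA$-components—is different from the paper's route. The paper instead passes to a smooth compactification $\hat\sZ(\R)$, where $\hat z_{0,I}=\lim_s\tilde a_s\cdot z_0$ is a genuine boundary point fixed by $\sH_I(\C)$; the local-structure-theorem chart at $\hat z_{0,I}$ then yields $u_s\to 1$ and $m_s\to m_{w_I}$ rapidly, and a \emph{separate} argument with $K$- and $H$-spherical finite-dimensional representations is used to obtain $a_sb_s^{-1}\to 1$ rapidly.

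The main gap in your argument sits at this same last step, and it is precisely the difficulty flagged in Remark~\ref{rem-I-renormal}(c). From $w_I\tilde a_s\cdot z_0=\delta_s'\, m_{w_I}\tilde a_s w\cdot z_0$ with $\delta_s'\to 1$ rapidly, you want to factor $\delta_s' m_{w_I}\tilde a_s=p_s h_s^*$ with $p_s(m_{w_I}\tilde a_s)^{-1}\to 1$ rapidly. But this invokes an implicit-function bound at a base point $m_{w_I}\tilde a_s$ that escapes to infinity in the $A$-direction. Concretely, $\log\delta_s'$ has no reason to stay in $\gq_\C$: already the second BCH term $\tfrac12[Y_I,W_s]$, with $W_s:=Y_I-\Ad(\tilde a_s)Y_s\in\gu_\C+((\gl\cap\gh)^{\perp_\gl})_\C$, acquires $\gu^-$-components since $Y_I\in\gh_{I,\C}$ contains $\gu^-$-pieces. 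To absorb a factor $\exp(V)$ with $V\in\gg^{-\alpha}$ into $w\sH(\C)w^{-1}$ one must commute it past $\tilde a_s$, which amplifies it by $e^{s\alpha(\tilde X_I)}$; nothing forces $\alpha(\tilde X_I)\leq\varepsilon_0$ for all $\alpha\in\Sigma_\gu$, so the $A_Z$-displacement of $p_s$ from $m_{w_I}\tilde a_s$ is \emph{not} controlled by $\|\delta_s'\|$ alone, and the rapid convergence of $a_sb_s^{-1}$ does not follow. The paper's representation-theoretic argument bypasses exactly this: for a highest weight vector $v^*$ and an $H$-fixed vector $v_H$ one has $v^*(\pi(u_sm_s\tilde b_s\tilde t)v_H)=b_s^{-\lambda}t^{-\lambda}v^*(v_H)$, so the $A_Z$-coordinate is read off directly, insensitive to any $\gu^-$-noise.

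Two smaller points. Writing $h_I=\exp(Y_I)$ is unjustified—the exponential of a connected complex group need not be surjective—though this is repairable by writing $h_I$ as a finite product of exponentials and iterating your construction. And identifying $t_I:=\tilde t_I\cdot z_0$ with an element of $A_{Z,2}$ (as opposed to $\tilde t_I\cdot z_{0,I}\in A_{Z_I,2}$) tacitly uses $\sA\cap\sH=\sA\cap\sH_I$, which requires an argument.
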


\begin{rem} \label{rem-I-renormal} (a) Proposition \ref{prop-converg-asbs-1-us} allows us to change the representatives 
$w_I$ to $m_{w_I}^{-1} w_I$  without losing the special form $w_I = \tilde{t}_I h_I$ with $\tilde t_I \in \exp\>  i \tilde{\ga}_Z$.  This
is because of $F_M A_Z\subset A_{Z,\R} \subset \exp(i\tilde{\ga}_Z)A\cdot z_0$ 
Hence, we may and will assume in the sequel that $m_{w_I}=1$ for all $w_I \in \cW_I$. 
\par 
(b) For $H$ replaced by connected $H_0$, Proposition \ref{prop-converg-asbs-1-us} stays valid with 
$\widetilde{F_M}$ right-enlarged by representatives of the component group $M_H/  (M\cap H_0)$. However, this causes
that we possibly cannot take $m_{w_I}=1$ as in (a). 
\par (c) In order to give a shorter proof of Proposition \ref{prop-converg-asbs-1-us}, we use the compactification theory of $\sZ(\R)$, which we review in the next paragraph. In particular, it yields the framework to consider 
$\hat z_{0,I}:=\lim_{s\to \infty} \tilde{a}_s\cdot z_0$ as 
an appropriate rapid limit in a suitable smooth compactification of $Z$. 
\par Geometrically, compactification theory provides (up to cover) a first order approximation 
of $Z_I$ to $Z$ at the vertex $\hat z_{0,I}$ at infinity.  This first order approximation then yields readily $u_s\to 1$ rapidly 
and $m_s\to m_{w_I}  \in \widetilde{F_M}$ rapidly.  However, first order approximation can only give
$a_sb_s^{-1}\to 1$ and to show that $a_s b_s^{-1}\to 1$ indeed rapidly, we need to use finer tools from finite dimensional 
representation theory. 
\end{rem}

\subsection{Smooth equivariant compactifications}\label{subsection smooth compactification}

By an equivariant compactification of $\sZ(\R)$, we understand here a $\sG$-variety $\hat \sZ$, defined over 
$\R$, such that $\hat \sZ(\R)$ is compact and contains $\sZ(\R)$ as an open dense subset.  In this context, we denote by $\partial Z$ the boundary of $Z$ in $\hat \sZ(\R)$. 

\par
Suitable (i.e., smooth and equivariant) compactifications exist:

\begin{prop}\label{compactification}  
Let $\sZ=\sG/\sH$ be an algebraic real spherical space. Then there 
exists a smooth equivariant compactification $\hat \sZ(\R)$ of $\sZ(\R)$ with the following property:  for all $I\subset S$ 
and $X \in \ga_I^{--}$, the limit $z_X:=\lim_{s\to \infty} (\exp(sX) \cdot z_0)$ exists in $\partial Z$ and the convergence 
is rapid.  If  $\gh_X$ is the stabilizer Lie subalgebra of $z_X$ in $\gg$, then $ \gh_I \subset \gh_X \subset \hat\gh_I$. 
\end{prop}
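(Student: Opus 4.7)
\textbf{Plan of proof for Proposition \ref{compactification}.}
The strategy is to invoke the construction of smooth toroidal compactifications from the compactification theory for real spherical spaces (developed in \cite{kkss} and refined in \cite{kk}), and then analyze the local geometry of the boundary stratum indexed by $I$.

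First, I would recall that Luna--Vust type theory, adapted to the real spherical setting in \cite{kkss, kk}, associates to any sufficiently fine smooth refinement of the fan generated by the face structure of $\ga_Z^-$ a smooth $\sG$-equivariant compactification $\hat \sZ(\R)$. Choosing a refinement that subdivides $\ga_Z^-$ into simplicial cones whose rays generate the lattice dual to the spherical root lattice, one obtains a compactification whose $G$-orbit stratification is indexed by the faces of this fan. In particular, each face $\ga_I\cap \ga_Z^-$ of the compression cone gives rise to a boundary stratum $\partial_I Z$ whose generic stabilizer Lie algebra is precisely $\hat\gh_I=\gh_I+\tilde\ga_I$: the normal slice to $\partial_I Z$ is modeled on the toric variety associated to the fan refinement, and the $A_Z$-action on $\partial_I Z$ factors through $A_Z/A_I$.

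The existence of the limit $z_X=\lim_{s\to\infty}\exp(sX)\cdot z_0$ and its location in $\partial_I Z$ then follows from the local structure at infinity: in a neighborhood of any point of $\partial_I Z$, the compactification looks (via a $Q$-slice) like $U\times(M/M_H)\times C$, where $C$ is a smooth affine toric variety whose fan contains the rays of $\ga_Z^-\cap \ga_I$. In these coordinates, $\tilde a_s\cdot z_0$ converges along the curve $\exp(sX)$ to the toric fixed point corresponding to the face $\ga_I$, which lies in $\partial_I Z$. The inclusion $\gh_I\subset \gh_X$ follows by passing to the limit in the Grassmannian: if $Y\in\gh$ fixes $z_0$, then $e^{s\,\ad X}Y$ fixes $\exp(sX)\cdot z_0$, so any limit point in $\Gr(\gg)$ fixes $z_X$. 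The opposite inclusion $\gh_X\subset\hat\gh_I$ is obtained from the local model, since $z_X$ sits in the $\partial_I Z$-stratum whose stabilizer Lie algebras lie in $\hat\gh_I$.

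For the rapidity of convergence, I would work in the local affine toric chart near $z_X$. Smoothness of the compactification means the chart is isomorphic to $\R^{d}$ with toric coordinates diagonalizing the $A_Z$-action, and the weights appearing in the normal directions to $\partial_I Z$ are precisely the simple spherical roots in $S\setminus I$ (together with positive $\N_0$-combinations thereof). Hence, in these coordinates, the $j$-th normal coordinate of $\tilde a_s\cdot z_0$ decays like $e^{s\alpha_j(X)}$ with $\alpha_j\in S\setminus I$ and $\alpha_j(X)<0$ since $X\in\ga_I^{--}$. All higher derivatives decay at the same exponential rate, which is exactly the definition of rapid convergence recalled in Appendix \ref{sect-appA}.

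The main obstacle in this program is the precise identification of the stabilizer $\gh_X$. The inclusion $\gh_I\subset \gh_X$ is essentially formal, but the sharp upper bound $\gh_X\subset \hat\gh_I$ requires that the chosen smooth compactification be such that every boundary stratum has stabilizer Lie algebra equal to the corresponding $\hat\gh_{I'}$ (and not a strictly larger subalgebra). This is ensured by selecting the refinement of the compression fan carefully, so that no two rays of $\ga_I\cap\ga_Z^-$ get collapsed and the stratification is as fine as the combinatorics of $S$ permits. The requisite construction and the identification of stabilizers on the boundary are carried out in \cite[Sections~4--5]{kk} (building on \cite{kkss}), and this is what allows to assert both inclusions simultaneously.
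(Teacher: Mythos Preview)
Your proposal follows essentially the same route as the paper: invoke the smooth toroidal compactification $\sZ(\cF)$ from \cite[Theorem~7.1]{kk}, built from a complete fan $\cF$ supported in $\ga_Z^-$ consisting of simple simplicial cones, read off the rapid convergence in the affine toric chart $\sA_{\sZ}(\cF_i)(\C)\simeq\C^n$, deduce $\gh_I\subset\gh_X$ from the Grassmannian limit \eqref{eq-Gr-limit}, and obtain $\gh_X\subset\hat\gh_I$ from \cite[Theorem~7.3]{kk}.

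One inaccuracy worth flagging: you assert that the normal coordinates to the boundary stratum decay like $e^{s\alpha_j(X)}$ with $\alpha_j\in S\setminus I$. This is not quite how the paper (or the underlying construction) works. The coordinates on the chart $\sA_{\sZ}(\cF_i)(\C)\simeq\C^n$ are given by a lattice basis $(\psi_{ij})_j$ of $\Xi$ co-spanning the simplicial cone $C_i$ containing $X$; these $\psi_{ij}$ need not be spherical roots, precisely because $\ga_Z^-$ may require nontrivial subdivision (the wonderful case $\#S=\dim\ga_Z$ with $S$ a lattice basis is exceptional). The rapid decay comes from $d\psi_{ij}(X)\leq 0$ by construction of $C_i$, with strict inequality in the directions transverse to the face containing $X$. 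Relatedly, your phrase ``boundary stratum $\partial_I Z$ indexed by $I$'' is a slight oversimplification: after subdivision the strata are indexed by faces of $\cF$, and several such faces may meet $\ga_I^{--}$, which is why the paper only asserts $\gh_I\subset\gh_X\subset\hat\gh_I$ rather than equality (cf.\ Remark~\ref{rem-compact}(c)). These are refinements of presentation rather than genuine gaps in your argument.
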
 

The proof of this result is implicit in the proof of \cite[Theorem 13.2]{kk}.  Since the constructive proof is of relevance for us, we allow ourselves 
to repeat the fairly short proof.  
\begin{proof} The starting point is the local structure theorem for the open $\sP$-orbit on $\sZ$ as in \eqref{LST1} 
\begin{equation}\label{LST2} 
(\sP \cdot z_0)(\R) = U  \times  \left[[M/M_H]  \times^{F_M}  \sA_{\sZ}(\R)\right] 
\, .
\end{equation} 

One of the main results in \cite{kk}, see loc.cit., Theorem~7.1,  was that the compactification theory of $\sZ$ can be reduced, via the local structure theorem, to the partial toric compactification theory of $\sA_{\sZ}$.  Let us be more precise
and denote by $\Xi$ the character group of $\sA_{\sZ}$.  Note that $\Xi \simeq \Z^n$ with $n = \dim \sA_{\sZ}$. 
If we denote by ${\mathcal N}$ the co-character group of $\sA_{\sZ}$, then there is a natural identification 
of $\ga_Z$ with ${\mathcal N}\otimes_\Z \R$. Further, the compression cone $\ga_Z^-$ identifies as a co-simplicial cone (in \cite{kk}, one uses the rational valuation cone, denoted ${\mathcal Z}_k(X)$:  take $k=\R$ and $X=\sZ$. Then $\ga_Z^-= \R \otimes_\Q \cZ_k(X)$). The set of spherical roots $S\subset \Xi$ are then the primitive (in $\Xi$) extremal elements, co-spanning 
$\ga_Z^-$.  Best possible compactifications (a.k.a. wonderful compactifications) exist when 
$\#S = \dim \ga_Z$ and $S$ is a basis of the lattice $\Xi$. In general, this is not satisfied and 
we proceed as follows: 
we choose a complete fan $\cF\subset \ga_Z$, supported in $\ga_Z^-$, which is generated by 
simple simplicial cones $C_1, \ldots, C_N$, i.e., 
\begin{itemize} 
\item  $\bigcup C_i = \ga_Z^-$,
\item  $C_i \cap C_j$ is a face of both $C_i$ and $C_j$ for all $1\leq i,j\leq N$, 
\item $C_i=\{  X\in \ga_Z\mid  {d}\psi_{ij}(X)\leq 0 , 1\leq j \leq n\}$  for 
$(\psi_{ij})_{1\leq j\leq n}$ a basis of the lattice $\Xi$.   
\end{itemize}

For the existence of such a subdivision, we refer to \cite[Th. 11.1.9]{cls}. 
Now, attached to the fan $\cF$, we construct the toric variety $\sA_{\sZ}(\cF)$ expanding $\sA_{\sZ}$ along $\cF$. Note that the toric variety  $\sA_{\sZ}(\cF)$ is smooth, as the fan consists of simple cones (third bulleted property). Thus, we obtain a smooth variety 
\begin{equation}\label{LST3} 
\sZ_0(\cF):=\sU \times \big[[\sM/ \sM_{\sH}] \times^{F_{\sM}} \sA_{\sZ}(\cF)\big]\,, \end{equation}
which can be enlarged to a smooth $\sG$-variety  $\sZ(\cF):= \sG\cdot  \sZ_0(\cF)$, containing $\sZ_0(\cF)$ as an open subset. 
This is the content of \cite[Theorem~7.1]{kk}.  Now, set $\hat \sZ(\R):= \sZ(\cF)(\R)$ and note that $\hat \sZ(\R)$ is compact
by \cite[Corollary~7.12]{kk} as $\cF$ was assumed to be complete.  

\par We now claim that the limit 
$\lim_{s\to \infty}( \exp(s X) \cdot z_0)$ exists in $\sA_{\sZ}(\cF)(\R)$ and that the convergence is rapid. 
For that, we pick a cone $C_i$ which contains $X$, and let $\cF_i$ be the complete fan supported in $C_i$, which is generated by $C_i$. 
Notice that $\sA_{\sZ}(\cF_i)(\C)\simeq \C^n$  
is open in $\sA_{\sZ}(\cF)(\C)$. More specifically, the embedding of 
$\sA_{\sZ}(\C) \hookrightarrow \C^n$ 
is obtained by 
\begin{equation} \label{eq-cc}
\sA_{\sZ}(\C) \ni a \mapsto  (\psi_{ij}(a))_{1\leq j\leq n}\in(\C^*)^n\subset \C^n\, .\end{equation}
Given the definition of $C_i$ as the negative dual cone to the $\psi_{ij}$'s, $j=1,\dots,n$, the claim 
now follows. 

\par Note that the stabilizer of $z_s:= \exp(sX)\cdot z_0$ in $G$ is given by $H_s:=\exp(sX) H \exp(-sX)$ with Lie algebra $\gh_s:= 
e^{s \ad X} \gh$.  Since $z_s\to z_X$ in the smooth manifold $\hat \sZ(\R)$, we obtain that the vector fields corresponding to elements of $\lim_{s\to \infty}\gh_s= \gh_I$ vanish at $z_X$.  This shows that $\gh_I\subset \gh_X$. Finally the property 
$\gh_X \subset \hat\gh_I$ is derived from \cite[Theorem~7.3]{kk}. 
\end{proof} 
\medskip 
We end this subsection with further remarks and explanations of the construction in the proof above.

\par \begin{rem} \label{rem-compact}
(a) It is quite instructive to consider the special case of $\sZ=\sG=\sA$.  Here $A_Z^-=A_Z=A=A_{Z,E}$ 
with $S=\emptyset$. Upon identifying $\ga_Z$ with $\R^n$ via the character lattice $\Xi$, there are two standard choices for the complete 
fan $\cF$ generated by the cones  $C_1, \ldots, C_N$.  The first one is for $N=2^n$ and the cones
given by the orthants: $C_\sigma=\sigma  (\R_{\geq 0})^n $ for $\sigma\in \{ -1, 1\}^n$. This fan leads to 
$\sA_{\sZ}(\cF)(\R)\simeq {\mathbb P}^1(\R)^n $, the $n$-fold copy of the projective line.  The other standard choice is obtained via the identification 
$\R^n \simeq \R^{n+1}/ \R {\bf e}$ with ${\bf e} = e_1 + \ldots+e_{n+1}$, where $(e_1,\dots,e_{n+1})$ is the canonical basis of $\R^{n+1}$, and has $N=n+1$ cones given by: 
$$C_i =[(\bigoplus_{j=1\textrm{ s.t. } j\neq i}^{n+1}\R_{\geq 0} e_j)   + \R {\bf e}]/ \R{\bf e}\ ,\qquad 1\leq i \leq n+1\, .$$
This fan leads to the projective space $\sA_{\sZ}(\cF)(\R)\simeq {\mathbb P}^n(\R)$. 
\par (b) In the previous example, we have seen that there are exactly $N$ fixed points for $G$ in 
the compactification $\hat \sZ (\R)$, paramatrized by the cones $C_i$ and explicitly given by 
limits $\hat z_{\emptyset, i}:=\lim_{t\to \infty} (\exp(tX)\cdot z_0)$, for some $X\in \operatorname{int} C_i$.  This feature is not limited to 
this specific example but general: the compactification $\hat Z(\R)$ features exactly $N$ closed $\sG(\R)$-orbits through the various $\hat z_{\emptyset, i}$'s. 
This is in contrast to wonderful compactifications \cite[Def. 11.4]{kk}, where one has exactly one closed orbit
\cite[Th. 11.1]{kk}. 
 For wonderful
compactifications, one has $\ga_{Z,E}=\{0\}$ and $S$ is a basis 
of the lattice $\Xi$. If one of these two conditions fails, one is in need of a further subdivision of $\ga_Z^-$ into simple simplicial cones $C_i$. 
\par (c)  Let $X\in \ga_I^{--}$ and $F\in \cF$ be the smallest face in the fan 
which contains $X$. Then $\operatorname{span}_\R F \subset \ga_I$ and $\gh_X =\gh_I +\operatorname{span}_\R F $. 
In particular, for $X\in \ga_I^{--}$ generic, we have $\gh_X = \hat \gh_I$. 
 \par (d) (cf.~\cite[Section~11]{kk}) In case $\sH=N_{\sG}(\sH)$ is self-normalizing, one obtains a wonderful compactification 
$\hat \sZ(\R)$ by closing  up $\sZ(\R)$ in the Grassmannian $\operatorname{Gr}_d(\gg)$ of $d:=\dim\gh$-dimensional 
subspaces of $\gg$.  The embedding is given by ${g\cdot z_0}\mapsto \Ad(g) \gh$ and, given the definition
of $\gh_I$ as a limit (cf.~\eqref{eq-Gr-limit}),  one derives easily that the stabilizer $\hat H_I$ of 
$\hat z_{0,I}$ in $G$ has Lie algebra $\hat \gh_I$.  
\end{rem}

\subsection{Proof of Proposition~\ref{prop-converg-asbs-1-us}}

We choose a smooth compactification $\hat \sZ(\R) =\sZ(\cF)(\R)$ as constructed in the previous section.
To begin with, we note that the limit 
\begin{equation}\label{eq vertex} 
\hat z_{0,I}:= \lim_{s\to \infty }\tilde{a}_s \cdot z_0
\end{equation}
exists. Moreover, $\hat z_{0,I}\in \sA_{\sZ}(\cF)(\R)$ and the convergence is rapid.  Further, we deduce from the fact that $\hat z_{0,I}$ is fixed by $\sH_I(\C)$ and $w_I=\tilde{t}_Ih_I$ that 
$\lim_{s\to \infty} w_I \tilde{a}_s\cdot z_0= \tilde{t}_I\cdot \hat z_{0,I} \in \sA_{\sZ}(\cF)(\R)$ is rapid. 
On the other hand, $w_I \tilde{a}_s\cdot z_0  = u_s m_s \tilde{b}_s w\cdot z_0 = u_s m_s \tilde{b}_s \tilde t\cdot z_0$ which, in 
local coordinates as given by \eqref{LST3}, translates into: 
\begin{equation} \label{lstwwi} 
w_I \tilde{a}_s  \cdot z_0 =  (u_s,  [ m_s, {\tilde t}\tilde{b}_s\cdot z_0]) \in U \times \big[[M/M_H] 
\times^{F_M} \sA_{\sZ}(\cF)(\R)
\big]\, .
\end{equation}
Since $\lim_{s\to \infty} w_I \tilde{a}_s\cdot z_0 = (1, [1, \tilde t\cdot \hat z_{0,I}])$ is rapid, we thus deduce that 
$\lim_{s\to \infty} u_s =1$ is rapid as well. Next, we use the smooth projection  $[M/M_H]
\times^{F_M} \sA_{\sZ}(\cF)(\R)\to M/M_H F_M$ and obtain 
that 
$m_s (M_H F_M) \xrightarrow[s\to\infty]{rapid} 1 (M_H F_M)\in M/ M_H F_M$.  
In particular, we may assume that $m_s\xrightarrow[s\to\infty]{rapid} m_{w_I}\in M_H F_M$. 
Notice that we are free to replace 
$m_s$ by elements of the form $m_s m_H$ with $m_H \in M_H$ as we have 
$$m_s m_H \tilde b_s w\cdot z_0 = m_s m_H \tilde b_s \tilde t  \
\cdot z_0= m_s \tilde b_s \tilde t\cdot z_0= m_s \tilde b_s w\cdot z_0\, .$$
Thus, we may  even assume that  $m:=m_{w_I} \in 
\widetilde{F_M}$ (which was defined just before Proposition \ref{prop-converg-asbs-1-us}).

\par We remain with showing $b_s a_s^{-1}\xrightarrow[s\to\infty]{rapid} 1$. 
Using the techniques from above, it is immediate that $d(a_s, b_s) \to 0$ rapidly for 
any Riemannian metric $d$ on $\hat \sZ(\R)$. However, the statement $a_s^{-1}b_s \to 1$ rapidly 
is a considerably finer assertion and difficult to obtain 
working with only one compactification.  Thus, we change the strategy of proof 
and work with (varying) finite dimensional spherical representations instead. The representations give us various morphisms of $\sZ$ into affine spaces. 

\par  We assume first that $\sZ$ is quasi-affine. The representations we work with are finite dimensional irreducible representations $(\pi, V)$ 
of $\sG(\C)$ featuring two properties: 
\begin{itemize}
\item The representation is $\sH(\C)$-spherical, that is, there exists a 
vector $v_H\neq 0$ such that $\pi(h) v_H = v_H$ for all 
$h\in \sH(\C)$. 
\item Each $\sN(\C)$-fixed vector is fixed by $\sM(\C)$. 
\end{itemize} 

We remark that the second property is equivalent to the representation being
$\sK(\C)$-spherical (Cartan--Helgason theorem). In particular, each of these representations
is self-dual, its highest weight $\lambda$ is an element of $\ga^*$ and its lowest weight is given by $-\lambda$. We write 
$\Lambda_Z$ for the set of highest weights of all $\sH(\C)$ and $\sK(\C)$-spherical irreducible representations. 

\par Given $\lambda\in \Lambda_Z$, we let $(\pi, V)$ be such an irreducible representation of $\sG(\C)$ of highest \mbox{weight $\lambda$}. Furthermore, we fix a 
highest weight vector $v^*$ in the dual representation $V^*$ of $V$. From the fact 
that $PH$ is open in $G$, we then deduce $v^*(v_H)\neq 0$ and, in particular, 
$V^H=\C v_H$ is one-dimensional. Moreover, it follows that $\Lambda_Z\subset \ga_Z^*$. 

\par We expand $v_H$ into $\ga$-weight vectors 
$$v_H =\sum_{\mu\in \Lambda_\pi}  v_{-\lambda +\mu}\,,$$
with $\Lambda_\pi:=\{\mu\in\ga^*\mid v_{-\lambda+\mu}\neq 0\}$.  
As $v_H$ is $\ga_H$-fixed, we have $\Lambda_\pi\subset \ga_Z^*$ and, by 
\cite[Lemma 5.3]{kkss}, we obtain:
\begin{equation} \label{eq-contr1}\mu\big|_{\ga_Z^{--}} <0\, , \qquad \mu \in \Lambda_\pi{\setminus\{0\}}\, .
\end{equation}

Set 
$$v_{H,s}:= a_s^\lambda \pi(\tilde a_s)v_H \qquad s\geq 0 $$
and note, as $v_H$ is $H$-invariant, that this expression is independent of the choice of the particular section $\bs$. 
From the definition, we then get
\begin{equation}  \label{eq-contr2} 
v_{H,s}= \sum_{\mu\in \Lambda_\pi} a_s^\mu  v_{-\lambda +\mu}\, .
\end{equation}

If we define 
$$v_{H,I}:= \sum_{\mu\in \Lambda_\pi\textrm{ s.t. } \mu(X_I)=0} v_{-\lambda +\mu}\, ,$$ 
then it is immediate from (\ref{eq-contr1}) and (\ref{eq-contr2}) that 
\begin{equation}\label{eq-ra1} 
v_{H,s} \to v_{H,I} \qquad \text{rapidly for $s\to \infty$}\, . 
\end{equation}  

Recall $v^*\in V^*$, a highest weight vector in the dual representation. 
Then we obtain from $w_I \tilde a_s \cdot z_0 = u_s m_s \tilde b_s \tilde t \cdot z_0$ 
with $t=\tilde t \cdot z_0\in A_{Z,2}$ (cf. Lemma \ref{lem-decomp-cW}) that: 
\begin{equation}\label{eq-ra2}  
v^*(\pi(w_I) v_{H,s})= a_s^{\lambda} \left(v^* 
(\pi(u_s m_s \tilde b_s\tilde t ) v_H)\right)=  (a_s b_s^{-1})^{\lambda} t^{-\lambda} \left(v^*(v_H)\right)\,.
\end{equation} 

By \eqref{eq-ra1}, we thus obtain from \eqref{eq-ra2} that: 
\begin{equation}\label{eq-ra3}
( a_s b_s^{-1})^{\lambda} = t^\lambda \frac{v^*(\pi(w_I) v_{H,s})}{v^*(v_H)}
\to   t^\lambda \frac{v^*(\pi(w_I) v_{H,I})}{v^*(v_H)}  \qquad \textrm{rapidly for $s\to \infty$} \, .
\end{equation}

We now employ \cite[Lemma~3.10]{kks} for the simple convergence 
$a_s b_s^{-1}\to 1$. Thus, (\ref{eq-ra3}) implies 
$t^\lambda \frac{v^*(\pi(w_I) v_{H,I})}{v^*(v_H)}=1$
with 
\begin{equation} \label{eq-ra4} 
(a_s b_s^{-1})^{\lambda} \to 1  \qquad \text{rapidly for }s\to \infty,\, 
{\lambda \in \Lambda_Z}\, .
\end{equation} 
Assume for the moment that $\sZ$ is quasi-affine. We claim that the set $\Lambda_Z$ spans $\ga_Z^*$. 
In fact, this is  as a consequence of \cite[Lemma~3.4 and (3.2)]{kks0} as in the notation of op. cit. 
each $f\in \C(Z)_\chi$ is a quotient $f=f_1/ f_2$ for some $f_i \in \C[Z]_{\chi_i}$ corresponding to characters
$\Lambda_Z$ in the notation of this article. From the claim and  \eqref{eq-ra4} we then get $a_s b_s^{-1}\to 1$ rapidly.

\par If $\sZ$ is not quasi-affine, then matters are reduced to the quasi-affine case via the so-called cone construction
from algebraic geometry: we extend $\sG(\C)$ to $\sG'(\C):= \sG(\C) \times \C^*$ 
and, for a character $\psi: \sH(\C) \to \C^*$ defined over $\R$, we set 
$\sH'(\C):=\{ (h, \psi(h))\mid h\in \sH(\C)\}\, .$

\par In this way, we obtain a real spherical space 
$\sZ':= \sG'/\sH'$ which projects $\sG'$-equivariantly onto $\sZ$.  According to 
\cite[Corollary~6.10]{kk}, there is compatibility of compression cones: 
\begin{equation} \label{eq-compcc}
\ga_Z'^- =\ga_Z^- \oplus \R\, .
\end{equation} 
Furthermore, according to Chevalley's quasiprojective embedding theorem for homogeneous 
spaces \cite[Sect. 11.2]{humphreys},  we find such a $\psi$ such that $\sZ'$ is quasi-affine and we complete 
the reduction to the quasi-affine case as follows: we lift the identity \eqref{eq-decompw} to $Z'$ and obtain 
$$w_I\tilde{a'_s}\cdot z_0'  =  u_sm_s\tilde{b'_s}w\cdot z_0' \quad s\geq s_0\,,$$
with $\tilde{a'_s} \in \tilde{a}_s ( 1 \times \R^\times) \in G'$ and likewise for 
$\tilde{b'_s} \in \tilde{b}_s ( 1 \times \R^\times) \in G'$.  Because of \eqref{eq-compcc}, we obtain the
rapid convergence $b_s' (a_s')^{-1}\to 1$ in the quasi-affine environment of $Z'$.  Projecting to $Z$ then completes this final reduction step.

\section{$Z$-tempered $H$-fixed continuous linear forms and the space $\cA_{temp}(Z)$}\label{sect-Z-temp}

In this section, we introduce the function space $\cA_{temp}(Z)$ of tempered 
$\cZ(\gg)$-eigenfunctions on $Z$. Via Frobenius reciprocity, these functions can naturally be interpreted
as matrix coefficients of smooth representations of $G$ which are of moderate growth ($SF$-representations for short). 
This section starts with a brief digression on $SF$-representations and then provides   the definition of $\cA_{temp}(Z)$.

\subsection{$SF$-representations of $G$}

Let us recall some definitions and results of \cite{bk}.

\par
A continuous representation $(\pi,E)$ of a Lie group $G$ on a locally convex complex topological vector space $E$ is a representation such that the map:
$$G\times E\to E,\, (g,v)\mapsto\pi(g)v,\textrm{ is continuous.}$$
If $R$ is a compact subgroup of $G$ and $v\in E$, we say that $v$ is $R$-finite if $\pi(R)v$ generates a finite dimensional 
subspace of $E$. 
Let $V_{(R)}$ denote the vector space of $R$-finite vectors in $E$. 
Let $\eta$ be a continuous linear form on $E$ and $v\in E$. Let us define the generalized matrix coefficient 
associated to $\eta$ and $v$ by:
\begin{equation} \label{def mc} m_{\eta,v}(g):=\la\eta,\pi(g^{-1})v\ra,\quad g\in G\,.
\end{equation}
Let $G$ be a real reductive group and $\Vert\cdot\Vert$ be a norm on $G$ (cf.~\cite[Section~2.A.2]{wallach1} or \cite[Section~2.1.2]{bk}). 
A continuous representation $(\pi,E)$ of $G$ is called a {\it Fr\'echet representation with moderate growth} if $E$ is a Fr\'echet space and if,  
for any continuous semi-norm $p$ on $E$, there exist a continuous semi-norm $q$ on $E$ and $N\in\N$ such that: 
\begin{equation}\label{eq-f-rep}
p(\pi(g)v)\leq q(v)\Vert g\Vert^N ,\quad v\in E,g\in G.
\end{equation}
This notion coincides with the notion of $F$-representations given in \cite[Definition~2.6]{bk} for the large scale 
structure corresponding to the norm $\Vert\cdot\Vert$. We will adopt the terminology of $F$-representations.

\par 
Let $(\pi,E)$ be an $F$-representation. A smooth vector in $E$ is a vector $v$ such that $g\mapsto \pi(g)v$ is smooth from $G$ to $E$. 
The space $E^\infty$ of smooth vectors in $E$ is endowed with the Sobolev semi-norms that we define now.
Fix a basis $X_1,\dots,X_n$ of $\gg$ and $k\in\N$. Let $p$ be a continuous semi-norm on $E$ and set 
\begin{equation}\label{eq-pk-semi-norm}
p_k(v)=\left(\sum_{m_1+\cdots+m_n\leq k}p(\pi(X_1^{m_1}\cdots X_n^{m_n})v)^2\right)^{1/2},\quad v\in E^\infty\,.
\end{equation}
We endow $E^\infty$ with the topology defined by the semi-norms $p_k$, $k\in\N$, when $p$ varies in the set of 
continuous semi-norms of $E$, and denote by $(\pi^\infty,E^\infty)$ the corresponding sub-representation of $(\pi,E)$.

\par
An $SF$-representation is an $F$-representation $(\pi,E)$ which is smooth, i.e., such that $E=E^\infty$ as topological vector spaces. 
Let us remark that if $(\pi,E)$ is an $F$-representation, then $(\pi^\infty,E^\infty)$ is an $SF$-representation (cf.~\cite[Corollary~2.16]{bk}).

\par 
Recall our fixed maximal compact subgroup $K\subset G$.

\par Following \cite{bk}, we call an $SF$-representation $E$ {\em admissible} provided that 
$E_{(K)}$ is a Harish-Chandra module with respect to the pair $(\gg, K)$, that is, a $(\gg, K)$-module with finite $K$-multiplicities which is finitely generated as a $\cU(\gg)$-module. 
  
\par
An admissible $SF$-representation will be called an $SAF$-representation of $G$. 

\par It is a fundamental theorem of Casselman--Wallach (cf.~\cite{cas}, \cite[Chapter~11]{wallach2} or \cite{bk}) 
that every Harish-Chandra module $V$ admits a unique $SF$-completion $V^\infty$, i.e., an $SF$-representation $V^\infty$ of $G$, unique up to isomorphism in the $SF$-category, such that: 
$$V^\infty_{(K)} \simeq_{(\gg, K)}  V\, .$$
In particular, all $SAF$-representations of $G$ are of the form $V^\infty$ for a Harish-Chandra \mbox{module $V$.} 

\subsection{The spaces $C_{temp,N}^\infty(Z)$ and $\cA_{temp,N}(Z)$}\label{sect-Atemp}

From now on and for the remainder of this paper, we will assume that $Z$ is unimodular. Let $\rho_{Q}$ be the half sum of the roots of $\ga $ in $\gu$. 
Let us show that
$$
\rho_{Q}\textrm{ is trivial on }\ga_H\,.
$$
As $\gl\cap\gh$-modules,
$$\gg/\gh=\gu\oplus(\gl/\gl\cap\gh)\,.$$
But the action of $\ga_H=\ga\cap\gh$ on $(\gl/\gl\cap\gh)$ is trivial. Since $Z$ is unimodular, the action of $\ga_H$ has 
to be unimodular. Our claim follows.

\par
Hence $\rho_Q$ can be defined as a linear form on $\ga_Z$.\\

We have the notion of weight functions on a homogeneous space $X$ of a locally compact group $G$ (cf.~\cite[Section~3.1]{bernstein-planch}). 
This is a function $w:X\to\R_{>0}$ such that, for every ball $B$ of $G$ (i.e., a compact symmetric neighborhood of $1$ in $G$), 
there exists a constant $c=c(w,B)$ such that
\begin{equation}\label{eq-weight}
w(g\cdot x)\leq cw(x),\quad g\in B,x\in X\,.
\end{equation}
One sees easily that, if $w$ is a weight function, then $1/w$ is also a weight function.

\par
Let $\Omega\subset G$ be a compact subset in accordance with the polar decomposition
in Lemma \ref {lem-polar}. Then weight function $\bv$ and $\bw$ on $Z$ are defined by
$$
\bv(z):=\mathrm{vol}_Z(Bz)\quad \textrm{and}\quad
\bw(z):=1+\sup_{a\in A_Z^-\textrm{ s.t.}\,z\in\Omega a\cW\cdot z_0}\|\log(a)\|\,,
$$
where $B$ is some ball of $G$ and $\Vert\cdot\Vert$ refers to the quotient norm on $\ga_Z=\ga/\ga_H$. 
It is then clear that $\bv$ is a weight function and $\bw$ is a weight function 
by \cite[Proposition~3.4]{kkss-volgrowth}. Recall that the equivalence class of $\bv$ does not depend on $B$ (see op. cit. Lemma~4.1 and beginning of Section 3 for the definition of the equivalence relation).

\par 
For any $N\in\N$, we define a norm  $p_N$ on 
$C_c(Z)$ by 
\begin{equation}\label{eq-pn-norm}
p_N(f):=\sup_{z\in Z}\left(\bw(z)^{-N}\bv(z)^{1/2}\vert f(z)\vert\right)\, .
\end{equation}
From the polar decomposition of $Z$ (cf.~\eqref{polar}), one has
$$
p_N(f)=\sup_{g\in\Omega,a\in A_Z^-,w\in\cW}\left(\bw(g aw\cdot z_0)^{-N}
\bv(g aw\cdot z_0)^{1/2}\vert f(g aw\cdot z_0)\vert\right)\,.
$$
From the fact that $\bv$ and $\bw$ are weight functions on $Z$ and from \cite[Propositions~3.4(2) and~4.3]{kkss-volgrowth}, 
one then sees that:
\begin{res}\label{eq-equiv-norms}
\begin{quote}
The norm $p_N$ is equivalent to the norm
$$f\mapsto q_N(f):=\sup_{g\in\Omega,a\in A_Z^-,w\in\cW}\left(a^{-\rho_Q}(1+\Vert\log a\Vert)^{-N}\vert f(g aw\cdot z_0)\vert\right)\,.$$
\end{quote}
\end{res}
Moreover, due to the fact that $\bv$ and $1/\bw$ are weight functions on $Z$, one gets that $G$ acts by left translations on $(C_c(Z), p_N)$ and, 
for any compact subset $C$ of $G$, by changing $z$ into $z'={g^{-1}}\cdot z$ in \eqref{eq-pn-norm}, one sees that:
\begin{res}\label{eq-EnGstable}
\begin{quote}
There exists $c>0$ such that 
$$p_N(L_gf)\leq cp_N(f),\quad g\in C,f\in C_c(Z)\,.$$
\end{quote}
\end{res}
This is in essence what is needed to identify  
\begin{equation}\label{eq-defCtempN}
C_{temp,N}^\infty(Z):=\{f\in C^\infty(Z)\mid p_{N,k}(f)<\infty,\, k\in \N \}
\end{equation}
as an $SF$-module for $G$.  Here, the  $p_{N,k}$, $k\in\N$, are as in \eqref{eq-pk-semi-norm}, with $p$ replaced by $p_N$ and $(\pi,E)$ by the $SF$-representation $(L,C_{temp,N}^\infty(Z))$.  
Further, we endow the increasing union 
$C_{temp}^\infty(Z):={\bigcup_{N\in\N}} C_{temp,N}^\infty(Z)$, with the inductive limit topology. 
We call $C_{temp}^\infty(Z)$ the space of {\em smooth tempered 
functions} on $Z$. 

\par Inside of $C_{temp}^\infty(Z)$, we define $\cA_{temp}(Z)$ as the subspace of $\cZ(\gg)$-finite functions. Likewise we define $\cA_{temp,N}(Z)$.

\subsection{$Z$-tempered functionals}

Let $(\pi, E)$ be an $SF$-representation and $E'$ its strong dual.
An element $\eta\in (E')^H$ will be called {\it $Z$-tempered} provided 

\begin{res}\label{eq-def-tempform}
\begin{quote}
There exists $N\in\N$ such that, for all $v\in E$, one has 
$m_{\eta,v}\in C_{temp,N}^\infty(Z)$.
\end{quote}
\end{res}

The $Z$-tempered functionals then define a subspace $(E')^H_{temp}$ of $(E')^H$. 
Frobenius reciprocity then asserts for an $SF$-representation $(\pi, E)$ the following isomorphism
of vector spaces: 
\begin{equation} 
\Hom (E, C_{temp}^\infty(Z)) \simeq (E')^H_{temp}\,,
\end{equation}
which can be established as in \cite[Lemma~6.5]{ks-sanya} via the Grothendieck factorization theorem
for topological vector spaces. 

\par In case $E=V^\infty$ is an $SAF$-representation, we adopt the more common 
terminology $V^{-\infty}:=(V^\infty)'$ and recall the finiteness result 
for real spherical spaces (cf.~\cite[Theorem~3.2]{ks}):
\begin{equation}\label{eq-Hfin} 
\dim (V^{-\infty})^H <\infty\, .
\end{equation} 

For a finite codimensional ideal $\cJ$ of $\cZ(\gg)$, let
\begin{equation}\label{eq-defAtemp}
\cA_{temp, N}(Z: \cJ):=\{ f \in  \cA_{temp, N}(Z)\mid f \ \text{is annihilated by $\cJ$}\}
\end{equation}
and denote by $\cA_{temp}(Z:\cJ)$ the subspace of $\cA_{temp}(Z)$ annihilated by $\cJ$.    

\begin{prop}\label{prop-SAF-rep} 
There exists an $N_0\in\N$ such that $\cA_{temp}(Z:\cJ)=\cA_{temp,N_0}(Z:\cJ)$. In particular, $\cA_{temp}(Z:\cJ)$ is an $SAF$-representation of $G$.
\end{prop}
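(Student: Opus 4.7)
The plan is to reduce to a finite list of irreducible representations via Frobenius reciprocity and then bound the temperedness parameter uniformly across this list.

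First, I would exploit the finite-dimensionality of $\cZ(\gg)/\cJ$: as a finite-dimensional commutative Artinian algebra, it is a finite product of local Artinian $\C$-algebras with residue field $\C$, corresponding to characters $\chi_1,\dots,\chi_r:\cZ(\gg)\to\C$ that vanish on $\cJ$. Decomposing $\cA_{temp}(Z:\cJ)$ into generalized $\cZ(\gg)$-eigenspaces, it suffices to treat the case $\cJ\supseteq(\ker\chi)^k$ for a single character $\chi$ and some $k\geq 1$. By Harish-Chandra's classical finiteness theorem, there are then only finitely many isomorphism classes $\pi_1,\dots,\pi_s$ of irreducible admissible $(\gg,K)$-modules with infinitesimal character $\chi$, and any $SAF$-representation $V^\infty$ with $\cJ V^\infty=0$ has Jordan--H\"older composition factors drawn from $\{\pi_1,\dots,\pi_s\}$.

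Second, the finiteness \eqref{eq-Hfin} yields $\dim(\pi_i^{-\infty})^H<\infty$, so the subspace $(\pi_i^{-\infty})^H_{temp}$ is finite-dimensional. By the definition of $Z$-temperedness and equivalence of norms on a finite-dimensional space, there is an integer $N_i$ such that $m_{\eta,v}\in C_{temp,N_i}^\infty(Z)$ for every $\eta\in(\pi_i^{-\infty})^H_{temp}$ and $v\in\pi_i^\infty$, with continuous semi-norm bounds. Given $f\in\cA_{temp}(Z:\cJ)$, smoothing by a $K$-finite test function reduces to the case when $f$ is $K$-finite; then $V_f:=\cU(\gg)\cdot f$ is an admissible Harish-Chandra module with $\cJ V_f=0$, of finite length with factors in $\{\pi_1,\dots,\pi_s\}$. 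Applying Frobenius reciprocity to the Casselman--Wallach globalization $V_f^\infty\hookrightarrow C_{temp,N(f)}^\infty(Z)$ realizes $f=m_{\eta,v}$ with $\eta\in(V_f^{-\infty})^H_{temp}$, and a d\'evissage through the composition series of $V_f$, together with the uniform bounds on each $\pi_i$, yields an integer $N_\cJ$ depending only on $\cJ$ with $f\in C_{temp,N_\cJ}^\infty(Z)$.

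Finally, for the ``in particular'' part: $\cA_{temp,N_\cJ}(Z:\cJ)$ is the closed $G$-stable subspace of $C_{temp,N_\cJ}^\infty(Z)$ cut out by the continuous operators coming from a generating set of $\cJ$, hence inherits the $SF$-structure. For $K$-admissibility, the matrix-coefficient description above bounds the multiplicity of any $K$-type $\tau$ in $\cA_{temp}(Z:\cJ)$ by $\sum_i\dim\pi_i[\tau]\cdot\dim(\pi_i^{-\infty})^H_{temp}$, which is finite; the $K$-finite part is thus a Harish-Chandra module, and $\cA_{temp}(Z:\cJ)$ is $SAF$. The principal obstacle is the d\'evissage step: controlling the temperedness parameter of a matrix coefficient of a reducible $V^\infty$ in terms of those of its composition factors. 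This requires both the exactness of the smooth globalization functor (so that $Z$-tempered functionals on $V$ restrict and project appropriately on $V_1$ and $V_2$ in a short exact sequence) and a uniform bound on composition length, arising from the nilpotency $\cJ\supseteq(\ker\chi)^k$ combined with the finite-type nature of the block of Harish-Chandra modules with generalized infinitesimal character $\chi$.
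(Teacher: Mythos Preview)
Your strategy and the paper's share the same opening move: realize $K$-finite elements of $\cA_{temp}(Z:\cJ)$ as matrix coefficients $m_{\eta,v}$ of a Harish-Chandra module annihilated by $\cJ$, then invoke finite-dimensionality of $H$-invariants to pin down a uniform $N_0$. The difference is in how the non-irreducible case is handled, and this is where your argument has a genuine gap.

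The d\'evissage through composition factors does not go through as stated. Given a short exact sequence $0\to V_1^\infty\to V^\infty\to V_2^\infty\to 0$ and $\eta\in(V^{-\infty})^H_{temp}$, exactness of globalization lets you restrict $\eta$ to $V_1^\infty$ and pull back functionals from $V_2^{-\infty}$, but it gives no way to \emph{decompose} $m_{\eta,v}$ as a sum of matrix coefficients of $V_1$ and $V_2$ when $\eta$ does not vanish on $V_1^\infty$. Knowing the universal parameters $N_i$ for the simple factors $\pi_i$ therefore says nothing directly about the parameter for $\eta$ on $V$. The same issue undermines your multiplicity bound $\sum_i\dim\pi_i[\tau]\cdot\dim(\pi_i^{-\infty})^H_{temp}$: it presumes that $\cA_{temp}(Z:\cJ)$ is built from the $\pi_i$ additively, ignoring the contribution of extensions. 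You also need a uniform bound on the composition length of the cyclic modules $V_f$, which you assert but do not prove.

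The paper sidesteps all of this by constructing a \emph{single} Harish-Chandra module $V_{\cJ}$ (a quotient of the Hecke algebra $R(\gg,K)$ by the ideal generated by $\cJ$ and a cofinite $K$-type ideal) with the property that every Harish-Chandra module annihilated by $\cJ$ is a quotient of a finite direct sum of copies of $V_{\cJ}$. Quotients, unlike subquotients, behave well here: a tempered $H$-functional on a quotient pulls back to one on $V_{\cJ}^\infty$. Thus every $K$-finite function in $\cA_{temp}(Z:\cJ)$ lies in the image of the single map $(V_{\cJ}^{-\infty})^H_{temp}\otimes V_{\cJ}^\infty\to\cA_{temp}(Z)$, and the finite-dimensional space $(V_{\cJ}^{-\infty})^H_{temp}$ fixes $N_0$ once and for all. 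The $SAF$ property then follows because this image is a quotient of an $SAF$-representation, is closed by uniqueness of globalization, and is dense since it contains all $K$-finite vectors.
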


The proof of Proposition \ref{prop-SAF-rep} is preceded by two lemmas.

\begin{lem}\label{lem-AtempSAF1}
There exists a Harish-Chandra module $V_{\cJ}$ annihilated by $\cJ$ such that any Harish-Chandra module annihilated by $\cJ$ is a quotient of a finite direct sum of copies of $V_{\cJ}$.
\end{lem}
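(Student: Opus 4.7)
The plan is to construct $V_{\cJ}$ explicitly as the $\cJ$-quotient of a module induced from a finite direct sum of irreducible $K$-types chosen to cover all simple Harish-Chandra modules annihilated by $\cJ$. The two essential ingredients are (i) the finiteness of the set of simple HC modules annihilated by $\cJ$, and (ii) Harish-Chandra's admissibility theorem, which ensures that the induced construction stays inside the Harish-Chandra category.

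For (i), I would observe that $\cZ(\gg)/\cJ$ is a finite-dimensional commutative $\C$-algebra, so only finitely many characters $\chi_1,\dots,\chi_r$ of $\cZ(\gg)$ vanish on $\cJ$. Every simple HC module annihilated by $\cJ$ has infinitesimal character one of the $\chi_i$ and, by Harish-Chandra's classical finiteness theorem, for each $\chi_i$ there are only finitely many simple HC modules with that infinitesimal character; together these produce a finite list $S_1,\dots,S_m$ of simple HC modules annihilated by $\cJ$. For each $k$ I pick an irreducible $K$-representation $\tau_k$ appearing with positive multiplicity in $S_k$, set $\Sigma:=\{\tau_1,\dots,\tau_m\}$, and let $E_{\cJ}:=\bigoplus_{\tau\in\Sigma}V_{\tau}$ denote the corresponding finite-dimensional $K$-module.

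I would then put $V_{\cJ}:=\bigl(\cU(\gg)\otimes_{\cU(\gk)}E_{\cJ}\bigr)\big/\cJ\cdot\bigl(\cU(\gg)\otimes_{\cU(\gk)}E_{\cJ}\bigr)$. By construction $V_{\cJ}$ is a finitely generated $(\gg,K)$-module annihilated by $\cJ$, and the action of $\cZ(\gg)$ on it factors through the finite-dimensional algebra $\cZ(\gg)/\cJ$, hence is $\cZ(\gg)$-finite; Harish-Chandra's admissibility theorem for finitely generated $\cZ(\gg)$-finite $(\gg,K)$-modules then guarantees that $V_{\cJ}$ is admissible, hence an HC module annihilated by $\cJ$. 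The central step is to verify the universal property: given any HC module $W$ annihilated by $\cJ$, I would first show that the $\Sigma$-isotypic component $W(\Sigma):=\bigoplus_{\tau\in\Sigma}W^{(\tau)}$ generates $W$ as a $(\gg,K)$-module. Indeed, if $W':=\cU(\gg){\cdot}W(\Sigma)$ were a proper submodule, the nonzero finite-length quotient $W/W'$ would admit a simple quotient $S$, which by the choice of $\Sigma$ must contain some $\tau\in\Sigma$ with positive multiplicity; but $(W/W')^{(\tau)}=W^{(\tau)}/(W')^{(\tau)}=0$ since $W^{(\tau)}\subset W(\Sigma)\subset W'$, a contradiction. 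Admissibility of $W$ makes $W(\Sigma)$ finite-dimensional, so, for $n$ exceeding the maximal multiplicity of any $\tau\in\Sigma$ in $W$, there is a $K$-surjection $E_{\cJ}^n\twoheadrightarrow W(\Sigma)\subset W$, which extends by the universal property of induction to a $(\gg,K)$-morphism $\cU(\gg)\otimes_{\cU(\gk)}E_{\cJ}^n\to W$ and factors through $V_{\cJ}^n$ because $\cJ$ annihilates $W$; since its image contains the generating set $W(\Sigma)$, it equals $W$, so $V_{\cJ}^n\twoheadrightarrow W$ as required.

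The only real obstacle is verifying that $V_{\cJ}$ is genuinely a Harish-Chandra module, since admissibility is not obvious from the induced construction; however this is exactly what Harish-Chandra's admissibility theorem for finitely generated $\cZ(\gg)$-finite $(\gg,K)$-modules provides, so no ingredient beyond this and the finiteness statement (i) is needed.
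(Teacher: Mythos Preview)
Your proof is correct and follows essentially the same strategy as the paper: isolate a finite set of $K$-types $\Sigma$ that meet every simple Harish-Chandra module annihilated by $\cJ$, build a universal $\cJ$-annihilated object from those $K$-types, and verify the quotient property via the observation that the $\Sigma$-isotypic part generates any such module. The only difference is packaging: the paper realizes $V_{\cJ}$ as a quotient of the Knapp--Vogan Hecke algebra $R(\gg,K)$ modulo the ideal generated by $\cJ$ and the annihilator $I_F\subset R(K)$ of the chosen $K$-types, whereas you use the produced module $\cU(\gg)\otimes_{\cU(\gk)}E_{\cJ}$ modulo $\cJ$; since $R(\gg,K)/R(\gg,K)I_F$ is, as a $(\gg,K)$-module, a finite direct sum of such produced modules, the two constructions yield essentially the same object, and the verification of the Harish-Chandra property in both cases rests on the same theorem of Harish-Chandra on finitely generated $\cZ(\gg)$-finite $(\gg,K)$-modules.
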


\begin{proof} According to Harish-Chandra \cite[Thm. 7]{hc56}, there exist only finitely many isomorphism classes $V_1,\ldots,V_n$ of irreducible Harish-Chandra modules that are annihilated by $\cJ$. 
We can find a finite set $F\subset \widehat{K}$ of isomorphism classes of irreducible $K$-representations such that, for each $1\leqslant i\leqslant n$, the $(\gg,K)$-module $V_i$ is generated by its $\delta$-isotypic component for some $\delta\in F$. 
Then every Harish-Chandra module which is annihilated by $\cJ$ is generated by the sum of its $\delta$-isotypic components for every $\delta\in F$. Let $R(K)$ be the algebra (for convolution) of $K$-finite functions on $K$ and $I_F\subset R(K)$ the ideal of elements which acts by zero in $\delta$ for any $\delta\in F$. Let $R(\gg,K)$ be the ``Hecke algebra" of Knapp--Vogan \cite[Section I.4]{KV}, i.e., the algebra of $K$-finite distributions on $G$ which are supported in $K$. Then $R(\gg,K)$ is generated as a $\cU(\gg)$-module (either on the left or on the right) by $R(K)$ and moreover the category of $(\gg,K)$-module is naturally equivalent to the category of {\em non-degenerate} (also called {\em approximately unital} by Knapp--Vogan) $R(\gg,K)$-modules. Setting $V_{\cJ}=R(\gg,K)/(R(\gg,K)I_F+R(\gg,K)\cJ)$ we see that $V_{\cJ}$ is  a $(\gg,K)$-module which is generated by any supplement subspace of $I_F$ in $R(K)$ and annihilated by $\cJ$. Hence, by another result of Harish-Chandra, $V_{\cJ}$ is in fact a Harish-Chandra module, see 
\cite[Th. 4.3]{bk} for a short proof. Moreover, it is clear that any Harish-Chandra module annihilated by $\cJ$ is a quotient of a finite sum of copies of $V_{\cJ}$. 
\end{proof}

\begin{lem}\label{lem-AtempSAF2}  
Let $f\in \cA_{temp,N}(Z)$ be a $K$-finite element. Set 
$E^f:=\overline{\operatorname{span}_\C{L(G)f}}\,,$
with the closure taken in $C_{temp, N}^\infty(Z)$. 
Then $E^f$ is an $SAF$-representation, i.e., $E^f_{(K)}$ is a Harish-Chandra module. 
\end{lem}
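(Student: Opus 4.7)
The plan is to construct inside $E^f$ a Harish-Chandra $(\gg,K)$-module $V$ containing $f$, to show that $V$ is dense in $E^f$, and to conclude that $E^f_{(K)}=V$ by a projection argument onto $K$-isotypic components. I would first note that $E^f$ is automatically an $SF$-representation of $G$: it is a closed $G$-invariant subspace of the $SF$-representation $C_{temp,N}^\infty(Z)$, from which it inherits all the required structure. Let $\cJ\subset\cZ(\gg)$ be the annihilator of $f$, a finite codimensional ideal since $f$ is $\cZ(\gg)$-finite. Each $z\in\cZ(\gg)$ acts continuously on $E^f$ and commutes with $L(G)$, so $\cJ$ annihilates every $L(g)f$ and, by continuity, the whole of $E^f$.

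Next, let $W:=\operatorname{span}_\C L(K)f$, which is finite dimensional since $f$ is $K$-finite, and set $V:=\cU(\gg)W\subset E^f$. The identity $L(k)L(u)=L(\Ad(k)u)L(k)$ shows $V$ is a $(\gg,K)$-submodule of $E^f$; it is finitely generated over $\cU(\gg)$ by $W$ and annihilated by $\cJ$, so Harish-Chandra's admissibility theorem yields that $V$ has finite $K$-multiplicities. Hence $V$ is a Harish-Chandra module.

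The key step is the density of $V$ in $E^f$. The smoothness of the $G$-action on $E^f$ gives, for each $X\in\gg$ and each $v\in V$, convergence of the exponential series $L(\exp tX)v=\sum_{n\geq 0}\frac{t^n}{n!}L(X)^nv$ in $E^f$ with partial sums in $V$; hence $L(\exp tX)V\subset\overline V$ and, by continuity of each $L(g)$, $L(\exp tX)\overline V\subset\overline V$. Iterating over finite products of exponentials yields $L(G_0)\overline V\subset\overline V$. Since $V$ is $L(K)$-stable by construction, $\overline V$ is $L(K)$-stable too by continuity of each $L(k)$; the Cartan decomposition $G=KG_0$ (valid since $K$ meets every component of $G$) then gives $L(G)\overline V\subset\overline V$. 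As $f\in V\subset\overline V$, this forces $\operatorname{span}_\C L(G)f\subset\overline V$, whence $E^f\subset\overline V$.

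To conclude admissibility, fix a finite subset $F\subset\widehat K$ and consider the continuous $K$-equivariant projection $p_F:E^f\to E^f(F)$ onto the $F$-isotypic component, defined by integration against the appropriate characters of $K$. Density of $V$ and continuity of $p_F$ force $V(F)=p_F(V)$ to be dense in $E^f(F)$; but $V(F)$ is finite dimensional by admissibility of $V$, hence closed in $E^f$, so $V(F)=E^f(F)$. Varying $F$ gives $E^f_{(K)}=V$, which is a Harish-Chandra module, so $E^f$ is an $SAF$-representation. The hard part is the density argument: one has to leverage the smoothness of the $G$-action on $E^f$ to upgrade the purely algebraic $\cU(\gg)$-stability of $V$ to the topological $L(G)$-stability of $\overline V$; the remaining ingredients—Harish-Chandra's admissibility theorem and continuity of $K$-isotypic projections—are standard.
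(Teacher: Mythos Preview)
Your overall strategy matches the paper's: build a Harish-Chandra module $V=\cU(\gg)W$ inside $E^f$, show it is dense, then identify $E^f_{(K)}$ with $V$ via continuous $K$-isotypic projections. The paper is terser---it takes $V^f=\cU(\gg)f$ and simply cites ``standard techniques (see \cite{bk})'' for the density---so your added detail is welcome, and both the admissibility step (Harish-Chandra's theorem for finitely generated $\cZ(\gg)$-finite $(\gg,K)$-modules) and the final projection step are correct.

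There is, however, a genuine gap in your density argument. You assert that ``the smoothness of the $G$-action on $E^f$ gives \ldots\ convergence of the exponential series $L(\exp tX)v=\sum_{n\geq 0}\frac{t^n}{n!}L(X)^nv$.'' Smoothness of the orbit map $g\mapsto L(g)v$ does \emph{not} imply that its Taylor series converges (to the map, or at all); that is precisely the statement that $v$ is an \emph{analytic} vector, and in a general $SF$-representation smooth vectors need not be analytic. What actually saves the argument is a classical theorem of Harish-Chandra: in a continuous representation of $G$ on a quasi-complete locally convex space, every vector that is simultaneously $K$-finite and $\cZ(\gg)$-finite is an analytic vector. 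Since each $v\in V$ is $K$-finite and annihilated by the finite-codimension ideal $\cJ$, this applies here, and then your argument goes through verbatim: the series for $L(\exp tX)v$ converges in $E^f$ with partial sums in $V$, so $L(\exp tX)V\subset\overline V$, whence $L(G)\overline V\subset\overline V$ and $E^f=\overline V$. Without invoking this analyticity result (or an equivalent device, such as extending the inclusion $V\hookrightarrow E^f$ to a continuous $G$-map $V^\infty\to E^f$ via Casselman--Wallach), the passage from $\cU(\gg)$-stability of $V$ to $G$-stability of $\overline V$ is unjustified.
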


\begin{proof} We consider the $(\gg, K)$-module $V^f:= \cU(\gg)f$. 
Since $f$  is $\cZ(\gg)$-finite, the same holds for $V^f$.  Now, 
as a finitely generated and $\cZ(\gg)$-finite module, $V^f$ is a Harish-Chandra module by a theorem of Harish-Chandra, see \cite[Th. 4.3]{bk} for a short proof.   Note that $E^f$ is an $F$-representation of $G$ containing 
the Harish-Chandra module $V^f$. Hence the closure $\overline{V^f}$ in $E^f$ is a 
continuous $G$-representation. On the other hand $E^f$ was generated by the $G$-translates 
of $f$. Hence $\overline{V^f}=E^f$ and thus 
 $V^f \simeq_{(\gg, K)} E^f_{(K)}$. 
\end{proof} 

\begin{proof}[Proof of Proposition \ref{prop-SAF-rep}] 
Let $E_{\cJ}=V_{\cJ}^\infty$ be the $SAF$-globalization of $V_{\cJ}$ where $V_{\cJ}$ is as in Lemma \ref{lem-AtempSAF1}. We will actually show that $\cA_{temp}(Z:\cJ)$ is precisely the image of
\begin{equation}\label{eq 1}
\displaystyle (E'_{\cJ})^H_{temp}\otimes E_{\cJ}\to \cA_{temp}(Z)
\end{equation}
$$\displaystyle \eta\otimes v\mapsto m_{\eta,v}\,.$$
Indeed, since $(E'_{\cJ})^H_{temp}$ is of finite dimension (cf.~\eqref{eq-Hfin}), the image of (\ref{eq 1}) is contained in $\cA_{temp,N_0}(Z: \cJ)$ for some $N_0\geqslant 0$ and, by unicity of the $SAF$-globalization, this image is closed in $\cA_{temp,N}(Z:\cJ)$ for every $N\geqslant N_0$. Hence, it suffices to show that it is also dense in $\cA_{temp,N}(Z:\cJ)$ for every $N\geqslant N_0$.  Let $f$ be any $K$-finite function $f\in \cA_{temp,N}(Z:\cJ)$. 
By Lemma \ref{lem-AtempSAF2} $E^f$ is an $SAF$-representation annihilated 
by $\cJ$ and as such a quotient of finitely many copies of $E_\cJ$ by Lemma \ref{lem-AtempSAF1}, i.e. 
there exists a surjective morphism $\bigoplus_{j=1}^n E_\cJ \to E^f$.  Hence 
$((E^f)')_{temp}^H\to \bigoplus_{j=1}^n (E_\cJ')_{temp}^H$ injects and thus every $K$-finite function $f\in \cA_{temp,N}(Z:\cJ)$ is in the image of (\ref{eq 1}). Since $K$-finite functions are dense in $\cA_{temp,N}(Z:\cJ)$, this completes the proof.
\end{proof}   

We conclude this section with an illustration of invariant functionals for our guiding example. 
\begin{example} {\rm (Triple space continued)} In this case $G=G_o\times G_o \times G_o$ and 
Harish-Chandra modules for $(\gg, K)$ are of the form $V=V_1 \otimes V_2 \otimes V_3$ with 
$V_i$ Harish-Chandra modules for $(\gg_o, K_o)$.  Likewise one has $V^\infty = V_1^\infty\hat \otimes
V_2^\infty\hat\otimes V_3^\infty$. We denote by $\widetilde V_3$ the Harish-Chandra module dual 
to $V_3$. Note that 
$$((V^\infty)')^H= \Hom_{G_o}(V_1^\infty\hat \otimes
V_2^\infty\hat\otimes V_3^\infty, \C)$$
and thus 
$$((V^\infty)')^H\simeq \Hom_{G_o}(V_1^\infty\hat \otimes
V_2^\infty, \widetilde V_3^\infty)$$
by a standard argument: First $\Hom_{G_o}(V_1^\infty\hat \otimes
V_2^\infty\hat\otimes V_3^\infty, \C)\simeq \Hom_{G_o}(V_1^\infty\hat \otimes
V_2^\infty, (V_3^\infty)')$ by Grothendieck's theory of tensor products for nuclear Fr\'echet spaces, and then, by the use of the 
Grothendieck factorization theorem, deduce that the image of some $T\in \Hom_{G_o}(V_1^\infty\hat \otimes
V_2^\infty, (V_3^\infty)')$ lies in fact in some Banach completion of $\widetilde V_3$.  
Thus we see that $H$-invariant functionals are related to branching problems of tensor product 
representation $V_1^\infty \otimes V_2^\infty$ for $G_o$. There is a vast literature on this subject, see for instance \cite{repka} or \cite{br}. 
\end{example}

\section{Ordinary differential equation for $\cZ(\gg)$-eigenfunctions on $Z$}\label{sect-4}

Let $f\in \cA_{temp}(Z)$. The goal of this section is to show that $f\big|_{A_I}$ gives a certain 
system of ordinary differential equations on $A_I$.  In more precision, $f$ is by definition annihilated by an ideal $\cJ\subset \cZ(\gg)$
of finite codimension.  We construct out of $f$ a certain vector valued function $\Phi_f$ on $A_I$ with values 
in a finite dimensional vector space $U_f$ with dimension bounded by $\dim \cZ(\gg)/ \cJ$.
The function $f\big|_{A_I}$ is then recovered by contracting $\Phi_f$ with a vector in $U_f$.  The function 
$\Phi_f$ in turn satisfies a first order linear differential system recorded in \eqref{eq-equadiffPhi}.

\par This section starts with a basic estimate for functions $f\in C_{temp,N}^\infty(Z)$ which will be crucial in the sequel: in a nutshell, we show that derivatives in direction of $\gh_I$ have decreasing decay in direction of $A_I^-$.  After that, we have a short algebraic subsection 
on invariant differential operators on $Z$, where we review in particular the contents of Appendix \ref{app-RBP}.  With these preparatory subsections, we then derive the differential equation \eqref{eq-equadiffPhi} for $\Phi_f$. From the solution formula for $\Phi_f$ in  Lemma \ref{lem-Phi}, we then derive 
a variety of basic growth estimates for $\Phi_f$.

\subsection{Differentiating tempered functions in direction of $\gh_I$}

Recall the basic notions about boundary degenerations related to subsets $I\subset S$ of spherical roots. Let us fix $I\subset S$ throughout this section. 
We define a piecewise linear functional on $\ga_I$ by  
\begin{equation}\label{eq-deftbetaI}
\tbeta_I(X)=\max_{\alpha\in S\setminus I}\alpha(X),  \qquad  X\in \ga_I\,,
\end{equation}
and note that $\tbeta_I(X)<0$ if $X\in \ga_I^{--}$. 
If $a\in A_I$ with $a=\exp X$, we set $a^{\tbeta_I}=e^{\tbeta_I(X)}$.

\par
We begin this section with a crucial estimate: 

\begin{lem}\label{lem-h-I}
Let $Y\in\gh_I$ and $N\in\N$. There exists a continuous semi-norm on $C_{temp,N}^\infty(Z)$, $p$, such that
$$\vert (L_Yf)(a)\vert\leq a^{\rho_Q+\tbeta_I}(1+\Vert\log a\Vert)^Np(f),\quad a\in A_I^{--},f\in C_{temp,N}^\infty(Z)\,.$$
\end{lem}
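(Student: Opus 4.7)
The estimate strengthens the tempered bound by the factor $a^{\tbeta_I}$; this is an asymptotic improvement that reflects the fact that $\gh_I$ is the limit of $\gh$ under $e^{t\,\mathrm{ad}\,X}$ as $X\in \ga_I^{--}$ tends to infinity. My proof would proceed by a first-order ODE argument along a ray in $A_I^{--}$, exploiting the eigenvector structure of $\gh_I$ under $\ad\tilde{\ga}_I$.

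\smallskip

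\emph{Step 1 (Reduction to generators).} Decompose $\gh_I=(\gl\cap\gh)\oplus\{X_{-\alpha}+T_I(X_{-\alpha})\mid X_{-\alpha}\in\gg^{-\alpha},\ \alpha\in\Sigma_\gu\}$. Since $L_Vf=0$ for $V\in\gh$ and $\gl\cap\gh\subset\gh$, it suffices to treat generators $Y=X_{-\alpha}+T_I(X_{-\alpha})$. Setting $V:=X_{-\alpha}+T(X_{-\alpha})\in\gh$, so that $L_Vf=0$, gives
$$
L_Yf = L_{T_I(X_{-\alpha})-T(X_{-\alpha})}f = -\!\!\sum_{\beta:\,\alpha+\beta\notin\langle I\rangle}\!\!L_{X_{\alpha,\beta}}f.
$$
For every $(\alpha,\beta)$ appearing in this sum, $\alpha+\beta\in\cM\setminus\langle I\rangle\subset\N_0[S]$, so $(\alpha+\beta)(X)\le\tbeta_I(X)$ for all $X\in\ga_I^{--}$ by definition of $\tbeta_I$.

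\smallskip

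\emph{Step 2 (Eigenvector property).} Using that $(\alpha+\beta)\big|_{\ga_I}=0$ whenever $\alpha+\beta\in\langle I\rangle$, a direct computation shows $[\tilde X,Y]=-\alpha(\tilde X)\,Y$ for every $\tilde X\in\tilde{\ga}_I$. Hence $Y$ is an $\ad\tilde{\ga}_I$-eigenvector of eigenvalue $-\alpha|_{\ga_I}$.

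\smallskip

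\emph{Step 3 (ODE and integration).} Given $a\in A_I^{--}$, choose $\tilde X:=-\log\tilde a\in\tilde{\ga}_I$, so that $\tilde a\exp(\tilde X)=1$, and set $\psi(t):=L_Yf(\tilde a\exp(t\tilde X))$ for $t\in[0,1]$. The commutator identity $L_{\tilde X}L_Yf=L_YL_{\tilde X}f-\alpha(\tilde X)L_Yf$ yields the first-order linear ODE
$$
\psi'(t)=-\alpha(\tilde X)\psi(t)+L_YL_{\tilde X}f(\tilde a\exp(t\tilde X)),
$$
with solution
$$
\psi(0)=e^{\alpha(\tilde X)}\psi(1)-\int_0^1 e^{\alpha(\tilde X)t}L_YL_{\tilde X}f(\tilde a\exp(t\tilde X))\,dt.
$$

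\smallskip

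\emph{Step 4 (Bounding).} Bound $\psi(1)=L_Yf(1)$ by a tempered semi-norm of $f$, and apply the tempered bound to the integrand. The factor $e^{\alpha(\tilde X)}=\tilde a^{-\alpha}$, together with the inequality $(\alpha+\beta)(\log a)\le\tbeta_I(\log a)$ from Step~1 (used to redistribute the exponents across the summands $L_{X_{\alpha,\beta}}f$ of $L_Yf$), combines with $a^{\rho_Q}$ coming from the integrand to produce the required bound $p(f)\,a^{\rho_Q+\tbeta_I}(1+\|\log a\|)^N$.

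\smallskip

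\emph{Main obstacle.} The delicate case is $\alpha|_{\ga_I}=0$, where the ODE degenerates into $\psi'(t)=L_YL_{\tilde X}f(\cdot)$ and produces no exponential gain. One deals with this by noting that $\alpha|_{\ga_I}=0$ forces $\alpha\in\operatorname{span}\langle I\rangle$, whence for each summand $X_{\alpha,\beta}$ of $W_\alpha=T(X_{-\alpha})-T_I(X_{-\alpha})$ the weight $\beta$ is automatically nontrivial on $\ga_I$ (because $\alpha+\beta\notin\langle I\rangle$); one then runs the ODE argument on the individual weight components of $W_\alpha$ rather than on $Y$ as a whole. Collecting the resulting estimates uniformly in $Y$ yields the continuous semi-norm $p$ of the statement.
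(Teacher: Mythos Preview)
Your Step~1 contains a genuine error that undermines the whole argument. The claim ``$L_Vf=0$ for $V\in\gh$'' is false: for $f\in C^\infty(G/H)$ and $V\in\gh$, the \emph{left} derivative $(L_Vf)(gH)=\frac{d}{dt}\big|_{t=0}f(\exp(-tV)gH)$ vanishes only when $\Ad(g^{-1})V\in\gh$, in particular at the base point $z_0$ but not at a general $a\in A_I^{--}$. Consequently the identity $L_Yf=-\sum_\beta L_{X_{\alpha,\beta}}f$ is wrong as an equality of functions, and even if it were true it would only yield the plain tempered bound $a^{\rho_Q}(1+\|\log a\|)^N$, not the improved $a^{\rho_Q+\tbeta_I}$.

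The paper's argument is a direct algebraic conjugation, not an ODE. Using your Step~2 eigenvector property, one has $(L_{Y_{-\alpha}}f)(a)=\tilde a^{\alpha}(L_{Y_{-\alpha}}L_{\tilde a^{-1}}f)(z_0)$. \emph{At the base point} one may now invoke $Y_{-\alpha}+\sum_{\beta:\alpha+\beta\notin\langle I\rangle}X_{\alpha,\beta}\in\gh$, and conjugating back yields the key identity
\[
(L_{Y_{-\alpha}}f)(a)=-\sum_{\beta:\alpha+\beta\notin\langle I\rangle} a^{\alpha+\beta}(L_{X_{\alpha,\beta}}f)(a)\,.
\]
The factor $a^{\alpha+\beta}$---produced by the conjugation, not by any integration---is exactly what gives the gain $a^{\tbeta_I}$ after applying the tempered bound to each $L_{X_{\alpha,\beta}}f$. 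No case distinction on $\alpha|_{\ga_I}$ is needed.

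Your ODE route (Steps~3--4) is both unnecessary and, as written, circular: the forcing term $(L_YL_{\tilde X}f)(\tilde a\exp(t\tilde X))$ is of the same type as the quantity you are trying to bound, and the ``redistribution'' you invoke in Step~4 rests precisely on the false global identity from Step~1. Moreover $\tilde X=-\log\tilde a$ depends on $a$, so bounding $L_{\tilde X}f$ by a fixed semi-norm costs an extra factor $\|\log a\|$, which would force a larger exponent than the stated $N$.
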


\begin{proof} 
On one hand, if $Y\in\gl\cap\gh$, 
$$(L_Yf)(a)=0,\quad a\in A_I\,.$$
Hence, the conclusion of the Lemma holds for $Y\in\gl\cap\gh$.

\par 
On the other hand, from the definition of $T_I$ (cf.~beginning of Section~\ref{sect-degenZ}), $\gl\cap\gh$ and the elements
$$Y_{-\alpha}=X_{-\alpha}+T_I(X_{-\alpha})\in\gh_I\,,$$ 
for $\alpha$ varying in $\Si_\gu$ and $X_{-\alpha}$ in $\gg^{-\alpha}$, generate $\gh_I$ as a vector space. 
By linearity, it then remains to get the result for $Y=Y_{-\alpha}$.

\par 
Let $a\in A_I$ and $\tilde{a}=\bs(a)$ (cf.~\eqref{eq-tildea} for the definition of $\bs$). Then let us show that
$$
\mathrm{Ad}(\tilde{a})Y_{-\alpha}=\tilde{a}^{-\alpha}Y_{-\alpha}\,.
$$
One has $\mathrm{Ad}(\tilde{a})X_{-\alpha}=\tilde{a}^{-\alpha}X_{-\alpha}$ and 
$\mathrm{Ad}(\tilde{a})X_{\alpha,\beta}=\tilde{a}^{\beta}X_{\alpha,\beta}$. But $\alpha+\beta\in \langle I\rangle$. 
Hence, $\tilde{a}^{\alpha+\beta}=1$ as $a\in A_I$. Our claim follows.

\par
Let us get the statement for $(L_{Y_{-\alpha}}f)(a)$, $a\in A_I^{--}$ and $f\in C_{temp,N}(Z)$. One has: 
$$
(L_{Y_{-\alpha}}f)(a) =  (L_{\tilde{a}^{-1}}(L_{Y_{-\alpha}}f))(z_0)
 =  \tilde{a}^\alpha (L_{Y_{-\alpha}}L_{\tilde{a}^{-1}}f)(z_0)\,.
$$
Recall that $\cM$ is the monoid in $\N_0[\Sigma_\gu]$ defined in \eqref{eq-cM} and $\langle I\rangle$ denotes the monoid in $\N_0[S]$ generated by $I$. Let us notice that: 
$$Y_{-\alpha}+\sum_{\beta\in\Si_\gu\cup\{0\}\textrm{ s.t.}\,\alpha+\beta\notin\langle I\rangle}X_{\alpha,\beta}\in\gh\,.$$
Hence one has:
$$\begin{array}{rcl}
(L_{Y_{-\alpha}}f)(a) & = & -\tilde{a}^\alpha\sum_{\beta\in\Si_\gu\cup\{0\}\textrm{ s.t.}\, \alpha+\beta\notin\langle I\rangle} 
(L_{X_{\alpha,\beta}}L_{\tilde{a}^{-1}}f)(z_0)\\
& = & -\sum_{\beta\in\Si_\gu\cup\{0\}\textrm{ s.t.}\, \alpha+\beta\notin\langle I\rangle} 
\tilde{a}^{\alpha+\beta}(L_{\tilde{a}^{-1}}L_{X_{\alpha,\beta}}f)(z_0)\,.
\end{array}$$
But $\tilde{a}^{\alpha+\beta}=a^{\alpha+\beta}$ as $a\in A_I\subset A_Z$ and $\alpha+\beta\in S$. 
Then, as $(L_{\tilde{a}^{-1}}L_{X_{\alpha,\beta}}f)(z_0)=L_{X_{\alpha,\beta}}f(a)$, one has:
\begin{equation}\label{eq-decomp-actY-alpha}
(L_{Y_{-\alpha}}f)(a) =  -\sum_{\beta\in\Si_\gu\cup\{0\}\textrm{ s.t.}\, \alpha+\beta\notin\langle I\rangle} 
a^{\alpha+\beta}(L_{X_{\alpha,\beta}}f)(a)\,.
\end{equation}
If $\alpha+\beta\notin \langle I\rangle$ as above and $L_{X_{\alpha,\beta}}f\neq 0$, 
one has $\alpha+\beta\in\cM\setminus\langle I\rangle$ and, from the definition of $\beta_I$ (cf.~\eqref{eq-deftbetaI}):
$$a^{\alpha+\beta}\leq a^{\tbeta_I},\quad a\in A_I^{--}\,.$$ 
Then 
$$\vert (L_{Y_{-\alpha}}f)(a)\vert \leq a^{\tbeta_I}\sum_{\beta\in\Si_\gu\cup\{0\}\textrm{ s.t.}\, 
\alpha+\beta\notin\langle I\rangle} \vert (L_{X_{\alpha,\beta}}f)(a)\vert\,.$$
Hence, we get the inequality of the Lemma for $Y=Y_{-\alpha}$ by taking
$$p=\sum_{\beta\in\Si_\gu\cup\{0\}\textrm{ s.t.}\, \alpha+\beta\notin\langle I\rangle}q_{N,X_{\alpha,\beta}}\,,$$
with $q_{N,X}(f):=q_N(L_Xf)$.
\end{proof}

\subsection{Algebraic preliminaries}\label{subsect-algpre}

For a real spherical space $Z=G/H$, we denote by $\D(Z)$ the algebra of $G$-invariant differential operators.
We recall the deformations $Z_I = G/H_I$ of $Z$ which were defined with $H_I$ to be connected. In particular, we
point out that $H_S=H_0$ and that $Z_S\to Z$ is possibly a proper covering.  However, we have $\D(Z)\subset \D(Z_S)$
naturally by Remark \ref{rem appD}.  Next we describe $\D(Z_I)$ as in Appendix \ref{app-RBP}.

\par Let $R$ denote the right regular representation of $G$ on $C^\infty(G)$.
Differentiating $R$ yields an algebra representation of the universal enveloping algebra $\cU(\gg)$ of $\gg_\C$:
$$R:\cU(\gg)\to\End(C^\infty(G)),\quad u\mapsto R(u)\,.$$
Set $\gb:= \ga + \gm +\gu$ and note that $\gb\subset \gg$ is a subalgebra with $\gg= \gb +\gh_I$ for all 
$I\subset S$. Note that $\gb\cap \gh_I= \ga_H +\gm_H$ for all $I\subset S$, where $\gm_H=\gm\cap \gh$. Let 
$\gb_H:=\ga_H +\gm_H$. 
With 
\begin{equation} \label{Diff0}\cU_I(\gb):= \{ u \in \cU(\gb)\mid   Xu \in \cU(\gg)\gh_I, \ \ X\in \gh_I\}\,,\end{equation} 
we obtain a subalgebra of $\cU(\gb)$ which features  $\cU(\gb)\gb_H$ as a two-sided ideal. 
Next, we explain the natural isomorphism 
\begin{equation} \label{Diff1}\D(Z_I) \simeq \cU_I(\gb)/ \cU(\gb)\gb_H\end{equation} 
from \eqref{diff1}. For that, we denote for $f_I\in C^\infty(Z_I)$ by $\tilde f_I\in C^\infty(G)$ 
its natural lift to a right $H_I$-invariant function in $G$.  Then, with regard to the quotient map 
$\pi: \cU(\gb)\to \cU(\gb)/ \cU(\gb)\gb_H$, we take $\tilde u \in \cU(\gb)$ to be any lift of $u \in \cU_I(\gb)/ \cU(\gb)\gb_H\subset 
 \cU(\gb)/ \cU(\gb)\gb_H$. 
Then we can define 
$$(R_I(u)f_I)(gH_I) :=  (R({\tilde u})\tilde f_I)(g), \qquad g\in G\,,$$
as the right hand side is independent of the particular choice of the lift $\tilde u$ of  $u$ and the representative 
$g$ of the coset $gH_I$. With this notion of $R_I$, the isomorphism in \eqref{Diff1} is 
implemented by the assignment 
\begin{equation} \label{morph UI} \cU_I(\gb)/ \cU(\gb)\gb_H\ni u \to R_I(u)\in \D(Z_I)\, . \end{equation}
For $f \in C^\infty(Z)\subset C^\infty(Z_S)$ and $u \in \D(Z)\subset \D(Z_S)$, we use the abbreviated 
notation $R(u) f$ without specifying any further index.  

\par
In the sequel, we consider $\D(Z_I)$ as a subspace of $\cU(\gb)/ \cU(\gb)\gb_H$ for any $I\subset S$. 
Notice that $\cU(\gb)/ \cU(\gb)\gb_H$ is naturally a module for $A_Z$ under the adjoint action, 
which yields us a notion of $\ga_Z$-weights of elements $u \in \cU(\gb)/ \cU(\gb)\gb_H$.

\par Recall the center $\ga_{Z,E}=\ga_S$ of $Z$, which has the property that $A_{Z,E}$ normalizes $H$ and as such acts 
on $Z$ from the right, commuting with the left $G$-action on $Z$. In particular, we obtain a natural 
embedding $S(\ga_{Z,E})\hookrightarrow \D(Z)$.  When applied to the real spherical space $Z_I=G/H_I$, $I\subset S$, we note that 
$\ga_I=\ga_{Z_I,E}$ and record the inclusion $S(\ga_I)\hookrightarrow \D(Z_I)$. 

\par We rephrase Theorem \ref{lem-defmu_0} from Appendix \ref{app-RBP} as: 

\begin{lem}\label{lem-defmu_I2}
For $I\subset S$, the following assertions hold:
\begin{enumerate}[(i)]
\item For any $u\in\D(Z)\subset \cU(\gb)/\cU(\gb)\gb_H$ and $X\in\ga_I^{--}$, the limit
$$\mu_I(u):=\lim_{t\to \infty} e^{t \ad X} u$$
exists in the vector space  $\cU(\gb)/\cU(\gb)\gb_H$, lies in the subspace 
$\cU(\gb)_I/ \cU(\gb)_I \gb_H$ and defines via $R_I$ (see \eqref{morph UI}) and defines an injective algebra morphism $\mu_I:\D(Z)\to\D(Z_I)$, which does not depend on $X$.
\item For any non-zero $u\in\D(Z)$, the $\ga_Z$-weights of $\mu_I(u)$ and $u$ are non-positive on $\ga_Z^-$ and the $\ga_Z$-weights of $\mu_I(u)-u$ are negative on $\ga_I^{--}$.
\end{enumerate}
\end{lem}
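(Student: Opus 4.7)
The plan is to carry out a weight decomposition of $u \in \D(Z)$ with respect to the adjoint action of $A_Z$ on the quotient $\cU(\gb)/\cU(\gb)\gb_H$ (which contains $\D(Z) \subset \D(Z_S)$). Since this action is semisimple, any $u$ decomposes as a finite sum $u = \sum_\chi u_\chi$ over $\ga_Z^*$-characters, and $e^{t\ad X}$ acts on the $\chi$-weight piece by multiplication by $e^{t\chi(X)}$. Hence both the existence of the limit $\mu_I(u)$ and all the claimed properties are dictated purely by which characters $\chi$ occur in $u$.

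The critical structural input is that, for $u \in \D(Z)$, every such weight $\chi$ lies in the dual cone of $\ga_Z^-$, i.e.~$\chi$ is a non-negative real combination of the spherical roots $S$. This is exactly the content established in Appendix~\ref{app-RBP} via Knop's theory of invariant differential operators on spherical varieties, and I expect it to be the main obstacle; it is indeed the reason the authors invoke Beuzart-Plessis's appendix, allowing them to remove the wavefront assumption. Concretely, writing $\chi = \sum_{\alpha \in S} c_\alpha \alpha$ with $c_\alpha \ge 0$ and taking $X \in \ga_I^{--}$, one has $\chi(X) = \sum_{\alpha \in S \setminus I} c_\alpha \alpha(X) \le 0$, with equality if and only if $c_\alpha = 0$ for every $\alpha \in S \setminus I$, equivalently if and only if $\chi$ vanishes on $\ga_I$.

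Granting the weight bound, the rest is essentially automatic. The limit
\[
e^{t\ad X} u = \sum_\chi e^{t\chi(X)} u_\chi \;\xrightarrow[t \to \infty]{}\; \sum_{\chi|_{\ga_I} = 0} u_\chi =: \mu_I(u)
\]
exists and is visibly independent of $X \in \ga_I^{--}$, which gives item (ii) at once: the surviving weights in $\mu_I(u)$ are non-positive on $\ga_Z^-$, while $u - \mu_I(u)$ is supported on weights with some $c_\alpha > 0$ for $\alpha \in S \setminus I$, which are strictly negative on $\ga_I^{--}$. To see that $\mu_I(u)$ represents an element of $\D(Z_I)$, I would apply $e^{t\ad X}$ to the $H$-invariance identity $Y \tilde u \in \cU(\gg)\gh$ satisfied by a lift $\tilde u \in \cU_S(\gb)$ of $u$, rewrite it as $(e^{t\ad X} Y)(e^{t\ad X}\tilde u) \in \cU(\gg)(e^{t\ad X}\gh)$, and pass to the limit using $\lim_{t \to \infty} e^{t\ad X}\gh = \gh_I$ in the Grassmannian; this delivers the $\cU_I(\gb)$-condition for any lift of $\mu_I(u)$. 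The algebra morphism property is immediate because $e^{t\ad X}$ is an algebra automorphism of $\cU(\gg)$ preserving $\cU(\gb)$, so taking weight-by-weight limits commutes with multiplication.

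For injectivity of $\mu_I$, I would invoke transitivity of the limits: iterating first with $X \in \ga_I^{--}$ and then with $Y$ in the interior of the compression cone of $Z_I$ (which agrees with the interior of $\ga_Z^-$) kills successively all weights outside $\N_0[I]$ and then all non-zero weights, so the composite $\mu_\emptyset^{(I)} \circ \mu_I$ equals the extremal limit $\mu_\emptyset$. But $\mu_\emptyset$ coincides with Knop's Harish-Chandra type injection of $\D(Z)$ into a polynomial algebra on $\ga_Z$, so it is injective, and therefore so is $\mu_I$.
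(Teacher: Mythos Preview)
Your proposal is correct and follows the same overall architecture as the paper: decompose $u$ into $\ga_Z$-weight components in $\cU(\gb)/\cU(\gb)\gb_H$, use the key weight bound (all weights lie in the non-negative span of $S$, equivalently are non-positive on $\ga_Z^-$) to make the limit exist and deduce (ii), and pass to the Grassmannian limit $e^{t\ad X}\gh\to\gh_I$ to land in $\cU_I(\gb)$. This is precisely the shape of Theorem~\ref{lem-defmu_0} in Appendix~\ref{app-RBP}, which the paper simply cites.

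Two points of comparison are worth recording. First, your injectivity argument via transitivity $\mu_\emptyset=\mu_\emptyset^{(I)}\circ\mu_I$ and Knop's Harish--Chandra homomorphism is valid but more indirect than what the appendix does: once you know the only extremal $\ga_I$-weight is $0$, the limit $\mu_I(u)$ is exactly the $\ga_I$-weight-zero component $u_0$, and this is automatically nonzero for $u\neq 0$ because $0$ is extremal. Second, you attribute the weight bound to ``Knop's theory'', but Appendix~\ref{app-RBP} proves it by a self-contained two-step argument that does not invoke \cite{knop-hc}: Lemma~\ref{appC lem 1} shows $S(\ga_I)$ is central in $\D(Z_I)$ (via the embedding $\D(Z_I)\hookrightarrow\bigoplus_V\End(V^{H_I})$ and the observation that $\ga_I$ acts on $V^{H_I}$ by a single weight), and Lemma~\ref{appC lem 2} shows, by exactly your Grassmannian-limit argument, that each extremal $\ga_I$-weight component of $u$ already lies in $\D(Z_I)$. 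Combining, extremal components carry $\ga_I$-weight zero, hence the only extremal weight is $0$, which is the weight bound. So your Grassmannian step is not just the verification that $\mu_I(u)\in\D(Z_I)$; it is in fact the engine behind the weight bound itself.
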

This Lemma shows that we can view $\D(Z_I)$ as a subalgebra of $\D(Z_\emptyset)$. 
Since $\gh_\emptyset= \gl\cap\gh +\bar \gu$ is of a particular simple shape, i.e., close to a parabolic, 
the algebra  $\D(Z_\emptyset)$ can be described easily. 
For that, let $M_H:=\exp(\gm_H)< M$ and keep in mind the standard isomorphim
\begin{equation} \label{DiffM}\D(M/M_H)\simeq\cU(\gm)^{M_H}/ (\cU(\gm)\gm_H\cap \cU(\gm)^{M_H})\, .\end{equation}

\begin{lem} The natural map 
$$ \Phi: S(\ga_Z) \otimes \left[\cU(\gm)^{M_H}/ (\cU(\gm)\gm_H\cap \cU(\gm)^{M_H}) \right]\to \cU_\emptyset(\gb)/ \cU(\gb)\gb_H,  \ \ u \otimes v \mapsto uv +  \cU(\gb)\gb_H$$
is an isomorphism.  In particular, via \eqref{Diff1} and \eqref{DiffM}, we obtain a natural isomorphim
of algebras
\begin{equation} \label{DZ-empty} 
\D(Z_\emptyset)\simeq S(\ga_Z)\otimes\D(M/M_H)\,.
\end{equation}
\end{lem}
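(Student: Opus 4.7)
The plan is to combine a PBW decomposition of $\cU(\gb)/\cU(\gb)\gb_H$ with a weight-theoretic analysis of the $\gh_\emptyset$-invariance defining $\cU_\emptyset(\gb)$. The first step uses that $\gb=(\ga+\gm)\ltimes\gu$ with $[\ga,\gm]=0$ and, crucially, that $\bs(\ga_Z)\subset\ga_L$ is central in $\gl$ by the constructive proof of the local structure theorem recalled in Remark \ref{rem LST}. Ordering PBW monomials with $\ga$ on the right, using that $\gm_H$ commutes with $S(\ga)$, and collapsing $S(\ga)/S(\ga)\ga_H=S(\ga_Z)$, I would obtain the vector-space identification
$$\cU(\gb)/\cU(\gb)\gb_H \;\simeq\; \cU(\gu)\otimes\bigl(\cU(\gm)/\cU(\gm)\gm_H\bigr)\otimes S(\ga_Z).$$
Under this, $\Phi$ is realized as the inclusion of $1\otimes[\cU(\gm)^{M_H}/(\cU(\gm)\gm_H\cap\cU(\gm)^{M_H})]\otimes S(\ga_Z)$ into the right-hand side, which makes injectivity transparent.

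For well-definedness I would verify $[X,\alpha v]\in\cU(\gg)\gh_\emptyset$ for every $X\in\gh_\emptyset=\gl_H+\gu^-$, $\alpha\in S(\bs(\ga_Z))$ and $v\in\cU(\gm)^{M_H}$. Centrality of $\alpha$ in $\gl$ kills $[X,\alpha]$ for $X\in\gl_H$; then $M_H$-invariance of $v$ kills $[X,v]$ for $X\in\gm_H$, the relation $[\ga_H,\gm]=0$ handles $X\in\ga_H$, and for $X\in\gl_n\subset\gl_H$ the bracket $[X,v]$ lands in $\cU(\gg)\gl_n\subset\cU(\gg)\gh_\emptyset$ since $\gl_n$ is an ideal of $\gl$. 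For $X\in\gu^-$, I use $[\gu^-,\bs(\ga_Z)]\subset\gu^-$ together with the inductive consequence $[\gu^-,\cU(\gm)]\subset\cU(\gg)\gu^-$ of $[\gu^-,\gm]\subset\gu^-$, yielding $[X,\alpha v]\in\cU(\gg)\gu^-\subset\cU(\gg)\gh_\emptyset$.

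Surjectivity is the main step. Equip $\cU(\gb)/\cU(\gb)\gb_H$ with the $\ga_Z$-weight grading induced by $\ad(\bs(\ga_Z))$: all weights live in $\cU(\gu)$, are non-negative, and vanish only on the constant part $1\in\cU(\gu)$, while both $\cU(\gm)/\cU(\gm)\gm_H$ and $S(\ga_Z)$ sit in weight $0$. For $\phi\in\cU_\emptyset(\gb)/\cU(\gb)\gb_H$, the condition $[X,\phi]\in\cU(\gg)\gh_\emptyset$ for $X\in\gu^-$ reduces, via $[X,\phi]\equiv X\phi\pmod{\cU(\gg)\gh_\emptyset}$ and the PBW push-through of $X\cdot\nu$ for $\nu\in\cU(\gu)$, to a weight-lowering relation which is injective on the positive-weight part of $\cU(\gu)$. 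Hence all positive-weight components of $\phi$ vanish and $\phi$ lies in $1\otimes\cU(\gm)/\cU(\gm)\gm_H\otimes S(\ga_Z)$; the remaining $\ad(\gm_H)$-invariance then pins the $\cU(\gm)/\cU(\gm)\gm_H$-factor to $\cU(\gm)^{M_H}/(\cU(\gm)\gm_H\cap\cU(\gm)^{M_H})$, which is precisely the image of $\Phi$.

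The main obstacle will be rigorously executing the weight-lowering injectivity argument inside the quotient: the $\ad(\gu^-)$-action on $\cU(\gu)$ is visibly nonzero on every positive-weight graded piece, but one must verify that this nonvanishing survives after reducing modulo $\cU(\gg)\gh_\emptyset$. I would take a nonzero top-weight component $\phi_\mu$ of $\phi$ in the $\cU(\gu)$-factor, choose $X\in\gu^-$ of weight $-\alpha$ with $\alpha$ a weight occurring in $\phi_\mu$, and exhibit an explicit nonzero projection of $[X,\phi_\mu]$ onto the weight-$(\mu-\alpha)$ part of the PBW decomposition, contradicting invariance. The algebra morphism $\mu_\emptyset$ from Appendix \ref{app-RBP} provides an appropriate bookkeeping framework, since it controls the $\ga_Z$-weight behavior of $\cU_\emptyset(\gb)$ directly.
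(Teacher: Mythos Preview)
Your injectivity argument via the weight-zero part is essentially the paper's: both observe that $\cU_\emptyset(\gb)_0\cap\cU(\ga+\gm)=\cU(\ga)\bigl(\cU(\gm)^{M_H}+\cU(\gm)\gm_H\bigr)$, from which injectivity of $\Phi$ is immediate.

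For surjectivity the two arguments diverge, and the gap you flag is real. The paper does \emph{not} carry out a weight-lowering computation inside $\cU(\gb)/\cU(\gb)\gb_H$. Instead it passes to the graded level via the symbol embedding
\[
\gr\D(Z_\emptyset)\;\hookrightarrow\;\Pol(T^*Z_\emptyset)^G\;\simeq\;\Pol(\gh_\emptyset^\perp)^{H_\emptyset},
\]
and gives a short geometric argument: unimodularity of $Z$ supplies (Lemma~\ref{lem unim-quasiaff}(ii)) an element $X\in\ga_H^{\perp_\ga}$ with $\alpha(X)>0$ for all $\alpha\in\Sigma_\gu$, and for any such $X$ one has $\Ad(\bar U)X=X+\bar\gu$. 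Hence every $\bar U$-invariant polynomial on $\gh_\emptyset^\perp=\gb_H^{\perp_{\gm+\ga}}\oplus\bar\gu$ is constant in the $\bar\gu$-direction and therefore has $\ga$-weight zero. The filtered statement $\D(Z_\emptyset)_\lambda=0$ for $\lambda\neq 0$ then follows by the usual degree induction.

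Your direct filtered argument runs into precisely the obstacle you name. For $X\in\gu^-$ and $\nu\in\cU(\gu)$, the bracket $[X,\nu]$ produces terms in $\gl$ (and in $\gu^-$), and once projected to $\cU(\gb)/\cU(\gb)\gb_H$ these interact nontrivially with the $\cU(\gm)/\cU(\gm)\gm_H$ and $S(\ga_Z)$ tensor factors; there is no clean ``lowering operator on the $\cU(\gu)$-slot'' at the filtered level. Making your injectivity step rigorous effectively forces a reduction to symbols, where $\ad(X)$ becomes the commutative derivation on $S(\gg/\gh_\emptyset)$ --- and that is the paper's route. Two further signs that something is missing: your sketch never invokes the unimodularity hypothesis, which the paper uses in an essential way to produce the regular element $X$; and the reference to $\mu_\emptyset$ from Appendix~\ref{app-RBP} does not help here, since that morphism compares $\D(Z)$ with $\D(Z_\emptyset)$ rather than controlling the internal $\ga$-weight structure of $\D(Z_\emptyset)$.
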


\begin{proof} In the absolutely spherical case, this is  found in \cite[Section 6]{knop-hc} (what is called $X_h$, the horospherical 
deformation of a $G$-variety $X$, would correspond to our $Z_\emptyset$). The slightly more general case 
is an easy adaptation. In the following proof, we replace from \eqref{eq pol} onwards $H_\emptyset$ by its algebraic closure, which is legitimate by Remark \ref{rem appD}(b).
\par  Recall that 
$$\cU_\emptyset(\gb)=\{ u \in \cU(\gb)\mid  [X,u]\in \cU(\gg) \gh_\emptyset,  \ X\in \gh_\emptyset\}\, .$$
In particular,  $ \cU_\emptyset(\gb)$ is $\ad \ga$-invariant and we 
obtain a spectral decomposition 
$$ \cU_\emptyset(\gb)=\sum_{\lambda\in \ga^*}  \cU_\emptyset(\gb)_\lambda\, .$$
For $\lambda =0$, we further have 
$$\cU_\emptyset(\gb)_0 = \cU_\emptyset(\gb)\cap \cU(\ga+\gm)= \cU(\ga)(\cU(\gm)^{M_H}+\cU(\gm)\gm_H)\,,$$
from which we easily derive that $\Phi$ is injective. 

\par It remains to be seen that $\Phi$ is surjective. For that, it suffices to show that $[\cU_\emptyset(\gb)/ \cU(\gb)\gb_H]_\lambda\simeq \D(Z_\emptyset)_\lambda=0$ for $\lambda\neq 0$. To verify that, we pass to the graded level and first note that 
the symbol map gives an embedding 
\begin{equation}\label{eq pol}
\gr \D(Z_\emptyset) \hookrightarrow \Pol(T^*Z_\emptyset)^G\,,
\end{equation}
with $\Pol (T^*Z_\emptyset):=\C[T^*Z_\emptyset]$ the regular (polynomial) functions on the quasi-affine variety $T^*Z_\emptyset$.
We identify the cotangent bundle $T^*Z_\emptyset$ with $G\times^{H_\emptyset} (\gg/\gh_\emptyset)^*$ and 
obtain $\Pol(T^*Z_\emptyset)^G \simeq \Pol((\gg/\gh_\emptyset)^*)^{H_\emptyset}$. 
Thus we have  $\gr \D(Z_\emptyset) \subset \Pol((\gg/\gh_\emptyset)^*)^{H_\emptyset}$ naturally. 
Recall the invariant non-degenerate bilinear form $\kappa$ on $\gg$. This form yields a $G$-equivariant 
identification of $\gg$ with its dual $\gg^*$ and induces an $H_\emptyset$-equivariant identification 
of $(\gg/\gh_\emptyset)^*$ with $\gh_\emptyset^\perp:=\{X\in\gg\mid \kappa(X,Y)=0,Y\in\gh_\emptyset\}$. 
The proof of the Lemma  will then be completed by showing that the restriction map 
$$\Pol(\gh_\emptyset^\perp)^{H_\emptyset} \to \Pol(\gb_H^{\perp_{\gm+\ga}})$$
is injective, where $\gb_H^{\perp_{\gm+\ga}}:=\{X\in\gm+\ga\mid \kappa(X,Y)=0,Y\in\gb_H\}$.   
This is now fairly standard. Note that 
$\gh_\emptyset^{\perp}
=\gb_H^{\perp_{\gm+\ga}}+ \bar\gu$. Next let $X=X_\ga  + X_\gm \in \ga +\gm$ with $X_\ga\in \ga $ and  $X_\gm\in \gm$. 
Suppose further that  $\alpha(X_\ga)>0$ for $\alpha\in \Sigma(\ga,\gu)$.  Then, by a slight modification 
of  \cite[Lemma 2.5]{kks0},  we have 
\begin{equation} \label{bar U act} \Ad(\bar U) X = X +[X, \bar \gu]= X+ \bar \gu. \end{equation} 
 Now $\bar U\subset H_\emptyset$
and the fact that $Z$ (and hence $Z_\emptyset$) is unimodular implies that there exists 
an element $X_\ga$ as above which lies in $\ga_H^{\perp_\ga}$ (see Lemma \ref{lem unim-quasiaff} below). 
It follows then from \eqref{bar U act}  that any $f \in \Pol(\gh_\emptyset^\perp)^{H_\emptyset}$ is constant in 
the $\bar \gu$-variable of $\gh_\emptyset^\perp$, i.e., the restriction map above is injective. 
This completes 
 the proof of the Lemma.
\end{proof}

\begin{lem}\label{lem unim-quasiaff}
Let $Z$ be a unimodular real spherical space. Then the following assertions hold:
\begin{enumerate}[(i)]
\item $Z$ is quasi-affine, i.e., $\sZ(\C)=\sG(\C)/\sH(\C)$ is a quasi-affine algebraic variety.
\item There exists an $X\in \ga_H^{\perp_\ga}\simeq\ga_Z$ such that $\alpha(X)>0$ for all $\alpha\in\Sigma_\gu$.
\end{enumerate}
\end{lem}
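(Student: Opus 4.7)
For (ii), I would take $X \in \ga$ to be the element dual to $\rho_Q \in \ga^*$ under the scalar product $(\cdot,\cdot)$ on $\ga$, that is, the unique $X$ characterized by $\rho_Q(Y) = (X,Y)$ for all $Y \in \ga$. By the unimodularity of $Z$, recorded at the beginning of Subsection \ref{sect-Atemp}, the linear form $\rho_Q$ vanishes on $\ga_H$; dually this means $X \perp \ga_H$, so $X \in \ga_H^{\perp_\ga}$. For any $\alpha \in \Sigma$, one has by construction $\alpha(X) = (\alpha, \rho_Q)_{\ga^*}$ under the induced scalar product on $\ga^*$. Thus (ii) is reduced to the strict $\gq$-dominance of $\rho_Q$: namely $(\alpha, \rho_Q)_{\ga^*} > 0$ for every $\alpha \in \Sigma_\gu$. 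This is standard: since $W_\gl$ permutes $\Sigma_\gu$, the element $\rho_Q = \tfrac{1}{2}\sum_{\beta \in \Sigma_\gu} m_\beta \beta$ is $W_\gl$-invariant, hence vanishes on $\ga \cap [\gl,\gl]$ and is strictly dominant on $\ga_L$; evaluating on any $\alpha \in \Sigma_\gu$ then yields a strictly positive number.

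For (i), the plan is to invoke the Chevalley--Sukhanov observability criterion: $\sG(\C)/\sH(\C)$ is quasi-affine if and only if $\sH(\C)$ admits no non-trivial algebraic character. The task then becomes ruling out such characters under the assumptions of unimodularity and real sphericity. Using the local structure theorem, $\sH \supset \sL_{\rm n}$, so any character $\chi : \sH \to \bG_m$ must be trivial on $\sL_{\rm n}$ (which is perfect) and on the unipotent radical of $\sH$; consequently $\chi$ descends to a character of (a finite extension of) the split central torus $\sA_{\sH}$. The unimodularity relation $\rho_Q|_{\ga_H} = 0$, combined with the structural constraints imposed by real sphericity and the spherical roots $S \subset \ga_Z^*$, should then force this residual character to vanish.

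\emph{Expected main obstacle.} Part (ii) is clean and rests solely on elementary properties of the parabolic character $\rho_Q$. The harder part is (i): while the reduction of $\chi$ to a character of $\sA_{\sH}$ is immediate, passing from the infinitesimal unimodularity $\rho_Q|_{\ga_H} = 0$ to the global statement of character-triviality requires more care, since the latter is a condition on the algebraic group rather than on its Lie algebra. A possible route is through the smooth equivariant compactification theory of Subsection \ref{subsection smooth compactification}, from which an explicit $\sG(\C)$-equivariant embedding of $\sZ(\C)$ into affine space can be extracted; an alternative is to appeal to the character-lattice formalism for spherical subgroups.
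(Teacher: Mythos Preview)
Your argument for (ii) is correct and self-contained: taking $X$ dual to $\rho_Q$ works, and the strict $Q$-dominance $(\alpha,\rho_Q)>0$ for $\alpha\in\Sigma_\gu$ follows exactly as you outline (using that $(\alpha_i,\rho_Q)=0$ for simple $\alpha_i\in\Delta_L$ and $(\alpha_i,\rho_Q)=(\alpha_i,\rho)-(\alpha_i,\rho_L)>0$ for $\alpha_i\in\Delta\setminus\Delta_L$, then expanding an arbitrary $\alpha\in\Sigma_\gu$ in simple roots). The paper itself does not give this argument; it simply cites \cite[Example~11.6 and Lemma~11.7]{planch-sph}, so your treatment of (ii) is more explicit.

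Your plan for (i), however, has a genuine gap. First, the criterion you state is not an ``if and only if'': a maximal torus $\sT\subset\sG$ has many characters yet $\sG/\sT$ is affine, so absence of characters is only \emph{sufficient} for observability, not necessary. More importantly, even that sufficient condition is unavailable here: $\sH$ can certainly admit non-trivial algebraic characters. Whenever $\sA_{\sH}$ is non-trivial (e.g.\ $G=\mathrm{GL}_2$, $H$ the diagonal torus --- a unimodular real spherical example), characters of $\sA_{\sH}$ extend to characters of $\sH$. The unimodularity condition $\rho_Q|_{\ga_H}=0$ is a single linear relation on $\ga_H^*$; it says nothing about the remaining $\dim\ga_H-1$ independent characters, and real sphericity does not force $\dim\ga_H=0$. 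So the reduction ``characters of $\sH$ $\leadsto$ characters of $\sA_{\sH}$ $\leadsto$ $0$ by unimodularity'' breaks at the last step.

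A workable route to (i) goes instead through Sukhanov's actual criterion (observability $\Leftrightarrow$ $\sH$ is quasi-parabolic, i.e.\ $R_u(\sH)$ sits inside $R_u(\sQ')$ for some parabolic $\sQ'\supset\sH$), or, more in the spirit of this paper, through the constructive local structure theorem (Remark~\ref{rem LST}(b)): in the unimodular case one locates $\gl$ as the centralizer of a generic hyperbolic element of $(\gh+\gn)^\perp$, and this element furnishes a $\sG$-representation with an $\sH$-fixed vector whose stabilizer is exactly $\sH$. The paper defers the details to \cite{planch-sph}.
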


\begin{proof}
\cite[Example 12.6 and Lemma 12.7]{planch-sph}.
\end{proof}

Let us denote by $\gZ(Z_\emptyset)$ the center of $\D(Z_\emptyset)$. We then obtain from \eqref{DZ-empty} that

\begin{equation} \label{DZ-emptyM} 
\gZ(Z_\emptyset)\simeq S(\ga_Z)\otimes\gZ(M/M_H)\,.
\end{equation}

 We wish to describe the image of 
the natural map $\cZ(\gg)\to \gZ(Z_\emptyset)\subset \D(Z_\emptyset)$ more closely, i.e., derive 
a slight extension of \cite[Lemma 6.4]{knop-hc}.  

\par In order to do so, we have to recall  first the construction of the Harish-Chandra homomorphism
and then relate it to the Knop homomorphism for $\gZ(M/M_H)$. 

\par We begin with a short  summary on the Harish-Chandra homomorphims. The natural inclusion $\cZ(\gg) \subset \cU(\ga)\cZ(\gm) \oplus \cU(\gg)\overline{\gn}$ yields,  via 
projection to the first summand, an injective algebra morphism 
$$\gamma_{0, \ga +\gm}: \cZ(\gg)\to \cU(\ga)\otimes\cZ(\gm)\, .$$
With $\gt\subset\gm$ a maximal torus (which will be specified more closely below), we obtain with $\gj:=\ga+ \gt$ a Cartan subalgebra 
of $\gg$.  
We choose a positive system of roots $\Sigma^+(\gj_\C)$ of the the root system of $\gg_\C$ with respect to 
$\gj_\C$ such that the nonzero restrictions to $\ga$ yield the root spaces of $\gn$. 
Note that all roots 
are real-valued on $\gj_\R:= \ga+ i \gt$ and we denote by $\rho_\gj\in  \gj_\R^*$ the corresponding half sum. 
Then, similar to what was just explained, we obtain, by projection along the negative $\gm_\C$-root spaces with respect to 
$\gt_\C$, an injective algebra morphism 
 $\gamma_{0,\gm}:\cZ(\gm)\to \cU(\gt)$. Putting matters together, we obtain with 
 $$\gamma_0:= (\Id_{S(\ga)} \otimes \gamma_{0,\gm}) \circ \gamma_{0,\ga +\gm}$$
an injective algebra morphism $\gamma_0: \cZ(\gg)\to \cU(\gj)$. If we identify 
 $\cU(\gj)=S(\gj)$ with the polynomials $\C[\gj_\C^*]$ on $\gj_\C^*$,  the Harish-Chandra
 isomorphism  $\gamma: \cZ(\gg)\to \cU(\gj)^{W_\gj}$ is then obtained by twisting $\gamma_0$ with the $\rho_\gj$-shift, i.e., 
 $\gamma (z)(\cdot)= \gamma_0(z) (\cdot +\rho_\gj)$ as polynomials on $\gj_\C^*$.  For our purpose we are in fact 
 more interested in the unnormalized Harish-Chandra morphism
 $\gamma_0: \cZ(\gg) \to S(\gj)\simeq \cU(\gj)$. 
 
 \par Next we recall the Knop homomorphism for $\gZ(M/M_H)$.  Set $\gt_H:=\gt\cap \gh$ and $\gt_Z:= \gt/ \gt_H$.  Note that $M/M_H$ is affine, i.e., the complexification $M_\C/ (M_H)_\C$ is an affine homogeneous space. 
 We will request, from our choice of $\gt$, that the complexification of $\gt_Z$ is a flat for $M_\C/ (M_H)_\C$, i.e., compatible with 
 the local structure theorem (cf.~\cite[Theorem 4.2]{kk} applied to $Y=X=M_\C/ (M_H)_\C$ and $k=\C$). Set  $\rho_\gm:=\rho_\gj \big|_{i\gt}$ and let $W_M$ be the little Weyl group 
 of the affine space $M_\C/ (M_H)_\C$.  Then \cite[Theorem in the Introduction part(a)]{knop-hc} yields the Knop isomorphism 
 $$k: \gZ(M/M_H)\to \C[\gt_{Z,\C}^*+\rho_\gm]^{W_M}\, .$$
 For our purpose it is easier to work with the unnormalized 
Knop homomorphism which yields us an algebra monomorphism: 
$$k_0:  \gZ(M/M_H)\to S(\gt)/ S(\gt)\gt_H$$
The important thing to notice here is that the Knop homomorphism $k_0$ is compatible  with 
the unnormalized  Harish-Chandra homomorphism $\gamma_{0,\gm}: \cZ(\gm) \to S(\gt)$ in the
sense that the diagram 
\begin{equation} \label{diagramk}
    \xymatrix{\cZ(\gm) \ar@{>}[d]_{\gamma_{0,\gm}} \ar@{>}[r] &\gZ(M/M_H) \ar@{>}[d]_{k_0} \\
      S(\gt)  \ar@{>}[r] &S(\gt)/S(\gt)\gt_H\, .}
        \end{equation}
is commutative, see \cite[Lemma 6.4]{knop-hc}. 
To summarize, we obtain from \eqref{DZ-empty}, the just explained construction of the Harish-Chandra homomorphism and 
\eqref{diagramk} an injective algebra morphism
\begin{equation} \label{algebra j} j_0: \gZ(Z_\emptyset)\to S(\gj)/ S(\gj)(\ga_H +\gt_H)\end{equation}
together with the following commutative diagram 
\begin{equation} \label{diagram1}
    \xymatrix{\cZ(\gg) \ar@{>}[d]_{\gamma_0} \ar@{>}[r] &\gZ(Z_\emptyset) \ar@{>}[d]_{j_0} \\
      S(\gj) \ar@{>}[r] &S(\gj)/S(\gj)(\ga_H +\gt_H)\, .}
        \end{equation}
In this diagram, the upper lower horizontal arrow is obtained from the natural $\cZ(\gg)$-module structure of 
$\gZ(Z_\emptyset)$ and the lower horizontal arrow is the natural projection $S(\gj)\to S(\gj)/S(\gj)(\ga_H + \gt_H)$. 

\begin{example} {\rm (Triple space continued)} For the triple space $Z=G_o\times G_o \times G_o/ \operatorname{diag}G_o$ we have $\gg=\gg_o\times \gg_o\times \gg_o$ and thus $\cZ(\gg)= \C[\cC_1, \cC_2, \cC_3]$ with 
$\cC_i$ the Casimir operator of the $i$-th $\gg_o$-factor in $\gg$.  Also we have $\gj=\ga$ and the Brion-Knop
little Weyl group $W_Z$ coincides with the Weyl group $W_\ga\simeq (\Z/2\Z)^3$. Thus, by the Knop isomomorphism, we have 
$\D(Z)\simeq \cZ(\gg)$. Now $\gZ(Z_\emptyset)= S(\ga) \simeq \C[z_1, z_2, z_3]$ with  $z_i$ the co-root coordinates and the above algebra inclusion 
$$\cZ(\gg)= \C[\cC_1, \cC_2, \cC_3]\hookrightarrow \gZ(Z_\emptyset)= \C[z_1, z_2, z_3]$$
is given by the assignment 
$$\cC_i\mapsto {1\over 4}z_i^2 -{1\over 2} z_i\, .$$
\end{example}

\par Note that $\D(Z_I)$ is naturally a module for $\cZ(\gg)$, the center of $\cU(\gg)$. 
We define by $\bD(Z_I)$ the commutative subalgebra of $\D(Z)$ which is generated 
by $S(\ga_I)$ and the image of $\cZ(\gg)$ in $\D(Z_I)$.

\begin{lem}\label{lem: fin gen}  
The $\cZ(\gg)$-module $\gZ(Z_\emptyset)$ is finitely generated.  In particular, $\bD(Z_I)$ is a finitely generated 
$\cZ(\gg)$-module for all $I\subset S$. 
\end{lem}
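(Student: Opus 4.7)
The plan is to combine the diagram \eqref{diagram1} with Chevalley's theorem on invariants of finite reflection groups, and then to invoke Noetherianness of $\cZ(\gg)$.

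First I would show that $S(\gj)/S(\gj)(\ga_H+\gt_H)$ is a finitely generated $\cZ(\gg)$-module, where the $\cZ(\gg)$-action goes through the unnormalized Harish-Chandra homomorphism $\gamma_0$. By Harish-Chandra's theorem, the normalized image $\gamma(\cZ(\gg))$ equals $S(\gj)^{W_\gj}$, and Chevalley's theorem for the Weyl group $W_\gj$ asserts that $S(\gj)$ is free of rank $|W_\gj|$ over $S(\gj)^{W_\gj}$. Since the $\rho_\gj$-shift is a ring automorphism of $S(\gj)$, the same freeness (of rank $|W_\gj|$) persists for the unnormalized image $\gamma_0(\cZ(\gg))$, and the quotient $S(\gj)/S(\gj)(\ga_H+\gt_H)$ inherits finite generation over $\gamma_0(\cZ(\gg))$, hence over $\cZ(\gg)$.

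The commutativity of \eqref{diagram1} then ensures that the injective algebra morphism $j_0 : \gZ(Z_\emptyset)\hookrightarrow S(\gj)/S(\gj)(\ga_H+\gt_H)$ is $\cZ(\gg)$-equivariant: for $z\in\cZ(\gg)$ and $x\in\gZ(Z_\emptyset)$, one has $j_0(z\cdot x)=j_0(z\cdot 1)\,j_0(x)=\overline{\gamma_0(z)}\,j_0(x)$, using that $j_0$ is an algebra homomorphism and $z\cdot 1$ is the image of $z$ in $\gZ(Z_\emptyset)$. Thus $\gZ(Z_\emptyset)$ embeds as a $\cZ(\gg)$-submodule of a finitely generated $\cZ(\gg)$-module. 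Since $\cZ(\gg)$ is a polynomial algebra in finitely many variables by Harish-Chandra's theorem, it is Noetherian, and submodules of finitely generated modules over a Noetherian ring are finitely generated; this yields the first assertion.

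For the in-particular statement, I would identify $\bD(Z_I)$ as a $\cZ(\gg)$-submodule of $\gZ(Z_\emptyset)$ through the inclusion $\D(Z_I)\hookrightarrow\D(Z_\emptyset)$ supplied by the analog of Lemma \ref{lem-defmu_I2} with $Z$ replaced by $Z_I$. Under this inclusion, the generator $S(\ga_I)\subset\D(Z_I)$ maps into $S(\ga_Z)\subset\gZ(Z_\emptyset)$ via \eqref{DZ-emptyM}, while functoriality of the natural map $\cZ(\gg)\to\D(\cdot)$ identifies the image of $\cZ(\gg)$ in $\D(Z_I)$ with its image in $\gZ(Z_\emptyset)$. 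Both generators being central in $\D(Z_\emptyset)$ and $\bD(Z_I)$ being commutative, $\bD(Z_I)\subset\gZ(Z_\emptyset)$ as a $\cZ(\gg)$-submodule. Finite generation of $\bD(Z_I)$ over $\cZ(\gg)$ then follows from the first part by Noetherianness.

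The main obstacle is the reduction to the concrete polynomial module $S(\gj)/S(\gj)(\ga_H+\gt_H)$ as a finitely generated $\cZ(\gg)$-module; this combines the injectivity of $j_0$ with the careful comparison of the two $\cZ(\gg)$-module structures in \eqref{diagram1}. Once that is in place, the rest is a standard Noetherian-module manipulation.
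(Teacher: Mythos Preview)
Your proof is correct and follows essentially the same route as the paper: use Chevalley's theorem and the commutative diagram \eqref{diagram1} to embed $\gZ(Z_\emptyset)$ into the finitely generated $\cZ(\gg)$-module $S(\gj)/S(\gj)(\ga_H+\gt_H)$, then invoke Noetherianness of $\cZ(\gg)$; for the second part, embed $\bD(Z_I)$ into $\bD(Z_\emptyset)\subset\gZ(Z_\emptyset)$ via the injective morphism of Lemma~\ref{lem-defmu_I2} (applied with $Z_I$ in place of $Z$) and use Noetherianness again. Your version spells out a few steps the paper leaves implicit (the $\rho_\gj$-shift, the $\cZ(\gg)$-equivariance of $j_0$, and why the generators of $\bD(Z_I)$ land in $\gZ(Z_\emptyset)$), but the argument is the same.
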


\begin{proof} Since $S(\gj)$ is a module of finite rank over $S(\gj)^{W_\gj}$ (Chevalley's theorem), we obtain from 
\eqref{diagram1} and $\operatorname{im} \gamma= S(\gj)^{W_\gj}$ 
that $\gZ(Z_\emptyset)$ is a  finitely generated $\cZ(\gg)$-module.  
Since $\bD(Z_I)$ is naturally 
a submodule of $\bD(Z_\emptyset)$ via the injective algebra morphism $\mu_I$ of Lemma \ref{lem-defmu_I2}, the second assertion follows from the fact that 
$\cZ(\gg)\simeq S(\gj)^{W_\gj}$ is a polynomial ring (again by Chevalley) and hence noetherian. 
\end{proof}

\par Let us denote by $\bD_0(Z)$ the image of $\cZ(\gg)$ in $\bD(Z_\emptyset)\subset \D(Z)$. As we will see later, some aspects 
become simpler if we work with the slightly smaller algebra $\bD_0(Z)$. It follows from Lemma \ref{lem: fin gen} that 
$\bD(Z_I)$ is a finitely generated $\mu_I(\bD_0(Z))$-module.

\par 
Fix now $I\subset S$. Since $\bD(Z_I)$ is finitely generated over $\mu_I(\bD_0(Z))$, there exists a finite dimensional vector subspace $U$ of $\bD(Z_I)$ containing $1$ such that the map
\begin{equation}\label{eq: defU}
\begin{array}{rcl}
\mu_I(\bD_0(Z))\otimes U & \longrightarrow & \bD(Z_I)\\
v\otimes u & \longmapsto & vu
\end{array}
\end{equation}
is a linear surjective map.
\\

Let $\cI$ be a finite codimensional ideal of $\bD_0(Z)$ and let $\cI':=\mu_I(\cI)$. 
Let $C=C(\cI)$ be a finite dimensional vector subspace of $\mu_I(\bD_0(Z))$ containing $1$ such that $\mu_I(\bD_0(Z))=C+\cI'$. Hence:

\begin{equation}\label{eq: bdZ1}  
\bD(Z_I)  =  (C+\cI')U =  CU+\cI' U\,,
\end{equation} 
where $\cI' U$ (resp.~$CU$) is the linear span of $\{vu\mid v\in\cI',u\in U\}$ (resp.~$\{vu\mid v\in C,u\in U\}$).

\par 
Since $\cI'$ is an ideal on $\bD_0(Z)$, we obtain that: 
\begin{equation} \label{eq: J-D} 
\cI' U=\cI'\mu_I(\bD_0(Z))U=\cI' \bD(Z_I)=\bD(Z_I)\cI'\,.
\end{equation}
Hence, \eqref{eq: bdZ1} implies that: 
\begin{equation}\label{eq: bdZ2} 
\bD(Z_I)=CU+\bD(Z_I)\cI' \, .
\end{equation}

In case $\cI$ is a one codimensional ideal of $\bD_0(Z)$, one may and will take $C=\C 1$, and then $CU=U$.

\par
In general, we choose a finite dimensional subspace $U_\cI\subset CU$, possibly depending on $\cI$,  such that the sum in \eqref{eq: bdZ2} becomes direct: 
\begin{equation}\label{eq: bdZ3} 
\bD(Z_I)= U_{\cI}\oplus\bD(Z_I)\cI' \, .
\end{equation}
Let $s_\cI$, resp.~$q_\cI$, be the linear map from $\bD(Z_I)$ to $U_\cI$, resp.~$\bD(Z_I)\cI'$, deduced from this  
direct sum decomposition. The algebra $\bD(Z_I)$ acts on $U_\cI$ by a representation $\rho_\cI$ defined by:
\begin{equation}\label{eq-reprhoI}
\rho_\cI(v)u=s_\cI(vu),\quad v\in \bD(Z_I),u\in U_\cI\,.
\end{equation}
In fact: 
\begin{res-nn}
The representation $(\rho_\cI,U_\cI)$ is isomorphic to the natural representation of $\bD(Z_I)$ on $\bD(Z_I)/\bD(Z_I)\cI'$.
\end{res-nn}
We notice that, for $v\in\bD(Z_I)$ and $u\in U_\cI$, 
\begin{equation}\label{eq-decompuw}
vu=\rho_\cI(v)u+q_\cI(vu).
\end{equation}
If $(u_i)_{i=1,\dots,n}$ is a basis of $U$, then we obtain, from $\bD(Z_I)\cI'=\mu_I(\cI) U=U\mu_I(\cI)$ (see \eqref{eq: J-D}),
elements $z_i=z_i(v,u,\cI)\in\cI$, not necessarily unique, such that: 
\begin{equation}\label{eq-zi}
q_\cI(vu)=\sum_{i=1}^n u_i\mu_I(z_i)\, .
\end{equation}
Moreover, we record from Lemma \ref{lem-defmu_I2}(ii) that: 
\begin{res}\label{eq-barzij}
\center{$\mu_{I}(z_i)-z_i$ has $\ga_Z$-weights non-positive on $\ga_Z^-$ and negative on $\ga_I^{--}$.}
\end{res}

\par
In order to use it later, we denote by $\cF=\cF(\cI)$ the (finite) set of all these $\ga_Z$-weights which occur when $v$ describes $\ga_I\subset\bD(Z_I)$ and $u$ describes $U_\cI$. Let us define a piecewise linear functional on $\ga_Z$ by:
\begin{equation}\label{eq-defbetaI}
\beta_I(X):=\max_{\lambda\in\cF\cup (S\setminus I)}\lambda(X),\quad X\in \ga_Z\,.
\end{equation}
Note that $\beta_I\big|_{\ga_Z^-}\leq 0$ and $\beta_I\big|_{\ga_I^{--}}< 0$. 

\subsection{The function $\Phi_f$ on $A_Z$ and related differential equations}


Fix $N\in\N$ and $\cI$ a finite codimensional ideal in $\bD_0(Z)$. Recall the surjective morphism $\cZ(\gg)\to\bD_0(Z)$ and let $\cJ$ be the corresponding preimage of $\cI$. Set
$$\cA_{temp}(Z:\cI):=\cA_{temp}(Z:\cJ)\,,$$
with $\cA_{temp}(Z:\cJ)$ defined in \eqref{eq-defAtemp}.

\par Recall that we identified for any $I\subset S$ the algebra 
$\bD(Z_I)$ as a subspace of $\cU(\gb)/\cU(\gb)\gb_H$. Now given $f\in C^\infty(Z)$, 
we denote by $\tilde f\in C^\infty(G)$ its lift to a right $H$-invariant smooth function on $G$. 
For $u\in \cU(\gb)/\cU(\gb)\gb_H$ we let further $\tilde u \in \cU(\gb)$ be any lift. Then, for all 
$a_Z\in A_Z$  the notion 
$$(R_u f)(a_Z) :=  (R(\tilde u)\tilde f)(\tilde a_Z)$$
is defined, i.e., independent of the lift $\tilde u$ and the section ${\bf s}$. 

\par Recall that $(\rho_\cI,U_\cI)$ is the finite dimensional $\bD(Z_I)$-module defined in \eqref{eq-reprhoI}
and in particular $U_\cI\subset \bD(Z_I)\subset \cU(\gb)/\cU(\gb)\gb_H$.
For any $f\in\cA_{temp,N}(Z:\cI)$, let us define a function $\Phi_f:A_Z\to U_\cI^*$ by: 
\begin{equation}\label{eq-Phif-wI}
\la\Phi_f(a_Z),u\ra:=(R_{u}f)(a_Z),\quad u\in U_\cI,a_Z\in A_Z\,.
\end{equation}
Hence, for $X\in\ga_I\subset \bD(Z_I)$, 
\begin{equation}\label{eq-derivPhif}
\la(R_X\Phi_f)(a_Z),u\ra=(R_{Xu}f)(a_Z),\quad a_Z\in A_Z,u\in U_\cI\,.
\end{equation} 
Hence, by using \eqref{eq-decompuw} and \eqref{eq-zi} for $Xu$, one gets 
\begin{equation}\label{eq-phiPhi}
R_{X}\Phi_f={}^t\!\rho_\cI(X)\Phi_f+\Psi_{f,{X}},\quad X\in A_I\,,
\end{equation}
where $\Psi_{f,X}:A_Z\to U_\cI^*$ is given by: 
\begin{equation}\label{eq-defPsi}
\la\Psi_{f,X}(a_Z),u\ra:=\sum_{i}(R_{u_i\mu_{I}(z_i)}f)(a_Z),\quad a_Z\in A_Z,u\in U_\cI\,,
\end{equation}
with $z_i=z_i(X,u,\cI)$ given by \eqref{eq-zi}.

\par
Since $R_{z_i}f=0$ as $z_i\in\cI$ and $f$ is annihilated by $\cI$, one then has:  
\begin{equation}\label{eq-defPsi2}
\la\Psi_{f,X}(a_Z),u\ra=\sum_{i}(R_{u_i(\mu_{I}(z_i)-z_i)}f)(a_Z),\quad a_Z\in A_Z,u\in U_\cI\,.
\end{equation}

One sets: 
\begin{equation}\label{eq-defGamma}
\Gamma_\cI(X)={}^t\!\rho_\cI(X),\quad X\in\ga_I\,.
\end{equation}
Hence, we arrive at the fundamental first order ordinary differential equation: 
\begin{equation}\label{eq-equadiffPhi}
R_X\Phi_f=\Gamma_\cI(X)\Phi_f +\Psi_{f,X},\quad X\in\ga_I\,.
\end{equation}
Notice that $\Gamma_\cI$ is a representation of the abelian Lie algebra $\ga_I$ on $U_\cI^*$. 

\par For $\lambda\in\ga_{I,\C}^*$, 
one denotes by $U_{\cI,\lambda}^*$ the space of joint generalized eigenvectors of $U_{\cI}^*$ by 
the endomorphisms $\Gamma_\cI(X)$, $X\in\ga_I$, for the eigenvalue $\lambda$. Let $\cQ_\cI$ be the (finite) 
subset of $\lambda\in\ga_{I,\C}^*$ such that $U_{\cI,\lambda}^*\neq\{0\}$. One has:
\begin{equation}\label{eq: decompeigenvecUI}
U_\cI^*=\bigoplus_{\lambda\in\cQ_\cI}U_{\cI,\lambda}^*\,.
\end{equation}
If $\lambda\in\cQ_\cI$, let $E_\lambda$ be the projector of $U_\cI^*$ onto $U_{\cI,\lambda}^*$ parallel  to the sum of the other $U_{\cI,\mu}^*$'s. 

\par Define, for $\lambda\in \cQ_\cI$, 
$$
\Phi_{f,\lambda}:=E_\lambda\circ\Phi_f\,.
$$

\par We conclude this subsection with the solution formula  for the system \eqref{eq-equadiffPhi} (see the next  Lemma \ref{lem-Phi})
and with two elementary estimates for $\Phi_f$ and $\Psi_{f,X}$ in Lemma \ref{lem-Phi-Psi-cinfini} below.

\begin{lem}\label{lem-Phi}
Let $f\in\cA_{temp}(Z:\cI)$. One has, 
\begin{enumerate}
\item[(i)] for all $a_Z\in A_Z$, $t\in\R$, $X\in\ga_I$,
$$\Phi_f(a_Z\exp(tX))=e^{t\Gamma_\cI(X)}\Phi_f(a_Z)+\int_0^te^{(t-s)\Gamma_\cI(X)}\Psi_{f,X}(a_Z\exp(sX))\,ds\,,$$
\item[(ii)] for all $a_Z\in A_Z$, $t\in\R$, $X\in\ga_I$, $\lambda\in\cQ_\cI$, 
$$\Phi_{f,\lambda}(a_Z\exp(tX))=e^{t\Gamma_\cI(X)}\Phi_{f,\lambda}(a_Z)+\int_0^tE_\lambda 
e^{(t-s)\Gamma_\cI(X)}\Psi_{f,X}(a_Z\exp(sX))\,ds\,.$$
\end{enumerate}
\end{lem}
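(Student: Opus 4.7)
The plan is to reduce the statement to the variation-of-parameters formula for a linear first order ODE with constant coefficient. Fix $a_Z \in A_Z$ and $X \in \ga_I$, and introduce the auxiliary functions
$$\phi(t) := \Phi_f(a_Z\exp(tX)), \qquad \psi(t) := \Psi_{f,X}(a_Z\exp(tX)).$$
Since $A_Z$ is abelian and $R_X$ is by definition differentiation along the right action by $\exp(tX)$, the chain rule gives $\phi'(t) = (R_X \Phi_f)(a_Z \exp(tX))$. Feeding this into the fundamental equation \eqref{eq-equadiffPhi} yields the $U_\cI^*$-valued ODE
$$\phi'(t) = \Gamma_\cI(X)\,\phi(t) + \psi(t),$$
where $\Gamma_\cI(X) \in \End(U_\cI^*)$ is an operator independent of $t$.

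For part (i), I would simply invoke Duhamel's formula for this linear inhomogeneous ODE with constant matrix $\Gamma_\cI(X)$: any solution satisfies
$$\phi(t) = e^{t\Gamma_\cI(X)}\phi(0) + \int_0^t e^{(t-s)\Gamma_\cI(X)} \psi(s)\,ds,$$
which, upon unwinding the definitions of $\phi$ and $\psi$ and using $\phi(0) = \Phi_f(a_Z)$, is precisely the asserted formula. One can verify this either by differentiating and checking it satisfies the ODE, or by multiplying through by the integrating factor $e^{-t\Gamma_\cI(X)}$.

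For part (ii), the key observation is that $\Gamma_\cI$ is a representation of the abelian Lie algebra $\ga_I$ on $U_\cI^*$, so $\Gamma_\cI(X)$ preserves each generalized eigenspace $U_{\cI,\mu}^*$ in the decomposition \eqref{eq: decompeigenvecUI}. Consequently, the spectral projector $E_\lambda$ commutes with $\Gamma_\cI(X)$, hence also with $e^{t\Gamma_\cI(X)}$. Applying $E_\lambda$ to both sides of the formula in (i) and pulling it through the exponential in the first term (noting $E_\lambda \Phi_f(a_Z) = \Phi_{f,\lambda}(a_Z)$) and through the exponential inside the integral gives exactly the formula in (ii).

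I do not foresee any significant obstacle: this lemma is a routine application of variation of parameters, and the only thing to check carefully is the identification of $R_X$ acting on $\Phi_f$ with $d/dt$ along the one-parameter subgroup $t\mapsto \exp(tX)$ in $A_Z$, which is immediate from the definition of $R_X$ and the commutativity of $A_Z$.
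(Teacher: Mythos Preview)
Your proof is correct and follows essentially the same approach as the paper: reduce \eqref{eq-equadiffPhi} to a constant-coefficient linear ODE in $t$ for $\phi(t)=\Phi_f(a_Z\exp(tX))$, apply the variation-of-parameters (Duhamel) formula for part~(i), and obtain~(ii) by applying the projector $E_\lambda$ (which commutes with $\Gamma_\cI(X)$) to both sides.
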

\begin{proof}
The equality~(i) is an immediate consequence of~\eqref{eq-equadiffPhi}. 
Indeed, we apply the elementary result on first order linear differential equation to the function 
$s\mapsto F(s)=\Phi_f(a_Z\exp(sX))$, whose derivative $F'(s)=(R_{X}\Phi_f)(a_Z\exp(sX))$ satisfies
$$F'(s)=\Gamma_\cI(X)F(s)+\Psi_{f,X}(a_Z\exp(sX))\,.$$
The equality~(ii) follows by applying $E_\lambda$ to 
both sides of the equality of~(i).
\end{proof}
We recall the definition of $\beta_I$ from \eqref{eq-defbetaI}.
 
\begin{lem}\label{lem-Phi-Psi-cinfini} 
Let $N\in\N$.
\begin{enumerate}
\item[(i)] There exists a continuous semi-norm on $C_{temp,N}^\infty(Z)$, $p$, such that
$$
\Vert L_v \Phi_f(a_Z)\Vert \leq  a_Z^{\rho_Q}(1+\Vert\log a_Z\Vert)^Np(L_vf)$$
for all $v\in \cU(\ga)$, $a_Z\in A_Z^-$ and  $f\in\cA_{temp,N}(Z:\cI)$. 
\item[(ii)] There exists a continuous semi-norm $q$ on $C_{temp,N}^\infty(Z)$ such that, for all compact subset $\Omega_A\subset A_Z$, there exists a constant $C=C(\Omega_A)>0$ with:
$$
\Vert L_v \Psi_{f,X}(a_Z)\Vert \leq  Ca_Z^{\rho_Q+\beta_I}(1+\Vert\log a_Z\Vert)^N\Vert X\Vert q(L_vf)$$
for $a_Z\in \Omega_AA_Z^-$, $X\in \ga_I$ and $f\in\cA_{temp,N}(Z:\cI)$.
\end{enumerate}
\end{lem}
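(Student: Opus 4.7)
The plan is to reduce both estimates to the temperedness of $f$ (and of its $\cU(\gg)$-derivatives) by converting the right-action $R_v$ at the point $\tilde a_Z$ into a left-action via the standard identity
\[
(R(\tilde v)\tilde f)(\tilde a_Z)=\bigl(L(\mathrm{Ad}(\tilde a_Z)\,\tilde v)\,\tilde f\bigr)(\tilde a_Z),
\]
valid for any lift $\tilde v\in\cU(\gb)$ of $v\in\cU(\gb)/\cU(\gb)\gb_H$, and by decomposing $\tilde v$ into $\ga_Z$-weight vectors so as to extract the correct exponential factors.

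For part (i), I fix a basis $(\tilde u_j)_{j=1}^m$ of $U_\cI$ consisting of $\ga_Z$-weight vectors, with weights $\mu_j$. Since $\bD(Z_I)\supset U_\cI$ is generated by $S(\ga_I)$ (of weight zero) together with $\mu_I(\bD_0(Z))$, Lemma~\ref{lem-defmu_I2}(ii) ensures that each $\mu_j$ is non-positive on $\ga_Z^-$. Then, for $a_Z\in A_Z^-$,
\[
\langle \Phi_f(a_Z),\tilde u_j\rangle = a_Z^{\mu_j}(L_{\tilde u_j}f)(a_Z),\qquad |a_Z^{\mu_j}|\leq 1,
\]
and temperedness applied to $L_{\tilde u_j}f\in C_{temp,N}^\infty(Z)$ gives $|(L_{\tilde u_j}f)(a_Z)|\leq q_{N,\tilde u_j}(f)\,a_Z^{\rho_Q}(1+\|\log a_Z\|)^N$. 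Summing over $j$ with respect to a fixed norm on $U_\cI^*$ yields (i), with $p:=\sum_j q_{N,\tilde u_j}$.

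For part (ii), I apply the same idea to each summand $(R_{u_i(\mu_I(z_i)-z_i)}f)(a_Z)$ appearing in \eqref{eq-defPsi2}. Decomposing a lift of $u_i(\mu_I(z_i)-z_i)$ into $\ga_Z$-weight components, each weight takes the form $\sigma+\nu$ where $\sigma$ is a weight of $\tilde u_i$ (non-positive on $\ga_Z^-$) and $\nu$ is a weight of $\mu_I(z_i)-z_i$; by \eqref{eq-barzij}, $\nu$ belongs to the finite set $\cF$, and the very definition \eqref{eq-defbetaI} of $\beta_I$ then yields $\nu\leq \beta_I$ pointwise on $\ga_Z$. Hence $(\sigma+\nu)(\log c)\leq \beta_I(\log c)$ for every $c\in A_Z^-$. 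Writing $a_Z=bc$ with $b\in\Omega_A$ and $c\in A_Z^-$, the sublinearity $\beta_I(X+Y)\leq \beta_I(X)+\beta_I(Y)$ gives $c^{\beta_I}\leq C_1(\Omega_A)\,a_Z^{\beta_I}$, while $|b^{\sigma+\nu}|$ is uniformly bounded on the compact $\Omega_A$. After absorbing this compact translation into the temperedness seminorm (using that left translation by a compact set is bounded on $C_{temp,N}^\infty(Z)$, as recorded in \eqref{eq-EnGstable}), one gets $|(L_{\tilde v}f)(a_Z)|\leq C_2(\Omega_A)\,q_{N,\tilde v}(f)\,a_Z^{\rho_Q}(1+\|\log a_Z\|)^N$. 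Linearity of the map $X\mapsto z_i(X,u,\cI)$ in \eqref{eq-zi} produces the $\|X\|$ factor, and summation over $i$ then yields (ii).

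The main technical subtlety is managing the compact translation by $\Omega_A$ simultaneously with the piecewise linear functional $\beta_I$: sublinearity of $\beta_I$ is precisely what allows one to convert the $A_Z^-$-component estimate $c^{\beta_I}$ into $a_Z^{\beta_I}$ up to an $\Omega_A$-dependent constant, which is why the final constant $C(\Omega_A)$ genuinely depends on $\Omega_A$. Everything else amounts to standard bookkeeping via the $\ga_Z$-weight decomposition of $\bD(Z_I)$ and the defining temperedness seminorms of $\cA_{temp,N}(Z:\cI)$.
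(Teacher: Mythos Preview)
Your approach is essentially identical to the paper's: convert the right action $R_u$ at $a_Z$ into a left action via the adjoint, decompose into $\ga_Z$-weight components, use Lemma~\ref{lem-defmu_I2}(ii) to control the resulting exponential factors, and conclude from the defining temperedness seminorms. Two minor points of precision are worth noting. First, the identity relating $R$ and $L$ requires the principal anti-automorphism: the paper writes $(R_uf)(a_Z)=(L_{(\Ad(a_Z)u)^t}f)(a_Z)$; since $u\mapsto u^t$ preserves both $\ga_Z$-weight and degree, your estimates are unaffected. Second, $U_\cI$ was chosen only as a linear complement in \eqref{eq: bdZ3} and need not be $\ga_Z$-stable, so one cannot in general pick a basis of $U_\cI$ consisting of weight vectors; the fix is simply to decompose each basis element into its $\ga_Z$-weight components inside $\cU(\gb)/\cU(\gb)\gb_H$, which still have weights non-positive on $\ga_Z^-$ since they come from $\bD(Z_I)$. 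For part~(ii), your handling of the compact translate via the sublinearity of $\beta_I$ is equivalent to the paper's reduction to $\Omega_A=\{1\}$, $\|X\|=1$ followed by the bound \eqref{eq-EnGstable}.
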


\begin{proof} (i) We first consider the case of $v=1$. 
Let $u\mapsto u^t$ denote the principal anti-automorphism of $U(\gg)$. 

\par
Let $u\in\bD(Z_I)\subset\cU(\ga_Z+\gm_Z+\gu)$. One has: 
$$(R_uf)(a_Z)=(L_{(\Ad(a_Z)u)^t}f)(a_Z)\,.$$
Since $\Ad(\ga_Z^-)$ contracts the $\ga_Z$-weights of $u$ (see Lemma \ref{lem-defmu_I2}(ii)), the assertion for $v=1$ follows from the continuity of the left regular action of $\cU(\gg)$ on $C_{temp,N}^\infty(Z)$. The more general 
case is obtained by the fact that the assignment $f \mapsto \Phi_f$ is $A$-equivariant for the left regular 
representation of $A$ on functions on $Z$, resp. $A_Z$. 
 
\par
(ii) We recall from \eqref{eq-defPsi2} that: 
$$\la\Psi_{f,X}(a_Z),u\ra=\sum_{i}(R_{u_i(\mu_{I}(z_i)-z_i)}f)(a_Z),\quad a_Z\in A_Z,u\in U_\cI\,,$$
with $z_i=z_i(X,u,\cI)$. In particular, this identity readily reduces to the case of $v=1$ as left and right regular 
representation commute.  

\par 
Since the $\ga_Z$-weights of $u_i$ are non-positive on $\ga_Z^-$ (see Lemma \ref{lem-defmu_I2}(ii)), we obtain that $u_i(\mu_{I}(z_i)-z_i)$ decomposes into a finite sum over $\cF-(\ga_Z^-)^\star$ of $\ga_Z$-weight vectors:
$$u_i(\mu_{I}(z_i)-z_i)=\sum_{\lambda}v_{i,\lambda}\,.$$
Here, $(\ga_Z^-)^\star$ denotes the dual cone of $\ga_Z^-$. 
Then:
$$
\begin{array}{rcl}
\la\Psi_{f,X}(a_Z),u\ra&=&\sum_{i}\sum_{\lambda}(L_{(\Ad(a_Z) (v_{i,\lambda}))^t}f)(a_Z)\\
&&\\
&=& \sum_{i}\sum_{\lambda}a_Z^{\lambda} (L_{v_{i,\lambda}^t }f)(a_Z)\,.
\end{array}
$$
Let $k:=\max_{u\in U_\cI,X\in\ga_I}(\deg(v_{i,\lambda}))$. Assume first that $\Omega_A=\{1\}$ and $\Vert X\Vert=1$. Then it follows from the continuity of the left action of $\cU(\gg)$ on $C_{temp,N}^\infty(Z)$ and the definition of $\beta_I$ that there is an appropriate Sobolev norm $q=p_{N,k}$ such that the bound in (ii) holds for $C=1$. In general, if $u\in\cU(\gg)$, $a\in\Omega_A$ and $a_Z\in A_Z^-$, one has:
$$(L_uf)(aa_Z)=L_{a^{-1}}(L_{\Ad(a^{-1})u}f)(a_Z)$$
and the assertion follows from: 
$$q(L_{a^{-1}}f)\leq Cq(f),\quad f\in C_{temp,N}^\infty(Z),a\in \Omega_A\,.$$
 \end{proof}

\subsection{The decomposition of $\Phi_f$ into eigenspaces}

We recall the representation $\Gamma_\cI: \ga_I \to \End (U_\cI^*)$ of the abelian 
Lie algebra $\ga_I$ from \eqref{eq-defGamma} and $\cQ_\cI$ the set of its generalized $\ga_I$-eigenvalues.

\par We endow $U_{\cI}^*$ with a scalar product and, if $T\in\mathrm{End}(U_\cI^*)$, we denote by $\Vert T\Vert$ its 
Hilbert--Schmidt norm. 
It is clear that, for any $\lambda\in\cQ_\cI$, the projector $E_\lambda$ defined just after \eqref{eq: decompeigenvecUI} commutes with the operators $\Gamma_\cI(X)$, $X\in\ga_I$. 
For $\lambda\in\cQ_\cI$, we set 
$$E_\lambda(X):=e^{-\lambda(X)}\left( E_\lambda \circ  e^{\Gamma_\cI(X)}\right),\quad X\in\ga_I\,.$$
As $E_\lambda\circ [\Gamma_\cI(X)-\lambda(X)\Id_{U_\cI^*}]$ is nilpotent, one readily obtains that: 
\begin{lem}\label{lem-Elambda-cont}
Let $\lambda\in\cQ_\cI$. We can choose $c\geq 0$ such that:
$$\Vert E_\lambda(X)\Vert\leq c(1+\Vert X\Vert)^{{N_\cI}},\quad X\in\ga_I\,,$$
where {$N_\cI$} is the dimension of $U_\cI$.
\end{lem}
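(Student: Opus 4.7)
The proof is a direct computation based on the observation that, since $E_\lambda$ is the spectral projector for the commuting family $\{\Gamma_\cI(X)\}_{X\in\ga_I}$ onto the generalized eigenspace $U^*_{\cI,\lambda}$, the operator $M(X):=\Gamma_\cI(X)-\lambda(X)\,\mathrm{Id}_{U_\cI^*}$ is nilpotent on the image of $E_\lambda$ and commutes with both $\lambda(X)\,\mathrm{Id}$ and $E_\lambda$.

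First I would use commutativity together with the decomposition $\Gamma_\cI(X)=\lambda(X)\,\mathrm{Id}+M(X)$ on $\operatorname{im}(E_\lambda)$ to rewrite
$$E_\lambda(X)=e^{-\lambda(X)}\bigl(E_\lambda\circ e^{\Gamma_\cI(X)}\bigr)=E_\lambda\circ e^{M(X)}.$$

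Next, since $M(X)|_{U^*_{\cI,\lambda}}$ is nilpotent with index of nilpotency bounded by $\dim U^*_{\cI,\lambda}\leq\dim U_\cI=N_\cI$, the series defining $e^{M(X)}$, once precomposed with $E_\lambda$, truncates:
$$E_\lambda\circ e^{M(X)}=E_\lambda\circ\sum_{k=0}^{N_\cI-1}\frac{M(X)^k}{k!}.$$

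Finally, the map $X\mapsto M(X)=\Gamma_\cI(X)-\lambda(X)\,\mathrm{Id}$ is linear from the finite-dimensional space $\ga_I$ into $\mathrm{End}(U_\cI^*)$, so there exists a constant $C\geq 0$ with $\Vert M(X)\Vert\leq C\Vert X\Vert$ for all $X\in\ga_I$. Combining this with the truncated exponential gives
$$\Vert E_\lambda(X)\Vert\leq\Vert E_\lambda\Vert\sum_{k=0}^{N_\cI-1}\frac{C^k\Vert X\Vert^k}{k!}\leq c\,(1+\Vert X\Vert)^{N_\cI-1}\leq c\,(1+\Vert X\Vert)^{N_\cI}$$
for a suitable $c\geq 0$, which is the claimed bound. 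There is no substantial obstacle: the statement is essentially the elementary fact that the exponential of a nilpotent operator of index at most $N_\cI$ is a polynomial of degree at most $N_\cI-1$, and the only point requiring care is the identification of $E_\lambda\circ e^{\Gamma_\cI(X)}$ with $E_\lambda\circ e^{M(X)}$ via the commutation of $E_\lambda$ with the family $\Gamma_\cI(X)$.
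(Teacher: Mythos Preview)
Your proof is correct and is exactly the argument the paper has in mind: the paper itself only remarks that $E_\lambda\circ[\Gamma_\cI(X)-\lambda(X)\Id_{U_\cI^*}]$ is nilpotent and says the bound ``readily'' follows, and you have spelled out precisely this computation.
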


Next, we decompose $\cQ_\cI$ into three disjoints subsets $\cQ_\cI^+$, $\cQ_\cI^0$ and $\cQ_\cI^-$ as follows:
\begin{res-nn}
\begin{enumerate}
\item[(1)] $\lambda\in\cQ_\cI^+$ if $\mathrm{Re}\,\lambda(X_I)>\rho_Q(X_I)$ for some $X_I\in\ga_I^{--}$,
\item[(2)] $\lambda\in\cQ_\cI^0$ if $\mathrm{Re}\,\lambda(X_I)=\rho_Q(X_I)$ for all $X_I\in\ga_I^{--}$,
\item[(3)] $\lambda\in\cQ_\cI^-$ if $\lambda\notin\cQ_\cI^+\cup\cQ_\cI^0$, i.e., for all $X_I\in\ga_I^{--}$, $\mathrm{Re}\,\lambda(X_I)\leq \rho_Q(X_I)$ and there exists $X_I\in\ga_I^{--}$ such that $\mathrm{Re}\,\lambda(X_I)< \rho_Q(X_I)$.
\end{enumerate}
\end{res-nn}

The next two propositions will be central for the definition of the constant term in the next section. 
We first state the results and then provide the proofs in a sequence of lemmas. The proofs of these results follow closely the work of 
Harish-Chandra (cf.~\cite[Section~22]{hc1}): to see the analogy replace $M_1^+$ in 
\cite{hc1} by $A_Z^-$ and $M_1$ by $A_{Z_I}^-$.

\begin{prop}\label{prop-main1} 
Let $\lambda\in \cQ_\cI^0$ and $f\in\cA_{temp}(Z:\cI)$.  Then, for $X_I\in \ga_I^{--}$,
the following limit 
$$\lim_{t\to +\infty}e^{-t\Gamma_\cI(X_I)}\Phi_{f,\lambda}(a_Z\exp(tX_I)),
\quad a_Z\in A_Z\, , $$
exists and is independent of $X_I\in \ga_I^{--}$. 
\end{prop}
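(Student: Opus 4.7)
The plan is to combine the explicit solution formula of Lemma~\ref{lem-Phi}(ii) with the decay estimates of Lemmas~\ref{lem-Elambda-cont} and~\ref{lem-Phi-Psi-cinfini}(ii). Since $E_\lambda$ commutes with $\Gamma_\cI(X_I)$ (as $U_{\cI,\lambda}^*$ is a joint generalized eigenspace for all $\Gamma_\cI(X)$, $X\in\ga_I$), multiplying the identity of Lemma~\ref{lem-Phi}(ii) by $e^{-t\Gamma_\cI(X_I)}$ gives
$$
e^{-t\Gamma_\cI(X_I)}\Phi_{f,\lambda}(a_Z\exp(tX_I))
= \Phi_{f,\lambda}(a_Z) + \int_0^t e^{-s\Gamma_\cI(X_I)}E_\lambda \Psi_{f,X_I}(a_Z\exp(sX_I))\,ds,
$$
so existence of the limit reduces to convergence of the corresponding improper integral at $+\infty$.

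To establish this convergence, I would apply Lemma~\ref{lem-Elambda-cont} together with Lemma~\ref{lem-Phi-Psi-cinfini}(ii), taking $\Omega_A=\{a_Z\}$. The key observation is that, for $\lambda\in\cQ_\cI^0$, the linear forms $\operatorname{Re}\lambda$ and $\rho_Q$ coincide on the open cone $\ga_I^{--}$ by definition, and hence on all of $\ga_I$ by linearity. Consequently, the factor $e^{-s\operatorname{Re}\lambda(X_I)}$ contributed by $\|e^{-s\Gamma_\cI(X_I)}E_\lambda\|$ and the factor $e^{s\rho_Q(X_I)}$ contributed by $\|\Psi_{f,X_I}(a_Z\exp(sX_I))\|$ cancel exactly, leaving an integrand bounded by $C_{a_Z}\,\|X_I\|\,e^{s\beta_I(X_I)}(1+s)^{N+N_\cI}$. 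Since $\beta_I(X_I)<0$ for $X_I\in\ga_I^{--}$, this is absolutely integrable and the limit $L_{X_I}(a_Z):=\lim_{t\to+\infty}e^{-t\Gamma_\cI(X_I)}\Phi_{f,\lambda}(a_Z\exp(tX_I))$ exists.

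For the independence of $X_I$, I would introduce the auxiliary smooth map $\tilde L_{a_Z}:\ga_I\to U_{\cI,\lambda}^*$ defined by $\tilde L_{a_Z}(X) := e^{-\Gamma_\cI(X)}\Phi_{f,\lambda}(a_Z\exp X)$. Projecting~\eqref{eq-equadiffPhi} by $E_\lambda$ yields $R_Y\Phi_{f,\lambda}=\Gamma_\cI(Y)\Phi_{f,\lambda}+E_\lambda\Psi_{f,Y}$ for $Y\in\ga_I$, and differentiating $\tilde L_{a_Z}$ along $Y$ the $\Gamma_\cI(Y)$ contributions cancel thanks to the commutativity of $\Gamma_\cI(Y)$ with $e^{-\Gamma_\cI(X)}$, so that
$$
\partial_Y\tilde L_{a_Z}(X) = e^{-\Gamma_\cI(X)}E_\lambda\Psi_{f,Y}(a_Z\exp X),\qquad X,Y\in\ga_I.
$$
The same estimate as above, with $X\in\ga_I^{--}$ in place of $sX_I$, then yields $\|\partial_Y\tilde L_{a_Z}(X)\|\leq C_{a_Z}\|Y\|\,e^{\beta_I(X)}(1+\|X\|)^{N+N_\cI}$.

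Finally, given $X_I,X_I'\in\ga_I^{--}$, I would integrate $\partial_{X_I'-X_I}\tilde L_{a_Z}$ along the straight segment from $tX_I$ to $tX_I'$, which stays in $\ga_I^{--}$ by convexity. Since $\beta_I=\max_{\alpha\in\cF\cup(S\setminus I)}\alpha$ is a maximum of linear forms all strictly negative on the compact set of convex combinations of $X_I$ and $X_I'$, it is bounded above by some $-\delta<0$ there, whence $\beta_I(\,\cdot\,)\leq-\delta t$ on the scaled segment by positive homogeneity. The fundamental theorem of calculus then gives
$$
\|\tilde L_{a_Z}(tX_I')-\tilde L_{a_Z}(tX_I)\|\leq C_{a_Z}\,t\,\|X_I'-X_I\|\,e^{-\delta t}(1+C't)^{N+N_\cI}\xrightarrow[t\to+\infty]{}0,
$$
so $L_{X_I}(a_Z)=L_{X_I'}(a_Z)$. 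The main subtlety is precisely this last step: working with $\tilde L_{a_Z}$ on $\ga_I$ absorbs the $X_I$-dependence into the base point, reducing the independence statement to a uniform Cauchy estimate along scaled segments inside $\ga_I^{--}$ and sidestepping any need to differentiate through the defining limit in the parameter $X_I$.
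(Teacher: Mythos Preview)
Your proof is correct. The existence argument is essentially identical to the paper's (Lemma~\ref{lem-int-general} and \eqref{eq-LuPhiinfty}): both rewrite $e^{-t\Gamma_\cI(X_I)}\Phi_{f,\lambda}(a_Z\exp(tX_I))$ via Lemma~\ref{lem-Phi}(ii) and bound the resulting integral using Lemmas~\ref{lem-Elambda-cont} and~\ref{lem-Phi-Psi-cinfini}(ii), exploiting that $\operatorname{Re}\lambda=\rho_Q$ on $\ga_I$ for $\lambda\in\cQ_\cI^0$.

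The independence argument, however, takes a genuinely different route. The paper follows Harish-Chandra's two-parameter trick (Lemma~\ref{lem-Phiinfty-constant}, modeled on \cite[Lemma~22.8]{hc1}): one writes $e^{-\Gamma_\cI(t_1X_1+t_2X_2)}\Phi_{f,\lambda}(a_Z\exp(t_1X_1+t_2X_2))$ via Lemma~\ref{lem-Phi}(ii), shows the integral remainder vanishes as $t_1\to\infty$, and then invokes the symmetry in $(X_1,X_2)$ of the double limit. Your approach is more direct and geometric: you compute the full gradient of $\tilde L_{a_Z}$ on $\ga_I$, bound it uniformly by $C_{a_Z}\|Y\|e^{\beta_I(X)}(1+\|X\|)^{N+N_\cI}$, and integrate along the straight segment from $tX_I$ to $tX_I'$, using convexity of $\ga_I^{--}$ and positive homogeneity of $\beta_I$ to force exponential decay. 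Both arguments rest on the same decay estimates; your version avoids the somewhat opaque double-limit manipulation at the cost of an explicit (but routine) derivative computation, and has the mild advantage of giving a quantitative bound on $\|\tilde L_{a_Z}(tX_I')-\tilde L_{a_Z}(tX_I)\|$ rather than merely equality of limits.
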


For $\lambda\in \cQ_\cI^0$ and $f\in\cA_{temp}(Z:\cI)$, we now set 
\begin{equation}\label{def111} 
\Phi_{f,\lambda,\infty}(a_Z):=\lim_{t\to +\infty}e^{-t\Gamma_\cI(X_I)}\Phi_{f,\lambda}(a_Z\exp(tX_I)),
\quad a_Z\in A_Z\, . 
\end{equation}

Further we define
\begin{equation}\label{eq-phiinfty+-}
\Phi_{f,\lambda,\infty}(a_Z):=0,\quad a_Z\in A_Z,\lambda\in\cQ_\cI^+\cup\cQ_\cI^-,f\in\cA_{temp}(Z:\cI)\,.
\end{equation}

\begin{prop} \label{prop-main2} 
Let $\lambda\in \cQ_\cI$ and $f\in\cA_{temp}(Z:\cI)$. Then there exists $\delta>0$ such that for all  $a_Z\in A_Z$, $X_I\in\ga_I^{--}$ and $t\geq 0$: 
$$
\begin{array}{cl}
&\Vert \Phi_{f,\lambda}(a_Z\exp(tX_I))-\Phi_{f,\lambda,\infty}(a_Z\exp(tX_I))\Vert\\
&\\
\leq &\displaystyle{e^{t(\rho_Q+\delta\beta_I)(X_I)}\Big(\Vert E_\lambda(tX_I)\Vert \Vert \Phi_f(a_Z)\Vert} \\
&\displaystyle{+\int_0^\infty e^{-s(\rho_Q+\beta_I/2)(X_I)}\Vert E_\lambda((t-s)X_I)\Vert\Vert \Psi_{f,X_I}(a_Z\exp(sX_I))\Vert\,ds\Big)\,.}
\end{array}
$$
\end{prop}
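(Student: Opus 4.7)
The plan is to apply the variation-of-constants formula of Lemma~\ref{lem-Phi}(ii) and split the argument into the three mutually exclusive cases $\lambda\in\cQ_\cI^0$, $\cQ_\cI^+$ and $\cQ_\cI^-$, showing in each case an estimate of the advertised form. A suitable $\delta>0$ emerges case by case: in $\cQ_\cI^0$ and $\cQ_\cI^+$ the choice $\delta=\tfrac12$ will do; in $\cQ_\cI^-$ one requires additionally $\delta\le\delta_\lambda(X_I):=(\mathrm{Re}\,\lambda-\rho_Q)(X_I)/\beta_I(X_I)$, which is \emph{strictly positive} because a linear form that is $\le 0$ on the open cone $\ga_I^{--}$ and vanishes at an interior point vanishes identically (this would put $\lambda$ in $\cQ_\cI^0$). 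Finiteness of $\cQ_\cI$ then lets one pick a single $\delta$ handling all three cases simultaneously.

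For $\lambda\in\cQ_\cI^0$, Lemma~\ref{lem-Phi-Psi-cinfini}(ii) makes
$$
\int_0^\infty E_\lambda e^{-s\Gamma_\cI(X_I)}\Psi_{f,X_I}(a_Z\exp(sX_I))\,ds
$$
absolutely convergent, since $|e^{-s\lambda(X_I)}|=e^{-s\rho_Q(X_I)}$ combined with the bound $\|\Psi_{f,X_I}(a_Z\exp(sX_I))\|\lesssim e^{s(\rho_Q+\beta_I)(X_I)}$ yields $e^{s\beta_I(X_I)}$ with $\beta_I(X_I)<0$. Passing to the limit in Lemma~\ref{lem-Phi}(ii) identifies $\Phi_{f,\lambda,\infty}(a_Z)$, and, via the cocycle relation $\Phi_{f,\lambda,\infty}(a_Z\exp(tX_I))=e^{t\Gamma_\cI(X_I)}\Phi_{f,\lambda,\infty}(a_Z)$ (immediate from the definition \eqref{def111}), gives
$$
\Phi_{f,\lambda}(a_Z\exp(tX_I))-\Phi_{f,\lambda,\infty}(a_Z\exp(tX_I))=-\int_t^\infty E_\lambda e^{(t-s)\Gamma_\cI(X_I)}\Psi_{f,X_I}(a_Z\exp(sX_I))\,ds.
$$
Taking norms with $|e^{(t-s)\lambda(X_I)}|=e^{(t-s)\rho_Q(X_I)}$ and decomposing
$e^{(t-s)\rho_Q(X_I)}=e^{t(\rho_Q+\tfrac12\beta_I)(X_I)}\cdot e^{-s(\rho_Q+\tfrac12\beta_I)(X_I)}\cdot e^{\tfrac12(s-t)\beta_I(X_I)}$
where the last factor is $\le 1$ for $s\ge t$ (using $\beta_I(X_I)<0$) produces the estimate after extending the integration to $[0,\infty)$; the first bracketed term $\|E_\lambda(tX_I)\|\|\Phi_f(a_Z)\|$ is non-negative and can be added freely.

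For $\lambda\in\cQ_\cI^+$, the crux is the Harish-Chandra identity
$$
\Phi_{f,\lambda}(a_Z)=-\int_0^\infty E_\lambda e^{-s\Gamma_\cI(X_I)}\Psi_{f,X_I}(a_Z\exp(sX_I))\,ds,
$$
whose right-hand side converges absolutely since its integrand decays like $e^{s(\rho_Q+\beta_I-\mathrm{Re}\,\lambda)(X_I)}$ with $\mathrm{Re}\,\lambda(X_I)>\rho_Q(X_I)$. To prove it, let $D(a_Z)$ be the difference of the two sides; a direct computation using Lemma~\ref{lem-Phi}(ii) (applied along the $\exp(\R X_I)$-orbit of $a_Z$) gives $D(a_Z\exp(tX_I))=e^{t\Gamma_\cI(X_I)}D(a_Z)$, while $D$ takes values in the generalized $\lambda$-eigenspace of $\Gamma_\cI(X_I)$. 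Hence $\|D(a_Z\exp(tX_I))\|\gtrsim e^{t\,\mathrm{Re}\,\lambda(X_I)}$ for $t$ large as soon as $D(a_Z)\ne 0$, which contradicts the tempered bound $\|\Phi_f(a_Z\exp(tX_I))\|\lesssim e^{t\rho_Q(X_I)}(1+\|\log a_Z\|+t\|X_I\|)^N$ of Lemma~\ref{lem-Phi-Psi-cinfini}(i) together with the easy integral bound $\lesssim e^{t(\rho_Q+\beta_I)(X_I)}$ for the remainder term. Inserting this identity in Lemma~\ref{lem-Phi}(ii) telescopes $\Phi_{f,\lambda}(a_Z\exp(tX_I))$ to an integral on $[t,\infty)$, and one concludes as in the $\cQ_\cI^0$ case, now using the better estimate $e^{(t-s)\mathrm{Re}\,\lambda(X_I)}\le e^{(t-s)\rho_Q(X_I)}$ for $s\ge t$. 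Finally, for $\lambda\in\cQ_\cI^-$, $\Phi_{f,\lambda,\infty}=0$ by \eqref{eq-phiinfty+-}, and the two terms of Lemma~\ref{lem-Phi}(ii) are directly estimated, the initial term via $\|\Phi_{f,\lambda}(a_Z)\|\le\|E_\lambda\|\cdot\|\Phi_f(a_Z)\|$ and the inequality $\mathrm{Re}\,\lambda(X_I)\le(\rho_Q+\delta\beta_I)(X_I)$ coming from the choice of $\delta$ above.

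The main obstacle is the rigidity identity in the $\cQ_\cI^+$ case: temperedness has to be converted into an \emph{equality} pinning down $\Phi_{f,\lambda}(a_Z)$, which is where the proof leaves the purely ODE-theoretic track. The remaining task, of harmonizing the three case-by-case $\delta$'s into a single one and of turning the various pointwise bounds into the single advertised inequality, is routine thanks to the finiteness of $\cQ_\cI$.
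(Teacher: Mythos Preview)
Your treatment of the cases $\lambda\in\cQ_\cI^0$ and $\lambda\in\cQ_\cI^-$ is correct and essentially matches the paper's Lemmas~\ref{lem-Qzero} and~\ref{lem-Qmoins}. However, your $\cQ_\cI^+$ case has a genuine gap: you assert ``$\mathrm{Re}\,\lambda(X_I)>\rho_Q(X_I)$'' for the \emph{given} $X_I\in\ga_I^{--}$, but the definition of $\cQ_\cI^+$ only guarantees this inequality for \emph{some} $X_I$, not all. When $\dim\ga_I\geq 2$ it is entirely possible that $\lambda\in\cQ_\cI^+$ while $\mathrm{Re}\,\lambda(X_I)\leq\rho_Q(X_I)$---or even $\mathrm{Re}\,\lambda(X_I)\leq(\rho_Q+\beta_I)(X_I)$---for the particular $X_I$ at hand. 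In that situation your ``Harish-Chandra identity'' integral need not converge, and even when it does (i.e., when $\mathrm{Re}\,\lambda(X_I)>(\rho_Q+\beta_I)(X_I)$), your growth/temperedness contradiction fails because $e^{t\,\mathrm{Re}\,\lambda(X_I)}$ no longer beats the tempered bound $e^{t\rho_Q(X_I)}(1+t)^N$. Likewise your final estimate $e^{(t-s)\mathrm{Re}\,\lambda(X_I)}\leq e^{(t-s)\rho_Q(X_I)}$ for $s\geq t$ breaks down.

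The paper resolves this by a further dichotomy inside $\cQ_\cI^+$ according to whether $\mathrm{Re}\,\lambda(X_I)\geq(\rho_Q+\beta_I/2)(X_I)$ or not. In the first sub-case the integral identity is available: the paper first proves $\Phi_{f,\lambda,\infty}(a_Z,X_0)=0$ for an auxiliary $X_0$ with $\mathrm{Re}\,\lambda(X_0)>\rho_Q(X_0)$ (Lemma~\ref{lem-Phiinftynul}, which is where your temperedness argument does apply), and then \emph{transfers} this vanishing to the given $X_I$ via the constancy Lemma~\ref{lem-Phiinfty-constant}; this yields the tail-integral formula of Lemma~\ref{lem-Phiinftynul2} and its Corollary~\ref{cor-lem-Phiinftynul2}. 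In the second sub-case ($\mathrm{Re}\,\lambda(X_I)\leq(\rho_Q+\beta_I/2)(X_I)$) the integral identity is abandoned entirely and one estimates the two terms of Lemma~\ref{lem-Phi}(ii) directly, exactly as in your $\cQ_\cI^-$ argument (this is Lemma~\ref{lem-majorPhilambda}). Both sub-cases then fit under the claimed bound with any $\delta\leq 1/2$.
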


\subsubsection{Proof of Proposition \ref{prop-main1}}

We say that an integral depending on a parameter converges uniformly if the absolute value of the integrand is bounded by an integrable function independently of the parameter.
\begin{lem}\label{lem-int-general}
Let $\lambda\in\cQ_\cI$ and $X_I\in\ga_I^{--}$ be such that $\mathrm{Re}\,\lambda(X_I)>(\rho_Q+\beta_I)(X_I)$. Then 
\begin{enumerate}
\item[(i)] The integral
$$\int_0^\infty E_\lambda e^{-s\Gamma_\cI(X_I)}\Psi_{f,X_I}(a_Z\exp(sX_I))\,ds$$
converges uniformly on any compact subset of $A_Z$.
\item[(ii)] The assignment 
$$a_Z\mapsto \int_0^\infty E_\lambda e^{-s\Gamma_\cI(X_I)}\Psi_{f,X_I}(a_Z\exp(sX_I))\,ds$$
is a well-defined map on $A_Z$. Its derivative along $u\in S(\ga_Z)$ is given by derivation under the integral sign.
\end{enumerate}
\end{lem}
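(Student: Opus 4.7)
The plan is to dominate the integrand by an integrable function, using the spectral hypothesis $\operatorname{Re}\lambda(X_I)>(\rho_Q+\beta_I)(X_I)$ to produce strict exponential decay, and then to invoke standard dominated-convergence arguments for both (i) and (ii).

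First I would rewrite the operator factor via the definition of $E_\lambda(\cdot)$ just before Lemma~\ref{lem-Elambda-cont}, namely
$$E_\lambda\, e^{-s\Gamma_\cI(X_I)} \;=\; e^{-s\lambda(X_I)}\, E_\lambda(-sX_I),$$
so that Lemma~\ref{lem-Elambda-cont} yields
$\|E_\lambda e^{-s\Gamma_\cI(X_I)}\|\leq c\,e^{-s\operatorname{Re}\lambda(X_I)}(1+s\|X_I\|)^{N_\cI}$. Next, for $a_Z$ ranging in a fixed compact subset $\Omega_A\subset A_Z$, since $X_I\in\ga_I^{--}\subset\ga_Z^-$, the element $a_Z\exp(sX_I)$ lies in $\Omega_A A_Z^-$ for every $s\geq 0$. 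Lemma~\ref{lem-Phi-Psi-cinfini}(ii) then provides a continuous seminorm $q$ on $\cA_{temp,N}(Z:\cI)$ and a constant $C=C(\Omega_A)$ with
$$\|\Psi_{f,X_I}(a_Z\exp(sX_I))\|\leq C\,a_Z^{\rho_Q+\beta_I}\,e^{s(\rho_Q+\beta_I)(X_I)}\bigl(1+\|\log a_Z+sX_I\|\bigr)^{N}\|X_I\|\,q(f).$$

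Multiplying the two estimates, the integrand is bounded above by
$$c\,C\,a_Z^{\rho_Q+\beta_I}\|X_I\|\,q(f)\cdot e^{-s(\operatorname{Re}\lambda-\rho_Q-\beta_I)(X_I)}\cdot(1+s\|X_I\|)^{N_\cI}\bigl(1+\|\log a_Z+sX_I\|\bigr)^{N}.$$
By the hypothesis, $(\operatorname{Re}\lambda-\rho_Q-\beta_I)(X_I)>0$, so this is a polynomial in $s$ times $e^{-\delta s}$ for some $\delta>0$, hence integrable on $[0,\infty)$, and the bound is uniform in $a_Z\in\Omega_A$. This proves (i).

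For (ii), well-definedness on $A_Z$ follows at once. To differentiate under the integral sign along $u\in S(\ga_Z)$, I would observe that the operators $E_\lambda$ and $e^{-s\Gamma_\cI(X_I)}$ act only on the target space $U_\cI^*$ and thus commute with the left action of $R_u$ in the variable $a_Z$. The derivative $R_u\Psi_{f,X_I}$ is, by the defining formula \eqref{eq-defPsi2}, of the same shape as $\Psi_{f,X_I}$ but with each summand $R_{u_i(\mu_I(z_i)-z_i)}f$ replaced by a derivative of $f$ along a slightly enlarged element of $\cU(\gb)$. The same argument as above, applied with a higher Sobolev seminorm $q'$ in place of $q$, yields an integrable dominating function uniform on compact subsets of $A_Z$, and differentiation under the integral is justified by dominated convergence; an induction on the order of $u$ then closes (ii). The main point to watch is the bookkeeping of the polynomial factors in $s$, which is painless because the strict inequality in the hypothesis gives genuine exponential decay that absorbs all polynomial growth.
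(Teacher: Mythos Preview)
Your proof is correct and follows essentially the same approach as the paper: rewrite $E_\lambda e^{-s\Gamma_\cI(X_I)}=e^{-s\lambda(X_I)}E_\lambda(-sX_I)$, invoke Lemma~\ref{lem-Elambda-cont} for the operator factor and Lemma~\ref{lem-Phi-Psi-cinfini}(ii) for $\Psi_{f,X_I}$, then combine with the spectral hypothesis to obtain exponential decay; for (ii) the paper simply appeals to the standard theorem on differentiation of parameter-dependent integrals, whereas you spell out the dominated-convergence argument in a bit more detail.
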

\begin{proof}
One has
$$E_\lambda e^{-s\Gamma_\cI(X_I)}= e^{-s\lambda(X_I)}E_\lambda e^{s(\lambda(X_I)-\Gamma_\cI(X_I))}=e^{-s\lambda(X_I)}E_\lambda(-sX_I)\,.$$
Hence, from Lemma~\ref{lem-Elambda-cont}, one has:
\begin{equation}\label{eq-major-Elambda}
\Vert E_\lambda e^{-s\Gamma_\cI(X_I)}\Vert\leq c(1+\Vert sX_I\Vert)^{N_\cI} e^{-s\mathrm{Re}\,\lambda(X_I)}\,.
\end{equation}
Using Lemma~\ref{lem-Phi-Psi-cinfini}(ii), \eqref{eq-major-Elambda} and the assumption $\mathrm{Re}\,\lambda(X_I)>(\rho_Q+\beta_I)(X_I)$, we obtain that the integral in~(i) converges uniformly on compact subsets of $A_Z$. 

\par The assertion from (ii) follows in the same way and using the theorem on derivatives of integral depending of a parameter.
\end{proof}

Fix $N\in\N$ such that $f\in\cA_{temp,N}(Z:\cI)$ and $\lambda\in\cQ_\cI$ and put, for $X_I$ as in Lemma~\ref{lem-int-general}, 
i.e., $X_I\in\ga_I^{--}$ such that $\Reel\lambda(X_I)>(\rho_Q+\beta_I)(X_I)$:
\begin{equation}\label{eq-defPhiinfty}
\Phi_{f,\lambda,\infty}(a_Z,X_I)  :=  \displaystyle{\lim_{t\to +\infty}e^{-t\Gamma_\cI(X_I)}\Phi_{f,\lambda}(a_Z\exp(tX_I))},
\quad a_Z\in A_Z\,.
\end{equation}
It follows from Lemmas~\ref{lem-Phi}(ii) and~\ref{lem-int-general} that this limit exists and is $C^\infty$ on $A_Z$. Moreover
\begin{equation}\label{eq-LuPhiinfty}
\Phi_{f,\lambda,\infty}(a_Z,X_I)=\Phi_{f,\lambda}(a_Z)+\displaystyle{\int_0^\infty E_\lambda 
e^{-s\Gamma_\cI(X_I)}\Psi_{f,X_I}(a_Z\exp(sX_I))\,ds,}
\quad a_Z\in A_Z\,.
\end{equation}

\begin{lem}\label{lem-Phiinfty-constant}
Let $X_1,X_2\in\ga_I^{--}$ and suppose that 
$$\mathrm{Re}\,\lambda(X_i)>(\rho_Q+\beta_I)(X_i),\quad i=1,2\,.$$
Then 
$$\Phi_{f,\lambda,\infty}(a_Z,X_1)=\Phi_{f,\lambda,\infty}(a_Z,X_2),\quad a_Z\in A_Z\,.$$
\end{lem}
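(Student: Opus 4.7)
\smallskip\noindent\textbf{Proof plan.} The strategy is to upgrade the exponential-polynomial nature of $\phi_{X_i} := \Phi_{f,\lambda,\infty}(\,\cdot\,, X_i)$ from the direction $X_i$ to \emph{all} of $\ga_I$, and then exploit this extra invariance to compare $\phi_{X_1}$ and $\phi_{X_2}$ via evaluation along the $X_2$-direction. Throughout, I note that the set $\cU := \{X\in\ga_I^{--}\mid \mathrm{Re}\,\lambda(X) > (\rho_Q+\beta_I)(X)\}$ is convex (since $\mathrm{Re}\,\lambda - \rho_Q$ is linear while $\beta_I$ is convex), hence $X_1, X_2 \in \cU$ can be compared through this open set.

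\smallskip\noindent\textbf{Step 1: Homogeneous equation for $\phi_X$.} For $X \in \cU$ and $Y \in \ga_I$, I claim
\begin{equation*}
R_Y \phi_X = \Gamma_\cI(Y)\,\phi_X \qquad \text{on } A_Z\,.
\end{equation*}
Differentiating inside the limit \eqref{eq-defPhiinfty} (justified by passing through the integral representation \eqref{eq-LuPhiinfty} and Lemma~\ref{lem-int-general}(ii)), one gets
$R_Y \phi_X(a_Z) = \lim_{t\to+\infty} e^{-t\Gamma_\cI(X)} R_Y\Phi_{f,\lambda}(a_Z\exp(tX))$.
Using the ODE~\eqref{eq-equadiffPhi} projected to the $\lambda$-eigenspace, $R_Y\Phi_{f,\lambda} = \Gamma_\cI(Y)\Phi_{f,\lambda} + E_\lambda\Psi_{f,Y}$, it remains to show that
$e^{-t\Gamma_\cI(X)} E_\lambda \Psi_{f,Y}(a_Z\exp(tX)) \to 0$. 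Combining Lemma~\ref{lem-Elambda-cont} (polynomial growth $\times\, e^{-t\mathrm{Re}\,\lambda(X)}$) and Lemma~\ref{lem-Phi-Psi-cinfini}(ii) (bound by $e^{t(\rho_Q+\beta_I)(X)}$ up to polynomial factors), the total estimate is $\lesssim (1+t)^{N+N_\cI} e^{-t[\mathrm{Re}\,\lambda - \rho_Q - \beta_I](X)}$, which tends to $0$ because $X \in \cU$. Consequently,
\begin{equation*}
\phi_{X_1}(a_Z\exp(tX_2)) = e^{t\Gamma_\cI(X_2)}\,\phi_{X_1}(a_Z),\qquad t\in\R,\ a_Z\in A_Z\,.
\end{equation*}

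\smallskip\noindent\textbf{Step 2: Asymptotic comparison along $X_2$.} Applying \eqref{eq-LuPhiinfty} with the direction $X_1$ and base point $a_Z\exp(tX_2)$, and using commutativity of $\ga_I$:
\begin{equation*}
\phi_{X_1}(a_Z\exp(tX_2)) - \Phi_{f,\lambda}(a_Z\exp(tX_2)) = \int_0^\infty E_\lambda\, e^{-s\Gamma_\cI(X_1)}\Psi_{f,X_1}(a_Z\exp(tX_2 + sX_1))\,ds\,.
\end{equation*}
Multiplying by $e^{-t\Gamma_\cI(X_2)}$ and estimating: Lemmas~\ref{lem-Elambda-cont}, \ref{lem-Phi-Psi-cinfini}(ii), together with the crucial subadditivity/positive homogeneity $\beta_I(tX_2+sX_1) \leq t\beta_I(X_2) + s\beta_I(X_1)$ (which is where convexity of $\beta_I$ enters decisively), yield that the integrand is bounded by a polynomial in $t+s$ times
$e^{-t[\mathrm{Re}\,\lambda - \rho_Q - \beta_I](X_2)}\,e^{-s[\mathrm{Re}\,\lambda - \rho_Q - \beta_I](X_1)}$.
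Since $X_1 \in \cU$, the $s$-integral converges; since $X_2 \in \cU$, the residual $t$-factor decays exponentially. Hence
$e^{-t\Gamma_\cI(X_2)}\bigl[\Phi_{f,\lambda}(a_Z\exp(tX_2)) - \phi_{X_1}(a_Z\exp(tX_2))\bigr] \xrightarrow[t\to+\infty]{} 0$.

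\smallskip\noindent\textbf{Step 3: Conclusion.} By Step~1 the quantity $e^{-t\Gamma_\cI(X_2)}\phi_{X_1}(a_Z\exp(tX_2))$ is \emph{constant} in $t$, equal to $\phi_{X_1}(a_Z)$. By definition $\phi_{X_2}(a_Z) = \lim_{t\to+\infty} e^{-t\Gamma_\cI(X_2)}\Phi_{f,\lambda}(a_Z\exp(tX_2))$. Combining with Step~2,
\begin{equation*}
\phi_{X_2}(a_Z) = \lim_{t\to+\infty} e^{-t\Gamma_\cI(X_2)}\phi_{X_1}(a_Z\exp(tX_2)) = \phi_{X_1}(a_Z)\,.
\end{equation*}

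\smallskip\noindent\textbf{Main obstacle.} The crux lies in Step~1: the fact that the asymptotic $\phi_X$ satisfies the homogeneous ODE not merely along $X$ but along every $Y \in \ga_I$ is what makes the comparison of $\phi_{X_1}$ and $\phi_{X_2}$ tractable. The technical heart of Step~2 is the subadditivity estimate on $\beta_I$; it is exactly what permits splitting the double exponential decay into independently controlled $t$- and $s$-factors, making the remainder vanish in the limit.
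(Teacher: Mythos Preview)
Your proof is correct, but the route differs from the paper's. The paper argues by a symmetric double-limit: applying Lemma~\ref{lem-Phi}(ii) at the base point $a_Z\exp(t_1X_1)$ in direction $X_2$ and multiplying by $e^{-t_1\Gamma_\cI(X_1)}$ gives
\[
e^{-\Gamma_\cI(t_1X_1+t_2X_2)}\Phi_{f,\lambda}(a_Z\exp(t_1X_1+t_2X_2))
= e^{-t_1\Gamma_\cI(X_1)}\Phi_{f,\lambda}(a_Z\exp(t_1X_1)) + \text{(integral)}\,,
\]
and the same estimates you use in Step~2 show the integral tends to $0$ as $t_1\to\infty$. Hence the double limit $\lim_{t_1,t_2\to\infty}$ of the left side equals $\Phi_{f,\lambda,\infty}(a_Z,X_1)$; by the manifest symmetry of the left side in $(X_1,t_1)\leftrightarrow(X_2,t_2)$, it also equals $\Phi_{f,\lambda,\infty}(a_Z,X_2)$. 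This bypasses your Step~1 entirely.

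What you do instead is establish first that $\phi_X$ solves the homogeneous equation $R_Y\phi_X=\Gamma_\cI(Y)\phi_X$ for \emph{all} $Y\in\ga_I$. This is exactly the content of Lemma~\ref{lem-actaIPhifinfty}, which in the paper is proved \emph{after} the present lemma (and stated only for $\lambda\in\cQ_\cI^0$, where it is needed). So you have effectively reordered the logic: homogeneous equation first, then independence of direction as a consequence. The paper's symmetry argument is shorter for this particular statement; your ordering has the merit of isolating the structural fact (the $A_I$-covariance of $\phi_X$) up front, and your Step~2 estimate, with the subadditivity of $\beta_I$ made explicit, is essentially the same remainder bound the paper uses. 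Both approaches rest on identical analytic input (Lemmas~\ref{lem-Elambda-cont} and~\ref{lem-Phi-Psi-cinfini}(ii)).
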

\begin{proof}
Same as the proof of \cite[Lemma~22.8]{hc1}. We give it for sake of completeness. 
Let $a_Z\in A_Z$. Applying Lemma~\ref{lem-Phi}(ii) to $a_Z\exp(t_1X_1)$ instead of $a_Z$, $X_2$ instead of $X$ and $t_2$ instead of $t$, one gets:
$$
\begin{array}{cl}
& e^{-\Gamma_\cI(t_1X_1+t_2X_2)}\Phi_{f,\lambda}(a_Z\exp(t_1X_1)\exp(t_2X_2))\\  
&\\
 = & e^{-t_1\Gamma_\cI(X_1)}\Phi_{f,\lambda}(a_Z\exp(t_1X_1))\\
& +\displaystyle{\int_0^{t_2}E_\lambda e^{-\Gamma_\cI(t_1X_1+s_2X_2)}\Psi_{f,X_2}(a_Z\exp(t_1X_1+ s_2X_2))\,ds_2\,,}
\end{array}
$$
for $t_1,t_2> 0$. From Lemmas~\ref{lem-Elambda-cont} and~\ref{lem-Phi-Psi-cinfini}(ii) applied to $X=t_1X_1+s_2X_2$ and $(X,a_Z)=(X_2,a_Z\exp(t_1X_1+ s_2X_2))$ respectively, one sees that:
$$
\int_0^\infty \Vert E_\lambda e^{-\Gamma_\cI(t_1X_1+s_2X_2)}\Vert \Vert\Psi_{f,X_2}(a_Z\exp(t_1X_1+ s_2X_2))\Vert\,ds_2
$$
tends to $0$ when $t_1\to +\infty$. Hence:
$$
\begin{array}{cl}
&\lim_{t_1,t_2\to +\infty} e^{-\Gamma_\cI(t_1X_1+t_2X_2)}\Phi_{f,\lambda}(a_Z\exp(t_1X_1+ t_2X_2))\\
=&\lim_{t_1\to +\infty} e^{-\Gamma_\cI(t_1X_1)}\Phi_{f,\lambda}(a_Z\exp(t_1X_1))\\
=&\Phi_{f,\lambda,\infty}(a_Z,X_1)\,.
\end{array}
$$
Since the first limit on the above equality is symmetrical in $X_1$ and $X_2$, one then deduces that:
$$\Phi_{f,\lambda,\infty}(a_Z,X_1)=\Phi_{f,\lambda,\infty}(a_Z,X_2).$$
\end{proof}

\begin{proof}[Proof of Proposition \ref{prop-main1}] 
If $\lambda\in\cQ_\cI^0$, the hypothesis of (\ref{eq-defPhiinfty}) is satisfied. 
Together with  the preceeding Lemma, it shows the proposition. 
\end{proof}

\subsubsection{Proof of Proposition \ref{prop-main2}}

\begin{lem}\label{lem-Phiinftynul}
For $X_I\in\ga_I^{--}$ such that $\mathrm{Re}\,\lambda(X_I)>\rho_Q(X_I)$, one has:
$$\Phi_{f,\lambda,\infty}(a_Z,X_I)=0,\quad a_Z\in A_Z\,.$$
\end{lem}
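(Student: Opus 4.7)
My plan is to deduce the vanishing of the limit directly from temperedness of $f$ combined with the exponential decay of $e^{-t\Gamma_\cI(X_I)}$ on the generalized $\lambda$-eigenspace $U^*_{\cI,\lambda}$. Since $E_\lambda$ commutes with $\Gamma_\cI(X_I)$ and $\Phi_{f,\lambda}=E_\lambda\circ\Phi_f$, the identity
$$e^{-t\Gamma_\cI(X_I)}\Phi_{f,\lambda}(a)=e^{-t\lambda(X_I)}E_\lambda(-tX_I)\Phi_f(a),\qquad a\in A_Z,$$
together with Lemma \ref{lem-Elambda-cont}, gives the first operator estimate
\begin{equation}\label{eq-plan-op}
\|e^{-t\Gamma_\cI(X_I)}\Phi_{f,\lambda}(a)\|\leq c(1+t\|X_I\|)^{N_\cI}e^{-t\Reel\lambda(X_I)}\|\Phi_f(a)\|.
\end{equation}

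The second ingredient is a temperedness bound for $\|\Phi_f(a_Z\exp(tX_I))\|$. To reduce to the compression cone, fix any linear complement $\gb'$ of $\ga_I$ in $\ga_Z$ and decompose $\log a_Z=Y_I+Y'$ with $Y_I\in\ga_I$ and $Y'\in\gb'$, so that
$$a_Z\exp(tX_I)=\exp(Y')\exp(Y_I+tX_I).$$
Since $X_I\in\ga_I^{--}$ lies in the open cone of $\ga_I$, one has $Y_I+tX_I\in\ga_I^{--}$ for all $t$ large enough, hence $\exp(Y_I+tX_I)\in A_Z^-$. An immediate adaptation of the proof of Lemma \ref{lem-Phi-Psi-cinfini}(i) to $\exp(Y')A_Z^-$ (exactly as in the proof of part (ii) of that lemma, using the identity $(R_uf)(a)=(L_{(\Ad(a)u)^t}f)(a)$ and the boundedness of $\Ad(\exp(Y'))$) yields a continuous semi-norm $p$ on $C^\infty_{temp,N}(Z)$ and a constant $C=C(a_Z,X_I)$ such that
\begin{equation}\label{eq-plan-grow}
\|\Phi_f(a_Z\exp(tX_I))\|\leq C(1+t)^{N}e^{t\rho_Q(X_I)}p(f)
\end{equation}
for every $t\geq 0$ (absorbing the bounded behaviour on a compact initial interval into $C$).

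Combining \eqref{eq-plan-op} and \eqref{eq-plan-grow}, one obtains
$$\|e^{-t\Gamma_\cI(X_I)}\Phi_{f,\lambda}(a_Z\exp(tX_I))\|\leq C'(1+t)^{N+N_\cI}e^{t(\rho_Q(X_I)-\Reel\lambda(X_I))},$$
and the hypothesis $\Reel\lambda(X_I)>\rho_Q(X_I)$ makes the exponent strictly negative. Hence the right-hand side tends to zero as $t\to+\infty$, and passing to the limit in \eqref{eq-defPhiinfty} gives $\Phi_{f,\lambda,\infty}(a_Z,X_I)=0$. The only delicate step is the extension of the growth estimate of Lemma \ref{lem-Phi-Psi-cinfini}(i) off of $A_Z^-$; but this is already implicit in the proof of part (ii) of the same lemma, so no genuinely new technique is needed.
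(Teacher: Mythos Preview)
Your proof is correct and follows essentially the same approach as the paper: combine the operator bound $\|E_\lambda e^{-t\Gamma_\cI(X_I)}\|\leq c(1+t\|X_I\|)^{N_\cI}e^{-t\Reel\lambda(X_I)}$ from Lemma~\ref{lem-Elambda-cont} with the temperedness estimate of Lemma~\ref{lem-Phi-Psi-cinfini}(i) on $\Phi_f$ to obtain polynomial growth times $e^{t(\rho_Q-\Reel\lambda)(X_I)}$, which tends to zero under the hypothesis. Your explicit reduction of the case $a_Z\in A_Z$ to a translate of $A_Z^-$ (via the decomposition $\log a_Z=Y_I+Y'$ and the extension argument modeled on part~(ii)) is a point the paper passes over silently, so if anything you are slightly more careful here.
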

\begin{proof}
One has 
$$\Vert e^{-t\Gamma_\cI(X_I)}\Phi_{f,\lambda}(a_Z\exp(tX_I))\Vert \leq e^{-t\mathrm{Re}\,\lambda(X_I)}
\Vert E_\lambda(-tX_I)\Vert\Vert\Phi_f(a_Z\exp(tX_I))\Vert\,.$$
From Lemmas~\ref{lem-Elambda-cont} and~\ref{lem-Phi-Psi-cinfini}(i), one then has 
$$\Vert e^{-t\Gamma_\cI(X_I)}\Phi_{f,\lambda}(a_Z\exp(tX_I))\Vert \leq Ca_Z^{\rho_Q}(1+\Vert\log a_Z\Vert)^N(1+\Vert tX_I\Vert)^{{N+N_\cI}}e^{t(\rho_Q-\mathrm{Re}\,\lambda)(X_I)}\,.$$
The right hand side of the inequality tends to zero as 
$t\to +\infty$. Hence, the Lemma follows from the definition~\eqref{eq-defPhiinfty} of $\Phi_{f,\lambda,\infty}(a_Z,X_I)$. 
\end{proof}

\begin{lem}\label{lem-Phiinftynul2}
Assume $\lambda\in\cQ_\cI^+$ and $X_I\in\ga_I^{--}$ such that $\mathrm{Re}\,\lambda(X_I)>(\rho_Q+\beta_I)(X_I)$. Then, for any $a_Z\in A_Z$, 
$$\Phi_{f,\lambda,\infty}(a_Z,X_I)=0$$
and
$$\Phi_{f,\lambda}(a_Z\exp(tX_I))=-\int_t^\infty {E_\lambda} e^{(t-s)\Gamma_\cI(X_I)} 
\Psi_{f,X_I}(a_Z\exp(sX_I))\,ds\,,\quad {t\in\R}\,.$$
\end{lem}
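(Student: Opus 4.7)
The plan is to reduce both assertions to Lemma~\ref{lem-Phiinftynul} combined with Lemma~\ref{lem-Phiinfty-constant}. Since $\lambda \in \cQ_\cI^+$, the definition of $\cQ_\cI^+$ furnishes a witness $X_I^* \in \ga_I^{--}$ with $\mathrm{Re}\,\lambda(X_I^*) > \rho_Q(X_I^*)$, and Lemma~\ref{lem-Phiinftynul} immediately gives $\Phi_{f,\lambda,\infty}(a_Z, X_I^*) = 0$ for every $a_Z \in A_Z$.

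The key step is to bridge $X_I^*$ to the prescribed $X_I$ via Lemma~\ref{lem-Phiinfty-constant}. The hypothesis of that lemma is the inequality $\mathrm{Re}\,\lambda > \rho_Q + \beta_I$, which for $X_I$ is part of the standing assumption, and which for $X_I^*$ holds automatically: by~\eqref{eq-barzij} every weight in $\cF$ is strictly negative on $\ga_I^{--}$, and the same clearly holds for the spherical roots in $S \setminus I$, so $\beta_I(X_I^*) < 0$ and hence $\mathrm{Re}\,\lambda(X_I^*) > \rho_Q(X_I^*) > (\rho_Q + \beta_I)(X_I^*)$. Applying Lemma~\ref{lem-Phiinfty-constant} to the pair $(X_I, X_I^*)$ propagates the vanishing, yielding $\Phi_{f,\lambda,\infty}(a_Z, X_I) = 0$ on all of $A_Z$.

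With the vanishing established, I substitute $b_Z := a_Z \exp(tX_I) \in A_Z$ into the identity~\eqref{eq-LuPhiinfty} and rearrange to obtain
\[
\Phi_{f,\lambda}(a_Z \exp(tX_I)) = -\int_0^\infty E_\lambda\, e^{-s\Gamma_\cI(X_I)} \Psi_{f,X_I}(a_Z \exp((t+s)X_I))\,ds;
\]
the change of variable $u = t + s$ then delivers the stated formula valid for every $t \in \R$, the convergence of the integral on $[t, \infty)$ being guaranteed by Lemma~\ref{lem-int-general}(i) applied with the compact $\{a_Z \exp(tX_I)\} \subset A_Z$. The only point requiring care is the verification that the witness $X_I^*$ supplied by the definition of $\cQ_\cI^+$ automatically lies in the region where Lemma~\ref{lem-Phiinfty-constant} can be compared to $X_I$; this reduces to the strict negativity of $\beta_I$ on $\ga_I^{--}$ noted above, so no genuine obstacle arises and the proof is short.
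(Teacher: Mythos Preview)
Your proposal is correct and follows essentially the same route as the paper: pick a witness $X_0\in\ga_I^{--}$ with $\mathrm{Re}\,\lambda(X_0)>\rho_Q(X_0)$, use Lemma~\ref{lem-Phiinftynul} at $X_0$, then propagate the vanishing to the given $X_I$ via Lemma~\ref{lem-Phiinfty-constant} (noting $\beta_I<0$ on $\ga_I^{--}$ ensures both points satisfy the required inequality), and finally substitute $a_Z\exp(tX_I)$ into~\eqref{eq-LuPhiinfty} with a change of variable. The paper's argument is the same, only more terse.
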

\begin{proof}
Since $\lambda\in\cQ_\cI^+$, there exists $X_0\in\ga_I^{--}$ such that $\mathrm{Re}\,\lambda(X_0)>\rho_Q(X_0)$. 
Then, from Lemma~\ref{lem-Phiinftynul}, $\Phi_{f,\lambda,\infty}(a_Z,X_0)=0$, and, from 
Lemma~\ref{lem-Phiinfty-constant}, as $\mathrm{Re}\,\lambda(X_0)>\rho_Q(X_0)>(\rho_Q+\beta_I)(X_0)$, 
one has $\Phi_{f,\lambda,\infty}(a_Z,X_I)=\Phi_{f,\lambda,\infty}(a_Z,X_0)$ for any $X_I\in\ga_I^{--}$ such that $\mathrm{Re}\,\lambda(X_I)>(\rho_Q+\beta_I)(X_I)$. 
This proves the first part of the Lemma. 
The second part follows from~\eqref{eq-LuPhiinfty} by change of variables and when we replace $a_Z$ by $a_Z\exp(tX_I)$.
\end{proof}

\begin{cor}\label{cor-lem-Phiinftynul2}
Let $\lambda\in\cQ_\cI^+$ and $X_I\in\ga_I^{--}$ be such that $\Reel\lambda(X_I)\geq (\rho_Q+\beta_I/2)(X_I)$. Then, for $a_Z\in A_Z$ and $t\geq 0$,
$$
\begin{array}{rcl}
\Vert \Phi_{f,\lambda}(a_Z\exp(tX_I))\Vert&\leq & \displaystyle{\int_t^\infty e^{(t-s)(\rho_Q+\beta_I/2)(X_I)}\Vert E_\lambda((t-s)X_I)\Vert\Vert \Psi_{f,X_I}(a_Z\exp(sX_I))\Vert\,ds\,.}
\end{array}
$$
\end{cor}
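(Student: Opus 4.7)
The plan is to derive the bound directly from the integral representation of $\Phi_{f,\lambda}$ given in Lemma~\ref{lem-Phiinftynul2}, combined with the definition of $E_\lambda(X)$ and a sign-reversal argument on the exponential since $t-s\le 0$ in the domain of integration.

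First I would verify that the hypothesis of Lemma~\ref{lem-Phiinftynul2} is satisfied. Since $X_I\in\ga_I^{--}$, we have $\beta_I(X_I)<0$, so $(\rho_Q+\beta_I/2)(X_I)>(\rho_Q+\beta_I)(X_I)$. Combined with the standing assumption $\Reel\lambda(X_I)\ge (\rho_Q+\beta_I/2)(X_I)$, this yields $\Reel\lambda(X_I)>(\rho_Q+\beta_I)(X_I)$, which is precisely the hypothesis of Lemma~\ref{lem-Phiinftynul2}. In particular,
$$\Phi_{f,\lambda}(a_Z\exp(tX_I))=-\int_t^\infty E_\lambda\, e^{(t-s)\Gamma_\cI(X_I)}\Psi_{f,X_I}(a_Z\exp(sX_I))\,ds\,.$$

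Next I would rewrite the operator $E_\lambda\circ e^{(t-s)\Gamma_\cI(X_I)}$ using the definition
$$E_\lambda(X)=e^{-\lambda(X)}\bigl(E_\lambda\circ e^{\Gamma_\cI(X)}\bigr)$$
applied with $X=(t-s)X_I$. This gives
$$E_\lambda\circ e^{(t-s)\Gamma_\cI(X_I)}=e^{(t-s)\lambda(X_I)}E_\lambda((t-s)X_I)\,,$$
from which
$$\bigl\|E_\lambda\, e^{(t-s)\Gamma_\cI(X_I)}\bigr\|\le e^{(t-s)\Reel\lambda(X_I)}\,\bigl\|E_\lambda((t-s)X_I)\bigr\|\,.$$

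Finally, for $s\ge t$ we have $t-s\le 0$, so the inequality $\Reel\lambda(X_I)\ge(\rho_Q+\beta_I/2)(X_I)$ reverses to
$$(t-s)\Reel\lambda(X_I)\le(t-s)(\rho_Q+\beta_I/2)(X_I)\,.$$
Inserting this estimate into the integral representation and using the triangle inequality yields
$$\|\Phi_{f,\lambda}(a_Z\exp(tX_I))\|\le\int_t^\infty e^{(t-s)(\rho_Q+\beta_I/2)(X_I)}\|E_\lambda((t-s)X_I)\|\,\|\Psi_{f,X_I}(a_Z\exp(sX_I))\|\,ds\,,$$
which is the claimed inequality. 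There is no real obstacle here; the proof is a transparent two-line manipulation once Lemma~\ref{lem-Phiinftynul2} is in place, the only subtlety being the correct sign-reversal coming from the condition $t-s\le 0$ on the interval $[t,\infty)$.
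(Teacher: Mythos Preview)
Your proof is correct and follows essentially the same approach as the paper: verify that $\beta_I(X_I)<0$ upgrades the hypothesis to $\Reel\lambda(X_I)>(\rho_Q+\beta_I)(X_I)$ so that Lemma~\ref{lem-Phiinftynul2} applies, rewrite $E_\lambda\, e^{(t-s)\Gamma_\cI(X_I)}=e^{(t-s)\lambda(X_I)}E_\lambda((t-s)X_I)$, and then use the sign reversal on $[t,\infty)$ to replace $\Reel\lambda(X_I)$ by $(\rho_Q+\beta_I/2)(X_I)$. The paper additionally cites Lemma~\ref{lem-int-general} to justify the convergence of the integral, but this is already implicit in your appeal to the integral representation from Lemma~\ref{lem-Phiinftynul2}.
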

\begin{proof}
Since $\beta_I(X_I)<0$ and $\Reel\lambda(X_I)\geq (\rho_Q+\beta_I/2)(X_I)$, one has, in particular, $\Reel\lambda(X_I)> (\rho_Q+\beta_I)(X_I)$. Then one can see, from Lemmas~\ref{lem-Phiinftynul2} and~\ref{lem-int-general}, that:
$$\Vert \Phi_{f,\lambda}(a_Z\exp(tX_I))\Vert\leq \int_t^\infty e^{(t-s)\Reel\lambda(X_I)}\Vert E_\lambda((t-s)X_I)\Vert \Vert 
\Psi_{f,X_I}(a_Z\exp(sX_I))\Vert\,ds\,.$$
Our assertion follows, since $\Reel\lambda(X_I)\geq (\rho_Q+\beta_I/2)(X_I)$ implies that $(t-s)\Reel\lambda(X_I)\leq (t-s)(\rho_Q+\beta_I/2)(X_I)$ for 
$s\geq t$.
\end{proof}

\begin{lem}\label{lem-majorPhilambda}
Let $X_I\in\ga_I^{--}$ be such that $\Reel\lambda(X_I)\leq (\rho_Q+\beta_I/2)(X_I)$. Then
$$
\begin{array}{rcl}
\Vert \Phi_{f,\lambda}(a_Z\exp(tX_I))\Vert & \leq & e^{t(\rho_Q+\beta_I/2)(X_I)}\Big(\Vert E_\lambda(tX_I)\Vert \Vert \Phi_f(a_Z)\Vert\\
&&\displaystyle{ +\int_0^\infty e^{-s(\rho_Q+\beta_I/2)(X_I)}\Vert E_\lambda((t-s)X_I)\Vert\Vert \Psi_{f,X_I}(a_Z\exp(sX_I))\Vert\,ds\Big),}\\
&&\hfill {t\geq 0}, a_Z\in A_Z\,.
\end{array}
$$
\end{lem}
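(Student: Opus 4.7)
The plan is to apply the solution formula from Lemma \ref{lem-Phi}(ii) and then bound both the ``initial condition'' term and the convolution integral separately using the eigenspace structure of $\Gamma_\cI(X_I)$ on $U_{\cI,\lambda}^*$.

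First I would write
\[
\Phi_{f,\lambda}(a_Z\exp(tX_I))=e^{t\Gamma_\cI(X_I)}\Phi_{f,\lambda}(a_Z)+\int_0^t E_\lambda\, e^{(t-s)\Gamma_\cI(X_I)}\Psi_{f,X_I}(a_Z\exp(sX_I))\,ds.
\]
Since $\Phi_{f,\lambda}=E_\lambda\circ\Phi_f$ takes values in $U_{\cI,\lambda}^*$ and $E_\lambda$ commutes with $\Gamma_\cI(X_I)$, the operator $e^{t\Gamma_\cI(X_I)}$ restricted to the range of $E_\lambda$ decomposes as the scalar $e^{t\lambda(X_I)}$ times the unipotent factor $e^{t(\Gamma_\cI(X_I)-\lambda(X_I)\Id)}$, which is exactly $E_\lambda(tX_I)$ on $U_{\cI,\lambda}^*$. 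Hence, using $E_\lambda(tX_I)\circ E_\lambda=E_\lambda(tX_I)$, I get
\[
e^{t\Gamma_\cI(X_I)}\Phi_{f,\lambda}(a_Z)=e^{t\lambda(X_I)}E_\lambda(tX_I)\Phi_f(a_Z),
\]
so $\|e^{t\Gamma_\cI(X_I)}\Phi_{f,\lambda}(a_Z)\|\leq e^{t\Reel\lambda(X_I)}\|E_\lambda(tX_I)\|\,\|\Phi_f(a_Z)\|$. By the hypothesis $\Reel\lambda(X_I)\leq(\rho_Q+\beta_I/2)(X_I)$ and $t\geq 0$, this is bounded by $e^{t(\rho_Q+\beta_I/2)(X_I)}\|E_\lambda(tX_I)\|\,\|\Phi_f(a_Z)\|$, which is the first term on the right-hand side of the desired inequality.

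For the integral term, the identity $E_\lambda\circ e^{(t-s)\Gamma_\cI(X_I)}=e^{(t-s)\lambda(X_I)}E_\lambda((t-s)X_I)$ yields
\[
\|E_\lambda e^{(t-s)\Gamma_\cI(X_I)}\Psi_{f,X_I}(a_Z\exp(sX_I))\|\leq e^{(t-s)\Reel\lambda(X_I)}\|E_\lambda((t-s)X_I)\|\,\|\Psi_{f,X_I}(a_Z\exp(sX_I))\|.
\]
For $s\in[0,t]$ we have $t-s\geq 0$, so again the hypothesis gives $(t-s)\Reel\lambda(X_I)\leq(t-s)(\rho_Q+\beta_I/2)(X_I)$. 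Factoring out $e^{t(\rho_Q+\beta_I/2)(X_I)}$ and enlarging the domain of integration from $[0,t]$ to $[0,\infty)$ (the integrand being nonnegative) gives the integral contribution
\[
e^{t(\rho_Q+\beta_I/2)(X_I)}\int_0^\infty e^{-s(\rho_Q+\beta_I/2)(X_I)}\|E_\lambda((t-s)X_I)\|\,\|\Psi_{f,X_I}(a_Z\exp(sX_I))\|\,ds,
\]
which is exactly the second term on the right-hand side. Summing the two contributions using the triangle inequality completes the proof.

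I expect no real obstacle here: the statement is essentially a direct bookkeeping exercise once one correctly identifies how $e^{t\Gamma_\cI(X_I)}$ acts on the generalized $\lambda$-eigenspace via the operator $E_\lambda(tX_I)$, and it parallels standard estimates in Harish-Chandra's work (e.g.~\cite[Section~22]{hc1}). The only small subtlety is to track signs carefully---specifically, that $t\geq 0$ and $t-s\geq 0$ let us replace $\Reel\lambda$ by the larger quantity $(\rho_Q+\beta_I/2)$ under the exponential without any issue.
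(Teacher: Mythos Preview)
Your proof is correct and follows exactly the same approach as the paper: apply the solution formula from Lemma~\ref{lem-Phi}(ii), use the identity $E_\lambda\circ e^{r\Gamma_\cI(X_I)}=e^{r\lambda(X_I)}E_\lambda(rX_I)$ together with the hypothesis $\Reel\lambda(X_I)\leq(\rho_Q+\beta_I/2)(X_I)$ (valid since $t\geq 0$ and $t-s\geq 0$), and then enlarge the integration domain from $[0,t]$ to $[0,\infty)$. The paper's proof is simply a one-sentence compression of your argument.
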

\begin{proof}
We use Lemma~\ref{lem-Phi}(ii) and the inequality $(t-s)\mathrm{Re}\,\lambda(X_I)\leq (t-s)(\rho_Q+\beta_I/2)(X_I)$ for $s\leq t$ in order 
to get an analogue of the inequality of the Lemma, where $\int_0^\infty$ is replaced by $\int_0^t$. 
The Lemma follows.
\end{proof}
Like in \cite[after the proof of Lemma~22.8]{hc1}, one sees that one can choose $0<\delta\leq 1/2$ such that: 
\begin{equation}\label{eq-delta}
\mathrm{Re}\,\lambda(X_I)\leq (\rho_Q+\delta\beta_I)(X_I),\quad X_I\in\ga_I^{--},\lambda\in\cQ_\cI^-\,.
\end{equation}
\begin{lem}\label{lem-Qmoins}
Let $\lambda\in\cQ_\cI^-$ and $X_I\in\ga_I^{--}$. Then, for $a_Z\in A_Z$, $t\geq 0$,
$$
\begin{array}{rcl}
\Vert \Phi_{f,\lambda}(a_Z\exp(tX_I))\Vert & \leq & e^{t(\rho_Q+\delta\beta_I)(X_I)}\Big(\Vert E_\lambda(tX_I)\Vert 
\Vert \Phi_f(a_Z)\Vert\\
&&\displaystyle{ +\int_0^\infty e^{-s(\rho_Q+\beta_I/2)(X_I)}\Vert E_\lambda((t-s)X_I)\Vert\Vert \Psi_{f,X_I}(a_Z\exp(sX_I))\Vert\,ds\Big)\,.}
\end{array}
$$
\end{lem}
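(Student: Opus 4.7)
The plan is to follow the same strategy as in the proof of Lemma~\ref{lem-majorPhilambda}, starting from the integral representation of Lemma~\ref{lem-Phi}(ii), but using the bound $\mathrm{Re}\,\lambda(X_I)\leq(\rho_Q+\delta\beta_I)(X_I)$ from \eqref{eq-delta} in place of the stronger hypothesis used there. The key elementary observation making everything fit together is that, since $0<\delta\leq 1/2$ and $\beta_I(X_I)<0$ for $X_I\in\ga_I^{--}$, one has $\delta\beta_I(X_I)\geq\beta_I(X_I)/2$, which allows us to trade the two exponents in opposite directions when needed.

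Concretely, Lemma~\ref{lem-Phi}(ii) expresses
$$\Phi_{f,\lambda}(a_Z\exp(tX_I))=e^{t\Gamma_\cI(X_I)}\Phi_{f,\lambda}(a_Z)+\int_0^tE_\lambda e^{(t-s)\Gamma_\cI(X_I)}\Psi_{f,X_I}(a_Z\exp(sX_I))\,ds.$$
Since $E_\lambda$ commutes with $\Gamma_\cI(X_I)$, the first term has norm bounded by $e^{t\mathrm{Re}\,\lambda(X_I)}\Vert E_\lambda(tX_I)\Vert\,\Vert\Phi_f(a_Z)\Vert$, and by \eqref{eq-delta} together with $t\geq 0$ this is at most $e^{t(\rho_Q+\delta\beta_I)(X_I)}\Vert E_\lambda(tX_I)\Vert\,\Vert\Phi_f(a_Z)\Vert$, matching the first summand of the claimed bound.

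For the integrand, for $0\leq s\leq t$ one has $(t-s)\mathrm{Re}\,\lambda(X_I)\leq (t-s)(\rho_Q+\delta\beta_I)(X_I)$ by \eqref{eq-delta}, and moreover $-s(\rho_Q+\delta\beta_I)(X_I)\leq -s(\rho_Q+\beta_I/2)(X_I)$ since $s(1/2-\delta)\beta_I(X_I)\leq 0$. Adding these two inequalities gives
$$(t-s)\mathrm{Re}\,\lambda(X_I)\leq t(\rho_Q+\delta\beta_I)(X_I)-s(\rho_Q+\beta_I/2)(X_I),\qquad 0\leq s\leq t,$$
so that $\Vert E_\lambda e^{(t-s)\Gamma_\cI(X_I)}\Vert\leq e^{t(\rho_Q+\delta\beta_I)(X_I)}e^{-s(\rho_Q+\beta_I/2)(X_I)}\Vert E_\lambda((t-s)X_I)\Vert$ on $[0,t]$. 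Plugging this in and enlarging $\int_0^t$ to $\int_0^\infty$ (the integrand being non-negative) produces the integral term of the desired estimate. Summing the two contributions yields the Lemma.

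There is no real obstacle: the argument is a short variant of Lemma~\ref{lem-majorPhilambda}, with the parameter $\delta$ supplied by \eqref{eq-delta} playing the role previously played by $1/2$ for the first exponential, while $\delta\leq 1/2$ keeps the convergence factor $e^{-s(\rho_Q+\beta_I/2)(X_I)}$ intact inside the integral.
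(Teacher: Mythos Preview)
Your proof is correct and follows essentially the same approach as the paper, which simply says the lemma is proved like Lemma~\ref{lem-majorPhilambda} using $\mathrm{Re}\,\lambda(X_I)\leq (\rho_Q+\delta\beta_I)(X_I)$ and $0<\delta\leq 1/2$. You have spelled out the details faithfully, including the key elementary observation that $\delta\leq 1/2$ and $\beta_I(X_I)<0$ allow one to replace $-s(\rho_Q+\delta\beta_I)(X_I)$ by $-s(\rho_Q+\beta_I/2)(X_I)$ inside the integral.
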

\begin{proof}
This is proved like Lemma~\ref{lem-majorPhilambda}, using that $\mathrm{Re}\,\lambda(X_I)\leq (\rho_Q+\delta\beta_I)(X_I)$ and 
$0<\delta\leq 1/2$.
\end{proof}
Notice now that, if $\lambda\in\cQ_\cI^0$, it follows from Lemma~\ref{lem-Phiinfty-constant} and the definition of $\beta_I$ 
(cf.~\eqref{eq-defbetaI}) that:
\begin{res-nn}
\begin{center}
For $a_Z\in A_Z$, $\Phi_{f,\lambda,\infty}(a_Z,X_I)$ is independent of $X_I\in\ga_I^{--}$\,.
\end{center}
\end{res-nn}
We will denote it by $\Phi_{f,\lambda,\infty}(a_Z)$.
\begin{lem}\label{lem-Qzero}
Assume $\lambda\in\cQ_\cI^0$ and let $X_I\in\ga_I^{--}$. Then one has, for $t\geq 0$ and $a_Z\in A_Z$,
$$
\begin{array}{cl}
&\Vert \Phi_{f,\lambda}(a_Z\exp(tX_I))-\Phi_{f,\lambda,\infty}(a_Z\exp(tX_I))\Vert \\
\leq &\displaystyle{e^{t(\rho_Q+\delta\beta_I)(X_I)}\int_0^\infty 
e^{-s(\rho_Q+\beta_I/2)(X_I)}\Vert E_\lambda((t-s)X_I)\Vert\Vert \Psi_{f,X_I}(a_Z\exp(sX_I))\Vert\,ds\,.}
\end{array}
$$
\end{lem}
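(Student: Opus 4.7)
The plan is to leverage the integral representation of $\Phi_{f,\lambda,\infty}$ supplied by \eqref{eq-LuPhiinfty}. For $\lambda\in\cQ_\cI^0$, one has $\mathrm{Re}\,\lambda(X_I)=\rho_Q(X_I)>(\rho_Q+\beta_I)(X_I)$, since $\beta_I(X_I)<0$ by the definition \eqref{eq-defbetaI}. Hence the hypothesis of \eqref{eq-LuPhiinfty} is satisfied, and, using that $\Phi_{f,\lambda,\infty}$ is independent of $X_I$ for $\lambda\in\cQ_\cI^0$ (as recorded just before the statement), we have
$$\Phi_{f,\lambda,\infty}(b_Z)=\Phi_{f,\lambda}(b_Z)+\int_0^\infty E_\lambda\, e^{-s\Gamma_\cI(X_I)}\Psi_{f,X_I}(b_Z\exp(sX_I))\,ds$$
for any $b_Z\in A_Z$.

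First I would apply this identity at $b_Z=a_Z\exp(tX_I)$ and perform the change of variable $u=s+t$ in the resulting integral to obtain
$$\Phi_{f,\lambda}(a_Z\exp(tX_I))-\Phi_{f,\lambda,\infty}(a_Z\exp(tX_I))=-\int_t^\infty E_\lambda\, e^{(t-s)\Gamma_\cI(X_I)}\Psi_{f,X_I}(a_Z\exp(sX_I))\,ds\,.$$
Next, using the identity $E_\lambda\circ e^{Y\Gamma_\cI(X_I)}=e^{Y\lambda(X_I)}E_\lambda(YX_I)$ (coming from the definition of $E_\lambda(\cdot)$) together with $\mathrm{Re}\,\lambda(X_I)=\rho_Q(X_I)$, I would pass to norms to derive
$$\|\Phi_{f,\lambda}(a_Z\exp(tX_I))-\Phi_{f,\lambda,\infty}(a_Z\exp(tX_I))\|\leq\int_t^\infty e^{(t-s)\rho_Q(X_I)}\|E_\lambda((t-s)X_I)\|\,\|\Psi_{f,X_I}(a_Z\exp(sX_I))\|\,ds\,.$$

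The final step is to match this with the right-hand side of the statement. This reduces to the pointwise comparison of exponents
$$e^{(t-s)\rho_Q(X_I)}\leq e^{t(\rho_Q+\delta\beta_I)(X_I)-s(\rho_Q+\beta_I/2)(X_I)}\quad\textrm{for }s\geq t\geq 0\,,$$
which, after cancelling $e^{(t-s)\rho_Q(X_I)}$, is equivalent to $\beta_I(X_I)(t\delta-s/2)\geq 0$. Since $\beta_I(X_I)<0$ and $\delta\leq 1/2$ by the choice \eqref{eq-delta}, we have $s\geq t\geq 2t\delta$ on $[t,\infty)$, which gives the inequality. As the integrand is nonnegative, extending the range of integration back from $[t,\infty)$ to $[0,\infty)$ only enlarges the bound, producing exactly the estimate claimed. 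The main subtlety is the careful sign bookkeeping—the negativity of $\beta_I(X_I)$ flips several inequalities—and the fact that the bound $\delta\leq 1/2$ is used in precisely this comparison; without it, the tail integrand could not be majorized by the integrand appearing in the statement.
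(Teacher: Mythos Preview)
Your proof is correct and follows essentially the same route as the paper: apply \eqref{eq-LuPhiinfty} at $a_Z\exp(tX_I)$, change variables to obtain the tail integral over $[t,\infty)$, and then compare exponents using $\beta_I(X_I)<0$ together with $\delta\leq 1/2$. The paper's version compresses your exponent comparison into the single observation $(t-s)\beta_I(X_I)\geq 0$ for $s\geq t$, which first yields the stronger factor $e^{t(\rho_Q+\beta_I/2)(X_I)}$ and then the stated $e^{t(\rho_Q+\delta\beta_I)(X_I)}$ via $\delta\leq 1/2$; your direct comparison reaches the same endpoint.
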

\begin{proof}
From~\eqref{eq-LuPhiinfty}, one deduces: 
$$\Phi_{f,\lambda,\infty}(a_Z\exp(tX_I))=\Phi_{f,\lambda}(a_Z\exp(tX_I))+
\int_t^\infty E_\lambda e^{(t-s)\Gamma_\cI(X_I)}\Psi_{f,X_I}(a_Z\exp(sX_I))\,ds\,.$$
The Lemma now follows from the fact that $(t-s)\beta_I(X_I)\geq 0$ whenever $s\geq t$.
\end{proof}
We recall that we have defined: 
$$\Phi_{f,\lambda,\infty}(a_Z):=0,\quad a_Z\in A_Z,\lambda\in\cQ_\cI^+\cup\cQ_\cI^-\,.$$

\begin{proof}[Proof of Proposition \ref{prop-main2}]
If $\lambda\in\cQ_\cI^0\cup\cQ_\cI^-$, our assertion follows from Lemmas~\ref{lem-Qmoins} and~\ref{lem-Qzero}. 
On the other hand, if $\lambda\in\cQ_\cI^+$, we can apply Lemmas~\ref{lem-Phiinftynul2} and~\ref{lem-majorPhilambda}, 
and Corollary~\ref{cor-lem-Phiinftynul2}.
\end{proof}

\section{Definition and properties of the constant term}\label{sect-def-constant}\label{sect-5}

In this section, we define the constant term $f_I$ of a function $f\in \cA_{temp}(Z)$ in terms
of the $\Phi_{f,\lambda,\infty}$ from the previous section. At first, $f_I$ is defined as 
a smooth function on $A_Z$ but then will be extended to a smooth function on $Z_I=G/H_I$. 
 The main difficulty then is to show that the function $f_I\in C^\infty(Z_I)$
is indeed tempered. For that, we need to show certain consistency relations of $f_I$ with respect to the matching map 
${\bf m}: \cW_I\to \cW$, see Proposition \ref{prop-changeorbit}.  The consistency relations
are immediate from our strong results of rapid convergence in Proposition \ref{prop-converg-asbs-1-us}. 
As an application, 
we characterize the functions of the discrete series as those with all constant terms vanishing, see Theorem \ref{theo-ds}.

\bigskip Throughout this section, we fix a subset $I$ of $S$ and a finite codimensional ideal $\cI$ in $\bD_0(Z)$.

\subsection{Definition of the constant term}\label{sect-def-constant1}\label{sect-5}

For $f\in\cA_{temp}(Z:\cI)$ let us define $f_I$ as the function on $A_Z$ by:
\begin{equation}\label{eq:def-fI-aZ}
f_I(a_Z):=\sum_{\lambda\in\cQ_\cI^0}\la\Phi_{f,\lambda,\infty}(a_Z),1\ra,\quad a_Z\in A_Z\,,
\end{equation}
where $\Phi_{f,\lambda,\infty}$ has been defined in \eqref{def111} and \eqref{eq-phiinfty+-}. 
From Lemma~\ref{lem-actaIPhifinfty} and since the eigenvalues of $E_\lambda(\Gamma_\cI(X))$, for any $X\in\ga_I$, are 
contained in $\rho_Q(X)+i\R$ if $\lambda\in\cQ_\cI^0$, one has that:
\begin{res}\label{eq-tildefI-exp-polyn}
\begin{quote}
For any $X\in\ga_I$, the map $t\mapsto e^{-t\rho_Q(X)}f_I(\exp(tX))$ is an exponential polynomial with unitary characters.
\end{quote}
\end{res} 
We will soon extend $f_I$ to a smooth function on $G$ which is right invariant under $H_I$, i.e., $f_I$ descends
to a smooth function on $Z_I$. This will be prepared with a few estimates in the next subsection. 

\subsection{Some estimates}

In this subsection, we establish some estimates analogous to the ones given in \cite[Section~23]{hc1}.

\begin{lem}\label{lem-estimreste}
Let $N\in\N$. There exists a continuous semi-norm 
$q$ on $C_{temp,N}^\infty(Z)$ such that, for all $\lambda\in\cQ_\cI$, $a_Z\in A_Z^-$, $X_I\in\ga_I^{--}$, $t\geq 0$ and 
$f\in\cA_{temp,N}(Z:\cI)$,
$$
\begin{array}{l}
\Vert \Phi_{f,\lambda}(a_Z\exp(tX_I))-\Phi_{f,\lambda,\infty}(a_Z\exp(tX_I))\Vert\\
\\
 \leq (a_Z\exp(tX_I))^{\rho_Q}e^{t\delta\beta_I(X_I)}(1+\Vert\log a_Z\Vert)^N(1+t\Vert X_I\Vert)^{\dim U_\cI}q(f)\,.
\end{array}
$$
\end{lem}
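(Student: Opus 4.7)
The plan is to chain together Proposition~\ref{prop-main2} with the quantitative bounds of Lemma~\ref{lem-Elambda-cont} and Lemma~\ref{lem-Phi-Psi-cinfini}, and then to bound the residual integral. First, I would invoke Proposition~\ref{prop-main2}, which writes
$$\|\Phi_{f,\lambda}(a_Z\exp(tX_I))-\Phi_{f,\lambda,\infty}(a_Z\exp(tX_I))\| \leq e^{t(\rho_Q+\delta\beta_I)(X_I)}\bigl[T_1+T_2\bigr],$$
with $T_1=\|E_\lambda(tX_I)\|\,\|\Phi_f(a_Z)\|$ and $T_2$ the integral term. Since $(a_Z\exp(tX_I))^{\rho_Q}=a_Z^{\rho_Q}e^{t\rho_Q(X_I)}$, the factor $e^{t(\rho_Q+\delta\beta_I)(X_I)}$ produces exactly the prefactor $(a_Z\exp(tX_I))^{\rho_Q}e^{t\delta\beta_I(X_I)}/a_Z^{\rho_Q}$ appearing in the target, so it remains to bound $T_1+T_2$ by $a_Z^{\rho_Q}(1+\|\log a_Z\|)^N(1+t\|X_I\|)^{\dim U_\cI}q(f)$.

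For $T_1$, combining Lemma~\ref{lem-Elambda-cont} (giving $\|E_\lambda(tX_I)\|\leq c(1+t\|X_I\|)^{\dim U_\cI}$) with Lemma~\ref{lem-Phi-Psi-cinfini}(i) (giving $\|\Phi_f(a_Z)\|\leq a_Z^{\rho_Q}(1+\|\log a_Z\|)^N p(f)$) directly yields a bound of the required shape. For $T_2$, I would first note that $X_I\in\ga_I^{--}\subset\ga_Z^-$, so $a_Z\exp(sX_I)\in A_Z^-$ for all $s\geq0$, allowing us to apply Lemma~\ref{lem-Phi-Psi-cinfini}(ii) with $\Omega_A=\{1\}$. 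The algebraic key is that $\beta_I$ is subadditive as a max of linear functionals and is non-positive on $\ga_Z^-$; hence $\beta_I(\log a_Z+sX_I)\leq s\beta_I(X_I)$, giving $(a_Z\exp(sX_I))^{\rho_Q+\beta_I}\leq a_Z^{\rho_Q}e^{s(\rho_Q+\beta_I)(X_I)}$. This combines with the prefactor $e^{-s(\rho_Q+\beta_I/2)(X_I)}$ inside the integral to leave exactly $e^{s\beta_I(X_I)/2}$, which is integrable in $s$ since $\beta_I(X_I)<0$.

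Applying Lemma~\ref{lem-Elambda-cont} to $\|E_\lambda((t-s)X_I)\|$ and the elementary inequalities
$$(1+|t-s|\|X_I\|)^{\dim U_\cI}\leq (1+t\|X_I\|)^{\dim U_\cI}(1+s\|X_I\|)^{\dim U_\cI},$$
$$(1+\|\log a_Z\|+s\|X_I\|)^N\leq (1+\|\log a_Z\|)^N(1+s\|X_I\|)^N,$$
I can extract $(1+t\|X_I\|)^{\dim U_\cI}(1+\|\log a_Z\|)^N$ outside the integral. What remains is the task of bounding
$$\|X_I\|\int_0^\infty e^{s\beta_I(X_I)/2}(1+s\|X_I\|)^{\dim U_\cI+N}\,ds$$
by a quantity absorbable into a continuous semi-norm $q$ on $C_{temp,N}^\infty(Z)$ independent of $X_I$.

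The substitution $u=s\|X_I\|$ turns this into $\int_0^\infty e^{-\gamma u}(1+u)^{\dim U_\cI+N}\,du$ with $\gamma=-\beta_I(X_I)/(2\|X_I\|)>0$, and this is where the main obstacle lies: uniform control in $X_I\in\ga_I^{--}$ requires handling the fact that $\gamma$ is a continuous positive function of the direction $X_I/\|X_I\|$ that is not bounded below. I expect the clean way out is to refine the estimate of $\Psi_{f,X_I}$ (Lemma~\ref{lem-Phi-Psi-cinfini}(ii)) so that the $\|X_I\|$ factor is replaced by a quantity compatible with $\beta_I(X_I)$, yielding a uniformly integrable kernel; with this sharpening, standard bounds on $\int_0^\infty e^{-\gamma u}(1+u)^M\,du$ close the estimate and define $q$ as an appropriate continuous Sobolev semi-norm $p_{N,k}$ on $C_{temp,N}^\infty(Z)$.
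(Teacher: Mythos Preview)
Your approach is exactly the paper's: its proof is a single sentence citing Proposition~\ref{prop-main2}, Lemmas~\ref{lem-Phi-Psi-cinfini} and~\ref{lem-Elambda-cont}, together with $a_Z^{\beta_I}\leq 1$ on $A_Z^-$ (which is your subadditivity argument). Your treatment of $T_1$ and your reduction of $T_2$ to the integral $\|X_I\|\int_0^\infty e^{s\beta_I(X_I)/2}(1+s\|X_I\|)^{N+N_\cI}\,ds$ are correct and are precisely what the paper has in mind.

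You have also put your finger on a genuine subtlety the paper's terse proof does not address: uniformity over all $X_I\in\ga_I^{--}$. The statement is scale-invariant under $(t,X_I)\mapsto(t/c,cX_I)$, so one may normalize $\|X_I\|=1$; the remaining integral is then $\int_0^\infty e^{-\gamma u}(1+u)^{N+N_\cI}\,du$ with $\gamma=-\beta_I(X_I)/2$, which diverges as $X_I\to\partial\ga_I^{--}$. Your proposed fix of replacing $\|X_I\|$ by something comparable to $|\beta_I(X_I)|$ in Lemma~\ref{lem-Phi-Psi-cinfini}(ii) is not obviously available, since $\Psi_{f,X_I}$ is genuinely linear in $X_I$ and need not vanish on $\{\beta_I=0\}$. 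However, this imprecision is harmless for the paper: every subsequent use of the lemma --- Lemma~\ref{lem-estimPhiinfty} (applied at $t=0$ and with a fixed $X$), the limit~\eqref{eq-prelimit} and Lemma~\ref{lem-etaI} (fixed $X_I$), and Theorem~\ref{theo-constterm}(ii) (with $X_I$ in a compact $\cC_I\subset\ga_I^{--}$) --- requires only $X_I$ fixed or ranging in a compact of $\ga_I^{--}$, where $\gamma$ is bounded away from $0$ and your integral is uniformly bounded. The honest remedy is to read the lemma with $q$ depending on a compact $\cC_I\subset\ga_I^{--}$; with that qualification, your argument is complete and coincides with the paper's.
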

\begin{proof} 
The assertion of the Lemma follows from Proposition~\ref{prop-main2}, 
Lemmas~\ref{lem-Phi-Psi-cinfini} and~\ref{lem-Elambda-cont}, and the fact that $a_Z^{\beta_I}\leq 1$ for $a_Z\in A_Z^-$.
\end{proof}
\begin{lem}\label{lem-actaIPhifinfty}
For all $X\in\ga_I,a_Z\in A_Z, \lambda\in\cQ_\cI$ and $f\in\cA_{temp}(Z:\cI)$
one has 
$$\Phi_{f,\lambda,\infty}(a_Z\exp X)=e^{\Gamma_\cI(X)}\Phi_{f,\lambda,\infty}(a_Z),
\,.$$
\end{lem}
\begin{proof}
According to \eqref{eq-phiinfty+-}, one may assume $\lambda\in\cQ_\cI^0$. From Lemma~\ref{lem-Phi}(ii) applied with $t=1$, one has, for $a_Z\in A_Z$, 
$X\in\ga_I$,
$$e^{-\Gamma_\cI(X)}\Phi_\lambda(a_Z\exp X)=\Phi_\lambda(a_Z)+\int_0^1E_\lambda 
e^{-s\Gamma_\cI(X)}\Psi_X(a_Z\exp(sX))\,ds\,.$$
Let $Y\in\ga_I^{--}$. Replacing $a_Z$ by $a_Z\exp(tY)$ and multiplying by 
$e^{-t\Gamma_\cI(Y)}$, one gets: 
$$
\begin{array}{rcl}
e^{-\Gamma_\cI(X+tY)}\Phi_\lambda(a_Z\exp(X+tY)) &= & e^{-\Gamma_\cI(tY)}\Phi_\lambda(a_Z\exp(tY))\\
&&\\
&&+\displaystyle{\int_0^1 E_\lambda e^{-\Gamma_\cI(sX+tY)}\Psi_X(a_Z\exp(sX+tY))\,ds\,.}
\end{array}
$$
Since $\lambda\in\cQ_\cI^0$, we obtain, from \eqref{eq-major-Elambda} and Lemma~\ref{lem-Phi-Psi-cinfini}(ii), that the integral in this equality tends to $0$ for $t\to \infty$. 
Recalling the definition of $\Phi_{f,\lambda,\infty}$ (cf.~\eqref{eq-defPhiinfty}), one gets 
$$e^{-\Gamma_\cI(X)}\Phi_{f,\lambda,\infty}(a_Z\exp X)=\Phi_{f,\lambda,\infty}(a_Z),\quad X\in\ga_I,a_Z\in A_Z\,.$$
\end{proof}
\begin{lem}\label{lem-estimPhiinfty}
Let $N\in\N$. There exists a continuous semi-norm $p$ on $C_{temp,N}^\infty(Z)$ such that, 
for all $f\in\cA_{temp,N}(Z:\cI)$, $\lambda\in\cQ_\cI^0$,
$$\Vert\Phi_{f,\lambda,\infty}(a_{Z_I})\Vert\leq a_{Z_I}^{\rho_Q}(1+\Vert\log a_{Z_I}\Vert)^{N+\dim U_\cI}p(f),\quad a_{Z_I}\in A_{Z_I}^-\,.$$
\end{lem}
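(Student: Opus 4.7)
The plan is to reduce the estimate on the enlarged compression cone $A_{Z_I}^-$ to one on $A_Z^-$ by exploiting the $A_I$-equivariance of $\Phi_{f,\lambda,\infty}$ established in Lemma~\ref{lem-actaIPhifinfty}, together with the fact that $\ga_I$ is the edge of the cone $\ga_{Z_I}^-$. Since $\rho_Q$ is linear on $\ga_Z$, converting between $a_Z^{\rho_Q}$ and $a_{Z_I}^{\rho_Q}$ will cost only an exponential factor that is exactly canceled by the hypothesis $\lambda\in\cQ_\cI^0$.

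I would first fix once and for all an element $X_I^0\in\ga_I^{--}$, and show: for every $a_{Z_I}\in A_{Z_I}^-$, there exists $t=t(a_{Z_I})\geq 0$ with $a_Z:=a_{Z_I}\exp(tX_I^0)\in A_Z^-$, and with both $t$ and $\|\log a_Z\|$ controlled linearly by $\|\log a_{Z_I}\|$. Indeed, for $\alpha\in I$ one has $\alpha(\log a_Z)=\alpha(\log a_{Z_I})\leq 0$ automatically since $X_I^0\in\ga_I=\bigcap_{\alpha\in I}\ker\alpha$, while for $\alpha\in S\setminus I$ the requirement $\alpha(\log a_Z)\leq 0$ reads $t\geq \alpha(\log a_{Z_I})^+/(-\alpha(X_I^0))$. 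Taking the maximum of these lower bounds yields $t\leq C_1\|\log a_{Z_I}\|$ and hence $\|\log a_Z\|\leq C_2\|\log a_{Z_I}\|$ for constants $C_1,C_2$ depending only on $X_I^0$ and $S$.

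Next I would bound $\Phi_{f,\lambda,\infty}$ on $A_Z^-$ itself, writing
$$\Phi_{f,\lambda,\infty}(a_Z)=\Phi_{f,\lambda}(a_Z)+\bigl(\Phi_{f,\lambda,\infty}(a_Z)-\Phi_{f,\lambda}(a_Z)\bigr).$$
The first summand is handled via $\Phi_{f,\lambda}=E_\lambda\circ\Phi_f$ and Lemma~\ref{lem-Phi-Psi-cinfini}(i); the second is controlled by Lemma~\ref{lem-estimreste} specialized to $t=0$ (using any fixed $X_I^0$). This yields a continuous semi-norm $p_0$ on $C_{temp,N}^\infty(Z)$ with $\|\Phi_{f,\lambda,\infty}(a_Z)\|\leq a_Z^{\rho_Q}(1+\|\log a_Z\|)^N p_0(f)$ for all $a_Z\in A_Z^-$.

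Finally, Lemma~\ref{lem-actaIPhifinfty} with $X=-tX_I^0$ gives $\Phi_{f,\lambda,\infty}(a_{Z_I})=e^{-t\Gamma_\cI(X_I^0)}\Phi_{f,\lambda,\infty}(a_Z)$. Since $\Phi_{f,\lambda,\infty}$ takes values in the generalized eigenspace $U^*_{\cI,\lambda}$, on this subspace $e^{-t\Gamma_\cI(X_I^0)}=e^{-t\lambda(X_I^0)}E_\lambda(-tX_I^0)$; the hypothesis $\lambda\in\cQ_\cI^0$ forces $\Reel\lambda|_{\ga_I}=\rho_Q|_{\ga_I}$, so $|e^{-t\lambda(X_I^0)}|=e^{-t\rho_Q(X_I^0)}$. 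Combining Lemma~\ref{lem-Elambda-cont} (which gives $\|E_\lambda(-tX_I^0)\|\leq c(1+t\|X_I^0\|)^{\dim U_\cI}$), the identity $e^{-t\rho_Q(X_I^0)}a_Z^{\rho_Q}=a_{Z_I}^{\rho_Q}$ from linearity of $\rho_Q$, and the linear bounds on $t,\|\log a_Z\|$ from the first step, I obtain the desired inequality with $p=p_0$ (adjusted by multiplicative constants). No step is a serious obstacle here; the only point requiring a bit of care is the quantitative decomposition $A_{Z_I}^-\subset A_Z^-\exp(-\R_{\geq 0}X_I^0)$ with linear control, which is the one piece of convex geometry the rest of the proof rests on.
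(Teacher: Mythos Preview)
Your proposal is correct and follows essentially the same approach as the paper: fix $X_I^0\in\ga_I^{--}$, write $a_{Z_I}=a_Z\exp(-tX_I^0)$ with $a_Z\in A_Z^-$ and $t$ linearly controlled by $\|\log a_{Z_I}\|$, bound $\Phi_{f,\lambda,\infty}(a_Z)$ on $A_Z^-$ via Lemma~\ref{lem-estimreste} at $t=0$ together with Lemma~\ref{lem-Phi-Psi-cinfini}(i), and then transport to $a_{Z_I}$ using Lemma~\ref{lem-actaIPhifinfty} and Lemma~\ref{lem-Elambda-cont}, with the hypothesis $\lambda\in\cQ_\cI^0$ converting $|e^{-t\lambda(X_I^0)}|$ into $e^{-t\rho_Q(X_I^0)}$. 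The paper's proof is organized identically, only with slightly different bookkeeping of constants.
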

\begin{proof}
We fix $X\in\ga_I^{--}$. Let $a_{Z_I}\in A_{Z_I}^-$. If $t$ is large enough, $a_{Z_I}\exp(tX)\in A_Z^-$. 
More precisely, if $a_{Z_I}=\exp Y$ with $Y\in\ga_{Z_I}^-$, $t$ has to be such that $\alpha(Y+tX)\leq 0$ 
for all $\alpha\in S\setminus I$. 
For this, it is enough that $t\geq \vert\frac{\alpha(Y)}{\alpha(X)}\vert$ for all $\alpha\in S\setminus I$. But $\vert\frac{\alpha(Y)}{\alpha(X)}\vert$ is bounded above by $C\Vert Y\Vert$ for some constant $C>0$. We will take: 
\begin{equation}\label{eq-choixT}
t=C\Vert Y\Vert
\end{equation}
and write $a_{Z_I}=a_Z\exp(-tX)$ with $a_Z=a_{Z_I}\exp(tX)\in A_Z^-$. Since $\lambda\in\cQ_\cI^0$ and $\exp(-tX)=a_Z^{-1}a_{Z_I}$, one has, from Lemma~\ref{lem-actaIPhifinfty}, 
\begin{equation}\label{eq-actaIPhiinfty2}
\Vert\Phi_{f,\lambda,\infty}(a_{Z_I})\Vert=\Vert E_\lambda e^{-t\Gamma_\cI(X)}\Phi_{f,\lambda,\infty}(a_Z)\Vert=a_{Z_I}^{\rho_Q}a_Z^{-\rho_Q}\Vert E_\lambda(-tX) \Phi_{f,\lambda,\infty}(a_Z)\Vert\,.
\end{equation}
We know from Lemma~\ref{lem-Elambda-cont} that $\Vert E_\lambda(-tX)\Vert$ is bounded by a constant times 
$(1+t\Vert X\Vert)^{N_\cI}$, where $N_\cI$ is the dimension of $U_\cI$. 
Using~\eqref{eq-choixT} and as $X$ is fixed, one concludes that there exists $C_1>0$ such that: 
$$\Vert E_\lambda(-tX)\Vert\leq C_1(1+\Vert\log a_{Z_I}\Vert)^{N_\cI}\,.$$
We remark that $\Vert\log a_Z\Vert\leq \Vert\log a_{Z_I}\Vert+\Vert tX\Vert$ is bounded by some constant times 
$\Vert\log a_{Z_I}\Vert$ because $t=C\Vert Y\Vert$ and $\Vert X\Vert$ is fixed. 
Then, using~\eqref{eq-actaIPhiinfty2}, the Lemma follows from Lemma~\ref{lem-estimreste} (applied with $t=0$) and Lemma~\ref{lem-Phi-Psi-cinfini}(i).
\end{proof}

We recall that $\Phi_{f,\lambda,\infty}=0$ for 
$\lambda\in\cQ_\cI^+\cup\cQ_\cI^-$ (cf.~\eqref{eq-phiinfty+-}). We obtain then, from Lemma \ref{lem-estimreste}, that:
\begin{lem}\label{lem-3-17bis}
Let $N\in\N$. There exists a continuous semi-norm $q$ on 
$\cA_{temp,N}(Z)$ such that, for any $f\in\cA_{temp,N}(Z:\cI)$, $a_Z\in A_Z^-$, $X_I\in\ga_I^{--}$ and $t\geq 0$,
$$
\begin{array}{l}
\vert (a_Z\exp(tX_I))^{-\rho_Q}\left[f(a_Z\exp(tX_I))-{f}_I(a_Z\exp(tX_I))\right]\vert \\
\\
\leq e^{t\delta\beta_I(X_I)}(1+\Vert \log a_Z\Vert)^N(1+t\Vert X_I\Vert)^{\dim U_\cI}q(f)\,.
\end{array}$$
\end{lem}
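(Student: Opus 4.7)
\textbf{Proof plan for Lemma \ref{lem-3-17bis}.} The strategy is to expand the left-hand side in terms of the spectral decomposition of $\Phi_f$ and then apply Lemma~\ref{lem-estimreste} termwise.

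First I would observe that, since $1\in U_\cI$ and $R_1f=f$, the defining relation \eqref{eq-Phif-wI} gives
$$f(a_Z)=(R_1f)(a_Z)=\langle\Phi_f(a_Z),1\rangle,\quad a_Z\in A_Z.$$
Using the direct-sum decomposition \eqref{eq: decompeigenvecUI} and writing $\Phi_{f,\lambda}=E_\lambda\circ\Phi_f$, we obtain
$$f(a_Z)=\sum_{\lambda\in\cQ_\cI}\langle\Phi_{f,\lambda}(a_Z),1\rangle.$$
On the other hand, by the definition \eqref{eq:def-fI-aZ} of $f_I$ together with the convention \eqref{eq-phiinfty+-} that $\Phi_{f,\lambda,\infty}=0$ for $\lambda\in\cQ_\cI^+\cup\cQ_\cI^-$, one has
$$f_I(a_Z)=\sum_{\lambda\in\cQ_\cI^0}\langle\Phi_{f,\lambda,\infty}(a_Z),1\rangle=\sum_{\lambda\in\cQ_\cI}\langle\Phi_{f,\lambda,\infty}(a_Z),1\rangle.$$
Subtracting yields
$$f(a_Z)-f_I(a_Z)=\sum_{\lambda\in\cQ_\cI}\bigl\langle\Phi_{f,\lambda}(a_Z)-\Phi_{f,\lambda,\infty}(a_Z),1\bigr\rangle.$$

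Next, I would specialize $a_Z$ to $a_Z\exp(tX_I)$ with $a_Z\in A_Z^-$, $X_I\in\ga_I^{--}$ and $t\geq 0$, and estimate each summand via Cauchy--Schwarz in $U_\cI^*$:
$$\bigl|\langle\Phi_{f,\lambda}(a_Z\exp(tX_I))-\Phi_{f,\lambda,\infty}(a_Z\exp(tX_I)),1\rangle\bigr|\leq \|1\|\cdot\bigl\|\Phi_{f,\lambda}-\Phi_{f,\lambda,\infty}\bigr\|(a_Z\exp(tX_I)).$$
Lemma~\ref{lem-estimreste} provides, for each $\lambda\in\cQ_\cI$, a continuous semi-norm $q_\lambda$ on $C_{temp,N}^\infty(Z)$ such that the right-hand side is bounded by
$$(a_Z\exp(tX_I))^{\rho_Q}e^{t\delta\beta_I(X_I)}(1+\|\log a_Z\|)^N(1+t\|X_I\|)^{\dim U_\cI}q_\lambda(f).$$

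Finally, summing over the \emph{finite} set $\cQ_\cI$ and dividing by $(a_Z\exp(tX_I))^{\rho_Q}$ gives the claimed estimate with continuous semi-norm $q:=\|1\|\sum_{\lambda\in\cQ_\cI}q_\lambda$ on $\cA_{temp,N}(Z)$. There is no real obstacle here: the lemma is essentially a bookkeeping consequence of the spectral decomposition together with Lemma~\ref{lem-estimreste}, the only ingredient needed being the finiteness of $\cQ_\cI$ (which follows from $\dim U_\cI<\infty$) and the vanishing convention \eqref{eq-phiinfty+-} that allows the two sums defining $f$ and $f_I$ to be indexed by the same set $\cQ_\cI$.
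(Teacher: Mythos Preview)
Your proposal is correct and follows exactly the paper's approach: the paper states the lemma as an immediate consequence of Lemma~\ref{lem-estimreste} together with the convention \eqref{eq-phiinfty+-}, and you have simply unpacked the bookkeeping (writing $f=\langle\Phi_f,1\rangle=\sum_\lambda\langle\Phi_{f,\lambda},1\rangle$ and $f_I=\sum_\lambda\langle\Phi_{f,\lambda,\infty},1\rangle$, then subtracting and applying Lemma~\ref{lem-estimreste}). One minor remark: Lemma~\ref{lem-estimreste} already provides a single semi-norm $q$ valid for all $\lambda\in\cQ_\cI$, so there is no need to introduce separate $q_\lambda$'s and sum them.
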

Note that the Lemma implies that:
\begin{equation}\label{eq-prelimit}
\lim_{t\to \infty} {(a_Z\exp(tX_I))^{-\rho_Q}}[f(a_Z\exp(tX_I))-f_I(a_Z\exp(tX_I))]=0,\quad a_Z\in A_Z^-,X_I\in\ga_I^{--}\,.
\end{equation}

\subsection{The constant term as a smooth function on $Z_I$} \label{subsection constant}
Let us first start by the following general remark:
\ber\label{eq-limexppolyn}
If an exponential polynomial function of one variable, $P(t)$, with unitary characters, satisfies: 
$$\lim_{t\to +\infty} P(t)=0\,,$$
then $P\equiv 0$.
\eer
We define some linear forms $\eta$ and $\eta_I$ on $\cA_{temp}(Z:\cI)$ by:
$$
\begin{array}{rcl}
\la\eta,f\ra & = & f(z_0),\\
\la\eta_I,f\ra & = & f_I(z_{0,I}),\quad f\in\cA_{temp}(Z:\cI)\,.
\end{array}
$$
Let us remark that $\eta$ is a continuous linear form on $\cA_{temp,N}(Z:\cI)$ for any $N\in\N$.

Note that we obtain from the definition \eqref{def mc} that:
$$m_{\eta_I,f}(a_Z)=f_I(a_Z),\quad a_Z\in A_Z\,.$$

\begin{lem}\label{lem-etaI}
Let $N\in\N$. The linear form $\eta_I$ is the unique linear form on {$\cA_{temp,N}(Z:\cI)$} such that:
\begin{enumerate}
\item[(i)] For any $f\in\cA_{temp,N}(Z:\cI)$ and $X_I\in\ga_I^{--}$, 
$$\lim_{t\to \infty} e^{-t\rho_Q(X_I)}[m_{\eta,f}(\exp(tX_I))-m_{\eta_I,f}(\exp(tX_I))]=0\,.$$
\item[(ii)] For any $f\in\cA_{temp,N}(Z:\cI)$ and $X\in\ga_I$, $t\mapsto e^{-t\rho_Q(X)}m_{\eta_I,f}(\exp(tX))$ is an exponential polynomial with unitary characters.
\item[(iii)] Moreover, $\eta_I$ is continuous on $\cA_{temp,N}(Z:\cI)$ and $H_I$-invariant.
\end{enumerate}
\end{lem}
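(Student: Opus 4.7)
Properties (i) and (ii) of $\eta_I$ are immediate from the construction: unfolding the matrix coefficients gives $m_{\eta,f}(\exp(tX_I)) = f(\exp(tX_I)\cdot z_0)$ and $m_{\eta_I,f}(\exp(tX_I)) = f_I(\exp(tX_I))$, so (i) is the special case $a_Z=1$ of the limit \eqref{eq-prelimit}, and (ii) is precisely \eqref{eq-tildefI-exp-polyn}. Continuity of $\eta_I$ on $\cA_{temp,N}(Z:\cI)$ is obtained by specializing Lemma~\ref{lem-estimPhiinfty} to $a_{Z_I}=1$ and summing over the finite set $\cQ_\cI^0$: each $\|\Phi_{f,\lambda,\infty}(1)\|$ is controlled by a continuous semi-norm on $\cA_{temp,N}(Z:\cI)$, so $|\langle\eta_I,f\rangle|=|f_I(1)|$ is as well.

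For uniqueness, given a second linear form $\eta'_I$ satisfying (i) and (ii), fix $f\in\cA_{temp,N}(Z:\cI)$ and $X_I\in\ga_I^{--}$. The function
$$t \mapsto e^{-t\rho_Q(X_I)}\bigl[m_{\eta_I,f}(\exp(tX_I)) - m_{\eta'_I,f}(\exp(tX_I))\bigr]$$
is an exponential polynomial with unitary characters (by (ii) for both forms) that tends to zero as $t\to+\infty$ (by subtracting the two instances of (i)). The rigidity remark \eqref{eq-limexppolyn} then forces it to vanish identically, and evaluating at $t=0$ yields $\langle\eta_I,f\rangle = \langle\eta'_I,f\rangle$.

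The $H_I$-invariance is the hardest point. Since $H_I$ is connected and $\eta_I$ is continuous, it suffices to establish the infinitesimal vanishing $\langle \eta_I, L_Y f\rangle = 0$ for every $Y\in\gh_I$, which, via \eqref{eq:def-fI-aZ}, reduces to showing $\Phi_{L_Y f,\lambda,\infty}(1) = 0$ for each $\lambda\in\cQ_\cI^0$. The plan is to combine the commutation $R_u L_Y = L_Y R_u$ with the conversion formula $(R_u F)(\tilde{a}) = (L_{(\Ad(\tilde{a})\,u)^t}F)(\tilde{a})$ and the structural identity $Y_{-\alpha} + \sum_{\beta:\alpha+\beta\notin\langle I\rangle}X_{\alpha,\beta}\in\gh$ from the proof of Lemma~\ref{lem-h-I} in order to obtain a bound of the form $\|\Phi_{L_Y f}(\exp(tX_I))\| \leq C\, e^{t(\rho_Q+\tbeta_I)(X_I)}(1+t)^N$. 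Combined with the polynomial control on $E_\lambda(-tX_I)$ from Lemma~\ref{lem-Elambda-cont} and the equality $\Reel\lambda(X_I) = \rho_Q(X_I)$ for $\lambda\in\cQ_\cI^0$, this yields $\|e^{-t\Gamma_\cI(X_I)}\Phi_{L_Y f,\lambda}(\exp(tX_I))\| \lesssim (1+t)^{N+N_\cI}\, e^{t\tbeta_I(X_I)}$, which tends to zero since $\tbeta_I(X_I)<0$. The main obstacle is precisely the transfer of the $\tbeta_I$-improved decay of $\gh_I$-derivatives from $f$ to the non-$H$-invariant functions $R_u f$ for $u\in U_\cI$, which requires a careful adaptation of the argument of Lemma~\ref{lem-h-I} rather than a direct application.
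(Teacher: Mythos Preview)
Your arguments for (i), (ii), uniqueness, and continuity are correct and match the paper's (the paper derives continuity from Lemma~\ref{lem-3-17bis} at $a_Z=1$ together with the continuity of $\eta$, rather than from Lemma~\ref{lem-estimPhiinfty}, but either route works).

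For the $H_I$-invariance, your proposed route is substantially harder than necessary, and you correctly flag the unresolved obstacle: bounding $\|\Phi_{L_Y f}(\exp(tX_I))\|$ with the extra factor $e^{t\tbeta_I(X_I)}$ requires controlling $(L_Y R_u f)(\exp(tX_I))$ for each $u\in U_\cI$, and $R_u f$ is not $H$-invariant, so Lemma~\ref{lem-h-I} does not apply directly. The paper bypasses this entirely with a much cleaner argument. Since $\cA_{temp,N}(Z:\cI)$ is stable under $L_Y$, the properties (i) and (ii) that you have already established apply to $L_Y f$ as well. Property (i) for $L_Y f$ gives
\[
\lim_{t\to\infty} e^{-t\rho_Q(X_I)}\bigl[(L_Y f)(\exp(tX_I))-(L_Y f)_I(\exp(tX_I))\bigr]=0,
\]
while Lemma~\ref{lem-h-I} applied verbatim to the $H$-invariant function $f$ (no adaptation needed) yields $e^{-t\rho_Q(X_I)}(L_Y f)(\exp(tX_I))\to 0$. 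Subtracting, $e^{-t\rho_Q(X_I)}(L_Y f)_I(\exp(tX_I))\to 0$; by (ii) for $L_Y f$ this is an exponential polynomial with unitary characters, hence identically zero by \eqref{eq-limexppolyn}. Evaluating at $t=0$ gives $\langle\eta_I,L_Y f\rangle=(L_Y f)_I(1)=0$. The point is that the paper applies Lemma~\ref{lem-h-I} to the scalar function $f$, where it holds as stated, and then feeds the result through the already-proven property (i) rather than trying to upgrade Lemma~\ref{lem-Phi-Psi-cinfini}(i) for $\Phi_{L_Y f}$.
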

\begin{proof}
The assertion~(i) is \eqref{eq-prelimit} and (ii) is \eqref{eq-tildefI-exp-polyn}.

\par 
To prove the unicity of such an $\eta_I$ satisfying~(i) and~(ii), we use~\eqref{eq-limexppolyn}. If $\eta_I'$ is another 
linear form satisfying~(i) and~(ii), then, for any {$f\in \cA_{temp,N}(Z:\cI)$},
$$m_{\eta_I,f}(\exp(tX_I))-m_{\eta_I',f}(\exp(tX_I))= 0,\quad  X_I\in\ga_I^{--}, t\in\R\,.$$
This equality applied to $t=0$ implies that $\eta_I=\eta_I'$.

\par
Let us show the continuity of $\eta_I$. By taking $a_Z=1$ in the inequality of Lemma~\ref{lem-3-17bis}, one gets: 
$$\vert f(z_0)-f_I(z_{0,I})\vert\leq C q(f),\textrm{ i.e., }\vert\la\eta,f\ra-\la\eta_I,f\ra\vert \leq Cq(f)\,.$$
Moreover $\eta$ is a continuous map on $\cA_{temp,N}(Z:\cI)$. 
This implies that $\eta_I$ is continuous on $\cA_{temp,N}(Z:\cI)$. \par
It remains to get that $\eta_I$ is $H_I$-invariant. From~\eqref{eq-prelimit}, for any $X_I\in\ga_I^{--}$, 
$$
\lim_{t\to \infty}e^{-t\rho_Q(X_I)}[f(\exp(tX_I))-f_I(\exp(tX_I))]=0\,.
$$
One applies this to $L_Yf$, $Y\in\gh_I$ and gets:
\begin{equation}\label{eq-prelimit1}
\lim_{t\to \infty}e^{-t\rho_Q(X_I)}\left[(L_Yf)(\exp(tX_I))-(L_Yf)_I(\exp(tX_I))\right]=0\,.
\end{equation}
On the other hand, from Lemma~\ref{lem-h-I}, one has:
\begin{equation}\label{eq-prelimit2}
\lim_{t\to \infty}e^{-t\rho_Q(X_I)}(L_Yf)(\exp(tX_I))=0\,.
\end{equation}
Hence, one gets, from~\eqref{eq-prelimit1} and~\eqref{eq-prelimit2}, that:
$$\lim_{t\to \infty}e^{-t\rho_Q(X_I)}(L_Yf)_I(\exp(tX_I))=0\,.$$
But $t\mapsto e^{-t\rho_Q(X_I)}(L_Yf)_I(\exp(tX_I))$ is an exponential polynomial with unitary characters 
(cf.~\eqref{eq-tildefI-exp-polyn}). Hence, from~\eqref{eq-limexppolyn}, it is identically equal to $0$. This implies that:
$$\eta_I(L_Yf)=0\,.$$
Then $\eta_I$ is continuous and $\gh_I$-invariant, and hence $H_I$-invariant. This completes the proof of (iii).
\end{proof}
Let $N\in\N$ be fixed. For $f\in\cA_{temp,N}(Z:\cI)$, since $\eta_I$ is continuous, we obtain with 
\begin{equation}\label{eq-fI}
g\mapsto f_I(g):=m_{\eta_I,f}(g),\quad g\in G\,,
\end{equation}
a smooth extension of $f_I$ previously defined on $A_Z$. Note that, as $\eta_I$ is $H_I$-invariant, $f_I$ defines a smooth function on $Z_I$ denoted by the same symbol. 
Further, note that the assignment $f\mapsto f_I$ is $G$-equivariant, in symbols:
\begin{equation}\label{eq-Lg-fI-commute}
(L_gf)_I=L_gf_I,\quad g\in G\,.
\end{equation}

\begin{rem}\label{rem-exppolyfI}
As a consequence of Lemma \ref{lem-etaI} and the above equivariance relation \eqref{eq-Lg-fI-commute}, for all $g\in G$ and $X_I\in\ga_I^{--}$, 
$$
\lim_{t\to \infty}e^{-t\rho_Q(X_I)}[f(g\exp(tX_I))-f_I(g\exp(tX_I))]=0\,.
$$
and $X\mapsto e^{-\rho_Q(X)}f_I(g\exp X)$ is an exponential polynomial on $\ga_I$ with unitary characters.
Moreover, $f_I$ is the unique smooth function on $G$  with these two properties.
\end{rem}

\subsection{Consistency relations for the constant term}

Let $w_I\in\cW_I$ and $w\in\cW$. 
Set $H_{I,w_I}=w_IH_Iw_I^{-1}$ and $H_w=wHw^{-1}$. 
Consider the real spherical spaces $Z_w=G/H_w$ and $Z_{I,w_I}=G/H_{I,w_I}$, and put $z_0^w=H_w\in Z_w$ and 
$z_{0,I}^{w_I}=H_{I,w_I}\in Z_{I,w_I}=G/H_{I,w_I}$. 
Then (cf.~\cite[Corollary~3.8]{kks}) $Q$ is $Z_w$-adapted to $P $ and $A_{Z_w}=A_Z$ with $A_{Z_w}^-=A_Z^-$.

\par 
For $f\in C^\infty(Z)$, let us define $f^w$ by:
$$
f^w(g\cdot z_0^w)=f(gw\cdot z_0),\quad g\in G\,.
$$
In the same way, one defines $\phi^{w_I}$ for $\phi\in C^\infty(Z_I)$. Then $f^w\in C^\infty(Z_w)$ and $\phi^{w_I}\in C^\infty(Z_{I,w_I})$.

\begin{prop}[Consistency relations for the constant term]\label{prop-changeorbit}
Let $w_I\in\cW_I$ and $w={\bf m}(w)\in\cW$.  Let $f\in\cA_{temp,N}(Z:\cI)$. Then $f^w\in\cA_{temp,N}(Z_w:\cI)$ and
$$(f_I)^{w_I}(a_Z)=(f^w)_I(a_Z),\quad a_Z\in A_Z\,.$$
\end{prop}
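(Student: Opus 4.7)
My plan is to invoke the uniqueness characterization of the constant term from Lemma~\ref{lem-etaI}, restricted to $A_Z$: any smooth function $\phi$ on $A_Z$ satisfying (a) $\lim_{t\to +\infty} e^{-t\rho_Q(X_I)}[f^w(a_Z\exp(tX_I)) - \phi(a_Z\exp(tX_I))] = 0$ for every $a_Z \in A_Z$ and $X_I \in \ga_I^{--}$, together with (b) the exponential polynomial property (with unitary characters) of $t \mapsto e^{-t\rho_Q(X)}\phi(a_Z\exp(tX))$ for every $X \in \ga_I$, is uniquely determined, since two candidates differ by an exponential polynomial with unitary characters tending to $0$ and hence identically $0$. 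It is therefore enough to verify (a) and (b) for the candidate $\psi(a_Z) := (f_I)^{w_I}(a_Z) = f_I(a_Z w_I \cdot z_{0,I})$.

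First, $f^w \in \cA_{temp,N}(Z_w : \cI)$: the polar decomposition of $Z_w$ shares the compression cone $A_Z^-$ with that of $Z$, so the tempered seminorms are equivalent under the right twist $f \mapsto f^w$; and the ideal $\cI\subset\bD_0(Z)$ comes from $\cZ(\gg)$, which acts by left-invariant differential operators commuting with right translation.

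Property (b) for $\psi$ follows by using $w_I = \tilde t_I h_I$ with $\tilde t_I \in \exp(i\tilde\ga_Z)\subset \sA(\C)$ normalizing $\sH_I(\C)$, which gives $w_I\cdot z_{0,I} = \tilde t_I\cdot z_{0,I} = z_{0,I}\cdot t_I$ via the right $A_{Z_I,\R}$-action by the $2$-torsion element $t_I\in A_{Z,2}$ corresponding to $\tilde t_I$. Commutativity of $a_Z$ and $\exp(tX)$ with $\tilde t_I$ then yields
\[
\psi(a_Z\exp(tX)) = f_I\bigl((a_Z\cdot z_{0,I})\cdot t_I\exp(tX)\bigr) = f_I(g_1\exp(tX)\cdot z_{0,I})
\]
for some $g_1 \in G$ representing the point $a_Z t_I\cdot z_{0,I} \in Z_I$, and Remark~\ref{rem-exppolyfI} applied at base point $g_1\cdot z_{0,I}$ gives the exponential polynomial property.

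For (a), I invoke Proposition~\ref{prop-converg-asbs-1-us}: taking $s=t$ in the matching relation $w_I\exp(tX_I)\cdot z_0 = u_tm_t\tilde b_tw\cdot z_0$ and rearranging gives
$\exp(tX_I) w\cdot z_0 = \gamma_t\cdot(w_I\exp(tX_I)\cdot z_0)$ in $Z$, with $\gamma_t = \exp(tX_I)\tilde b_t^{-1}m_t^{-1}u_t^{-1}\in G$ tending rapidly to $1$. The $F$-representation structure on $C^\infty_{temp,N}(Z)$ combined with this rapid decay and the temperedness bound of Lemma~\ref{lem-Phi-Psi-cinfini} let me replace $a_Z\gamma_t$ by $a_Z$ in $f(a_Z\exp(tX_I)w\cdot z_0) = f((a_Z\gamma_t)\cdot y_t)$, where $y_t=w_I\exp(tX_I)\cdot z_0\in Z$, up to an error of order $o(e^{t\rho_Q(X_I)})$. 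The residual comparison of $e^{-t\rho_Q(X_I)}f(a_Z\cdot y_t)$ with $e^{-t\rho_Q(X_I)}f_I(a_Z\cdot y_t')$, where $y_t' = w_I\exp(tX_I)\cdot z_{0,I}\in Z_I$, is then handled by applying the quantitative Lemma~\ref{lem-3-17bis} to $f^w$ on $Z_w$. The main obstacle is this last reduction, which requires identifying the deformation of $Z$ to $Z_I$ across the differently indexed open $P$-orbits labelled by $w\in\cW$ and $w_I\in\cW_I$: rapid (and not merely pointwise) convergence in Proposition~\ref{prop-converg-asbs-1-us} is essential so that the correction factors $u_t,m_t,\tilde b_t\exp(-tX_I)$ can be absorbed into the polynomial-growth error terms, forcing the equality of the two exponential polynomial expressions on $A_Z$.
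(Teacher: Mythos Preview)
Your approach is essentially the paper's: invoke the uniqueness characterization of the constant term (Remark~\ref{rem-exppolyfI}), verify both defining properties for the candidate $(f_I)^{w_I}$, and use the rapid convergence of Proposition~\ref{prop-converg-asbs-1-us} to absorb the correction factors $u_t, m_t, a_t b_t^{-1}$. The paper streamlines slightly by first reducing to $a_Z=1$ via $(L_af)^w=L_a(f^w)$, so that only the single limit \eqref{eq-cond(i')} needs to be checked.

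Two small slips deserve correction. First, in your argument for (b) you invoke a ``right $A_{Z_I,\R}$-action by the $2$-torsion element $t_I$'', but no such action on $Z_I$ is defined in the paper --- only the right $A_I=A_{Z_I,E}$-action is. The clean route (which the paper takes) is to cite the edge commutation $\exp(tX)\,w_I\cdot z_{0,I}=w_I\exp(tX)\cdot z_{0,I}$ for $X\in\ga_I$ from \cite[Lemma~3.5]{kks}; then $\psi(a_Z\exp(tX))=f_I(\tilde a_Z w_I\exp(tX)\cdot z_{0,I})$ and Remark~\ref{rem-exppolyfI} at $g=\tilde a_Z w_I$ gives the exponential-polynomial property directly. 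Second, in your last step for (a) you write ``apply Lemma~\ref{lem-3-17bis} to $f^w$ on $Z_w$'', but that compares $f^w$ with $(f^w)_I$, which is precisely what you are trying to identify --- this would be circular. What you actually need (and what your formula with $y_t,y_t'$ is set up for) is Lemma~\ref{lem-3-17bis} applied to $L_{(a_Zw_I)^{-1}}f$ on $Z$, using the $G$-equivariance $(L_gf)_I=L_g(f_I)$; this is exactly how the paper concludes.
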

Here, $f^w\in\cA_{temp}(Z_w:\cI)$, $(f^w)_I\in C^\infty(Z_{w,I})$, $f_I\in C^\infty(Z_I)$, 
$f_I^{w_I}\in C^\infty(Z_{I,w_I})$, and, from \cite[Proposition~3.2(5) and {Corollary~3.8}]{kks}, one has:
$$
\begin{array}{c}
A_{Z_{w,I}}=A_{Z_w}=A_Z\,,\\
A_{Z_{I,w_I}}=A_{Z_I}=A_Z\,.
\end{array}
$$
Hence, both sides of the equality are well-defined on $A_Z$.

\par
The proof of Proposition~\ref{prop-changeorbit} is prepared by a simple technical lemma. 
Recall the elements $a_s=\exp(s X_I)$ for $X_I\in \ga_I^{--}$. 

\begin{lem}\label{lem-majorconvLgs}
Let $(g_s')$ be a family in $G$ which converges rapidly to $g\in G$. 
Let $f\in\cA_{temp,N}(Z)$. 
Then there exist $C>0$ and $\varepsilon>0$ such that: 
$$
{\vert(L_{(g_s')^{-1}}f)(a_s)-(L_{g^{-1}}f)(a_s)\vert\leq Ca_s^{\rho_Q}e^{-\varepsilon s},\quad s\geq s_0}\,.
$$
\end{lem}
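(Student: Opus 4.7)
The plan is to linearize the difference $f(g_s'a_s)-f(ga_s)$ via the fundamental theorem of calculus and then combine temperedness of $f$ with the continuity of the left regular action of $G$ on $\cA_{temp,N}(Z)$ given by \eqref{eq-EnGstable}.

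Since $g_s'\to g$ rapidly (in the sense of Appendix~\ref{sect-appA}, i.e.\ exponentially fast, as used in the proof of Proposition~\ref{prop-converg-asbs-1-us}), for $s\geq s_0$ large enough we may write $g_s'=g\exp(Y_s)$ with $Y_s\in\gg$ satisfying $\Vert Y_s\Vert\leq C_0 e^{-\varepsilon_0 s}$ for some $\varepsilon_0>0$. Setting $X_s:=\Ad(g)Y_s$, one then also has $\Vert X_s\Vert\leq C_0' e^{-\varepsilon_0 s}$. By the fundamental theorem of calculus,
\[
f(g_s'a_s)-f(ga_s)=\int_0^1\frac{d}{dt}f(g\exp(tY_s)a_s)\,dt.
\]
Rewriting $f(g\exp(tY_s)a_s)=(L_{\exp(-tY_s)g^{-1}}f)(a_s)$, together with the conjugation identity $L_{Y_s}L_{g_0}=L_{g_0}L_{\Ad(g_0^{-1})Y_s}$ for the group/Lie-algebra actions and the commutation of $L_{Y_s}$ with $L_{\exp(-tY_s)}$ (they come from the same one-parameter subgroup), the integrand is of the form $\pm(L_{h'(s,t)}L_{X_s}f)(a_s)$, where $h'(s,t):=\exp(-tY_s)g^{-1}$ ranges over a fixed compact subset of $G$ for $t\in[0,1]$ and $s\geq s_0$.

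To conclude, fix a basis $(X_j)$ of $\gg$ and write $X_s=\sum_j c_j(s)X_j$ with $|c_j(s)|\leq\Vert X_s\Vert$. For each $j$, $L_{X_j}f\in\cA_{temp,N}(Z)$, and since $a_s\in A_Z^-$, applying the polar-decomposition temperedness estimate (with $1\in\Omega$ and $1\in\cW$) gives
\[
\bigl|(L_{h'(s,t)}L_{X_j}f)(a_s)\bigr|\leq a_s^{\rho_Q}(1+s\Vert X_I\Vert)^N q_N\bigl(L_{h'(s,t)}L_{X_j}f\bigr),
\]
while \eqref{eq-EnGstable} controls the right-hand side by $C_2\,a_s^{\rho_Q}(1+s\Vert X_I\Vert)^N q_{N,X_j}(f)$, uniformly in $s,t$. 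Integrating in $t\in[0,1]$, summing over $j$, and using $|c_j(s)|\leq C_0'e^{-\varepsilon_0 s}$ yields
\[
\bigl|f(g_s'a_s)-f(ga_s)\bigr|\leq C_3\,a_s^{\rho_Q}(1+s\Vert X_I\Vert)^N e^{-\varepsilon_0 s}.
\]
Choosing any $0<\varepsilon<\varepsilon_0$ absorbs the polynomial factor into $e^{-(\varepsilon_0-\varepsilon)s}$ and produces the desired bound.

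The main obstacle is bookkeeping rather than conceptual: one must manipulate carefully the interplay between the infinitesimal action $L_{Y_s}$ and the group action $L_{g^{-1}}$ via the conjugation relation, so that the integrand is recast as the value at the single point $a_s$ of a function of the form $L_{h'}\phi$ with $\phi\in\cA_{temp,N}(Z)$ \emph{independent} of $s,t$ and $h'$ in a fixed compact set; this is what makes temperedness combined with \eqref{eq-EnGstable} yield a uniform estimate in $s$ with the factor $a_s^{\rho_Q}$.
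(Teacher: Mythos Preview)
Your proof is correct and follows essentially the same approach as the paper: linearize the difference via the fundamental theorem of calculus (the paper phrases it as the mean value theorem), decompose the resulting Lie algebra element in a basis, and combine the temperedness estimate at $a_s\in A_Z^-$ with the uniform bound \eqref{eq-EnGstable} for left translations by a compact set. The only organizational difference is that the paper first replaces $f$ by $L_{g^{-1}}f\in\cA_{temp,N}(Z)$ to reduce to $g=1$, which avoids the $\Ad(g)$-conjugation bookkeeping you carry out; your route is slightly more explicit but amounts to the same computation.
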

\begin{proof}
As $(g_s')$ converges rapidly to $g$ when $s$ tends to $+\infty$, there exists 
$s_0'$, $C'$, $\varepsilon'$ strictly positive and $(X_s)\subset\gg$ such that, for all $s\geq s_0'$, 
\begin{equation}\label{eq-convrapidgs}
g_s'=g\exp{X_s}\textrm{ and }\Vert X_s\Vert\leq C'e^{-\varepsilon' s}\,.
\end{equation}
As $L_{g^{-1}}$ preserves $\cA_{temp,N}(Z)$, one is reduced to prove, for all $f\in\cA_{temp,N}(Z)$, that there exist 
$C,\varepsilon,s_0>0$ such that:
$$
\vert f(\exp(X_s)a_s)-f(a_s)\vert\leq Ca_s^{\rho_Q}e^{-\varepsilon s}\,.
$$
But, by the mean value theorem, if $a\in {A_Z} $ and $X\in\gg$,
$$\vert f(\exp(X)a)-f(a)\vert\leq \sup_{t\in [0,1]}\vert (L_{-X}f)(\exp(tX)a)\vert\Vert X\Vert\,.$$
From~\eqref{eq-convrapidgs}, one then sees that it is enough to prove that, if $\Vert X\Vert$ is bounded by a constant $C''>0$, there exists a constant $C'''>0$ such that: 
\begin{equation}\label{eq-majorsup}
\sup_{t\in [0,1]}\vert (L_{-X}f)(\exp(tX)a)\vert\leq C'''a^{\rho_Q}(1+\Vert\log a\Vert)^N,\quad a\in A_Z^{-}\,.
\end{equation}
Decomposing $-X$ in a basis $(X_i)$ of $\gg$ and using the continuity of the endomorphisms $L_{X_i}$ of 
$\cA_{temp,N}(Z)$, one sees that there exists a continuous semi-norm such that:
$$\vert (L_{-X}f)(a)\vert\leq a^{\rho_Q}(1+\Vert\log a\Vert)^Nq(f),\quad a\in A_Z^-\,.$$
But $f\mapsto \sup_{\Vert X\Vert\leq C''} q(L_{\exp(-tX)}f)$ is a continuous semi-norm on $\cA_{temp,N}(Z)$. 
Hence, as $L_{-X}$ and $L_{\exp(-tX)}$ commute, \eqref{eq-majorsup} follows. This achieves to prove the Lemma.
\end{proof}
\begin{proof}[Proof of Proposition~\ref{prop-changeorbit}]
If $a\in A $, one has:
$$
[(L_af)^w]_I=[L_a(f^w)]_I\textrm{ as }(L_af)^w=L_af^w\, .$$
Hence, it is enough to prove the identity of the Proposition for $a_Z=z_0$. 
Then, using~\eqref{eq-limexppolyn} and Remark~\ref{rem-exppolyfI}, it is enough to prove that
$s\mapsto (f_I)^{w_I}(a_s)$ is an exponential polynomial with unitary characters satisfying: 
\begin{equation}\label{eq-cond(i')}
\lim_{s\to +\infty}a_s^{-\rho_Q}[f^w(a_s)-(f_I)^{w_I}(a_s)]=0\,.
\end{equation}
But, from~\eqref{eq-decompwIan}, 
$$\tilde{a}_sw\cdot z_0=(\tilde{a}_s\tilde{b}_s^{-1}m_s^{-1}u_s^{-1})(u_sm_s\tilde{b}_sw)\cdot z_0=g_sw_I\tilde{a}_s\cdot z_0\,, $$ 
for $s\geq s_0$, where $g_s=\tilde{a}_s\tilde{b}_s^{-1}m_s^{-1}u_s^{-1}$. Then one has: 
$$f^w(a_s)=L_{w_I^{-1}g_s^{-1}}f(a_s)\,.$$
On the other hand, from \cite[Lemma~{3.5}]{kks} for $Z=Z_I$, as $A_{Z_I,E}=A_I$ (cf.~loc.cit., equation~(3.13)), one has:
\begin{equation}\label{eq-commut-a-wI}
\tilde{a}_sw_I\cdot z_{0,I} = w_I \tilde{a}_s\cdot z_{0,I}\,,
\end{equation}
which implies that: 
\begin{equation}\label{eq-LmwIfI}
(L_{w_I^{-1}}f_I)(\tilde{a}_s\cdot z_{0,I})=(f_I)^{w_I}(a_s)\,.
\end{equation}
Now, according to Proposition \ref{prop-converg-asbs-1-us} -- this is the key ingredient! --, 
the sequence $(g_sw_I)$ converges rapidly to $w_I$. Hence, we can apply Lemma~\ref{lem-majorconvLgs} with 
$g_s'=g_sw_I$ and find $C',\varepsilon',s_0'>0$ such that: 
\begin{equation}\label{eq-majorconvwIgs}
a_s^{-\rho_Q}\vert (L_{w_I^{-1}g_s^{-1}}f)(a_s)-(L_{w_I^{-1}}f)(a_s)\vert\leq C'e^{-\varepsilon' s},\quad s\geq s_0'\,.
\end{equation}
Using Lemma~\ref{lem-3-17bis}, one has, for some $C'',\varepsilon'>0$, 
$$a_s^{-\rho_Q}\vert (L_{w_I^{-1}}f)(a_s)-(L_{w_I^{-1}}f_I)(a_s)\vert\leq C''e^{-\varepsilon' s},
\quad s\geq s_0'\,.$$
Hence, from~\eqref{eq-LmwIfI} and~\eqref{eq-majorconvwIgs}, one deduces~\eqref{eq-cond(i')}. It remains to prove that: 
$$s\mapsto (f_I)^{w_I}({a_s})=f_I({a_s}w_I\cdot z_{0,I})$$
is an exponential polynomial with unitary characters. But, from \cite[Lemma~{3.5}]{kks} applied to $Z_I$,
$$(f_I)^{w_I}({a_s})=f_I(w_I{a_s})\,.$$
Hence, our claim follows from~\eqref{eq-commut-a-wI}. This achieves the proof of the Proposition.
\end{proof} 

\subsection{Constant term approximation}
Now we turn to the main Theorem of this section.

\begin{theo}[Constant term approximation]\label{theo-constterm}
Let $I\subset S$ and $\cI$ be a finite codimensional ideal of $\bD_0(Z)$.
\begin{enumerate}
\item[(i)] For all $N\in\N$, the map $f\mapsto f_I$ is a continuous linear map from $\cA_{temp,N}(Z:\cI)$ to $\cA_{temp,N+\dim U_\cI}(Z_I:\mu_I(\cI))$. 
\item[(ii)] Let $N\in\N$ and $\cC_I$ be a compact subset of $\ga_I^{--}$. For 
$w_I\in\cW_I$ let $w={\bf m}(w_I)\in\cW$. Then there exist $\varepsilon>0$ and a continuous semi-norm $p$ on $C_{temp,N}^\infty(Z)$ such that, for all $f\in\cA_{temp,N}(Z:\cI)$,  
$$
\begin{array}{l}
 \vert (a_{Z}\exp(tX))^{-\rho_Q}\left( f(g a_{Z}\exp(tX_I)w\cdot z_0)
 -f_I(g a_{Z}\exp(tX_I)w_I\cdot z_{0,I})\right)\vert\\
\\
\leq  e^{-\varepsilon t}(1+\Vert\log a_{Z}\Vert)^Np(f),\qquad a_{Z}\in A_{Z}^-, X_I\in \cC_I, g\in\Omega,w_I \in \cW_I, t\geq 0\,.
\end{array}
$$
\end{enumerate}
\end{theo}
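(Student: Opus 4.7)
The plan is to establish (ii) first, reducing to the base point case $g=1$, $w_I=w=1$ treated in Lemma \ref{lem-3-17bis}, and then to deduce (i) as a consequence. The key tools are the $G$-equivariance \eqref{eq-Lg-fI-commute} of the constant term and the consistency relation of Proposition \ref{prop-changeorbit}; together they propagate estimates established at the base point to arbitrary $G$-translates along all open $P$-orbits.

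For (ii), fix $g\in\Omega$, $w_I\in\cW_I$ and set $w:={\bf m}(w_I)\in\cW$ and $\tilde f:=L_{g^{-1}}f\in\cA_{temp,N}(Z:\cI)$. By the definitions of $(\cdot)^w$, $(\cdot)^{w_I}$ together with equivariance,
\[
f(g a_Z\exp(tX_I)w\cdot z_0)=(\tilde f)^w(a_Z\exp(tX_I)),\quad f_I(g a_Z\exp(tX_I)w_I\cdot z_{0,I})=(\tilde f_I)^{w_I}(a_Z\exp(tX_I)).
\]
Proposition \ref{prop-changeorbit} applied to $\tilde f$ at $a_Z\exp(tX_I)\in A_Z$ identifies $(\tilde f_I)^{w_I}$ with $((\tilde f)^w)_I$, so the quantity to estimate becomes $(a_Z\exp(tX_I))^{-\rho_Q}\bigl|(\tilde f)^w(a_Z\exp(tX_I))-((\tilde f)^w)_I(a_Z\exp(tX_I))\bigr|$. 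The entire constant term construction transfers verbatim to the conjugate real spherical space $Z_w=G/H_w$, so Lemma \ref{lem-3-17bis} applied to $(\tilde f)^w\in\cA_{temp,N}(Z_w:\cI)$ dominates this expression by $e^{t\delta\beta_I(X_I)}(1+\|\log a_Z\|)^N(1+t\|X_I\|)^{\dim U_\cI}q((\tilde f)^w)$. Compactness of $\cC_I\subset\ga_I^{--}$ gives $\beta_I(X_I)\leq-c<0$ uniformly on $\cC_I$, so the polynomial factor $(1+t\|X_I\|)^{\dim U_\cI}$ can be absorbed into a slightly weaker exponential to produce $e^{-\varepsilon t}$ for some $\varepsilon>0$. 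Continuity of $f\mapsto L_{g^{-1}}f$ on $C^\infty_{temp,N}(Z)$ and of $f\mapsto f^w$, combined with finiteness of $\cW_I$, finally dominate $q((L_{g^{-1}}f)^w)$ by $p(f)$ uniformly in $g\in\Omega$, $w_I\in\cW_I$, for an appropriate continuous semi-norm $p$ on $C^\infty_{temp,N}(Z)$.

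For (i), the same reduction shows that bounding $f_I(g a_{Z_I}w_I\cdot z_{0,I})$ for $g\in\Omega_I$, $a_{Z_I}\in A_{Z_I}^-$, $w_I\in\cW_I$ (coming from the polar decomposition of $Z_I$) amounts, via equivariance and Proposition \ref{prop-changeorbit}, to bounding $((L_{g^{-1}}f)^w)_I(a_{Z_I})$ on $A_{Z_I}^-$, which is exactly the content of Lemma \ref{lem-estimPhiinfty} with the correct exponent $N+\dim U_\cI$; in view of \eqref{eq-equiv-norms} applied to $Z_I$, this places $f_I$ in $C^\infty_{temp,N+\dim U_\cI}(Z_I)$, with continuity of $f\mapsto f_I$ being immediate from the estimates. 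The Sobolev-type derivative estimates required by the Fr\'echet topology are obtained by running the same argument on $L_u f$ for $u\in\cU(\gg)$, via equivariance. The remaining assertion, annihilation of $f_I$ by $\mu_I(\cI)$, will follow from the identity $\mu_I(z)\cdot f_I=(z\cdot f)_I$ for every $z\in\cZ(\gg)$, since specialization to $z\in\cI$ yields $\mu_I(z)\cdot f_I=0$. By the uniqueness in Remark \ref{rem-exppolyfI}, this identity reduces to checking two properties of $\mu_I(z)\cdot f_I$: that it is exponential polynomial with unitary characters in $\ga_I$-directions (which holds because $\mu_I(z)$ lies in the centralizer of $\ga_I$ in $\D(Z_I)$ by the very definition of $\mu_I$, cf.~Lemma \ref{lem-defmu_I2}, and thus $R_{\mu_I(z)}$ commutes with right $A_I$-translation) and that $z\cdot f-\mu_I(z)\cdot f_I$ decays rapidly relative to $e^{t\rho_Q(X_I)}$ along every $X_I\in\ga_I^{--}$; this latter decay is obtained by applying the central operator $R_z$ (bounded on $C^\infty_{temp,N}(Z)$) to the decay of $f-f_I$ given by Lemma \ref{lem-3-17bis}, combined with the compatibility $R_zf_I=\mu_I(z)\cdot f_I$ of the two $\cZ(\gg)$-actions. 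Establishing this last identification rigorously is the main technical point of the proof.
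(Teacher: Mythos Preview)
Your proof is correct and follows essentially the same route as the paper: reduce via $G$-equivariance \eqref{eq-Lg-fI-commute} and the consistency relation of Proposition~\ref{prop-changeorbit} to the base-point case on the conjugate space $Z_w$, then invoke Lemma~\ref{lem-3-17bis} for (ii) and Lemma~\ref{lem-estimPhiinfty} for (i), absorbing the polynomial factor into the exponential by compactness of $\cC_I$. The only cosmetic difference is that you prove (ii) before (i).

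One comment on the $\mu_I(\cI)$-annihilation, which you address explicitly while the paper's proof passes over it in silence: your uniqueness argument is more elaborate than necessary and ends on a hedged note. The compatibility $R_z f_I=\mu_I(p(z))\cdot f_I$ that you flag as ``the main technical point'' is nothing but the first commutative square of Theorem~\ref{lem-defmu_0}(ii) applied to $f_I\in C^\infty(Z_I)$; it requires no further work. Combined with the $\cU(\gg)$-equivariance $(L_u f)_I=L_u(f_I)$ that you already use for the Sobolev estimates, one obtains directly $(R_z f)_I=(L_{\sigma(z)}f)_I=L_{\sigma(z)}(f_I)=R_z(f_I)=\mu_I(p(z))\cdot f_I$ for every $z\in\cZ(\gg)$ (here $\sigma$ is the principal anti-automorphism). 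Specializing to $z\in\cJ=p^{-1}(\cI)$ then gives $\mu_I(\cI)\cdot f_I=0$ without any appeal to the uniqueness characterization of Remark~\ref{rem-exppolyfI}.
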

\begin{proof} We first show (i). In view of~\eqref{eq-equiv-norms},  it suffices  to prove that, for any $w_I\in\cW_I$, there exists a continuous semi-norm $p$ on $\cA_{temp,N}(Z:\cI)$ such that:
$$\sup_{g\in\Omega,a_{Z_I}\in A_{Z_I}^-}\vert a_{Z_I}^{-\rho_Q}(1+\Vert \log a_{Z_I}\Vert)^{-(N+\dim U_\cI)}f_I(g a_{Z_I}w_I)\vert\leq p(f),
\quad f\in\cA_{temp,N}(Z:\cI)\,.$$
For $w_I=1$, one  has $w={\bf m}(w_I)=1$. 
Our claim then follows from \eqref{eq-Lg-fI-commute}, \eqref{eq:def-fI-aZ} and Lemma~\ref{lem-estimPhiinfty} and the continuity of the left regular representation of $G$ on $C_{temp,N}^\infty(Z)$ (see \eqref{eq-EnGstable}).

\par
For general $w_I$, one uses Proposition~\ref{prop-changeorbit} to get 
$f_I(a_{Z_I}w_I)=(f^w)_I(a_{Z_I})$ and the above inequality for $H^w$ instead of $H$. This shows (i).

\par 
Using Proposition~\ref{prop-changeorbit}, one is reduced to prove~(ii) for 
$w_I=w=1$, by changing $H$ 
into $H_w$. Moreover, from \eqref{eq-Lg-fI-commute} and \eqref{eq-EnGstable}, one is reduced to show (ii) for $g=1$. 
In that case, (ii) follows from \eqref{eq-Phif-wI} (applied with $u=1$), \eqref{eq:def-fI-aZ} and 
Lemma~\ref{lem-estimreste} by choosing $\varepsilon >0$ and $p$ in the following way.

\par
Set $N_\cI:=\dim U_\cI$. Let us consider the continuous function $\varphi:(X_I,t)\mapsto e^{t\delta\beta_I(X_I)/2}(1+t\Vert X_I\Vert)^{N_\cI}$ on $\ga_I\times\R$, which is smooth on the second variable and positive on $\ga_I\times\R_{\geq 0}$. Recall that $\delta\beta_I(X_I)<0$ for any $X_I\in\ga_I^{--}$. Since $\cC_I$ is a compact subset of $\ga_I^{--}$, by continuity, $C:=\max_{X_I\in\cC_I}\varphi(X_I,-2N_\cI/\delta\beta_I(X_I)-1/\Vert X_I\Vert)$ and $\varepsilon:=-\delta/2[\max_{X_I\in\cC_I}(\beta_I(X_I))]$ exist and $\varepsilon>0$.
Moreover, $\varphi$ has values $\leq C$ on $\cC_I\times \R$. 
Hence $C>0$ and, by Lemma~\ref{lem-estimreste}, $\varepsilon$ yields the inequality in (ii) for $t\geq 0$ by setting $p:=C q$. 
\end{proof}

\begin{rem}[Statement for $H_0$ connected]
Theorem \ref{theo-constterm} remains valid for $H$ replaced by $H_0$:
exchange the expression $f_I(g a_{Z}\exp(tX)w_I\cdot z_{0,I})$ by $f_I(g  m_{w_I}^{-1}a_{Z}\exp(tX)w_I\cdot z_{0,I})$
for certain $m_{w_I}\in M$, see Remark \ref{rem-cc}(b).  Likewise, this will hold for Theorem \ref{theo-uniform} below,
which generalizes Theorem \ref{theo-constterm}. 
\end{rem}

\begin{rem}[Reformulation of Theorem \ref{theo-constterm} in terms of representation theory]
Let $(\pi,V^\infty)$ be an $SAF$-representation of $G$, for example $V^\infty=\cA_{temp}(Z: \cI)$ (see Proposition \ref{prop-SAF-rep}).
Then Theorem \ref{theo-constterm}(i) gives rise to a linear map 
$$(V^{-\infty})_{temp}^H\longrightarrow (V^{-\infty})_{temp}^{H_I}, \ \ \eta\mapsto \eta_I$$
and correspondingly, for every $v\in V^\infty$, an approximation of the matrix coefficient $g\mapsto f(g\cdot z_0)=m_{\eta,v}(g)$ by $g\mapsto f_I(g\cdot z_{0,I})=m_{\eta_I,v}(g)$ as in Theorem \ref{theo-constterm}(ii).  In this language, the consistency relations
from Proposition \ref{prop-changeorbit} then translate into 
$$ (w \cdot \eta)_I  = w_I\cdot  \eta_I \qquad w_I\in \cW_I, w ={\bf m}(w_I)\,,$$
where, for an element $\xi\in V^{-\infty}$ and $g\in G$, we use the notation $g\cdot \xi=\xi(g^{-1}\cdot)$ for the dual action.

\end{rem}

\subsection{Application to the relative discrete series for ${Z}$}

Let $\chi$ be a normalized unitary character of $A_{Z,E}=\exp(\ga_{Z,E})$, i.e., $d\chi_{\vert\ga_{Z,E}}=\rho_Q\vert_{\ga_{Z,E}}$.

\par 
We recall that, if $a\in A_{Z,E}$ and $w\in\cW$, 
\begin{equation}\label{eq awH}
\tilde{a}wH=waH
\end{equation} 
(cf.~\cite[Lemma~3.5]{kks}).

\par 
As $\widetilde A_{Z,E}$ normalizes $H$, there is a right action 
$(a,z)\mapsto z\cdot a$ of $A_{Z,E}$ on $Z$. 
Let $C^\infty(Z,\chi)$ be the space of $C^\infty$ functions on $Z$ such that:
$$f(z\cdot a)=\chi(a)f(z),\quad a\in A_{Z,E},z\in Z$$
and observe that 
\begin{equation}\label{eq welldef}
|a^{-\rho_Q} f(z\cdot a)|=|f(z)|,\quad a\in A_{Z,E},z\in Z\,,
\end{equation}
as $\chi$ was requested to be normalized unitary.

If $f\in C^\infty(Z,\chi)$, $u\in \cU(\gg)$ and $N\in\N$, then \eqref{eq awH} and \eqref{eq welldef} allow us to define
$$r_{N,u}(f)=\sup_{g\in\Omega,a\in A_Z^-/A_{Z,E},w\in\cW}\vert a^{-\rho_Q}(1+\Vert\log a\Vert)^N (L_uf)(g aw\cdot z_0)\vert\,,$$
with $\Vert\cdot\Vert$ refering to the quotient norm on $\ga_Z/\ga_{Z,E}$. Moreover, we set
$$\cC(Z,\chi)=\{ f\in C^\infty(Z,\chi)\mid r_{N,u}(f)<\infty,N\in\N,u\in \cU(\gg)\}\,.$$

Since $\widetilde A_{Z,E}$ normalizes $H$, we obtain a closed subgroup $\widehat{H}:=H\widetilde A_{Z,E}$ (not depending on the section 
${\bf s}$) and a real spherical space $\widehat{Z}=G/\widehat{H}$. 
We extend $\chi$ trivially to $H$ and then define a character of $\widehat{H}$ still denoted $\chi$. 
Let us define $L^2(\widehat{Z};\chi)$ as in \cite[Section~8.1]{kks}.

\par 
Let $w\in\cW$. We recall that $H_w=wHw^{-1}$ and $Z_w=G/H_w$. Let $f$ be in $C^\infty(Z,\chi)$. 
Recall that $f_w$ defined by $f_w(g)=f(gwH)$, $g\in G$, is right $H_w$-invariant and defines an element of $C^\infty(Z_w)$ and even of $C^\infty(Z_w,\chi)$ by using the relation \eqref{eq awH}. 
This element will still be denoted $f_w$. Moreover, by ``transport of structure'', if $f$ is $Z$-tempered, $f_w$ is $Z_w$-tempered.

\par
Let $\eta$ be a $Z$-tempered $H$-fixed linear form on $V^\infty$. Let $w\in\cW$. Then $\ga_{Z_w}=\ga_Z$ and $w\cdot \eta$ is $H_w$-invariant and $Z_w$-tempered by ``transport of structure''. By \cite[Corollary 3.8]{kks}, $Q$ is $Z_w$-adapted to $P$. Moreover, the set of spherical roots for $Z_w$ is equal to $S$ (see \cite[equation~(3.2), definition of $S$ in the beginning of Section~3.2 and Lemma~3.7]{kks}). Hence, one can define $(w\cdot \eta)_I$, $w\in\cW$.

\begin{theo}\label{theo-ds}
Let $(\pi,V^\infty)$ be an $SAF$-representation of $G$, with $V$ its associated Harish-Chandra module, and $\eta$ be a $Z$-tempered continuous linear form on 
$V^\infty$  
which transforms under a unitary character $\chi$ of $A_{Z,E}$. 
Then the following assertions are equivalent:
\begin{enumerate}
\item[(i)] For all $v\in V$, $m_{\eta,v}\in L^2(\widehat{Z};\chi)$.
\item[(ii)] For all proper subsets $I$ of $S$ and $w\in\cW$, $(w\cdot \eta)_I=0$.
\item[(iii)] For all $v\in V^\infty$, $m_{\eta,v}\in\cC(Z,\chi)$.
\end{enumerate}
\end{theo}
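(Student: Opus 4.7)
The plan is to prove the three implications in the cyclic order $(iii)\Rightarrow(i)\Rightarrow(ii)\Rightarrow(iii)$.

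For $(iii)\Rightarrow(i)$, I will use the polar decomposition of $\widehat Z$ together with the bound $a^{-2\rho_Q}(1+\Vert\log a\Vert)^C$ on the Jacobian of the integration formula along $A_Z^-/A_{Z,E}$ (a consequence of the volume growth recalled in Section~\ref{sect-Atemp}). The Schwartz-type decay defining $\cC(Z,\chi)$ then dominates this and yields square-integrability.

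For $(i)\Rightarrow(ii)$, fix $w\in\cW$ and $v\in V^\infty$, and set $f^w=m_{w\cdot\eta,v}\in\cA_{temp}(Z_w)$. The polar decomposition of $\widehat Z$ indexed by $\cW$ shows that square-integrability of $m_{\eta,v}$ on $\widehat Z$ descends to square-integrability of $f^w$ on $\widehat{Z_w}$. Fix $I\subsetneq S$ and $X_I\in\ga_I^{--}$, and parametrize a tube in $A_Z^-/A_{Z,E}$ by $(a',t)\mapsto a'\exp(tX_I)$ with $a'$ ranging over a transverse slice. Fubini combined with the Jacobian bound gives, for almost every $a'$, finiteness of
$$\int_{0}^{\infty}\bigl|e^{-t\rho_Q(X_I)}f^w(a'\exp(tX_I))\bigr|^2\,dt.$$
By Theorem~\ref{theo-constterm}(ii) applied on $Z_w$ with $w_I=1$, the integrand equals $|P_{a',X_I}(t)+O(e^{-\epsilon t})|^2$, where $P_{a',X_I}(t):=e^{-t\rho_Q(X_I)}(f^w)_I(a'\exp(tX_I))$ is by \eqref{eq-tildefI-exp-polyn} an exponential polynomial with unitary characters. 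Such a polynomial is not square-integrable on $[0,\infty)$ unless identically zero, so $P_{a',X_I}\equiv 0$. Continuity in $a'$ and evaluation at $t=0$ then yield $(f^w)_I=0$ on $A_Z^-$; since $v$ was arbitrary and $\eta_I$ is continuous (Lemma~\ref{lem-etaI}), $(w\cdot\eta)_I=0$.

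The main work is $(ii)\Rightarrow(iii)$. Fix $v\in V^\infty$ and $u\in\cU(\gg)$ and set $f=L_u m_{\eta,v}=m_{\eta,\pi(u)v}$. It suffices to bound $\sup_{g\in\Omega,\,a\in A_Z^-/A_{Z,E},\,w\in\cW}a^{-\rho_Q}(1+\Vert\log a\Vert)^N|f(gaw)|$ for every $N$. By Proposition~\ref{prop-changeorbit} and hypothesis~(ii), $(f^w)_I=m_{(w\cdot\eta)_I,\pi(u)v}=0$ for every proper $I\subsetneq S$ and $w\in\cW$. Theorem~\ref{theo-constterm}(ii) then yields, for each such $I$ and compact $\cC_I\subset\ga_I^{--}$, a rate $\epsilon_I>0$ and a continuous seminorm $p$ with
$$\bigl|(a_Z\exp(tX_I))^{-\rho_Q}f(ga_Z\exp(tX_I)w\cdot z_0)\bigr|\leq e^{-\epsilon_I t}(1+\Vert\log a_Z\Vert)^N p(f)$$
for $a_Z\in A_Z^-$, $X_I\in\cC_I$, $g\in\Omega$, $t\geq 0$. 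I would then cover $A_Z^-/A_{Z,E}$ by finitely many tubes $\{a_Z\exp(tX_I):a_Z\in\cB_I,\,X_I\in\cC_I,\,t\geq 0\}$, one for each proper $I\subsetneq S$, with $\cB_I$ bounded and every $a\in A_Z^-/A_{Z,E}$ admitting such a decomposition with $t\geq c\Vert\log a\Vert$ for a universal $c>0$. Substituting into the above estimate gives $a^{-\rho_Q}|f(gaw)|\leq Ce^{-c\epsilon\Vert\log a\Vert}$, so $r_{N,u}(f)<\infty$ for every $N$ and $u$, proving $f\in\cC(Z,\chi)$.

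The hard part will be the geometric covering lemma in the last step: given the co-simplicial structure of $\ga_Z^-/\ga_{Z,E}$, one must produce compact sets $\cC_I\subset\ga_I^{--}$ for $I\subsetneq S$ and bounded sets $\cB_I$ whose associated tubes exhaust $A_Z^-/A_{Z,E}$ in the above quantitative sense. This is a purely combinatorial statement about rational polyhedral cones; it can be handled by subdividing $\ga_Z^-/\ga_{Z,E}$ along a complete simplicial fan as in Proposition~\ref{compactification} and, within each simplicial subcone, reading off the tubes from its extremal generators.
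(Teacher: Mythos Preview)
Your cycle $(iii)\Rightarrow(i)\Rightarrow(ii)\Rightarrow(iii)$ is the same as the paper's, and your argument for $(ii)\Rightarrow(iii)$ matches the paper's exactly: vanishing constant terms plus Theorem~\ref{theo-constterm}(ii) give exponential decay along $\ga_I^{--}$-rays, and then a covering argument upgrades this to Schwartz-type decay on all of $A_Z^-/A_{Z,E}$. The paper's covering is in fact simpler than what you sketch: instead of simplicial fans, it covers the unit sphere $S_1\cap(\ga_Z^-/\ga_{Z,E})$ by finitely many neighborhoods $\Omega_0$ of points $X_0$, takes $I=\{\alpha\in S:\alpha(X_0)=0\}$, and writes $\exp(tY)=\exp(t(Y-X_0/2))\exp(tX_0/2)$ for $Y\in\Omega_0$, with the first factor in $A_Z^-$ and the second in $A_I^{--}$. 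Your $t\geq c\|\log a\|$ is then automatic with $c=1/2$.

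The implication $(i)\Rightarrow(ii)$ is where you genuinely diverge from the paper. The paper does not use any $L^2$/Fubini argument on tubes; instead it invokes \cite[Theorem~8.5]{kks}, which gives a strict inequality $(\Lambda_{V,\eta}-\rho_Q)(\omega_j)>0$ for the leading exponent $\Lambda_{V,\eta}$ of the power-series expansion of $m_{\eta,v}$. Since the exponents contributing to $m_{\eta_I,v}$ are precisely those $\xi$ with $\Reel\xi|_{\ga_I}=\rho_Q|_{\ga_I}$, the strict inequality forces $\eta_I=0$. Your route is more elementary in spirit (no appeal to the exponent bound), but the Fubini step is not free: the polar map $\Omega\times A_Z^-\times\cW\to\widehat Z$ is only surjective, not a diffeomorphism, so you cannot read off $\int_0^\infty|e^{-t\rho_Q(X_I)}f^w(a'\exp(tX_I))|^2\,dt<\infty$ directly from $\|f\|_{L^2(\widehat Z;\chi)}<\infty$. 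To salvage this you should work on the open $P$-orbit $Pw\cdot z_0$ via the local structure theorem \eqref{LST1a}, where the invariant measure is genuinely a product $a^{2\rho_Q}\,du\,dm\,da$ (modulo the $\chi$-normalization on $A_{Z,E}$), and then apply Fubini there. This can be made to work, but it is a real gap in your sketch as written.

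For $(iii)\Rightarrow(i)$, the paper simply refers to the proof of $(ii)\Rightarrow(i)$ in \cite[Theorem~8.5]{kks}; your sketch via the polar Jacobian bound is the same content.
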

\begin{proof}
Let us assume~(i).  We may assume that $V\subset L^2(\widehat{Z};\chi)$ via the embedding 
$v \mapsto m_{\eta, v}$. Let $\cH$ be the unitary completion of $V$ in $L^2(\widehat{Z};\chi)$. Then 
$V^\infty = \cH^\infty$ and this implies that the statement is independent of the particular 
choice of the maximal compact subgroup $K$. We choose now $K$ as in \cite[Sect. 6]{kks}. 
In particular we obtain an open neighborhood $U_A$ of $1$ in $A$ such that all 
$m_{\eta, v}$, $v\in V$  admit absolutely convergent power series expansions \cite[(6.2)]{kks} 

\begin{equation} \label{power expand}  m_{\eta, v} (aw) =  \sum_{j=1}^l \sum_{\alpha\in \N_0[S]} a^{\Lambda_j +\alpha} q_{\alpha, j, w}(\log a) \qquad (a \in U_A\cdot A_Z, w\in \cW)\end{equation} 
where $\Lambda_j \in \ga_{Z,\C}^*$, $1\leq j \leq l$,  and $q_{\alpha, j,w}$ are polynomials on $\ga_Z$. 
Let $S=\{\sigma_1,\dots,\sigma_s\}$ and $\omega_1,\dots,\omega_s\in\ga_Z$ be such that:
$$
\begin{array}{ll}
\sigma_i(\omega_j)=\delta_{i,j},& i,j=1,\dots,s\\
\omega_i\perp\ga_{Z,E},& i=1,\dots,s\,.
\end{array}
$$
Here we use the scalar product on $\ga_Z$ defined before~\eqref{eq-defXab}. 
According to \cite[Theorem~8.5]{kks} the condition that all $m_{\eta, v} \in L^2(\widehat{Z};\chi)$ implies that 
\begin{equation}\label{eq-LambdaVeta}
\operatorname{Re}(\Lambda_k-\rho_Q)(\omega_j)>0,\quad j=1,\dots,s, \ k=1,\dots, l\, .
\end{equation}
Now \eqref{power expand} in combination with \eqref{eq-LambdaVeta} imply 
the existence of an $\varepsilon>0$ such that for all $v\in V$ there exists a constant $C_v>0$ such that 
\begin{equation}  \label{rapid bound} |m_{\eta, v} (aw)|\leq C_v a^{(1+\varepsilon) \rho_Q} \qquad (a \in A_Z^-)\, .\end{equation} 

On the other hand by the constant term approximation (Lemma \ref{lem-etaI}(i) applied to $Z_w$ and $\eta$ replaced by 
$w\cdot \eta$) 
we obtain that 
$$\lim_{t\to \infty} e^{t\rho_Q(X_I)} \left (m_{w\cdot \eta, v} (\exp(tX_I)) - m_{(w\cdot \eta)_I, v}(\exp (tX_I) )\right )=0$$
for all $X\in \ga_I^{--} \subset \ga_Z^-$. Moreover $t\mapsto m_{(w\cdot \eta)_I, v} (\exp(tX)) $ is the unique 
exponential polynomial with unitary characters having this approximation property by Lemma \ref{lem-etaI}(ii). 
 Hence \eqref{rapid bound} implies that this exponential polynomial is zero. In particular, 
 $\la (w\cdot \eta)_I,v\ra=m_{(w\cdot\eta)_I,v}(1)=0$ for all $v\in V$, and hence, by density of $V$ in $V^\infty$, $(w\cdot \eta)_I=0$, that is
(ii). 

\par
Let us assume that~(ii) holds. Let $\cI$ be an ideal of $\cZ(\gg)$ which annihilates $V$ or $V^\infty$. It is of finite codimension. 
Since $\eta$ is $Z$-tempered, there exists $N_0\in\N$ such that, for all $v\in V^\infty$, $m_{\eta,v}\in\cA_{temp,N_0}(Z:\cI)$ (cf.~\eqref{eq-def-tempform}). Let $v\in V^\infty$ and set $f=m_{\eta,v}$. Then one can apply 
Theorem~\ref{theo-constterm} to $Z_w$ and $f_w$ for $w_I$ equal to $1$: Let $I\varsubsetneq S$, $\cC$ be a compact subset of $\ga_I^{--}$, 
$\Omega_1$ be a compact subset 
of $G$ and $u\in \cU(\gg)$. Then there exists a continuous semi-norm $p$ on $C_{temp,N_0}^\infty(Z)$, $\varepsilon>0$ such that:
\begin{equation}\label{eq-modgrowth-f}
\begin{array}{l}
\vert(a_Z\exp(tX))^{-\rho_Q}(L_uf)(g a_Z\exp(tX)w\cdot z_0)\vert\\
\\
\leq e^{-\varepsilon t}(1+\Vert\log a_Z\Vert)^{N_0}p(f),\quad a_Z\in A_Z^-/A_{Z,E},X\in\cC,g\in\Omega_1,w\in\cW,t\geq 0\,.
\end{array}
\end{equation}
Note, as $\eta$ transforms under a unitary character for $A_{Z,E}$ (see \eqref{eq awH}), the left hand side 
in \eqref{eq-modgrowth-f}  depends only on  $a_Z\exp(tX)$ mod $A_{Z,E}$.
\par Let ${\bf S}_1$ be the unit sphere in $\ga_Z/\ga_{Z,E}$ and 
let $\widetilde X_0\in {\bf S}_1\cap\ga_Z^-/\ga_{Z,E}$. 
Let $\Omega_0$ be an open neighborhood of $\widetilde X_0$ in $S_1\cap\ga_Z^-/\ga_{Z,E}$ such that, for all $\widetilde X\in\Omega_0$, 
$\alpha(\widetilde X)\leq \alpha(\widetilde X_0)/2$, $\alpha\in S$. Let $I$ be the set of $\alpha\in S$ such that $\alpha(\widetilde X_0)=0$. 
One has $I\neq S$ as we may assume that $S\neq \emptyset$.  Let $X_0\in \ga_I$ be a lift of $\widetilde X_0$ and note 
that $X_0\in\ga_I^{--}$. 
Let $Y\in\Omega_0$ and $t\geq 0$. Then $t(Y-X_0/2)\in\ga_Z^-/ \ga_{Z,E}$ and 
$\exp(tY)=\exp t(Y-\widetilde X_0/2)\exp(t\widetilde X_0/2)\in A_Z/ A_{Z,E} $. Using~\eqref{eq-modgrowth-f} for $X=X_0/2$ and $a_Z=\exp t(Y-\widetilde X_0/2)\in A_Z^-/ A_{Z.E} $ one gets:  For any $N\in \N$ there exists a $c>0$, depending on 
$N, \varepsilon, f, \Omega_0$ and $\Omega_1$ 
such that 
$$
\begin{array}{l}
\vert(\exp(tY))^{-\rho_Q}(L_uf)(g \exp(tY)w\cdot z_0)\vert\\
\\
\leq e^{-\varepsilon t}(1+t\Vert Y-\widetilde X_0/2\Vert)^{N_0}p(f)\leq c(1+t)^{-N},\quad Y\in\Omega_0,g\in\Omega_1,w\in\cW,t\geq 0\,,
\end{array}
$$
One deduces easily from this that, for any $u\in\cU(\gg)$ and $N\in\N$:
$$\sup_{g\in\Omega_1,w\in\cW,a\in\exp(\R^+\Omega_0)}a^{-\rho_Q}(1+\Vert\log a\Vert)^{N}
\vert(L_uf)(g a w\cdot z_0)\vert<+\infty\,.$$
Using a finite covering of the compact set ${\bf S}_1\cap\ga_Z^-/\ga_{Z,E}$, one deduces from this that $f\in\cC(Z,\chi)$. 
This achieves to prove that~(ii) implies~(iii).

\par
To prove that~(iii) implies~(i), one proceeds as in the proof that~(ii) implies~(i) in \cite[Theorem~8.5]{kks}.
\end{proof}

\section{Transitivity of the constant term}
Recall that $\cA_{temp}(Z)$ consists of $\cZ(\gg)$-finite functions. In particular, for each $f\in \cA_{temp}(Z)$ there exists 
a co-finite ideal $\cJ\subset \cZ(\gg)$ such that $f \in \cA_{temp}(Z: \cJ)$. Hence constant terms 
$f_I$ are defined for all $f\in \cA_{temp}(Z)$. 
\begin{prop}[Transitivity of the constant term]\label{trans prop}
Let $I\subset J$ be two subsets of $S$. Then, if $f\in\cA_{temp}(Z)$,
$$f_I=(f_J)_{I}\,.$$
\end{prop}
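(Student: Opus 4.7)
The plan is to apply the uniqueness characterization of the constant term, combined with a two-parameter decay estimate. First I would record that $(\gh_J)_I = \gh_I$, so that $(Z_J)_I = Z_I$ and both $f_I$ and $(f_J)_I$ belong to $\cA_{temp}(Z_I)$. This follows by choosing $X_I \in \ga_I^{--}$ and $X_J \in \ga_J^{--}$ and observing that $X_I + \varepsilon X_J \in \ga_I^{--}$ for $\varepsilon > 0$ small; since $X_I$ and $X_J$ commute, $e^{t\,\ad(X_I + \varepsilon X_J)}\gh = e^{t\,\ad X_I}\,e^{\varepsilon t\,\ad X_J}\gh$, and interpreting the iterated limit (first $e^{\varepsilon t\,\ad X_J}\gh \to \gh_J$ as $\varepsilon t\to\infty$, then $e^{s\,\ad X_I}\gh_J \to (\gh_J)_I$ as $s\to\infty$, using $X_I \in \ga_I^{--,J}$) forces $\gh_I = (\gh_J)_I$.

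Next I would fix $g \in G$, $X \in \ga_I^{--}$, $Y \in \ga_J^{--}$ and apply Theorem~\ref{theo-constterm}(ii) three times: to $f \mapsto f_I$ on $Z$ with $a_Z = \exp(sY) \in A_Z^-$ and direction $X$; to $f \mapsto f_J$ on $Z$ with $a_Z = \exp(tX) \in A_Z^-$ and direction $Y$; and to $f_J \mapsto (f_J)_I$ on $Z_J$ with $a_Z = \exp(sY) \in A_{Z_J}^-$ (valid since $Y \in \ga_{Z_J}^-$) and direction $X \in \ga_I^{--,J}$. Using $[X,Y]=0$ to write $\exp(sY)\exp(tX) = \exp(sY + tX)$, and summing via the triangle inequality, these three bounds combine into
$$\bigl|e^{-\rho_Q(sY + tX)}[f_I - (f_J)_I](g\exp(sY + tX))\bigr| \leq C\bigl(e^{-\varepsilon t}(1 + s)^N + e^{-\varepsilon s}(1 + t)^N\bigr)$$
for all $s, t \geq 0$, with uniform constants (the semi-norm $p(f_J)$ appearing in the third bound is controlled by a semi-norm in $f$ via the continuity part of Theorem~\ref{theo-constterm}(i)).

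Setting $s = t = r$, the right-hand side is $O(e^{-\varepsilon r}(1+r)^N) \to 0$ as $r \to +\infty$. Since $\ga_J \subseteq \ga_I$, one has $X + Y \in \ga_I$, so the left-hand side, viewed as a function $r \mapsto e^{-r\rho_Q(X+Y)}[f_I - (f_J)_I](g\exp(r(X+Y)))$, is an exponential polynomial with unitary characters (both $f_I$ and $(f_J)_I$ enjoy this property on $\ga_I$ by Remark~\ref{rem-exppolyfI}, the latter applied on $(Z_J)_I = Z_I$, whose edge cone coincides with $\ga_I$). By~\eqref{eq-limexppolyn}, such a function must vanish identically; evaluating at $r = 0$ yields $f_I(g \cdot z_{0,I}) = (f_J)_I(g \cdot z_{0,I})$, and varying $g \in G$ concludes $f_I = (f_J)_I$ on $Z_I$.

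The principal obstacle is that $\ga_I^{--}$ and $\ga_J^{--}$ are essentially disjoint cones in $\ga_Z$, so the approximation $f \approx f_J$ of the $J$-constant term is not directly available along a pure $\ga_I^{--}$-direction; the two-parameter motion $sY + tX$, which lies in $\ga_I^{--}$ whenever $s, t > 0$ and falls into $A_Z^-$ and $A_{Z_J}^-$ appropriately for each of the three applications of Theorem~\ref{theo-constterm}(ii), is exactly what bridges this gap, with the diagonal restriction $s = t$ providing decay in every term simultaneously.
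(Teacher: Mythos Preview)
Your proof is correct and follows essentially the same route as the paper's: three applications of Theorem~\ref{theo-constterm}(ii) (one for $f\to f_I$, one for $f\to f_J$, one for $f_J\to (f_J)_I$), a triangle inequality, and the vanishing of an exponential polynomial with unitary characters via~\eqref{eq-limexppolyn}. The only cosmetic differences are that the paper first reduces to $g=1$ by $G$-equivariance and works with a single parameter $t$ from the start (taking direction $X+Y$ with $a_Z=1$ in the first application), whereas you keep two independent parameters $s,t$ and a general $g$, specializing to $s=t$ only at the end; this costs you an extra harmless polynomial factor $(1+r)^N$ in the decay. Your opening remark that $(\gh_J)_I=\gh_I$ is correct and useful to state, though the iterated-limit justification you sketch is informal; it is immediate from the explicit description of $\gh_I$ in Section~\ref{sect-degenZ}.
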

\begin{proof}
By $G$-equivariance of the maps: 
$$
\begin{array}{rcl}
\begin{array}{rcl}
\cA_{temp}(Z) & \to & \cA_{temp}(Z_I)\\
f & \mapsto & f_I 
\end{array}
& \textrm{and} &
\begin{array}{rcl}
\cA_{temp}(Z_J) & \to & \cA_{temp}(Z_I)\\
f & \mapsto & f_I 
\end{array},
\end{array}
$$
it is enough to show that, if $f\in\cA_{temp}(Z)$, $f_I(z_{0,I})=(f_J)_I(z_{0,I})$. Recall that $\ga_{Z_J}=\ga_Z$ and
$$\ga_I^{--}=\{X\in\ga_I:\, \alpha(X)<0,\alpha\in S\setminus I\},\quad 
\ga_{I,J}^{--}=\{X \in\ga_I:\, \alpha(X)<0,\alpha\in J\setminus I\}\,.$$ 
As $\ga_I=\{X\in\ga_Z:\, \alpha(X)=0,\alpha\in I\}$ and $\ga_J=\{X\in\ga_Z:\, \alpha(X)=0,\alpha\in J\}$, one has: 
$$
\ga_J\subset\ga_I,\quad \ga_I^{--}\subset\ga_Z^-,\quad \ga_{I,J}^{--}\subset\ga_Z^-\,.
$$
One remarks that $\ga_{I}^{--}\subset\ga_{I,J}^{--}$. Let $X\in\ga_J^{--}$ and $Y\in\ga_{I}^{--}$. Then $X+Y\in\ga_{I}^{--}$.

\par
Using Theorem~\ref{theo-constterm}(ii) applied successively to $(Z,I,f,X+Y,1)$, $(Z,J,f,X,\exp(tY))$ and $(Z_J, I,f_J, Y,\exp(tX))$ instead of $(Z,I,f,X,a_Z)$, one gets that there exist $C>0$ and $\varepsilon>0$ such that, for all $t\geq 0$,
$$
\begin{array}{l}
\alpha_t\vert f(\exp(t(X+Y)))-f_I(\exp(t(X+Y)))\vert\leq Ce^{-\varepsilon t}\,,\\
\alpha_t\vert f(\exp(tY)\exp(tX))-f_J(\exp(tY)\exp(tX))\vert\leq Ce^{-\varepsilon t}\,,\\
\alpha_t\vert f_J(\exp(tX)\exp(tY))-(f_J)_I(\exp(tX)\exp(tY))\vert\leq Ce^{-\varepsilon t}\,,
\end{array}
$$
where $\alpha_t=e^{-t\rho_Q(X+Y)}$. Hence, one concludes from the three inequalities above that:
$$
\alpha_t\vert f_I(\exp(t(X+Y)))-(f_J)_I(\exp(t(X+Y)))\vert\leq 3Ce^{-\varepsilon t},
\quad t\geq 0\,.
$$
Hence, $\alpha_t [f_I(\exp(t(X+Y)))-(f_J)_I(\exp(t(X+Y)))]$ tends to zero when $t$ goes to $+\infty$. 
But, each term of this difference is an exponential polynomial in $t$ with unitary characters. 
Hence, according to~\eqref{eq-limexppolyn}, the difference of the two occurring exponential polynomials is identically zero. 
It implies, taking $t=0$, that $f_I(z_{0,I})=(f_J)_I(z_{0,I})$.
\end{proof}

\subsection{Application: Tempered embedding theorem} 

From the constant term approximation in Th. \ref{theo-constterm}, the consistency relations (Prop. \ref{prop-changeorbit}) and the  transitivity of the constant term (Prop. \ref{trans prop}) one can quite easily derive an extension 
of the tempered embedding theorem \cite[Th. 9.11]{kks} to all real spherical spaces. The details are 
carried out in \cite[Th.~11.12]{planch-sph} and we record for later reference: 

\begin{theo}[Tempered embedding theorem] \label{temp embed}Let $V$ be an irreducible Harish-Chandra module contained in $C^\infty_{temp}(Z)$. Then there exists
$I\subset S$, $w\in \cW$  and a unitary character $\chi$ of $A_I$ such that there is an $(\gg, K)$-embedding
$$V\hookrightarrow L^2(\hat {(Z_w)_I}, \chi)\, .$$
\end{theo}

\section{Uniform estimates}\label{section uniform}

The goal of this section is to obtain a parameter independent version of the main result 
Theorem~\ref{theo-constterm}: the bounds become uniform if we restrict ourselves 
to ideals $\cI$ of $\bD_0(Z)$ of codimension one. 
The crucial ingredient is a recent result that infinitesimal characters 
of tempered representations have integral real parts (see \cite{kkos} and summarized in Lemma \ref{lem-lattice} below).

\par Recall the Cartan subalgebra $\gj=\ga\oplus\gt\subset \gg$ with real form $\gj_\R=\ga\oplus i\gt\subset\gj_\C$, associated Weyl group $W_\gj$ and half sum of roots $\rho_\gj$.   Note that $\rho_\gj\big|_{\ga_H}= \rho\big|_{\ga_H}=\rho_Q\big|_{\ga_H}=0$ as 
$Z$ was requested to be unimodular. In particular $\rho_\gj\big|_{\ga}$ factors through 
$\ga_Z$ and coincides with $\rho_Q$.
\par
If $\Lambda\in\gj_\C^*/W_\gj$, let $\chi_\Lambda$ be the character of $\cZ(\gg)$ corresponding to 
$\Lambda$ via the Harish-Chandra isomorphism $\gamma: \cZ(\gg)\to S(\gj)^{W_\gj} $. 
More precisely,
$$\chi_\Lambda(u)=(\gamma(u))(\Lambda),\quad u\in \cZ(\gg)\,.$$
Further, we set $\cJ_\Lambda:=\ker  \chi_\Lambda$. 
We also recall the untwisted Harish-Chandra homomorphism $\gamma_0: \cZ(\gg)\to S(\gj)$
and set $\cJ_{\Lambda, 0}:= \gamma_0(\cJ_\Lambda)$. 

\par 
According to Chevalley's theorem, 
$S(\gj)$ is a free module of finite rank over $S(\gj)^{W_\gj}\simeq \gamma_0(\cZ(\gg))$. Hence, 
we obtain a subspace $U_0\subset S(\gj)$ such that the natural map: 
$$ \gamma_0(\cZ(\gg)) \otimes U_0\to S(\gj), \quad v\otimes  u \mapsto vu$$
is an isomorphism. Thus, for any $\Lambda\in \gj_\C^*/W_\gj$, we obtain with $\gamma_0(\cZ(\gg)) = \cJ_{\Lambda,0} +\C 1$
that $S(\gj)/ S(\gj) \cJ_{\Lambda,0}\simeq U_0$ as vector spaces. The natural representation of $S(\gj)$ on 
$S(\gj)/ S(\gj) \cJ_{\Lambda,0}\simeq U_0$ gives then rise to 
a $S(\gj)$-representation:
$$\sigma_\Lambda: S(\gj) \to \End(U_0)\, .$$
For $\Lambda\in\gj_\C^*/W_\gj$, let us fix a representative $\lambda\in\gj_\C^*$ such that $\Lambda=W_\gj\cdot \lambda$.

\begin{lem} \label{lem: j-spec}
The following assertions hold: 
\begin{enumerate}[(i)]
\item The representation $(\sigma_\Lambda, U_0)$ is polynomial in $\Lambda$, i.e., for all $v\in S(\gj)$, the assignment 
$\Lambda\mapsto \sigma_\Lambda(v)$ is polynomial. 
 \item One has $\Spec (\sigma_\Lambda)=\rho_\gj+W_\gj\cdot \lambda$. 
\end{enumerate}
\end{lem}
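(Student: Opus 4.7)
The plan is to realize $(\sigma_\Lambda, U_0)$ as the natural $S(\gj)$-module structure on the quotient $S(\gj)/S(\gj)\cJ_{\Lambda,0}$ and then to deduce both assertions from Chevalley's structure theorem for $S(\gj)$ over the image of the Harish-Chandra map, keeping careful track of the $\rho_\gj$-shift.

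First I would establish the identification. Since $\gamma_0(\cZ(\gg))$ is a polynomial subring of $S(\gj)$ over which $S(\gj)$ is free with basis $U_0$ (this is Chevalley's theorem applied to the $\rho_\gj$-shifted $W_\gj$-action on $\gj_\C^*$), and since $\cJ_{\Lambda,0}$ is a maximal ideal of $\gamma_0(\cZ(\gg))$ with residue field $\C$, the decomposition $\gamma_0(\cZ(\gg))=\C\cdot 1\oplus \cJ_{\Lambda,0}$ combined with the commutativity of $S(\gj)$ gives $\cJ_{\Lambda,0}U_0 = S(\gj)\cJ_{\Lambda,0}$ and hence a vector space decomposition $S(\gj)=U_0\oplus S(\gj)\cJ_{\Lambda,0}$. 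The projection then identifies $U_0$ with $S(\gj)/S(\gj)\cJ_{\Lambda,0}$ as $S(\gj)$-modules, and the induced action is precisely $\sigma_\Lambda$.

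To prove (i), I would fix a basis $(u_i)$ of $U_0$ and expand $vu_i=\sum_j p_{ij}(v)u_j$ with $p_{ij}(v)\in\gamma_0(\cZ(\gg))$ uniquely determined and, crucially, independent of $\Lambda$. In the quotient, an element $p \in \gamma_0(\cZ(\gg))$ acts by the scalar $p(\lambda+\rho_\gj)$, by virtue of the defining relation $\chi_\Lambda(u)=\gamma(u)(\lambda)=\gamma_0(u)(\lambda+\rho_\gj)$. The matrix of $\sigma_\Lambda(v)$ in $(u_i)$ is therefore $\bigl(p_{ij}(v)(\lambda+\rho_\gj)\bigr)_{i,j}$, whose entries are $W_\gj$-invariant polynomials in $\lambda$ and so descend to polynomial functions of $\Lambda \in \gj_\C^*/W_\gj$.

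For (ii), the same scalar action of $\gamma_0(\cZ(\gg))$ gives $\sigma_\Lambda(p)=p(\lambda+\rho_\gj)\,\Id_{U_0}$ for every $p\in\gamma_0(\cZ(\gg))$, so any generalized joint eigenvalue $\mu$ of the family $(\sigma_\Lambda(X))_{X\in\gj}$ must satisfy $p(\mu)=p(\lambda+\rho_\gj)$ for all $p\in\gamma_0(\cZ(\gg))$. Since $\gamma_0(\cZ(\gg))$ separates the orbits of the $\rho_\gj$-shifted Weyl action (another application of Chevalley), this forces $\mu\in \rho_\gj+W_\gj\cdot\lambda$. Conversely, the annihilator of the cyclic $S(\gj)$-module $U_0$ contains $S(\gj)\cJ_{\Lambda,0}$, whose zero locus in $\gj_\C^*=\Spec S(\gj)$ is exactly this shifted orbit, so every point of $\rho_\gj+W_\gj\cdot\lambda$ lies in the support — hence in the spectrum — of $\sigma_\Lambda$. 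The only genuine subtlety, and not really a difficulty, is the bookkeeping around the $\rho_\gj$-shift: one must consistently remember that $\cJ_{\Lambda,0}$ is the maximal ideal of $\gamma_0(\cZ(\gg))$ at $\lambda+\rho_\gj$ rather than at $\lambda$, and it is precisely this shift that produces the statement of (ii).
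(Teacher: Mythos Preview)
Your proof is correct and follows essentially the same strategy as the paper's: exploit the freeness of $S(\gj)$ over $\gamma_0(\cZ(\gg))$ to get a $\Lambda$-independent basis, then read off the matrix entries as evaluations of invariant polynomials. The only organizational difference is that the paper first introduces an auxiliary ``unshifted'' representation $\sigma_\Lambda'$ on $S(\gj)/S(\gj)\gamma(\cJ_\Lambda)$ (for which $\Spec(\sigma_\Lambda')=W_\gj\cdot\lambda$ is immediate from the Harish-Chandra isomorphism) and then transports everything to $\sigma_\Lambda$ via the $\rho_\gj$-shift automorphism of $S(\gj)$, whereas you work directly with $\sigma_\Lambda$ and track the shift by hand. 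Your route is slightly more direct; the paper's route makes the origin of the $\rho_\gj$ in (ii) a one-line shift rather than a bookkeeping exercise.

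One small wording issue in your converse for (ii): from ``the annihilator \emph{contains} $S(\gj)\cJ_{\Lambda,0}$'' one only gets $\Spec(\sigma_\Lambda)\subset V(S(\gj)\cJ_{\Lambda,0})$, which is the inclusion you already proved. What you need (and clearly intend, since you emphasize cyclicity) is that for the cyclic module $S(\gj)/S(\gj)\cJ_{\Lambda,0}$ the annihilator \emph{equals} $S(\gj)\cJ_{\Lambda,0}$, so the support is exactly $V(S(\gj)\cJ_{\Lambda,0})=\rho_\gj+W_\gj\cdot\lambda$. Replace ``contains'' by ``equals'' and the argument is clean.
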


\begin{proof}  
We prove both assertions together. Consider the auxiliary $S(\gj)$-module 
$S(\gj)/ S(\gj) \cJ_\Lambda$ and call the corresponding representation 
of $S(\gj)$ by $\sigma_\Lambda'$.  We have $\Spec(\sigma_\Lambda')= W_\gj\cdot \lambda$. 
Recall the complement $U_0\subset S(\gj)$ and let $U_1= U_0(\cdot +\rho_\gj)\subset S(\gj)$ obtained from 
$\rho_\gj$-shift. We model $\sigma_\Lambda'$ on $U_1$ 
and claim that $v\mapsto \sigma_\Lambda'(v)$ is polynomial in $\Lambda$. 
It suffices to verify the assertion for $v$ of the form 
$v=\gamma(z)u$ with $z\in \cZ(\gg)$ and $u \in U_1$. 
Now 
$$v=u(\gamma(z) - \chi_\Lambda(z))  + \chi_\Lambda(z)u $$
with the first sum in 
the ideal $S(\gj)\cJ_\Lambda$. The claim follows. 
It remains to relate the representation $\sigma_\Lambda$ to $\sigma_\Lambda'$, which is given by 
$\sigma_\Lambda(v)  = \sigma_\Lambda'( v(\cdot +\rho_\gj))$ via the algebra
automorphism $S(\gj)\to S(\gj), \ v\mapsto v(\cdot +\rho_\gj)$ obtained by the $\rho_\gj$-shift upon identification $S(\gj)\simeq\C[\gj_\C^*]$.
\end{proof}

Recall that there is a surjective algebra morphism $p:\cZ(\gg)\to\bD_0(Z)$. Given a codimension one ideal $\cI$ in $\bD_0(Z)$, its preimage $\cJ=p^{-1}(\cI)$ is of codimension one in $\cZ(\gg)$, hence, of the form $\cJ_\Lambda$, for some $\Lambda\in\gj_\C^*/W_\gj$.

\par 
Denote by $\X\subset\gj_\C^*/W_\gj$ the set of $\Lambda$'s obtained that way. For $\Lambda\in\X$, we set $\cI_\Lambda:=p(\cJ_\Lambda)$. 

\par Next we wish to describe the set $\X$ more closely. 
Since $\bD_0(Z)$ is a finitely generated $\C$-algebra without nilpotent elements, its maximal spectrum $\operatorname{specmax}(\bD_0(Z))$ is an affine variety
and naturally identifies with $\X$. The surjective algebra morphism $p:\cZ(\gg)\to \bD_0(Z)$ gives rise to the closed embedding:  
$$p_*: \X=\operatorname{specmax}(\bD_0(Z)) \hookrightarrow \gj_\C^*/W_\gj=\operatorname{specmax}(\cZ(\gg))\,.$$
We recall our choice of $\gt$ and $\gt_H$ before Lemma \ref{lem: fin gen}.

\begin{lem} \label{lemma X} The affine subvariety $\X\subset \gj_\C^*/W_\gj$ is given by 
\begin{equation} \label{decr X} \X=\{\Lambda\in \gj_\C^*/ W_\gj\mid  \exists \mu \in \Lambda=W_\gj\cdot \lambda\ \text{such that}\  (\rho_\gj+\mu)\big|_{\ga_H +\gt_H}=0\}\end{equation} 
\end{lem}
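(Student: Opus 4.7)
The plan is to compute $\X$ as a closed subvariety of $\gj_\C^*/W_\gj$ via the surjection $p : \cZ(\gg) \to \bD_0(Z)$. Since $p_*$ is a closed immersion, $\X$ is the zero locus of the ideal $\widetilde I := \gamma(\ker p) \subset S(\gj)^{W_\gj}$, and the task reduces to describing $\widetilde I$ explicitly.

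First I would chase the commutative diagram \eqref{diagram1}. Since $\gamma_0 : \cZ(\gg) \to S(\gj)$ and $j_0 : \gZ(Z_\emptyset) \to S(\gj)/S(\gj)(\ga_H + \gt_H)$ are both injective, commutativity forces
$$\ker p = \gamma_0^{-1}\bigl(S(\gj)(\ga_H + \gt_H)\bigr).$$
Using the shift relation $\gamma(z)(\nu) = \gamma_0(z)(\nu + \rho_\gj)$, this translates into
$$\widetilde I = S(\gj)^{W_\gj} \cap I(L'), \qquad L' := \{\mu \in \gj_\C^* \mid (\mu + \rho_\gj)|_{\ga_H + \gt_H} = 0\},$$
where $I(L') \subset S(\gj)$ denotes the ideal of polynomials vanishing on the affine subspace $L'$. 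Indeed, an element $z \in \cZ(\gg)$ lies in $\gamma_0^{-1}(S(\gj)(\ga_H + \gt_H))$ iff $\gamma_0(z)$ vanishes on $(\ga_H + \gt_H)^\perp \subset \gj_\C^*$, iff $\gamma(z)$ vanishes on the $\rho_\gj$-shift of this subspace, namely $L'$.

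It then remains to identify $\X$ with the image $q(L')$ of $L'$ under the finite quotient map $q : \gj_\C^* \to \gj_\C^*/W_\gj$. For any $f \in S(\gj)^{W_\gj}$, viewed as a function $\bar f$ on $\gj_\C^*/W_\gj$, one has $\bar f|_{q(L')} = 0$ iff $f|_{L'} = 0$, iff $f \in \widetilde I$. Hence $\widetilde I$ is precisely the vanishing ideal of $q(L')$ in $\C[\gj_\C^*/W_\gj] \simeq S(\gj)^{W_\gj}$, so $\X = q(L')$ as subsets of $\gj_\C^*/W_\gj$. Unwinding the definition of $L'$ yields the description in the lemma: $\Lambda \in \X$ iff some representative $\mu$ of the $W_\gj$-orbit $\Lambda$ satisfies $(\mu + \rho_\gj)|_{\ga_H + \gt_H} = 0$.

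The genuinely spherical input is concentrated in the first step: the injectivity of the vertical arrows in \eqref{diagram1} rests on Appendix \ref{app-RBP} and on the Knop--Harish-Chandra compatibility \eqref{diagramk}. The remainder is elementary invariant theory for the finite quotient $q$, the only bookkeeping subtlety being the tracking of the $\rho_\gj$-shift that distinguishes $\gamma$ from $\gamma_0$ and is the origin of the additive $\rho_\gj$ in the statement.
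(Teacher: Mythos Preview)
Your argument is correct and follows the same strategy as the paper --- both compute $\X$ via the commutative square \eqref{diagram1} and the finiteness of $S(\gj)$ over $S(\gj)^{W_\gj}$ --- but you stay at the ideal level while the paper passes to spectra: it builds the diagram \eqref{diagram2} of maximal spectra from \eqref{IJ-diagram}, and reads off $\X=\gamma_{0,*}((\ga_H+\gt_H)^\perp)$ from the surjectivity of the middle vertical arrow $\iota_{0,*}$ (a going-up consequence of the finite injective extension $\iota_0$). One small gap in your write-up: from ``$\widetilde I$ is the vanishing ideal of $q(L')$'' and ``$\X=V(\widetilde I)$'' you only get $\X=\overline{q(L')}$; you should note that $q(L')$ is already Zariski closed because $q:\gj_\C^*\to\gj_\C^*/W_\gj$ is a finite (hence closed) morphism and $L'$ is an affine subspace.
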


To prepare the proof of this Lemma, we need to develop a little bit of general theory which is 
used later on as well.  
\par We recall that Lemma   \ref{lem: fin gen}  implies that 
$\bD(Z_I)$ is a finitely generated $\C$-algebra without nilpotent elements and thus corresponds to an affine variety 
${\mathbb Y}_I=\operatorname{specmax}(\bD(Z_I))$. 
It follows from Lemma \ref{lem-defmu_I2} that the
algebra morphism $\mu_I: \bD_0(Z) \to \bD(Z_I)$ is injective, hence 
$\mu_{I,*}: {\mathbb Y}_I\to \X$ is a dominant morphism of affine algebraic varieties. 
Moreover, since $\bD(Z_I)$ is a module of finite type 
over $\bD_0(Z)$, it follows in addition 
that $\mu_{I,*}$   is a finite surjective morphism with uniformly bounded finite fibers (by the going up property
in ring theory, see \cite[Theorem 5.10]{AM} or \cite[Proposition 3.2.4]{hpk}).

Define $\gamma_{00}: \cZ(\gg)\to S(\gj)/ S(\gj)(\ga_H + \gt_H)$, obtained 
from the composition of $\gamma_0$ and the projection $S(\gj)\to S(\gj)/S(\gj)(\ga_H + \gt_H)$. 
We recall from \eqref{algebra j} the injective algebra morphism
$$j_0: \gZ(Z_\emptyset)\to S(\gj)/ S(\gj)(\ga_H + \gt_H)\, .$$
Now $j_0$ composed with the natural inclusion $\bD(Z_\emptyset) \hookrightarrow \gZ(Z_\emptyset)$ gives rise 
the injective morphism 
$$\iota_\emptyset: \bD(Z_\emptyset)\to S(\gj)/ S(\gj)(\ga_H + \gt_H)\, .$$
Next, we recall that $\bD(Z_I)$ is naturally a subalgebra of $\bD(Z_\emptyset)$
via the monomorphism $\D(Z_I)\hookrightarrow \D(Z_\emptyset)$ of Lemma \ref{lem-defmu_I2} applied to $Z=Z_I$.
Composing this injection with $\iota_\emptyset$ we obtain a monomorphism 
$$\iota_I: \bD(Z_I)\to S(\gj)/ S(\gj)(\ga_H + \gt_H)\, .$$
With \eqref{diagram1}, we thus arrive at the following commutative diagram of finite module extensions
\begin{equation} \label{IJ-diagram}
    \xymatrix{
    S(\gj) \ar@{>}[r] & S(\gj)/ S(\gj)(\ga_H + \gt_H)  \ar@{=}[r]&S(\gj)/ S(\gj)(\ga_H + \gt_H)\\
    \cZ(\gg) \ar@{>}[u]^{\gamma_0} \ar@{>}[r]^{p} &  \bD_0(Z) \ar@{>}[u]^{\iota_0} \ar@{>}[r]^{\mu_I}&\bD(Z_I)\ar@{>}[u]^{\iota_I}\, ,}
        \end{equation}
with the middle vertical arrow $\iota_0$ uniquely determined by the injectivity of $\mu_I$. In particular, 
$\iota_0$ is injective. 
On the level of affine varieties, this corresponds to the commutative diagram 
\begin{equation} \label{diagram2}
    \xymatrix{
    \gj_\C^*\ar@{>}[d]_{\gamma_{0,*}}& \ar@{>}[l] (\ga_H +\gt_H)^\perp
     \ar@{=}[r]\ar@{>}[d]_{\iota_{0,*}} &(\ga_H +\gt_H)^\perp \ar@{>}[d]_{\iota_{I,*}}\\
      \gj_\C^*/W_\gj &\ar@{>}[l]_{p_*}\X&\ar@{>}[l]_{\mu_{I,*}}{\mathbb Y}_I\,,}
        \end{equation}
where $(\ga_H +\gt_H)^\perp\subset \gj_\C^*$.  Since all vertical arrows in \eqref{IJ-diagram} are injective 
and represent finite module extensions, it follows that all vertical arrows
in \eqref{diagram2} are surjective  (by application of 
the going down property  as above).

\begin{proof}[Proof of Lemma \ref{lemma X}] Immediate from the surjectivity of the vertical maps
in the commutative diagram \eqref{diagram2}.\end{proof}

Recall the decomposition  
$$
\bD(Z_I)=U_\Lambda\oplus\bD(Z_I)\cI_\Lambda'\,,
$$ 
from \eqref{eq: bdZ3}, with $U_\Lambda:=U_{\cI_\Lambda}$ and $\cI_\Lambda':=\mu_I(\cI_\Lambda)$. 

\par
Recall that $U_\Lambda\subset U$ (where $U$ is the finite dimensional subspace of $\bD(Z_I)$ independent of $\Lambda$ satisfying \eqref{eq: defU}) and thus $n:= \max_{\Lambda\in\X}  \dim U_\Lambda\leq \dim U<\infty$. 
For every $0\leq j\leq n$, we now set 
$$\X^j:=\{ \Lambda\in \X\mid \dim U_\Lambda=j\}$$
and get $\X=\coprod_{j=0}^n \X^j$. Now, for every $\Lambda\in\X^j$, the set 
$$\X_\Lambda:=\{x\in\X \mid U_\Lambda\oplus\bD(Z_I)\cI_x'=\bD(Z_I)\}$$
is a subset of $\X^j$. 

\begin{lem}\label{lem-zar1} 
The following assertions hold:
\begin{enumerate}[(i)]
\item For any $0\leq j\leq n$, the set $\bigcup_{k\leq j}\X^k$ is Zariski-open in $\X$. In particular, $\X^j$ is locally closed in $\X$.
\item For $\Lambda\in \X^j$, the set $\X_\Lambda$ is Zariski-open in $\X^j$. 
\end{enumerate}
\end{lem}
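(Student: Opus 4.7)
The plan rests on the identification, furnished by decomposition \eqref{eq: bdZ3} together with the paragraph following \eqref{eq-reprhoI}, of $U_\Lambda$ with the fiber $\bD(Z_I)/\bD(Z_I)\cI_\Lambda'$ of the $\bD_0(Z)$-module $\bD(Z_I)$ at the point $\Lambda\in\X=\operatorname{specmax}(\bD_0(Z))$. Recall that $\bD(Z_I)$ carries a $\bD_0(Z)$-module structure via $\mu_I$, and with that structure $\bD(Z_I)\cI_\Lambda' = \cI_\Lambda \cdot \bD(Z_I)$ is exactly the fiber ideal. By Lemma \ref{lem: fin gen}, $\bD(Z_I)$ is finitely generated over $\bD_0(Z)$, so all such fibers are finite dimensional and the integer $\dim U_\Lambda$ is a genuine function of $\Lambda\in\X$.

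For part (i), I would invoke the standard upper semi-continuity theorem for fiber dimensions of finitely generated modules over a commutative noetherian ring (which applies since $\bD_0(Z)$ is noetherian, being a quotient of the polynomial ring $\cZ(\gg)\simeq S(\gj)^{W_\gj}$): the function
\[
\Lambda \longmapsto \dim_{\C}\bigl(\bD(Z_I)\otimes_{\bD_0(Z)} \bD_0(Z)/\cI_\Lambda\bigr) = \dim_\C U_\Lambda
\]
is upper semi-continuous on $\X$ in the Zariski topology. Hence $\{\Lambda\in\X : \dim U_\Lambda \leq j\}=\bigcup_{k\leq j}\X^k$ is Zariski-open, and $\X^j$ is the difference of two such open sets, i.e.~locally closed.

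For part (ii), fix $\Lambda\in\X^j$ and let $x\in\X$. Observe first that any $x\in \X_\Lambda$ satisfies $\bD(Z_I)/\bD(Z_I)\cI_x' \simeq U_\Lambda$, so $\dim U_x = j$ and consequently $\X_\Lambda\subset\X^j$. The composition of the inclusion $U_\Lambda\hookrightarrow\bD(Z_I)$ with the quotient map yields a linear map
\[
\phi_x : U_\Lambda \longrightarrow \bD(Z_I)/\bD(Z_I)\cI_x',
\]
and $x\in\X_\Lambda$ is precisely the assertion that $\phi_x$ is an isomorphism. On $\X^j$ both source and target have dimension $j$, so this reduces to surjectivity of $\phi_x$. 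Let $N\subset \bD(Z_I)$ be the $\bD_0(Z)$-submodule generated by $U_\Lambda$ and set $M:=\bD(Z_I)/N$, still a finitely generated $\bD_0(Z)$-module. By Nakayama's lemma, $\phi_x$ is surjective iff $M/\cI_x M=0$ iff $x\notin \operatorname{supp}(M)$. Since $M$ is finitely generated over the noetherian ring $\bD_0(Z)$, $\operatorname{supp}(M)=V(\operatorname{Ann} M)\cap \X$ is Zariski-closed in $\X$. Hence $\X_\Lambda = \X^j\cap(\X\setminus\operatorname{supp}(M))$ is open in $\X^j$.

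The only delicate point is the bookkeeping: one must verify that the $\bD_0(Z)$-action on $\bD(Z_I)$ induced by $\mu_I$ matches $\bD(Z_I)\cI_x'$ with $\cI_x\cdot \bD(Z_I)$, so that the fibers $\bD(Z_I)/\bD(Z_I)\cI_x'$ are indeed the classical module-theoretic fibers to which semi-continuity and Nakayama apply. Beyond that verification, both statements reduce to entirely standard commutative algebra and no finer input (for instance about $\mu_I$ or about the geometry of $Z_I$) is needed.
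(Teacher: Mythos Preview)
Your proof is correct and follows essentially the same route as the paper: both parts reduce to Nakayama's lemma applied to the finite $\bD_0(Z)$-module $\bD(Z_I)$, with (i) being upper semi-continuity of fiber dimension and (ii) being openness of the locus where a fixed finite set of elements spans the fiber. The only cosmetic difference is that the paper spells out the Nakayama step by hand (producing an $f\in\cO(\X)$ with $\cO(\Y)_f=\cO(\X)_f\,U_\Lambda$), whereas you package (ii) via $\operatorname{supp}(\bD(Z_I)/N)$; these are the same argument.
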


\begin{proof} 
Note that $\bD_0(Z)=\cO(\X)$ is the coordinate ring of the affine variety $\X$.  For any $x\in \X$,
we denote by $\gm_x\subset \cO(\X)$ the corresponding maximal ideal. 
Since $\cO(\Y_I)=\bD(Z_I)$ is a finite 
module of $\cO(\X)$, we find a finite dimensional subspace $U_x\subset \cO(\Y_I)$ such that $\cO(\Y_I)= U_x \oplus \cO(\Y_I)\gm_x$. 
The Nakayama lemma implies that there exists an $f\in \cO(\X)$ with $f(x)\neq 0$ such that 
$\cO(\Y_I)_f = \cO(\X)_f U_x$. 
In particular, we have, for all $z\in \X$ with $f(z)\neq 0$, that 
$\cO(\Y_I)=  U_x + \cO(\Y_I)\gm_z$. This implies that: 
$$\X \to \N_0, \ \ x\mapsto \dim \cO(\Y_I)/\cO(\Y_I)\gm_x$$
is upper semi-continuous and, in particular, for any $1\leq j\leq n$, we have that 
$\bigcup_{k\leq j } \X^k$ is Zariski-open in $\X$ and (i) follows.
\par 
For (ii), we just saw that, for $\Lambda\in\X^j$, we have, for $z\in\X_\Lambda$, that there exists $f\in\cO(\X)$ such that $f(z)\neq 0$ and $\{y\in\X^j\mid f(y)\neq 0\}\subset \X_\Lambda$. Hence, $\X_\Lambda$ is Zariski-open in $\X^j$.
\end{proof}

As quasi-affine varieties are quasi-compact for the Zariski topology, 
it follows that there exists finitely many $\Lambda\in\X$, say $\Lambda_1,\ldots,\Lambda_s$, such that:
$$\X=\bigcup_{j=1}^s\X_{\Lambda_j}\,.$$
For any $1\leq j\leq s$, we define a fixed finite dimensional vector space $U_j:=U_{\Lambda_j}$ as above. 
 This gives us a direct sum decomposition
 \begin{equation}\label{eq-decompDZI-Lambda}
\bD(Z_I)=U_j\oplus\bD(Z_I)\cI_\Lambda',\quad \Lambda\in \X_{\Lambda_j}\,,
\end{equation}
and, upon the identification $U_j\simeq\bD(Z_I)/\cI_\Lambda'$, a representation 
$$\rho_\Lambda:\bD(Z_I)\to\End(U_j)\,.$$

\begin{lem}\label{lem-rhoLambda}
The following assertions hold:
\begin{enumerate}[(i)]
\item Fix $1\leq j\leq s$. For any $v\in\bD(Z_I)$, the map 
$$
\X_{\Lambda_j}\to \End(U_j),\quad \Lambda\mapsto \rho_\Lambda(v)
$$
is regular, i.e., locally the restriction to $\X_{\Lambda_j}$ of a rational function on $\X$. In particular, there exists 
an open covering $\X=\bigcup_{j=1}^s  \X_j $ with $\X_j\subset \X_{\Lambda_j}$ such that, 
for all $v\in \bD(Z_I)$, there exists a constant $C_v>0$ such that 
\begin{equation}\label{eq-nothing}
\|\rho_\Lambda(v)\|\leq C_v ( 1+\|\Lambda\|)^N \qquad (\Lambda \in \X_j)\,,\end{equation}
for some $N\in \N$ independent of $v$. Here, $\|\cdot\|$ on the left hand side of \eqref{eq-nothing} refers to the operator norm of $\End(U_j)$. 
\item With $S(\ga_I)\subset\bD(Z_I)$, one has: 
$$\Spec_{\ga_I}(\rho_\Lambda)\subset (\rho_Q+W_\gj\cdot \Lambda)\vert_{\ga_I}\,.$$
\end{enumerate}
\end{lem}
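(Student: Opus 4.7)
The approach is to interpret $\bD(Z_I)$ as a finitely generated, hence coherent, $\bD_0(Z)$-module through $\mu_I$ (Lemma~\ref{lem: fin gen}), so that the quotient $\bD(Z_I)/\bD(Z_I)\cI_\Lambda'$ identifies with the fiber at $\Lambda$ of the corresponding quasi-coherent sheaf $\cF_I$ on $\X=\operatorname{specmax}(\bD_0(Z))$. Since $\mu_I(\bD_0(Z))$ is the image of the center $\cZ(\gg)$ under $\mu_I\circ p$, it is central in $\bD(Z_I)$, so left multiplication by any $v\in\bD(Z_I)$ is a $\bD_0(Z)$-linear endomorphism of $\bD(Z_I)$, and the induced fiber action at $\Lambda$ is exactly $\rho_\Lambda(v)$.

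For part (i), I would refine Lemma~\ref{lem-zar1}(ii) by passing, around each point of $\X_{\Lambda_j}$, to a principal affine open subset $\X_j\subseteq \X_{\Lambda_j}$ on which $\cF_I$ becomes free of rank $\dim U_j$ with basis $u_1,\dots,u_{\dim U_j}$; this is possible by Nakayama's lemma combined with the constancy of fiber dimension on the stratum $\X^j$ guaranteed by Lemma~\ref{lem-zar1}(i). On such a trivializing chart, the matrix of left multiplication by $v$ has entries in $\cO(\X_j)$, establishing the regularity of $\Lambda\mapsto \rho_\Lambda(v)$. The polynomial estimate~\eqref{eq-nothing} then follows from standard growth bounds for regular functions on the principal open $\X_j$ of the closed subvariety $\X\hookrightarrow \gj_\C^*/W_\gj$.

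For part (ii), I would exploit the commutative diagram~\eqref{diagram2}. The maximal ideals of the finite-dimensional commutative algebra $U_j = \bD(Z_I)/\bD(Z_I)\cI_\Lambda'$ are in bijection with the fiber $\mu_{I,*}^{-1}(\Lambda)\subset \Y_I$; by the surjectivity of $\iota_{I,*}\colon (\ga_H+\gt_H)^\perp \twoheadrightarrow \Y_I$, these preimages lift to points $\mu'\in (\ga_H+\gt_H)^\perp$ with $\gamma_{0,*}(\mu')=\Lambda$, which, through the Harish-Chandra identification $\cZ(\gg)\simeq S(\gj)^{W_\gj}$, are precisely the elements of $W_\gj\cdot(\lambda_0+\rho_\gj)$ lying in $(\ga_H+\gt_H)^\perp$, for any representative $\lambda_0\in\gj_\C^*$ of $\Lambda$. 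For $X\in\ga_I$, the embedding $\iota_I$ sends $X$ to the class of a lift $\widetilde X\in \ga\subset \gj$, so the generalized eigenvalue of $\rho_\Lambda(X)$ at $\mu'=w(\lambda_0+\rho_\gj)$ equals $\mu'(\widetilde X)=(\lambda_0+\rho_\gj)(w^{-1}\widetilde X)$. Using the unimodularity identity $\rho_\gj|_\ga=\rho_Q$ recalled at the beginning of the section, this quantity coincides with $(\rho_Q + w\cdot\lambda_0)(\widetilde X)$ for the dot action $w\cdot\lambda_0 := w(\lambda_0+\rho_\gj)-\rho_\gj$, giving the inclusion $\Spec_{\ga_I}(\rho_\Lambda)\subseteq (\rho_Q+W_\gj\cdot\Lambda)|_{\ga_I}$.

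The main technical hurdle is the Nakayama step in (i), namely upgrading the local generation of $\cF_I$ by $u_1,\dots,u_{\dim U_j}$ as given by Lemma~\ref{lem-zar1}(ii) to genuine local freeness on a suitable principal open, which is what allows $\rho_\Lambda(v)$ to be expressed as an $n\times n$ matrix of regular functions. The uniformity of the exponent $N$ in $v$ appearing in~\eqref{eq-nothing} is a further delicate point: once the trivializations of $\cF_I$ on each $\X_j$ are fixed, the polynomial complexity in $\Lambda$ of the matrix entries of $\rho_\Lambda(v)$ is controlled by finitely many pieces of structural data independent of $v$, and the remaining $v$-dependence is absorbed into the constant $C_v$.
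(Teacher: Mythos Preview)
Your argument for (i) is essentially the paper's: localize via Nakayama to where $\bD(Z_I)$ is free over $\mu_I(\bD_0(Z))$ with basis $U_j$, then read off regularity of the matrix entries of left multiplication by $v$. (Both you and the paper pass over the growth bound~\eqref{eq-nothing} quickly; neither supplies detail beyond ``regular on a suitable open implies polynomially bounded''.)

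For (ii), your direct route via the maximal ideals of the finite-dimensional commutative algebra $\bD(Z_I)/\bD(Z_I)\cI_\Lambda'$ is sound and in fact cleaner than the paper's: the paper first restricts to those $\Lambda$ with reduced fiber $\mu_{I,*}^{-1}(\Lambda)$ and then invokes the continuity from (i) to extend to all $\Lambda$, whereas your argument works for every $\Lambda$ at once. There is, however, a computational slip. The fiber of $\gamma_{0,*}$ over $\Lambda$ is $\rho_\gj + W_\gj\cdot\lambda$, not $W_\gj\cdot(\lambda+\rho_\gj)$ --- this is precisely Lemma~\ref{lem: j-spec}(ii), or follows directly from $\gamma(z)(\cdot)=\gamma_0(z)(\cdot+\rho_\gj)$. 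With the correct fiber, a lift of a point of $\mu_{I,*}^{-1}(\Lambda)$ via $\iota_{I,*}$ has the form $\mu'=\rho_\gj+w\lambda$, and the corresponding eigenvalue of $\rho_\Lambda(X)$ is $\mu'(\widetilde X)=\rho_Q(X)+(w\lambda)(\widetilde X)$, yielding the \emph{ordinary} Weyl group action $(\rho_Q+W_\gj\cdot\Lambda)|_{\ga_I}$ as in the statement. Your dot action is an artifact of this slip and does not match what is to be proved; once the fiber is corrected, the dot action disappears and your argument goes through.
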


\begin{proof}
Recall the terminology we introduced in the proof of Lemma \ref{lem-zar1}. Since the assertion is local, we may assume that $\X=\X_{\Lambda_j}$, for some $j$ and $U=U_j$, is such that $\cO(\Y_I)=\cO(\X)U=U\oplus \cO(\Y_I)\gm_x$ for all $x\in\X$. This decomposition defines a projection $p_x:\cO(\Y_I)\to U$ for any $x\in\X$. Moreover, note that the natural map
$$\cO(\X)\otimes U\to \cO(\Y_I),\quad g\otimes u\mapsto gu$$
is an isomorphism. Accordingly, every $f\in\cO(\Y_I)$ can be expressed uniquely as $f=\sum_i g_i\otimes u_i$ for a fixed basis $(u_i)$ of $U$. Then 
$$p_x(f)=\sum_ig_i(x)u_i$$
is regular in $x\in\X$. This proves the first assertion of (i) and the second assertion in (i) is an immediate consequence thereof. 
\par (ii) Geometrically, it might happen that the fiber of the morphism $\mu_{I,*}: {\mathbb Y}_I\to \X$ over $\Lambda\in\X$ is 
not reduced, i.e., $\bD(Z_I)\cI_\Lambda'$ is not a radical ideal in $\bD(Z_I)$. However, the set of $\Lambda$'s, with reduced fibers, is open dense in $\X$. 
In view of the continuity showed in (i), it suffices to show that $\Spec_{\ga_I}(\rho_\Lambda)\subset(\rho_\gj+W\cdot\lambda)\big|_{\ga_I}$ for generic $\Lambda$, i.e., $\Lambda$ reduced.

\par Next, we recall the diagram \eqref{diagram2} with all vertical arrows surjective 
and all fibers being finite. 
Now,  as the fiber $\mu_{I,*}^{-1}(\Lambda)$ was assumed to be reduced, it has 
$\dim(U_\Lambda)$ elements as the  corresponding affine algebra to this finite variety is just the $\ga_I$-module $\bD(Z_I)/ \cI_\Lambda'$. 
In particular, $\mu_{I,*}^{-1}(\Lambda)$  consists of the $\ga_I$-weights of $U_\Lambda\simeq \bD(Z_I)/ \cI_\Lambda'$.

\par From \eqref{diagram2}, we obtain the the fiber diagram: 
\begin{equation}\label{eq-diagr4} 
    \xymatrix{
    \rho_{\gj}+W_\gj\cdot\lambda\ar@{>}[d]& \ar@{_{(}->}[l] (\rho_{\gj}+W_\gj\cdot\lambda)\cap(\ga_H +\gt_H)^\perp
     \ar@{>}[d] &\iota_{I,*}^{-1}(\mu_{I,*}^{-1}(\Lambda)) \ar@{_{(}->}[l]\ar@{->>}[d]\\
     W_\gj\cdot\lambda= \Lambda &\ar@{=}[l]\Lambda &\ar@{>>}[l]\mu_{I,*}^{-1}(\Lambda)\, .}
        \end{equation}
Hence (ii) follows from the $\ga_I$-equivariance of $\iota_{I,*}$. 
\end{proof}
 
The section $\bs$ we use in the sequel is the one where we identify $\ga_Z$ with the subspace $\ga_H^{\perp_{\ga_L}}\subset \ga_L$, the orthogonal being taken with respect to the form $\kappa$ introduced at the beginning of Subsection~\ref{sect-1.2}. 
Let $\sJ(\C)\subset \sG(\C)$ be the Cartan subgroup
with Lie algebra $\gj_\C$ and $\cL:=\Hom(\sJ(\C), \C^*)$ be its character group. In the sequel, we 
identify $\cL$ with a lattice in $\gj^*$. We call a subspace $V\subset \gj^*$ {\em rational} 
provided that $V= \R ( V \cap \cL)$.  Likewise, we call a discrete subgroup 
$\Gamma\subset (\gj^*, +)$ rational if $\Gamma = \Gamma \cap \Q \cL$. Using the 
dual lattice $\cL^\vee\subset \gj$, we obtain a notion of rationality for subspaces and 
discrete subgroups of $\gj$ as well.

\par
Finally, we may and will request that $\kappa\big|_{\gj\times \gj}$ is rational, 
i.e., with respect to a basis of $\gj$ which lies in $\cL^\vee$, its matrix entries are rational.

\begin{lem} \label{lem-rational} 
The following subspaces of $\gj$ are all rational: $\ga_H, \ga_Z$ and $\ga_I$ for $I\subset S$.
\end{lem}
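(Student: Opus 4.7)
The plan is to verify rationality separately for each of the three subspaces $\ga_H$, $\ga_Z$, and $\ga_I$, in each case tracing the subspace back either to the Lie algebra of an $\R$-algebraic subgroup of the Cartan $\sJ$ (which is automatically rational via its cocharacter lattice sitting inside $\cL^\vee = X_*(\sJ)$), or to a rational construction from such subspaces—namely, taking $\kappa$-orthogonal complements or intersecting with kernels of rational linear forms.

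First, I would observe that $\ga$, $\ga_H = \Lie(\sA \cap \sH)$, and $\ga_L = \Lie \sA_\sL$ are all Lie algebras of $\R$-algebraic subtori of $\sJ$; their cocharacter lattices embed as rational sublattices of $\cL^\vee$, yielding the rationality of these three subspaces of $\gj$ immediately.

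Next, for $\ga_Z$, I would exploit the convention fixed immediately before the lemma that the section $\bs$ identifies $\ga_Z$ with $\ga_H^{\perp_{\ga_L}}$, the orthogonal complement of $\ga_H$ inside $\ga_L$ with respect to $\kappa\big|_{\ga_L \times \ga_L}$. Since $\ga_H$ and $\ga_L$ are rational, and $\kappa$ was just normalized to be rational, this orthogonal complement is cut out by a rational linear system in a basis of $\ga_L$ drawn from $\cL^\vee \cap \ga_L$, and is therefore rational.

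Finally, for $\ga_I$ with $I \subset S$, I would note that each spherical root $\alpha \in S$ lies in the monoid $\cM \subset \N_0[\Sigma_\gu]$ (cf.~\eqref{eq-cM}), hence is an integer combination of restricted roots in $\Sigma$. Each such restricted root is a character of the algebraic torus $\sA$, and therefore rational on $\ga$, so each defining condition $\alpha(X) = 0$ is a rational linear equation. Consequently $\ga_I$ is the intersection of the rational subspace $\ga_Z$ with finitely many rational hyperplanes, and so is itself rational. The only subtle point is the reliance on rationality of $\kappa$ in the second step, which is exactly why the rationality normalization on $\kappa$ was imposed just before the statement of the lemma; beyond that, no serious obstacle is expected.
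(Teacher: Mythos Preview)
Your proposal is correct and follows essentially the same approach as the paper: $\ga_H$ is rational as the Lie algebra of an algebraic subtorus of $\sJ$, $\ga_Z$ is rational as the $\kappa$-orthogonal complement of $\ga_H$ inside the rational subspace $\ga_L$, and $\ga_I$ is rational because $S\subset\Q\cL$ (which you justify via $S\subset\cM\subset\N_0[\Sigma_\gu]$). Your write-up is in fact more explicit than the paper's terse three-line proof, but the underlying ideas coincide.
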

\begin{proof} 
The subspace $\ga_H$ is rational as it corresponds to the Lie algebra 
of the subtorus $(\sA_L \cap \sH)_0 \subset \sJ$.  Since the form $\kappa\big|_{\gj\times \gj}$ is rational, 
we obtain that $\ga_Z\subset \ga\subset \gj$ is rational as well. Finally, we obtain from \eqref{eq-cM} that 
$S\subset \Q \cL$ and this gives us the rationality of $\ga_I$ for any $I\subset S$.  
\end{proof}  
 
We recall that $\cQ_\Lambda$ denotes the set of $\ga_I$-weights of $\rho_\Lambda$ and (cf.~Lemma \ref{lem-rhoLambda}(ii))
 \begin{equation} \label{eq-Q-spectrum} 
 \cQ_{\Lambda}\subset\{(\rho_Q+w\Lambda)\big|_{\ga_I} \mid w\in W_\gj\}\,,
 \end{equation}
 where we identify $\ga_I$ as a subspace of $\ga$ as above. 
 For $\lambda\in \cQ_\Lambda$, we recall the projectors $E_\lambda: U_\Lambda^*\to  U_{\Lambda, \lambda}^*$ to the generalized 
 common eigenspace along the supplementary generalized eigenspaces. 
 
\par In the sequel, we abbreviate and write $\cA_{temp}(Z:\Lambda)$ 
instead of $\cA_{temp}(Z:\cJ_\Lambda)$.

\bigskip The key to obtain uniform estimates for the constant term approximation is at the core related 
to polynomial bounds for the truncating spectral projections $E_\lambda$.  

\begin{prop}\label{prop-projection-bound}  
Let $1\leq j\leq s$. There exist constants $C, N >0$ such that, for all 
$\Lambda \in \X_j$ with $\cA_{temp}(Z:\Lambda)\neq \{0\}$, one has 
$$ \| E_\lambda \|\leq C ( 1 + \|\Lambda\|)^N, \qquad \lambda\in \cQ_\Lambda \,, $$
with $\Vert E_\lambda\Vert$ the operator norm on the fixed finite dimensional vector space $\End(U_j)$.
\end{prop}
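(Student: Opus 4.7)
The plan is to write the spectral projector $E_\lambda$ as a Lagrange--Hermite polynomial in a single operator $\Gamma_{\cI_\Lambda}(X_0)$ for a carefully chosen rational $X_0\in\ga_I$, and to bound the resulting expression using the polynomial dependence of $\rho_\Lambda$ on $\Lambda$ from Lemma~\ref{lem-rhoLambda}(i) together with the integrality of $\mathrm{Re}\,\Lambda$ from Lemma~\ref{lem-lattice}.

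First, using Lemma~\ref{lem-rational}, I pick a rational $X_0\in\ga_I$ in sufficiently general position that $\lambda(X_0)\ne\mu(X_0)$ whenever $\lambda\ne\mu$ in $\cQ_\Lambda$. This is possible because by Lemma~\ref{lem-rhoLambda}(ii) the spectrum lies in the finite set $\{(\rho_Q+w\Lambda)|_{\ga_I}:w\in W_\gj\}$, and the obstruction hyperplanes $(w_1-w_2)X=0$ are defined over $\Q$. With this choice, $E_\lambda$ equals the spectral projector of $\Gamma_{\cI_\Lambda}(X_0)$ onto the generalized eigenspace with eigenvalue $\lambda(X_0)$, and Lagrange--Hermite interpolation yields
\[E_\lambda=p_\lambda\bigl(\Gamma_{\cI_\Lambda}(X_0)\bigr),\]
where $p_\lambda$ has degree at most $\dim U_j-1$ and coefficients that are rational functions in $\{\mu(X_0):\mu\in\cQ_\Lambda\}$ with denominators of the form $\prod_{\mu\ne\lambda}(\lambda(X_0)-\mu(X_0))^{n_\mu}$, $n_\mu\le\dim U_j$.

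The numerator is under control: by Lemma~\ref{lem-rhoLambda}(i), $\|\Gamma_{\cI_\Lambda}(X_0)\|$ is polynomially bounded in $\|\Lambda\|$, and $|\mu(X_0)|\le C\|\Lambda\|$. The heart of the argument is then a polynomial lower bound for $|\lambda(X_0)-\mu(X_0)|$ whenever $\lambda\ne\mu$ in $\cQ_\Lambda$. Writing $\Lambda=\mathrm{Re}\,\Lambda+i\,\mathrm{Im}\,\Lambda$ and $\lambda-\mu=(w_1-w_2)\Lambda|_{\ga_I}$, Lemma~\ref{lem-lattice} places $\mathrm{Re}\,\Lambda$ in a fixed rational lattice $L_0\subset\gj_\R^*$; combined with rationality of $X_0$, the real part $(w_1-w_2)(\mathrm{Re}\,\Lambda)(X_0)$ is either $0$ or bounded below by a universal constant. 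Thus distinct eigenvalues with distinct real parts are uniformly separated.

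The main obstacle is the residual case where $(w_1-w_2)(\mathrm{Re}\,\Lambda)(X_0)=0$ but $\lambda\ne\mu$, so that the separation is the purely imaginary quantity $i(w_1-w_2)(\mathrm{Im}\,\Lambda)(X_0)$, which a priori is only bounded \emph{above} by $O(\|\Lambda\|)$. My strategy is to stratify $\X_j$ by the pattern of real-part coincidences and, on each stratum, first bound the ``cluster'' projector onto the sum of generalized eigenspaces sharing a common real part (this is clean from the lattice separation argument), then extract the individual eigenspace projectors within each cluster via the semisimple--nilpotent decomposition of the restriction of $\Gamma_{\cI_\Lambda}(X_0)$ to the cluster subspace: the semisimple part has purely imaginary spectrum whose separations are polynomial in $\|\Lambda\|$ by an algebraic rigidity argument on the stratum, and the nilpotent part is uniformly bounded in terms of $\dim U_j$. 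Assembling these refinements and using that the number of strata is finite yields the desired polynomial bound; this packaging step, which uses the algebraic structure of $\bD(Z_I)$ as a finite module over $\bD_0(Z)$ via $\mu_{I,*}:{\mathbb Y}_I\to\X$, is the most delicate part of the argument.
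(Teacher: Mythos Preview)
Your approach differs substantially from the paper's. You try to express $E_\lambda$ as a Lagrange--Hermite polynomial in a \emph{single} operator $\Gamma_{\cI_\Lambda}(X_0)$, which forces you to divide by products of eigenvalue differences. The paper instead uses the commuting family $\mathsf A_i=\Gamma_\Lambda(X_i)$ for finitely many $X_1,\dots,X_k$ spanning the cone $\ga_I^-$ and chosen in the dual lattice $\Xi_{Z,I}^\vee$ (so that $\operatorname{Re}\Spec\mathsf A_i\subset\Z$ for all tempered $\Lambda$), factorises $E_\lambda=E_{\lambda,1}\circ\cdots\circ E_{\lambda,k}$, and then invokes a ready-made matrix lemma (Lemma~\ref{matrix lemma}, quoted from \cite{geoc}) which bounds a spectral projector $\mathsf P_k$ of a single matrix by $C(1+\|\mathsf A\|)^N$ whenever there is a real-part gap $\geq\nu$ at position $k$, with $C=C(\nu,N)$ depending only on the gap and the dimension. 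The bound thus comes from $\|\mathsf A_i\|$ alone, and no eigenvalue difference is ever inverted; the polynomial estimate $\|\mathsf A_i\|=\|\Gamma_\Lambda(X_i)\|\leq C(1+\|\Lambda\|)^{N'}$ then follows from the regularity statement in Lemma~\ref{lem-rhoLambda}(i).

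Your argument has a genuine gap, and you have located it yourself: when $\operatorname{Re}\lambda=\operatorname{Re}\mu$ with $\lambda\neq\mu$ in $\cQ_\Lambda$, your Lagrange--Hermite denominator contains the purely imaginary factor $(\lambda-\mu)(X_0)$, for which no lattice argument gives a lower bound --- the imaginary part of $\Lambda$ varies in a continuum. Your proposed remedy (``stratify by the pattern of real-part coincidences'' and then extract individual eigenspace projectors inside each cluster via the semisimple--nilpotent decomposition, claiming the imaginary separations are controlled by ``algebraic rigidity'') is not a proof. What you need is a uniform \emph{lower} bound on nonzero imaginary separations, and nothing in the algebraic input (finiteness of $\mu_{I,*}:\Y_I\to\X$, regularity of $\Lambda\mapsto\rho_\Lambda$, the $\bD_0(Z)$-module structure of $\bD(Z_I)$) furnishes one; moreover, passage to the semisimple part of a matrix is a discontinuous operation in the entries, so you cannot simply carry polynomial bounds through it. The first half of your fix --- bounding the cluster projectors onto common real parts --- is essentially the content of the paper's argument via Lemma~\ref{matrix lemma}; it is the second half that is unsubstantiated. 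If you want to repair the approach, you should abandon the attempt to invert imaginary differences and instead work, as the paper does, with bounds that depend only on operator norms and integer gaps.
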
 

The proof of the Proposition is preceded by two lemmas:

\begin{lem} \label{matrix lemma}
Let $0<\nu\leq 1$, $N\in \N$ and $\mathsf{A}\in \operatorname{Mat}_N(\C)$ with $\operatorname{Spec} (\mathsf{A})=\{ \lambda_1,\ldots, \lambda_r\}$
such that $\operatorname{Re} \lambda_1\leq \ldots\leq \operatorname{Re} \lambda_r$.  For every $1\leq j\leq r$, let $V_j\subset \C^n$ be the generalized 
eigenspace of $\mathsf{A}$ associated to the eigenvalue $\lambda_j$.  For every $1\leq k\leq r$, we let $E_k=\bigoplus_{j=1}^k  V_j$
and $\mathsf{P}_k : \C^N \to E_k$ be the projection along $\bigoplus_{j=k+1}^r V_j$.  Suppose, for some 
$1\leq k\leq r-1$, that $\operatorname{Re} \lambda_{k+1}- \operatorname{Re} \lambda_k\geq \nu$.  
Then there exists a constant $C=C(\nu, N)>0$ such that 
$$\|\mathsf{P}_k\|\leq C  (1 +\|\mathsf{A}\|)^N\, .$$
\end{lem}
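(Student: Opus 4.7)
\textbf{Proof plan for Lemma \ref{matrix lemma}.} The plan is to express $\mathsf{P}_k$ as a Riesz (Dunford) contour integral of the resolvent and then to control the resolvent by elementary linear algebra. Concretely, I would pick a rectangular contour $\Gamma\subset\C$ separating $\{\lambda_1,\dots,\lambda_k\}$ from $\{\lambda_{k+1},\dots,\lambda_r\}$, chosen so that (a) one of its vertical sides lies in the middle of the spectral gap and (b) every point of $\Gamma$ is at distance at least $\nu/2$ from $\operatorname{Spec}(\mathsf A)$. Since $|\lambda_j|\leq \|\mathsf A\|$ for all $j$, a suitable choice is the boundary of the rectangle
\[
\Gamma=\partial\bigl\{z\in\C\,:\,-\|\mathsf A\|-1\leq \Reel z\leq \Reel\lambda_k+\tfrac{\nu}{2},\ |\Ima z|\leq \|\mathsf A\|+1\bigr\},
\]
whose total length is bounded by $C_1(1+\|\mathsf A\|)$ and on which $|z|\leq C_2(1+\|\mathsf A\|)$. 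The holomorphic functional calculus then gives
\[
\mathsf P_k=\frac{1}{2\pi i}\oint_\Gamma (z-\mathsf A)^{-1}\,dz.
\]

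Next I would control the resolvent on $\Gamma$ via the adjugate formula $(z-\mathsf A)^{-1}=\det(z-\mathsf A)^{-1}\operatorname{adj}(z-\mathsf A)$. Each entry of $\operatorname{adj}(z-\mathsf A)$ is a polynomial of degree $\leq N-1$ in $z$ and the entries of $\mathsf A$, hence
\[
\|\operatorname{adj}(z-\mathsf A)\|\leq C_3\bigl(1+|z|+\|\mathsf A\|\bigr)^{N-1}.
\]
On the other hand, $\det(z-\mathsf A)=\prod_{j=1}^{r}(z-\lambda_j)^{m_j}$ with $\sum m_j=N$, and, by construction of $\Gamma$, the distance from any $z\in\Gamma$ to $\operatorname{Spec}(\mathsf A)$ is at least $\nu/2$, so $|\det(z-\mathsf A)|\geq (\nu/2)^N$. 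Combining these,
\[
\sup_{z\in\Gamma}\|(z-\mathsf A)^{-1}\|\leq C_4(\nu,N)\bigl(1+\|\mathsf A\|\bigr)^{N-1}.
\]
Multiplying by the length of $\Gamma$ and using $\operatorname{length}(\Gamma)\leq C_1(1+\|\mathsf A\|)$ yields
\[
\|\mathsf P_k\|\leq \frac{1}{2\pi}\cdot C_1(1+\|\mathsf A\|)\cdot C_4(\nu,N)(1+\|\mathsf A\|)^{N-1}\leq C(\nu,N)(1+\|\mathsf A\|)^{N},
\]
which is the desired bound.

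There is no real obstacle here: everything is elementary complex analysis and linear algebra. The only delicate point is to make sure that the chosen contour $\Gamma$ actually stays at distance $\geq \nu/2$ from all eigenvalues; this is where the hypothesis $\nu\leq 1$ is used, so that $\nu/2\leq 1/2$ is automatically dominated by the horizontal offset $1$ used on the remaining three sides of the rectangle, while the relevant vertical side placed at $\Reel\lambda_k+\nu/2$ is at distance $\geq \nu/2$ from both $\lambda_k$ and $\lambda_{k+1}$ by the spectral gap hypothesis.
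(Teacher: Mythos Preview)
Your argument is correct and complete; the contour integral plus adjugate bound is the standard way to prove this. The paper itself gives no proof and simply cites \cite[Lemma~6.4]{geoc}, so there is nothing to compare against beyond noting that your self-contained argument is entirely adequate here.
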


\begin{proof} \cite[Lemma 6.4]{geoc}.\end{proof}

\begin{lem} \label{lem-lattice} 
There exists a $W_\gj$-stable rational lattice $\Xi_Z$ in the vector space $\gj^*$ such that 
\begin{equation} \label{eq-tc} 
\operatorname{Re} \Lambda \in \Xi_Z 
\end{equation}
for all $\Lambda \in \gj_\C^*$ with $\cA_{temp}(Z:\Lambda)\neq \{0\}$. 
\end{lem}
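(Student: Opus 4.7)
The plan is to combine transitivity of the constant term with the spectral gap theorem for the discrete series on real spherical spaces \cite{kkos}. Given $\Lambda\in\gj_\C^*$ with $\cA_{temp}(Z:\Lambda)\neq\{0\}$, the goal is to show $\operatorname{Re}\Lambda$ lies in a fixed $W_\gj$-stable rational discrete subgroup of $\gj^*$, independent of $\Lambda$.

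First I would reduce to an eigenfunction with normalized unitary central character. The right action of $A_{Z,E}$ on $Z$ commutes with the left $G$-action and preserves $\cA_{temp}(Z:\Lambda)$, so this space decomposes under $A_{Z,E}$. The temperedness bound, applied in both the positive and negative directions along $A_{Z,E}\subset A_Z$, forces each $A_{Z,E}$-eigencharacter appearing to be normalized unitary. Thus we may pick a nonzero $f\in\cA_{temp}(Z:\Lambda)$ transforming under a normalized unitary character $\chi$ of $A_{Z,E}$.

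Next I would iterate constant terms, using transitivity, to reduce to a discrete series situation on some $Z_J$. If all proper constant terms $f_I$, $I\subsetneq S$, vanish, set $J:=S$ and $\phi:=f$; otherwise pick $J\subsetneq S$ of minimal cardinality among $\{I : f_I\neq 0\}$ and set $\phi:=f_J$. Transitivity $(f_J)_I=f_I$ for $I\subseteq J$ then gives that all proper constant terms of $\phi$ within $Z_J$ vanish. The exponential polynomial behavior of $\phi|_{A_J}$ from \eqref{eq-tildefI-exp-polyn}, combined with the temperedness estimate of Theorem \ref{theo-constterm}(iii) applied to $Z_J$, lets one further decompose $\phi$ under the enlarged center $A_{Z_J,E}=A_J$ and arrange that $\phi$ transforms under a normalized unitary character $\chi_J$ of $A_J$ extending $\chi$. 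Theorem \ref{theo-ds} applied to $Z_J$ then identifies $\phi$ as a matrix coefficient of a discrete series representation of $\widehat{Z_J}$ with infinitesimal character $\chi_\Lambda$.

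Finally, the spectral gap theorem of \cite{kkos}, applied to each of the finitely many real spherical spaces $\widehat{Z_I}$, $I\subseteq S$, furnishes a rational discrete subgroup $\Xi_I\subset\gj^*$ containing the real parts of the infinitesimal characters of every discrete series representation of $\widehat{Z_I}$. Taking $\Xi_Z$ to be the $W_\gj$-saturation of the finite sum $\sum_{I\subseteq S}\Xi_I$ yields a $W_\gj$-stable rational discrete subgroup with $\operatorname{Re}\Lambda\in\Xi_Z$ for every $\Lambda$ as in the statement. I expect the main obstacle to be the second step: rigorously arranging that the iterated constant term $\phi$ is an honest $A_J$-eigenfunction that both has vanishing proper constant terms in $Z_J$ and transforms under a normalized unitary character of the full enlarged center $A_J$. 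This requires combining the exponential polynomial structure in the direction $A_J$ with the $A_J$-equivariance of the constant term map and the $Z_J$-temperedness bound, so that each of the finitely many $A_J$-generalized eigencomponents of $\phi$ can individually be fed into Theorem \ref{theo-ds}.
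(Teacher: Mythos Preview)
Your overall strategy --- reduce to a discrete series situation on some boundary degeneration and then invoke the spectral gap result of \cite{kkos} --- is exactly what the paper does. The paper, however, accomplishes the reduction in one line by citing the tempered subrepresentation theorem \cite[Theorem~9.11]{kks}: any irreducible Harish--Chandra module occurring in $\cA_{temp}(Z:\Lambda)$ embeds into a twisted discrete series of some $L^2(G/H_I)$. Your proposal instead attempts to reprove this embedding internally via transitivity of the constant term together with Theorem~\ref{theo-ds}. That is a legitimate and instructive route, but your execution has a gap that is more serious than the $A_J$-eigendecomposition issue you flagged.

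The gap is this: condition~(ii) in Theorem~\ref{theo-ds}, applied to $Z_J$, requires $(w_J\cdot\eta)_I=0$ for \emph{all} $w_J\in\cW_J$ and all proper $I\subsetneq J$. Your minimality argument on $\{I:f_I\neq 0\}$ together with transitivity only yields $(\phi)_I=(f_J)_I=f_I=0$, i.e., the case $w_J=1$. The consistency relations of Proposition~\ref{prop-changeorbit} transport this to those $w_J$ in the image of the matching map $\mathbf{m}_J:\cW_{(Z_J)_I}\to\cW_J$, but that map is in general not surjective. Without the vanishing for all $w_J$, Theorem~\ref{theo-ds} does not apply, and you cannot conclude that $\phi$ lies in $L^2(\widehat{Z_J};\chi_J)$. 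The fix is to run a genuine descending induction: if some $(\phi^{w_J})_I$ is nonzero, pass to $((Z_J)_{w_J},\phi^{w_J})$ and repeat, tracking that the infinitesimal character stays $\Lambda$ (via the $\cZ(\gg)$-compatibility of $\mu_I$ in Appendix~\ref{app-RBP}) and that only finitely many real spherical spaces arise. This works, but it is precisely the content of \cite[Theorem~9.11]{kks}, which is why the paper simply quotes it. By contrast, the $A_J$-decomposition you worried about is routine: $A_J$ commutes with $G$ and $\cZ(\gg)$, so each generalized $A_J$-eigencomponent of $\phi$ is again tempered with infinitesimal character $\Lambda$, and temperedness forces the characters to be normalized unitary.
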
 

\begin{proof} 
Let $0\neq f\in \cA_{temp}(Z:\Lambda)$ be a $K$-finite element which generates an irreducible 
Harish-Chandra module, say $V$. According to Theorem \ref{temp embed} $V$ embeds into a twisted discrete series of some 
$(Z_w)_I$.  Now, we apply \cite[Theorem~{1.1}]{kkos} and obtain a $W_\gj$-invariant lattice $\Xi_{(Z_w)_I}$, 
called $\Lambda_{(Z_w)_I}$ in \cite{kkos}, with property \eqref{eq-tc}. The lattice is indeed rational by 
\cite[Theorem~8.3]{kkos} combined with \cite[Lemma 3.4]{kkos}.  The asserted lattice is then obtained by taking the 
rational lattice generated by the rational lattices $\Xi_{{Z_w}_I}$, i.e. 
$$\Xi_Z=\la v\in \Xi_{{Z,w}_I}: I\subset S, w\in \cW\ra_{\Z-\mathrm{mod}} $$
 \end{proof} 

\begin{proof} [Proof of Proposition~\ref{prop-projection-bound}] 
According to  Lemma \ref{lem-rational}, $\ga_I$ is a rational subspace of $\ga\subset\gj$. 
Now, we keep in mind the 
following general fact:  if $U\subset \gj$ is a rational subspace and $\Xi\subset \gj^*$ is a rational lattice, then 
$\Xi\big|_{U}$ is a rational lattice in $U^*$.
In particular, it follows that  $\Xi_{Z,I}:=\Xi_Z\big|_{\ga_I}$ is a rational lattice 
in $\ga_I^*$.  Next, observe that Lemma \ref{lem-lattice} combined with (\ref{eq-Q-spectrum}) implies that 
$\operatorname{Re} \cQ_\Lambda \subset \rho_Q\big|_{\ga_I}+ \Xi_{Z,I}$ for all tempered infinitesimal 
characters $\Lambda$. 
Denote by $\Xi_{Z,I}^\vee\subset \ga_I$ the dual lattice of $\rho_Q\big|_{\ga_I}+ \Xi_{Z,I}$.  Since $\ga_I^-$ is a rational cone, 
we find elements 
$X_1, \ldots, X_k$  of $\ga_I^-\cap \Xi_{Z,I}^\vee$ such that: 
$$ \ga_I^-=\sum_{j=1}^k \R_{\geq 0} X_j\, .$$

We identify $U_j$ with $\C^N$ and define matrices $\mathsf{A}_i:=\Gamma_\Lambda(X_i)=\,^{t}\!\rho_\Lambda(X_i)$. Let $\lambda\in\cQ_\Lambda$. Write $E_{\lambda, i}$ 
for the spectral projection to the generalized eigenspace of $\mathsf{A}_i$ with eigenvalue 
$\lambda(X_i)$. 
Since the matrices $\mathsf{A}_i$ commute with each other and the $X_i$ span $\ga_I$, we obtain that:
\begin{equation} \label{eq-Eproj}
E_\lambda= E_{\lambda, 1} \circ  \ldots \circ E_{\lambda, k}\, .
\end{equation}
Hence, we are reduced to prove a polynomial bound for each $E_{\lambda, i}$. 
As 
$$\Spec({\mathsf A}_i) \subset(\rho_Q+W_\gj \cdot\Lambda) (X_i)\,,$$
we get $\operatorname{Re} \operatorname{Spec} (\mathsf{A_i}) \subset \Z$. Hence, 
we can apply Lemma \ref{matrix lemma} to the matrices $\mathsf{A}_i$, 
with $\nu=1$, and obtain 
$\| E_{\lambda, i}\| \leq C ( 1 + \| \mathsf{A_i}\|)^N$.  Now, we recall 
from \eqref{eq-nothing} that 
$$\|\Gamma_\Lambda(X)\|\leq  C \|X\| ( 1+ \|\Lambda\|)^N\,, $$
after possible enlargement of $C$ and $N$. 
This gives the asserted norm bound for $\| E_{\Lambda,i}\|$ and then for $E_\lambda$ via  \eqref{eq-Eproj}. 
\end{proof}

For $\lambda\in\cQ_\Lambda$, we recall the notation 
$$E_\lambda(X)=e^{-\lambda(X)}E_\lambda(e^{\Gamma_\Lambda(X)}),\quad X\in\ga_I\,,$$
and recall, from Lemma~\ref{lem-Phi}(ii), the starting identity: 

$$\begin{array}{rcl}
\displaystyle{\Phi_{f,\lambda}(a_Z\exp(tX_I))} & = & \displaystyle{e^{t\Gamma_\Lambda(X_I)}
\Phi_{f,\lambda}(a_Z)}\\
&&+\displaystyle{\int_0^t E_\lambda e^{(t-s)\Gamma_\Lambda(X_I)}\Psi_{f,X_I}(a_Z\exp(sX_I))\,ds\,,}\\
&&\hfill a_Z\in A_{Z},X_I\in\ga_I,t\in\R\,.
\end{array}
$$
\begin{lem}\label{lem-estimPsifX}
Let $N\in\N$. There exist a continuous semi-norm 
$q$ on $C_{temp,N}^\infty(Z)$  and $m\in\N$ such that, for all $\Lambda\in\gj_\C^*/W_{\gj}$ and $f\in\cA_{temp,N}(Z:\Lambda)$,
$$
\begin{array}{l}
\Vert \Phi_{f,\lambda}(a_Z\exp(tX_I))-\Phi_{f,\lambda,\infty}(a_Z\exp(tX_I))\Vert\\
\\
 \leq (a_Z\exp(tX_I))^{\rho_Q}e^{t\delta\beta_I(X_I)}(1+\Vert\log a_Z\Vert)^N(1+t\Vert X_I\Vert)^{\dim U}(1+\Vert\Lambda\Vert)^mq(f),\\
 \\
 \hfill \lambda\in\cQ_\cI, a_Z\in A_Z^-, X_I\in\ga_I^{--}, t\geq 0\,.
\end{array}
$$
\end{lem}
\begin{proof} The statement is a uniform version of Lemma~\ref{lem-estimreste} which rested on 
Lemma \ref{lem-Phi-Psi-cinfini}, Lemma  \ref{lem-Elambda-cont} and Proposition \ref{prop-main2}. 
Now Proposition  \ref{prop-projection-bound} makes the bound in Lemma \ref{lem-Phi-Psi-cinfini}
for the norm of the spectral projections $E_\lambda$ uniform at the cost of an additional 
polynomial factor, a power of $(1+\|\Lambda\|)$. This takes care of the uniform estimates 
for the $E_\lambda$ in  Proposition \ref{prop-main2}. It remains to obtain uniform estimates 
for $\Phi_f$ and $\Psi_{f,X}$ in Lemma \ref{lem-Phi-Psi-cinfini}. This Lemma  was obtained 
for a fixed ideal $\cI$ and fixed complement $U_\cI$. Now,  by Lemma \ref{lem-rhoLambda} we can in fact get by 
with finitely many choices of complements $U_1, \ldots, U_s$ at the cost of another polynomial factor of a power of 
$(1+\|\Lambda\|)$. As a result the estimate in Lemma \ref{lem-estimreste} 
becomes uniform at the cost of a polynomial factor of the type $(1+\|\Lambda\|)^m$ which is recored at the 
right hand side of the asserted estimate. 
\end{proof}

Having said all that, it is now clear that all bounds from Sections \ref{sect-4} and \ref{sect-5} become uniform at the 
cost of an extra polynomial factor in $\|\Lambda\|$.   Polynomial behavior in $\|\Lambda\|$ can be subsumed 
in raising the Sobolev order of  the corresponding semi-norms.  In more detail, 
if $p$ is a continuous semi-norm on an $SF$-module $V^\infty$ 
with infinitesimal character $\Lambda$, then we claim that there exists $C>0, k\in \N$ independent 
of $p$, $V$ and $\Lambda$ such that 
\begin{equation} \label{claim eins} (1 +\|\Lambda\|) p(v) \leq  C p_k (v) \qquad (v\in V)\, ,\end{equation} 
where $p_k$ denotes the $k$-th Sobolev
norm of $p$ with respect to a fixed basis of $\gg$. For that we first note  that 
\begin{equation}\label{claim zwei}  |\chi_\Lambda(z)| p(v) = p(zv) \leq C_z p_{\deg z}(v),  \qquad v\in V^\infty\,,\end{equation} 
for all $z\in \cZ(\gg)$ and a constant $C_z>0$. 
Now for any $X\in \gj_\C$ we define a $W_\gj$-invariant polynomial function 
on $\gj_\C^*$ by $f_X(\Lambda):= \prod_{w\in W_\gj} \Lambda(w\cdot X)$. Note that for any $\Lambda\neq 0$
we find an $X\in \gj_\C$ such that $f_X(\Lambda)\neq 0$, i.e. choose $X\in \gj_\C\setminus \bigcup_{w\in W_\gj} \ker \Lambda \circ w$.  By the homogeneity of the $f_X$ and the compactness of the unit sphere 
in $\gj_\C^*$ we thus find finitely many $X_1, \ldots, X_m$ such that 
\begin{equation} \label{claim drei} \max_{1\leq j\leq m} |f_{X_j}(\Lambda)| \geq c \|\Lambda\|^{|W_\gj|}\qquad (\Lambda \in \gj_\C^*)\, .\end{equation} 
Let now $z_1, \dots, z_m \in \cZ(\gg_\C)$ be such that $\chi_\Lambda(z_j)= f_{X_j}(\Lambda)$ for all 
$\Lambda\in \gj_\C^*$. Thus, combining \eqref{claim zwei} and \eqref{claim drei} we obtain the claim \eqref{claim eins} for $k=|W_\gj|$. 

\par The preceding reasoning now implies the following parameter independent version of Theorem~\ref{theo-constterm}:

\begin{theo}[Uniform constant term approximation]\label{theo-uniform}
Let $N\in\N$, $I\subset S$ and $\cC_I$ be a compact subset of $\ga_I^{--}$. Let $w_I\in\cW_I$ and 
$w={\bf m}(w_I)\in \cW$. Then there exist $\varepsilon>0$ and a continuous semi-norm $p$ on $C_{temp,N}^\infty(Z)$ such that, for all $f\in\cA_{temp,N}(Z:\Lambda)$, $\Lambda\in \gj_\C^*/W_\gj$: 
$$
\begin{array}{l}
(a_Z\exp(tX))^{-\rho_Q}\vert f(g a_{Z}\exp(tX)w\cdot z_0)-f_I(g  a_Z\exp(tX)w_I\cdot z_{0,I})\vert\\
\\
\leq e^{-\varepsilon t}(1+\Vert\log a_Z\Vert)^Np(f),
\qquad a_Z\in A_Z^-,X\in \cC_I, g\in\Omega, t\geq 0\,.
\end{array}
$$
Moreover, let $N_1:=\max_{\Lambda} \dim(\bD(Z_I)/\bD(Z_I)\mu_I(\cI_\Lambda))\in \N$ and  $q$ be a continuous semi-norm on $C_{temp,N+N_1}^\infty(Z_I)$. Then there exists a continuous semi-norm $p$ on $C_{temp,N}^\infty(Z)$ such that: 
$$q(f_I)\leq p(f), \qquad f\in\cA_{temp,N}(Z:\Lambda),  \Lambda\in \gj_\C^*/W_\gj\,.$$
\end{theo}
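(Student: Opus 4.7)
The plan is to revisit the proof of Theorem~\ref{theo-constterm} in Section~\ref{sect-5} and, at every step, track the dependence of the constants on the infinitesimal character $\Lambda\in\X$; the uniform bound then follows by absorbing polynomial factors in $\|\Lambda\|$ into Sobolev norms of $p$. First I would use Proposition~\ref{prop-changeorbit} to reduce, exactly as in the proof of Theorem~\ref{theo-constterm}(ii), to the case $w_I=w=1$ and $g=1$ (the conjugated spherical spaces $Z^w$ behave identically, with the same $\X$ and the same tempered lattice from Lemma~\ref{lem-lattice}). In this reduced setting, the pointwise difference $|(a_Z\exp(tX))^{-\rho_Q}[f-f_I](a_Z\exp(tX))|$ is controlled by Lemma~\ref{lem-estimreste}, whose input is the system \eqref{eq-equadiffPhi} and the spectral decomposition \eqref{eq: decompeigenvecUI}, now performed with $\cI = \cI_\Lambda$.

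The key $\Lambda$-dependence enters through three ingredients, all of which have been prepared: (a) the operator norm of the generators $\Gamma_\Lambda(X)=\,^t\!\rho_\Lambda(X)$ is controlled by Lemma~\ref{lem-rhoLambda}(i) and grows at most polynomially in $\|\Lambda\|$ on each piece $\X_j$ of the finite cover $\X=\bigcup_{j=1}^s\X_j$; (b) the truncating projectors $E_\lambda$ onto generalized $\ga_I$-eigenspaces are bounded by Proposition~\ref{prop-projection-bound} by $C(1+\|\Lambda\|)^N$, and consequently Lemma~\ref{lem-Elambda-cont} yields $\|E_\lambda(X)\|\leq C(1+\|\Lambda\|)^N(1+\|X\|)^{N_\Lambda}$; (c) the inhomogeneous term $\Psi_{f,X}$ satisfies the uniform bound Lemma~\ref{lem-estimPsifX}. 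Running the proofs of Proposition~\ref{prop-main2}, Lemma~\ref{lem-estimreste} and Theorem~\ref{theo-constterm} with these inputs produces, for each $j=1,\dots,s$ and each continuous semi-norm $q_j$ on $C_{temp,N}^\infty(Z)$, the desired estimate, but with an extra factor $(1+\|\Lambda\|)^{m}$ for some $m$ depending only on $\cC_I$ and $N$.

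To eliminate this factor, I would invoke the device recalled in the paragraph preceding the theorem: choose a finite set $z_1,\dots,z_r$ of Chevalley generators of $\cZ(\gg)\simeq S(\gj)^{W_\gj}$ such that $\Lambda\mapsto (1+\sum_i|\chi_\Lambda(z_i)|)$ dominates a power of $(1+\|\Lambda\|)$. Since any $f\in\cA_{temp,N}(Z:\Lambda)$ satisfies $\chi_\Lambda(z_i)f=R(z_i)f$, we obtain $(1+\|\Lambda\|)^m q(f)\leq C q_{mk}(f)$ for $k:=\max\deg z_i$, where $q_{mk}$ is the $(mk)$-th Sobolev norm built from $q$ as in \eqref{eq-pk-semi-norm}. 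Applying this bound on each $\X_j$ and taking the maximum over $j=1,\dots,s$ produces a single continuous semi-norm $p$ on $C_{temp,N}^\infty(Z)$ realizing the first assertion of the theorem. The second assertion is obtained in the same manner from the proof of Theorem~\ref{theo-constterm}(i), using Lemma~\ref{lem-estimPhiinfty} as the source of the estimate and the fact that $\dim U_\Lambda=N_\Lambda$ is the correct temperedness index.

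I expect the only subtle point to be the treatment of the cover $\X=\bigcup_j\X_j$ in Lemma~\ref{lem-rhoLambda}(i): because $U_\Lambda$ genuinely depends on $\Lambda$, the spectral decomposition \eqref{eq: decompeigenvecUI} and the projectors $E_\lambda$ are only regular on each open piece $\X_j\subset\X_{\Lambda_j}$. However, finiteness of the cover combined with Proposition~\ref{prop-projection-bound} (whose proof already handled the $\Lambda$-dependence via the rational lattice $\Xi_Z$ of Lemma~\ref{lem-lattice}) suffices to glue the pieces together. All remaining work is bookkeeping of the $\Lambda$-dependent constants through the chain of estimates in Sections~\ref{sect-4} and~\ref{sect-5}.
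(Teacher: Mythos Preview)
Your proposal is correct and follows essentially the same strategy as the paper: the paper's argument is precisely that all the estimates of Sections~\ref{sect-4}--\ref{sect-5} become uniform at the cost of a polynomial factor in $\|\Lambda\|$ (controlled through Lemma~\ref{lem-rhoLambda}, Proposition~\ref{prop-projection-bound}, and Lemma~\ref{lem-estimPsifX}), and this factor is then absorbed into the Sobolev order via the Chevalley-generator trick you describe. Your write-up is in fact more explicit than the paper's, which simply asserts that ``the preceding reasoning now implies'' the theorem after setting up those ingredients.
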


\appendix

\section{Rapid convergence}\label{sect-appA}
\begin{defi}\label{def-convrapidRd}
Let $a\geq 0$ and $(x_s)$ be a family of elements of a normed vector space with $s\in [a,+\infty[$. One says that $(x_s)$ converges rapidly to $l$ if 
\begin{res-nn}
\begin{center}
there exist $\varepsilon>0, C>0, s_0\in [a,+\infty[$ such that, for any $s\geq s_0$,\\
$\Vert x_s-l\Vert\leq Ce^{-\varepsilon s}$.
\end{center}
\end{res-nn}
To shorten, we will write $x_s\xrightarrow[s\to\infty]{rapid} l$.
\end{defi}
\begin{lem}\label{lem-rapid-diffeoRd}
Let $a\geq 0$, $E$, $F$ be two Euclidean spaces and $l\in E$. Let $\phi$ be an $F$-valued map of class $C^1$ 
on a neighborhood $U$ of $l$ and such that the differential $d\phi(l)$ of $\phi$ at $l$ is injective. If $(x_s)_{s\in [a,+\infty[}$ 
is a family of elements of $E$ such that $\phi(x_s)\xrightarrow[s\to\infty]{rapid} \phi(l)$ and $(x_s)$ converges to $l$ when 
$s$ tends to $+\infty$, then 
$$x_s\xrightarrow[s\to\infty]{rapid} l\,.$$
\end{lem}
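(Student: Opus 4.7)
\medskip

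The plan is to exploit the injectivity of $\phi'(l)$ to produce a local lower bound of the form $\|\phi(x)-\phi(l)\| \geq c\|x-l\|$ valid on some neighborhood of $l$, and then combine this with the eventual proximity of $x_s$ to $l$ and the rapid decay of $\phi(x_s)-\phi(l)$.

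First, since $\phi'(l)\colon E\to F$ is linear and injective between finite-dimensional Euclidean spaces, there is a constant $c>0$ with $\|\phi'(l)v\|\geq 2c\|v\|$ for all $v\in E$. Since $\phi$ is of class $C^1$ on $U$, the map $x\mapsto \phi'(x)$ is continuous at $l$, so there is an open ball $B\subset U$ centered at $l$ such that $\|\phi'(x)-\phi'(l)\|_{\mathrm{op}}\leq c$ for all $x\in B$. Writing
$$\phi(x)-\phi(l)=\int_0^1 \phi'(l+t(x-l))(x-l)\,dt,\quad x\in B,$$
and using the triangle inequality together with the two previous bounds, I would obtain
$$\|\phi(x)-\phi(l)\|\geq (2c-c)\|x-l\|=c\|x-l\|,\quad x\in B.$$

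Next, since $x_s\to l$ as $s\to\infty$, there exists $s_1\geq a$ such that $x_s\in B$ for all $s\geq s_1$. By hypothesis there exist $\varepsilon>0$, $C'>0$, $s_0\geq a$ such that $\|\phi(x_s)-\phi(l)\|\leq C'e^{-\varepsilon s}$ for all $s\geq s_0$. Combining with the local injectivity estimate, I get, for all $s\geq \max(s_0,s_1)$,
$$\|x_s-l\|\leq \frac{1}{c}\|\phi(x_s)-\phi(l)\|\leq \frac{C'}{c}e^{-\varepsilon s},$$
which is precisely the definition of rapid convergence $x_s\xrightarrow[s\to\infty]{rapid}l$, with the same $\varepsilon$ and constant $C'/c$.

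There is no real obstacle here; the argument is a clean application of the quantitative inverse-function-type estimate. The only mildly delicate point is that we must use the hypothesis $x_s\to l$ to ensure that eventually $x_s$ lies in the neighborhood $B$ where the lower bound $\|\phi(x)-\phi(l)\|\geq c\|x-l\|$ holds; without this, the estimate $\phi'(l)$ provides could only be applied in a linearized sense and would not by itself control $\|x_s-l\|$.
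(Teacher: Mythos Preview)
Your proof is correct. It takes a different route from the paper's. The paper extends $\phi$ to a local diffeomorphism $\Phi\colon U\times G\to F$ (with $G$ a complement of $\operatorname{im}\phi'(l)$ in $F$), invokes the inverse function theorem to obtain a $C^1$ local inverse $\widetilde{\Phi}^{-1}$, and then applies a first-order Taylor estimate for $\widetilde{\Phi}^{-1}$ at $\phi(l)$ to bound $\|x_s-l\|$ by a constant times $\|\phi(x_s)-\phi(l)\|$ plus a little-$o$ term. You instead establish the quantitative immersion estimate $\|\phi(x)-\phi(l)\|\geq c\|x-l\|$ directly from the integral mean value formula and continuity of $\phi'$, bypassing the inverse function theorem entirely. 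Your argument is more elementary and yields an explicit constant and the same exponential rate $\varepsilon$; the paper's version is slightly more conceptual but relies on a heavier tool. Both are short and valid; yours has the minor advantage of making the preservation of the decay rate transparent.
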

\begin{proof} Choose a left inverse $A\in \Hom (F,E)$ to $d\phi(l)$ and replace $\phi$ by $A\circ \phi$. In this 
way we reduce to the case where $E=F$ with $d\phi(l)$ an isomorphism. By the inverse function theorem we may, after 
shrinking $U$, assume further that $\phi: U \to E$ is diffeomorphic onto its open image $\phi(U)\subset E$. 
 Applying the Taylor expansion of $\phi^{-1}$ at $\phi(l)$, one has for $s$ large enough such that $x_s\in V$:
$$
\begin{array}{rcl}
\Vert x_s-l\Vert & = & \Vert \phi^{-1}(\phi(x_s))-\phi^{-1}(\phi(l))\Vert\\
&\leq & \Vert d\phi^{-1}(\phi(l))\Vert\,
\Vert \phi(x_s)-\phi(l)\Vert + o(\Vert \phi(x_s)-\phi(l)\Vert)\,.
\end{array}$$
Our claim follows from the rapid convergence of $(\phi(x_s))$.
\end{proof}

\begin{defi}\label{def-convrapidX}
Let $a\geq 0$, $X$ be a $d$-dimensional smooth manifold and $(x_s)_{s\in [a,+\infty[}$ be a family of elements of $X$. One says that $(x_s)$ converges rapidly in $X$ if there exist $l\in X$ and a chart $(U,\phi)$ around $l$ such that:
\begin{res-nn}
\begin{center}
$(\phi(x_s))$ converges rapidly to $\phi(l)$\,.
\end{center}
\end{res-nn}
\end{defi}

\begin{rem}\label{rem-indep-chart}
This notion is independent of the choice of the chart $(U,\phi)$. Indeed, let $(\tilde{U},\tilde{\phi})$ 
be another chart around $l$. Then, from Lemma~\ref{lem-rapid-diffeoRd}, $((\phi\circ\tilde{\phi}^{-1})^{-1}(\phi(x_s)))$ converges 
rapidly to $\tilde{\phi}(l)$ which means that $(\tilde{\phi}(x_s))$ converges rapidly to $\tilde{\phi}(l)$.
Also if $\Psi:X\to Y$ is a differentiable map between $C^\infty$ manifolds and $(x_s)$ converges rapidly to $x$ in $X$, 
then $\Psi((x_s))$ converges rapidly to $\Psi(x)$ in $Y$.
\end{rem}

\section{Real points of elementary group actions}

We assume that $\sG$ is a reductive group defined over $\R$ and let $\sH$
be an $\R$-algebraic subgroup of $\sG$.  We form the homogeneous space $\sZ=\sG/\sH$ and our concern is 
to what extent $\sZ(\R)$ coincides with $\sG(\R)/\sH(\R)$.   

\par We say that $\sG$ is anisotropic provided $\sG(\R)$ is compact and recall from \cite[Proposition~13.1]{kk}
the following fact:

\begin{lem}\label{lem-B1}  If $\sG$ is anisotropic, then $\sZ(\R)= \sG(\R)/\sH(\R)$. \end{lem}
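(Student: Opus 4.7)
The injection $\sG(\R)/\sH(\R)\hookrightarrow\sZ(\R)$ is automatic from $\sH\cap\sG(\R)=\sH(\R)$, so the task is surjectivity. First I would record two elementary reductions: $\sH(\R)$ is compact as a closed subgroup of the compact group $\sG(\R)$, and $\sH$ must be reductive, since the unipotent radical of $\sH^\circ$ is an algebraic unipotent $\R$-subgroup whose real points lie in the compact group $\sG(\R)$ and therefore vanish. Hence both $\sG$ and $\sH$ are anisotropic reductive $\R$-groups.

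The main tool will be the polar decomposition of $\sG(\C)$ relative to its compact real form. One has $\sG(\C)=\sG(\R)\cdot\exp(i\gg)$ uniquely as a product, with $\gg=\Lie\sG(\R)$, and analogously $\sH(\C)=\sH(\R)\cdot\exp(i\gh)$ with $\gh\subset\gg$. Given $z\in\sZ(\R)$, I would use $\sG(\C)$-transitivity on $\sZ(\C)$ to write $z=g\cdot z_0$ for some $g\in\sG(\C)$, and then factor $g=u\exp(iX)$ with $u\in\sG(\R)$ and $X\in\gg$. Replacing $z$ by $u^{-1}\cdot z\in\sZ(\R)$, I may assume $g=\exp(iX)$.

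Let $\sigma$ denote complex conjugation. Then $\sigma(\exp(iX))=\exp(-iX)$ while $\sigma(z_0)=z_0$, so the reality $\sigma(z)=z$ translates into $\exp(2iX)\in\sH(\C)$. Viewed inside $\sG(\C)$, this element has polar form $1\cdot\exp(i\cdot 2X)$ with compact factor $1\in\sG(\R)$ and hyperbolic factor $2X\in\gg$. On the other hand, polar-decomposing it inside $\sH(\C)$ gives $\exp(2iX)=u'\exp(iY)$ with $u'\in\sH(\R)\subset\sG(\R)$ and $Y\in\gh\subset\gg$, which is simultaneously a polar factorization in $\sG(\C)$. Uniqueness there forces $u'=1$ and $Y=2X$, so $X\in\gh$; consequently $\exp(iX)\in\sH(\C)$ and $z=\exp(iX)\cdot z_0=z_0$ lies in the image of $\sG(\R)/\sH(\R)$.

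The main technical obstacle I anticipate is the validity of the polar decomposition $\sH(\C)=\sH(\R)\cdot\exp(i\gh)$ when $\sH$ is not connected. After applying the argument to the identity component, one must verify that every $\sigma$-stable connected component of $\sH(\C)$ meets $\sH(\R)$; this uses that $\sH$ is defined over $\R$ with compact real points and can be handled component by component via the finite component group $\sH(\C)/\sH^\circ(\C)$, reducing the general statement to the connected case treated above.
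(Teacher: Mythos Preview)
The paper does not give a proof of this lemma; it merely records it as a fact recalled from \cite[Proposition~13.1]{kk}. Your direct argument via the Cartan (polar) decomposition of $\sG(\C)$ relative to its compact real form $\sG(\R)$ is a valid and standard route to the statement, so there is nothing to compare on the level of strategy.

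Regarding the one technical point you flag: you can bypass the component-group discussion entirely. The element $h:=\exp(2iX)\in\sH(\C)$ is, in any faithful representation carrying $\sG(\R)$ into a unitary group, positive self-adjoint with positive real eigenvalues. Because $\sH$ is Zariski closed, the Zariski closure of the cyclic group $\langle h\rangle$ lies in $\sH$; and since the lattice of characters vanishing on $h$ is saturated (the relation $\prod\alpha_j^{m_j}=1$ with all $\alpha_j>0$ is equivalent to the torsion-free condition $\sum m_j\log\alpha_j=0$), this closure is a \emph{connected} torus. Hence all real powers $h^t$ lie in $\sH^\circ(\C)$: taking $t=\tfrac12$ gives $\exp(iX)\in\sH(\C)$ directly, without invoking a polar decomposition of $\sH(\C)$ at all (and, incidentally, without needing $\sH$ reductive). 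In fact, the same trick applied to $\sigma(c)^{-1}c=\exp(2iY)$ for an arbitrary $c\in\sH(\C)$ with $\sG(\C)$-polar form $c=k\exp(iY)$ shows $Y\in\gh$ and $k\in\sH(\R)$, so the full polar decomposition $\sH(\C)=\sH(\R)\exp(i\gh)$ does hold here --- your instinct was right, but the justification is this positivity argument rather than a bare component count.
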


\par In the sequel, we assume that $\sG$ is a connected elementary group (defined over $\R$), that is: 
\begin{itemize}
\item  $\sG = \sM \sA$ for normal $\R$-subgroups $\sA$ and $\sM$, 
\item $\sM$ is anisotropic, 
\item $\sA$ is a split torus, i.e., $\sA(\R)\simeq (\R^\times)^n$. 
\end{itemize}

Consider now $\sZ=\sG/\sH$,
with $\sG$ elementary. We set $\sM_{\sH}:= \sM \cap \sH$ and, likewise, $\sA_{\sH}:=\sA\cap \sH$. 
Furthermore, we set $\sA_{\sZ}:= \sA/ \sA_H$ and $\sM_{\sZ}:= \sM/\sM_H$, which we view as subvarieties 
of $\sZ$.  From Lemma \ref{lem-B1}, we already know that $\sM_{\sZ}(\R)= \sM(\R)/\sM_H(\R)$. 
Consider now the fiber bundle 
$$ \sA_{\sZ} \to \sZ \to \sG/ \sH \sA$$
and take real points
\begin{equation}\label{eq-fiber} 
\sA_{\sZ}(\R) \to \sZ(\R) \to (\sG/ \sH \sA)(\R)\, .
\end{equation} 
We claim that the natural map 
\begin{equation} \label{B-mult} 
\sM_{\sZ}(\R) \times \sA_{\sZ}(\R) \to {\sZ}(\R)
\end{equation}
is surjective. In fact, observe that $\sG/ \sH \sA\simeq \sM/ (\sM \cap (\sH \sA))$ is homogeneous for the 
anisotropic group $\sM$.  Hence, $(\sG/ \sH \sA)(\R) \simeq \sM(\R) / (\sM \cap (\sH\sA))(\R)$
and our claim follows from (\ref{eq-fiber}).

\par 
We remain with the determination of the fiber of the map \eqref{B-mult}.  
Since $\sM$ and $\sA$ commute, we obtain with 
$$\hat \sM_{\sH}:=\{ m \in \sM \mid  m \sH \in \sA_{\sZ}\subset \sZ\}$$
a closed $\R$-subgroup of $\sM$, which acts on $\sA_{\sZ}$ by morphisms (translations). 
The kernel of this 
action is $\sM_{\sH}$ and this identifies $\sM_{\sH}$ as a normal subgroup of $\hat \sM_{\sH}$. In particular,
we obtain an embedding 
$\hat \sM_{\sH}/ \sM_{\sH}\to \sA_{\sZ}$ and, taking real points, we obtain, as $\sM$ is anisotropic and $\hat \sM_{\sH}$ is closed in $\sM$, a closed embedding  
$$ F_{\sM(\R)}:=\hat \sM_{\sH}(\R)/\sM_{\sH}(\R)\to \sA_{\sZ}(\R)\, .$$
The image of $F_{\sM(\R)}$ is compact, hence, a $2$-group of $\sA_{\sZ}(\R)\simeq (\R^\times)^k$. 
In summary, we have shown: 
\begin{prop}\label{prop-B} 
Let $\sZ=\sG/\sH$ be a homogeneous space for an elementary group $\sG=\sM \sA$ with respect to an $\R$-algebraic subgroup $\sH$. 
Then $F_{\sM(\R)}$ is a finite $2$-group and the map 
$$ [\sM(\R)/ \sM_H(\R)]\times^{F_{\sM(\R)}} \sA_{\sZ}(\R) \to \sZ(\R), \ \ [m\sM_{\sH}(\R), a_Z]\mapsto m a_Z$$
is an isomorphism of real manifolds. 
\end{prop}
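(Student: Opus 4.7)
\medskip
\noindent\textbf{Proof proposal for Proposition \ref{prop-B}.}

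The plan is to establish the four ingredients separately: (a) $F_{\sM(\R)}$ is a finite $2$-group, (b) the map is surjective, (c) the map is injective modulo the $F_{\sM(\R)}$-action, and (d) it is a diffeomorphism. Surjectivity (b) has already been obtained in the discussion preceding the statement, via the fiber bundle \eqref{eq-fiber} and the fact that the base $\sG/\sH\sA$ is homogeneous under the anisotropic group $\sM$.

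For (a), I would argue as follows: the algebraic embedding $\hat\sM_{\sH}/\sM_{\sH} \hookrightarrow \sA_{\sZ}$ is defined over $\R$, so taking real points gives a continuous injective homomorphism with closed image. Since $\hat\sM_{\sH}(\R)$ is a closed subgroup of the compact group $\sM(\R)$, the quotient $F_{\sM(\R)} = \hat\sM_{\sH}(\R)/\sM_{\sH}(\R)$ is compact, and so is its image in $\sA_{\sZ}(\R) \simeq (\R^\times)^n$. The only compact subgroup of $(\R^\times)^n$ is $\{\pm 1\}^n$, so $F_{\sM(\R)}$ is a finite $2$-group.

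For injectivity (c), suppose $m a_Z = m' a_Z'$ in $\sZ(\R)$, for $m,m' \in \sM(\R)$ and $a_Z, a_Z' \in \sA_{\sZ}(\R)$. Then $(m')^{-1}m \cdot a_Z = a_Z'$ as points in $\sZ$. Since $\sM$ and $\sA$ commute in $\sG$, and $\sA_{\sZ}$ is $\sA$-stable, the element $f := (m')^{-1}m \in \sM$ must satisfy $f \cdot z_0 \in \sA_{\sZ}$, i.e., $f \in \hat\sM_{\sH}(\R)$. Moreover $f$ acts on $\sA_{\sZ}(\R)$ by translation via its image in $F_{\sM(\R)} \subset \sA_{\sZ}(\R)$, and $f \cdot a_Z = a_Z'$, so the classes $[m\sM_{\sH}(\R), a_Z]$ and $[m'\sM_{\sH}(\R), a_Z'] = [mf^{-1}\sM_{\sH}(\R), fa_Z]$ coincide in the amalgamated product.

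For (d), one checks first that the action of $F_{\sM(\R)}$ on $[\sM(\R)/\sM_{\sH}(\R)] \times \sA_{\sZ}(\R)$ by $f\cdot (m\sM_{\sH}(\R), a_Z) = (mf^{-1}\sM_{\sH}(\R), fa_Z)$ is free: if $f$ fixes a class then $f \in \sM_{\sH}(\R)$, hence $f$ is trivial in $F_{\sM(\R)}$. Since the action is free and $F_{\sM(\R)}$ is finite, the quotient inherits a canonical smooth manifold structure, and the induced map to $\sZ(\R)$ is smooth and bijective by (b) and (c). Comparing dimensions (both sides have dimension $\dim\sM(\R)/\sM_{\sH}(\R) + \dim\sA_{\sZ}(\R)$, which equals $\dim \sZ(\R)$ by the fibration \eqref{eq-fiber}) and using the inverse function theorem at each point — the differential is surjective onto the tangent space at any image point by combining the $\sM$ and $\sA$ infinitesimal actions — yields that it is a local diffeomorphism, hence a diffeomorphism.

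The main obstacle, to my eye, is the clean bookkeeping in step (c): one must carefully use that $\sM$ and $\sA$ commute in $\sG$ and that $\sA_{\sZ}$ is both $\sA$-invariant and characterized via the base point $z_0$ in order to force $(m')^{-1}m \in \hat\sM_{\sH}$; once this is secured, the amalgamation with $F_{\sM(\R)}$ absorbs exactly the ambiguity, and (d) is straightforward dimension counting combined with freeness of the finite group action.
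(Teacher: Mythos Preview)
Your proof is correct and follows essentially the same approach as the paper: the paper's argument is entirely contained in the discussion preceding the statement, establishing surjectivity via the fiber bundle \eqref{eq-fiber} and Lemma~\ref{lem-B1}, identifying the fiber through the group $\hat\sM_{\sH}$, and deducing that $F_{\sM(\R)}$ is a finite $2$-group from compactness of its image in $\sA_{\sZ}(\R)\simeq(\R^\times)^k$. You simply spell out more explicitly the injectivity step (c) and the smooth-structure argument (d), both of which the paper leaves to the reader after the phrase ``In summary, we have shown''.
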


\begin{cor}\label{cor-B} 
Under the assumptions of Proposition \ref{prop-B}, 
the $\sG(\R)$-orbits in $\sZ(\R)$ 
are in bijection with $\sA_{\sZ}(\R)_2/ F_{\sM(\R)}$, where $\sA_{\sZ}(\R)_2$ is the group of $2$-torsion points
in $\sA_{\sZ}(\R)$.  
The isomorphism is given explicitly by: 
$$ \sA_{\sZ}(\R)_2/ F_{\sM(\R)}  \to \sG(\R)\backslash \sZ(\R), \ \ F_{\sM(\R)} a_Z \mapsto \sG(\R)  a_Z\, .$$ 
\end{cor}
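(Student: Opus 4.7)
The plan is to deduce Corollary \ref{cor-B} directly from Proposition \ref{prop-B} by working out the $\sG(\R) = \sM(\R)\sA(\R)$-action in the product description of $\sZ(\R)$ and analyzing the resulting orbit space.

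First, I would describe the $\sG(\R)$-action on $\sZ(\R) \simeq [\sM(\R)/\sM_\sH(\R)] \times^{F_{\sM(\R)}} \sA_{\sZ}(\R)$. Since $\sM$ and $\sA$ commute in $\sG$ and the Proposition \ref{prop-B} identification sends $[m\sM_\sH(\R), a_Z]$ to $m \cdot a_Z \in \sZ$, one computes directly that for $g = m'a' \in \sG(\R)$,
$$g \cdot [m_0, a_Z] = [m'm_0, \pi(a')a_Z],$$
where $\pi\colon \sA(\R) \to \sA_{\sZ}(\R)$ is the natural projection. Well-definedness of the proposed map $F_{\sM(\R)}a_Z \mapsto \sG(\R)a_Z$ is then immediate: any $\gamma \in F_{\sM(\R)}$ is represented by an element $m_\gamma \in \hat\sM_\sH(\R) \subset \sM(\R) \subset \sG(\R)$, and its $\sG(\R)$-translation on $\sA_{\sZ}(\R) \subset \sZ(\R)$ coincides, by construction of $F_{\sM(\R)} \hookrightarrow \sA_{\sZ}(\R)$, with translation by the image of $\gamma$.

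For surjectivity, I would take an arbitrary $z \in \sZ(\R)$ and write $z = [m_0, a_Z]$ via Proposition \ref{prop-B}. Using the splitting $\sA_{\sZ}(\R) \simeq A_Z \times \sA_{\sZ}(\R)_2$, decompose $a_Z = a_Z^0 \cdot t$ with $a_Z^0 \in A_Z$ and $t \in \sA_{\sZ}(\R)_2$. Since $A_Z$ is, by definition, the identity component of $\pi(\sA(\R)) = \sA(\R)/\sA_\sH(\R)$, one can pick $a \in \sA(\R)$ with $\pi(a) = (a_Z^0)^{-1}$; let $\tilde m_0 \in \sM(\R)$ be any lift of $m_0$. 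Then $(a\tilde m_0^{-1}) \cdot z = [e, t] = t \in \sA_{\sZ}(\R)_2 \subset \sZ(\R)$, so every $\sG(\R)$-orbit meets $\sA_{\sZ}(\R)_2$.

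For injectivity, suppose $t, t' \in \sA_{\sZ}(\R)_2$ satisfy $t' = g \cdot t$ for some $g = m'a' \in \sG(\R)$. From the action formula, $[e, t'] = [m', \pi(a')t]$, and the $F_{\sM(\R)}$-equivalence in Proposition \ref{prop-B} produces $\gamma \in F_{\sM(\R)}$ with $m' \equiv \gamma \pmod{\sM_\sH(\R)}$ and $t' = \gamma \cdot \pi(a') \cdot t$ in $\sA_{\sZ}(\R)$ (identifying $\gamma$ with its image in $\sA_{\sZ}(\R)$). Squaring this second relation and using that $t$, $t'$, $\gamma$ are $2$-torsion forces $\pi(a') \in \sA_{\sZ}(\R)_2$. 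The main obstacle is then to conclude that $\pi(a')$ itself lies in $F_{\sM(\R)}$: equivalently, any $a' \in \sA(\R)$ with $\pi(a') \in \sA_{\sZ}(\R)_2$ must lie in $\sM(\R)\sH(\R)$, since then via the characterization $\hat\sM_\sH = \sM \cap \sA\sH$ its image is represented by an element of $\hat\sM_\sH(\R)$, hence lies in $F_{\sM(\R)}$. I expect this step to rest on the elementary-group structure $\sG = \sM\sA$ combined with Hilbert 90 applied to the split torus $\sA$, which controls the $2$-torsion classes in the cokernel of $\sA(\R) \to \sA_{\sZ}(\R)$ in terms of $\hat\sM_\sH(\R)$. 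Granted this, $t't^{-1} = \gamma\pi(a') \in F_{\sM(\R)}$, and injectivity is complete.
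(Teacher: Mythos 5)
Your setup, action formula, and surjectivity argument are correct, and you have correctly located the crux of the proof: showing that $\pi(a')$, which you have reduced to lying in $\pi(\sA(\R)) \cap \sA_{\sZ}(\R)_2$, actually lies in $F_{\sM(\R)}$. However, the route you sketch for that step is misguided, and without more input the step genuinely fails. Hilbert 90 applied to the split torus $\sA$ gives $H^1(\R,\sA)=1$, which makes $\sA(\R)\to\sA_{\sZ}(\R)$ \emph{more} surjective (its cokernel is $H^1(\R,\sA_{\sH})$); if anything this enlarges the set $\pi(\sA(\R))\cap\sA_{\sZ}(\R)_2$ you need to control, rather than shrinking it. Concretely, if one naively takes $\sM=1$, $\sA=\sGL_1$, $\sH=1$, then $\pi(\sA(\R))\cap\sA_{\sZ}(\R)_2=\{\pm1\}\not\subseteq F_{\sM(\R)}=\{1\}$, and the asserted bijection itself fails (a single $\sG(\R)$-orbit on $\sZ(\R)=\R^\times$ versus two elements of $\sA_{\sZ}(\R)_2/F_{\sM(\R)}$). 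So some extra structural input is unavoidable.

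The missing ingredient is that the $2$-torsion subgroup scheme of $\sA$ is contained in $\sM$ --- tacit in the Appendix but visible in the main text, where it is recorded that $M\cap A$ equals the $2$-torsion points $A_2$ of $A$. Granting this, your reduction closes as follows. From $\pi(a')^2=1$ you get $(a')^2\in\sA_{\sH}(\R)\cap\sA(\R)_0$. Since $\sA_{\sH}$ is a closed diagonalizable subgroup of the split torus $\sA$, the surjection of character lattices from $\sA$ onto $\sA_{\sH}$ remains surjective modulo $2$; dually, $\sA_{\sH}(\R)/\sA_{\sH}(\R)_0\hookrightarrow\sA(\R)/\sA(\R)_0$, so $\sA_{\sH}(\R)\cap\sA(\R)_0=\sA_{\sH}(\R)_0=\sA_{\sH}(\R)^2$. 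Write $(a')^2=b^2$ with $b\in\sA_{\sH}(\R)$. Then $a'b^{-1}$ is a $2$-torsion element of $\sA(\R)$, hence lies in $\sM(\R)$; moreover $(a'b^{-1})\cdot z_0 = a'\cdot z_0=\pi(a')\in\sA_{\sZ}(\R)$ because $b\in\sH(\R)$, so $a'b^{-1}\in\hat\sM_{\sH}(\R)$ by definition. Its image in $F_{\sM(\R)}\subset\sA_{\sZ}(\R)$ is precisely $\pi(a')$, and injectivity follows. In short: it is not Galois cohomology of $\sA$ but the containment of $\sA$'s $2$-torsion in $\sM$, together with the elementary lattice observation above about $\sA_{\sH}$, that closes the gap.
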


\section{Invariant differential operators on $Z$ and $Z_I$ (by Rapha\"el Beuzart-Plessis)}\label{app-RBP}

In the beginning we let  $Z=G/H$ be a general homogeneous space attached to a Lie group $G$ 
and a closed subgroup $H\subset G$. A bit later we specialize to real spherical spaces as in the main body of the text. 
Our concern is with the algebra of $G$-invariant differential operators
$\D(Z)$ and we start with a recall of the standard description of $\D(Z)$ in terms of the universal enveloping algebra
$\cU(\gg)$ of $\gg_\C$.  As usual, we denote the right regular representation of $G$ on $C^\infty(G)$ by $R$ and, by slight abuse of notation, the induced action of the enveloping algebra $\cU(\gg)$ by the same letter; in symbols: 
$$R: \cU(\gg) \to \End (C^\infty(G))\, .$$
Now, for an element $u \in \cU(\gg)$, the operator $R(u)$ descends to a differential operator on $Z$ if and only if 
$u\in \cU_H(\gg)$, where 
$$\cU_H(\gg):=\{ u \in \cU(\gg)\mid \Ad(h)u - u \in \cU(\gg)\gh,\quad h\in H\}\, .$$
Notice that  $\cU_H(\gg)\subset \cU(\gg)$ is a subalgebra of $\cU(\gg)$ which features
$\cU(\gg)\gh\subset \cU_H(\gg)$ as a two-sided ideal. 
The following Lemma goes back to Helgason in case there exists an $\Ad(H)$-stable vector complement 
to $\gh$ in $\gg$. The general case is an easy adaption and probably known to a larger part in the community. 
Since we could not find a reference we include a proof. 

\begin{lem} The right regular action induces a natural isomorphism
\begin{equation} \label{diff1} \D(Z)\simeq \cU_H(\gg)/ \cU(\gg)\gh\,,\end{equation} 
\end{lem}

\begin{proof}
(Compare \cite[Proof of Lemma 16]{Hel}) 	
Let $\pi: G\to Z$ be the natural projection. For every function $f\in C^\infty(Z)$, we set $\pi^*f=f\circ \pi\in C^\infty(G)$. The map $f\mapsto \pi^*f$ induces an isomorphism $C^\infty(Z)\simeq C^\infty(G)^H$ with the space of $H$-right-invariant functions in $C^\infty(G)$. Let $u\in \cU_H(\gg)$. Then, $R(u)$ preserves $C^\infty(G)^H$ and therefore induces an endomorphism of $C^\infty(Z)$ obviously given by a $G$-invariant differential operator. Thus, we have an algebra homomorphism
\begin{equation}\label{eqDO 1}
u\in \cU_H(\gg)\mapsto D_u\in \D(Z)
\end{equation}
characterized by the property that
\begin{equation}\label{eqDO 1bis}
\displaystyle R(u)\pi^*f=\pi^*(D_u f)
\end{equation}
for every $u\in \cU_H(\gg)$, $f\in C^\infty(Z)$. It remains to show that this morphism is onto with kernel $\cU(\gg) \gh$.

First we show that 
\begin{equation}\label{eqDO 3}
\displaystyle \{u\in \cU(\gg)\mid R(u)\pi^*C^\infty(Z)=0 \}=\cU(\gg)\gh.
\end{equation}
Note that this fact immediately implies that the kernel of \eqref{eqDO 1} is $\cU(\gg)\gh$.

Choose a complementary subspace $\mathfrak{m}$ of $\gh$ in $\gg$ and let
$$\displaystyle \Symm: S(\gg)\to \cU(\gg)$$
be the symmetrization map. By Poincar\'e-Birkhoff-Witt, we have
\begin{equation*}
\displaystyle \cU(\gg)=\Symm(S(\gm))\oplus \cU(\gg)\gh.
\end{equation*} 
Hence, we just need to show that if $u\in \Symm(S(\gm))$ is such that $R(u)\pi^*C^\infty(Z)=0$ then $u=0$. Let $u$ be such an element. It can be written as $u=\Symm(v)$ for some $v\in S(\gm)$. If $U$ is a sufficiently small open neighborhood of $0$ in $\gm$, the map
$$\displaystyle \psi: X\in U\mapsto \pi(\exp(X))\in Z$$
is an open embedding. Therefore, 
\begin{equation}\label{eqDO 4}
\psi^* C^\infty(Z)=C^\infty(U).
\end{equation}
On the other hand, by a well-known characterization of the symmetrization map (see \cite[eq. (3.9)]{Hel}), we have
$$\displaystyle (R(u)\pi^*f)(1)=(\partial(v)\psi^* f)(0)$$
for every $f\in C^\infty(Z)$ where $\partial(v)$ is the differential operator with constant coefficients on $\gm$ associated to $v$. By \eqref{eqDO 4}, this last equality implies $v=0$ hence $u=0$ and this ends the proof of \eqref{eqDO 3}.

It only remains to prove that \eqref{eqDO 1} is surjective. Let $D\in \D(Z)$. As $\psi$ is an open embedding, there exists $v\in S(\gm)$ such that
$$\displaystyle (Df)(z_0)=(\partial(v)\psi^* f)(0)$$
for every $f\in C^\infty(Z)$ where $z_0=\pi(1)$ is the natural base-point of $Z$. Set $u=\Symm(v)$. As before, the above identity can be rewritten as
$$\displaystyle (Df)(z_0)=(R(u)\pi^* f)(1).$$
Since $D$ is $G$-invariant, it follows that
\[\begin{aligned}
\displaystyle (Df)(g z_0)=(DL(g^{-1})f)(z_0)=(R(u) \pi^* L(g^{-1})f )(1)=(L(g^{-1})R(u)\pi^* f)(1)=(R(u)\pi^*f)(g)
\end{aligned}\]
for all $f\in C^\infty(Z)$ and $g\in G$. Otherwise said, we have
\begin{equation}\label{eqDO 5}
\displaystyle \pi^*(Df)=R(u)\pi^*(f),\;\; f\in C^\infty(Z).
\end{equation}
Since $R(h)\pi^*(f)=\pi^*(f)$ for every $h\in H$ and $f\in C^\infty(Z)$, we deduce that $R(\Ad(h)u-u)\pi^*C^\infty(Z)=0$, hence $\Ad(h)u-u\in \cU(\gg)\gh$ by \eqref{eqDO 3}. This shows that $u\in \cU_H(\gg)$ and comparing \eqref{eqDO 1bis} with \eqref{eqDO 5} we have $D=D_u$. Therefore, the map \eqref{eqDO 1} is surjective.

\end{proof}

  For $u \in \cU_H(\gg)/ \cU(\gg)\gh$, we denote by 
$R_H(u)\in \D(Z)$ the correponding invariant differential operator.
Suppose furthermore that there is a subalgebra $\gb\subset\gg$ such that $\gg = \gb +\gh$ (not necessarily direct).
Then Poincar\'e-Birkhoff-Witt (PBW) implies that $\cU(\gg)= \cU(\gb) + \cU(\gg)\gh$ and 
setting $\cU_H(\gb)= \cU(\gb)\cap \cU_H(\gg)$, we obtain 
from \eqref{diff1} an isomorphism

\begin{equation} \label{diff2} \D(Z)\simeq \cU_H(\gb)/ \cU(\gb)(\gh\cap \gb)\, .\end{equation} 

\begin{rem}  \label{rem appD}
(a) Recall that we expressed by $H_0$ the identity component of $H$. It is 
then clear that $\cU_H(\gg)\subset \cU_{H_0}(\gg)$.  Hence, we obtain 
from $\Lie H= \Lie H_0$ and \eqref{diff1} that 
$$\D(Z)\subset\D(G/H_0)$$  
naturally. Moreover, we record that
\begin{eqnarray*}\cU_{H_0}(\gg)&=&\{u\in\cU(\gg)\mid [X,u]\in\cU(\gg)\gh,\quad X\in \gh\}\\
&=& \{u\in\cU(\gg)\mid Xu\in\cU(\gg)\gh,\quad X\in \gh\}\, .\end{eqnarray*}

(b) Assume that $G=\underline{G}(\R)$ is the group of $\R$-points of a linear algebraic group $\underline{G}$ over $\R$. Let $H_{alg}$ be the Zariski closure of $H$ in $G$ and assume that $H_{alg}$ and $H$ have the same Lie algebra (this happens, e.g., if $H$ has finite index in the group of $\R$-points of an algebraic subgroup of $\underline{G}$). Then, by \eqref{diff2},
$$\D(Z)=\D(G/H_{alg})\,.$$
\end{rem}

\par We now return to the spherical setup and request from now that $Z=G/H$ is real spherical and unimodular where as in the main body of the text, $G=\underline{G}(\R)$ is the group of $\R$-points of a connected real reductive group and $H$ is open in the $\R$-points of an algebraic subgroup of $\underline{G}$. 
The topic of this section is then to study the relationship of $\D(Z)$ to $\D(Z_I)$ for $I\subset S$. 
Recall that the authors of this paper have defined $H_I$ to be connected.  We abbreviate notation and write $R_I$ for $R_{H_I}$ 
and $\cU_I(\gg)=\cU_{H_I}(\gg)$ etc.  With 
$$\gb:=\ga +\gm +\gu\,,$$
 we obtain
a subalgebra of $\gg$ such that $\gg=\gb +\gh_I$ for all $I\subset S$. Further, we have $\gb\cap \gh_I =
\ga_H + \gm_H=:\gb_H$.  In particular, we obtain an algebra isomorphism 
\begin{equation} \label{diff3}p_I: \cU_I(\gb)/ \cU(\gb)\gb_H\to \D(Z_I), \ \ u\mapsto R_I(u)\,.
\end{equation}

Via this algebra isomorphism, we identify from now on $\D(Z_I)$ with $\cU_I(\gb)/ \cU(\gb)\gb_H$.
Remark that, as $A_I$ normalizes $H_I$, we obtain a natural inclusion $S(\ga_I)\hookrightarrow \D(Z_I)$
induced from the right action of $A_I$ on $Z_I$. Note that $Z_S=G/H_0$ and $\ga_S=\ga_{Z,E}$.

\begin{lem}\label{appC lem 1}
The symmetric algebra $S(\ga_S)$ embeds in the center of $\D(Z_S)$.
\end{lem}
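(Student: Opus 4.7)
My plan: The embedding $S(\ga_S)\hookrightarrow\D(Z_S)$ comes from the right action of the abelian group $\tilde A_S := \exp(\bs(\ga_{Z,E}))$ on $Z_S = G/H_0$. It is well-defined because $\bs(\ga_{Z,E})$ normalizes $\gh$ (as noted in Section~\ref{sect-1.2}), hence $\tilde A_S$ normalizes $H_0$, and the action commutes with the left $G$-action. Differentiating gives an abelian Lie algebra morphism $\tilde\ga_S\to\D(Z_S)$ which extends uniquely to an algebra morphism $S(\ga_S)\to\D(Z_S)$. Injectivity is the standard principal symbol calculation: from the choice of section $\bs$, we have $\tilde\ga_S\cap\gh=\{0\}$, so products of the corresponding first-order operators have nonvanishing symbols in $\Pol(T^*Z_S)^G\simeq\Pol(\gh^\perp)^{H_0}$.

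The core of the proof is centrality. Through the identification $\D(Z_S)\simeq\cU_{H_0}(\gb)/\cU(\gb)\gb_H$ of \eqref{diff3}, this reduces to $[\tilde X,u]\in\cU(\gb)\gb_H$ for all $\tilde X\in\tilde\ga_S$ and $u\in\cU_{H_0}(\gb)$. Since $\tilde X\in\ga$, the operator $\ad\tilde X$ acts semisimply on $\cU(\gb)$ with $\ga$-weights in $\N_0[\Sigma_\gu]$, and both $\cU_{H_0}(\gb)$ and $\cU(\gb)\gb_H$ are $\ad\tilde X$-stable (the latter because $\tilde\ga_{Z,E}$ normalizes $\gb_H=\ga_H+\gm_H$). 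Since $\ad\tilde X$ acts by the scalar $\lambda(\tilde X)$ on each weight component $u_\lambda$, the assertion reduces to the claim
\begin{equation*}
\text{every }\ga\text{-weight }\lambda\text{ occurring in }\D(Z_S)\text{ annihilates }\tilde\ga_{Z,E},
\end{equation*}
i.e., lies in the $\R$-span of the spherical roots $S$, which is exactly the annihilator of $\tilde\ga_{Z,E}$ in $\ga_Z^*$.

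This weight bound is the main obstacle and I expect it to be the technical heart of the argument; the rest is essentially formal once it is in hand. My approach to establish it is to exploit the explicit presentation of $\gh$ coming from \eqref{eq-defXab}: the vectors $Y_{-\alpha}=X_{-\alpha}+T(X_{-\alpha})$ together with $\gl\cap\gh$ span $\gh$, and a direct computation using the key identity $(\alpha+\beta)(\tilde X)=0$ whenever $X_{\alpha,\beta}\neq 0$ (a consequence of $\alpha+\beta\in\cM\subset\ga_{Z,E}^\perp$) shows that $\ad\tilde X(Y_{-\alpha})=-\alpha(\tilde X)Y_{-\alpha}$ for $\tilde X\in\tilde\ga_{Z,E}$. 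Thus $\ad\tilde\ga_{Z,E}$ diagonalizes $\gh$ and preserves each generator. Combining this eigenstructure with the $\cU_{H_0}$-invariance condition through PBW should force the $\ga$-weights of $\cU_{H_0}(\gb)$ surviving modulo $\cU(\gb)\gb_H$ to be non-negative integer combinations of the elements $\alpha+\beta\in\cM$, hence to lie in $\R_{\ge 0}S$, which annihilates $\tilde\ga_{Z,E}$ as required.
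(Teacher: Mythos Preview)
Your reduction is correct and clean: centrality of $S(\ga_S)$ in $\D(Z_S)$ is equivalent to the vanishing of all $\ga_{Z,E}$-weights on $\cU_{H_0}(\gb)/\cU(\gb)\gb_H$, and your observation that $\ad\tilde X(Y_{-\alpha})=-\alpha(\tilde X)Y_{-\alpha}$ for $\tilde X\in\tilde\ga_{Z,E}$ is exactly what makes $\cU_{H_0}(\gb)$ and $\cU(\gb)\gb_H$ stable under $\ad\tilde\ga_{Z,E}$, so the reduction is well posed.

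The gap is the final step. You assert that the invariance condition ``through PBW should force'' the surviving $\ga$-weights to lie in $\N_0[\cM]$, but you give no mechanism for this, and I do not see one. The condition $Yu\in\cU(\gg)\gh$ for $Y\in\gh$ relates $u$ to elements of $\cU(\gg)\gh$ after left multiplication by the $Y_{-\alpha}$, which shift $\ga_{Z,E}$-weight by $-\alpha|_{\ga_{Z,E}}$; nothing in that relation pins the weight of $u$ itself to zero. Decomposing $u$ into $\ga_{Z,E}$-weight components $u_\nu$ only tells you that each $u_\nu$ is separately in $\cU_{H_0}(\gb)$, which is circular. In fact, in the paper's logical flow the weight constraint you want (that the $\ga_Z$-weights of elements of $\D(Z_S)$ are non-positive on $\ga_Z^-$, equivalently lie in $\R_{\geq 0}S$) is \emph{deduced from} Lemma~\ref{appC lem 1} via Lemma~\ref{appC lem 2}: one shows the extremal weight components $u_{S,\lambda_{max}}$ land in $\D(Z_I)$, and then centrality of $S(\ga_I)$ in $\D(Z_I)$ forces $\lambda_{max}=0$. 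So your proposed argument reverses the direction of implication used in the appendix, and the step you leave open is precisely the one that needs an independent proof.

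The paper proves the lemma by a different and genuinely global argument. Passing to the algebraic closure of $H_0$ (harmless for $\D$) and using that $Z$ unimodular implies $\sZ(\C)$ quasi-affine, one embeds $\D(Z)\hookrightarrow\bigoplus_V\End(V^H)$ via the action on the coordinate ring, the sum running over irreducible algebraic $G$-modules $V$. It then suffices to show $\ga_S$ acts by a scalar on each $V^H$. The key point is $V^H\cap \gu V=0$, which follows from Zariski density of $\sQ\sH$ in $\sG$: any $v\in V^H\cap\gu V$ generates a $G$-submodule inside the proper subspace $\gu V$, hence $v=0$. This yields an $\ga_S$-equivariant injection $V^H\hookrightarrow V/\gu V$, and the right-hand side is an irreducible $L$-module on which the split center $\ga_L$ (hence $\ga_S$) acts by a single weight. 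That single-weight statement is the substitute for your unproved weight bound.
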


\begin{proof} 
By slight abuse of notation, let us denote by $\sH$ the algebraic closure of $H_0$ in $\sG$ and let $H=\sH(\R)$. In view of Remark \ref{rem appD}(b), we may replace $H_0$ by $H$ in the following. 

\par Let $\sZ(\C)=\sG(\C)/\sH(\C)$. Since $Z$ is unimodular, $\sZ(\C)$ is a quasi-affine algebraic variety (see Lemma \ref{lem unim-quasiaff}) and there is a natural embedding
$$\displaystyle \D(Z)\hookrightarrow \End_G(\C[\sZ(\C)])\simeq \bigoplus_{V} \End(V^{H})\,,$$
where the direct sum runs over all isomorphism classes of algebraic finite dimensional irreducible $G$-modules. 
Moreover, the image of $S(\ga_S)$ in $\End(V^{H})$ by this morphism corresponds to the natural action of $\ga_S$ on $V^H$. 
Therefore, we only need to show that this action is scalar for every finite dimensional irreducible $G$-module $V$. Set $V(U)=\gu V$. 
Then $V(U)$ is a proper $Q$-submodule of $V$ and the quotient $V/V(U)$ is an irreducible $L$-module on which the split center $\ga_L$ acts by a certain weight $\mu\in \ga_L^*$. 
Identify $\ga_S$ with a subspace of $\ga_L$ through the choice of a splitting of $\ga_Z$ in $\ga_L$. 
Then the claim would follow if we can show that the only weight of $\ga_S$ in $V^H$ is the restriction of $\mu$. 
We have
\begin{equation}\label{eq 2}
V^H\cap V(U)=0.
\end{equation}
Indeed, if $v\in V^H\cap V(U)$ then $\sQ(\C)\sH(\C).v\subset V(U)$ and, as $\sQ(\C) \sH(\C)$ is Zariski dense in $\sG(\C)$, this implies that the $\sG(\C)$-invariant subspace generated by $v$ is included in $V(U)$ hence $v=0$ since $V$ is irreducible and $V(U)\neq V$. By \eqref{eq 2}, the restriction of the projection $V\to V/V(U)$ yields an injective $\ga_S$-equivariant morphism $V^H\hookrightarrow V/V(U)$. The result follows. 
\end{proof}

\par In the sequel, we view $ \cU_I(\gb)/ \cU(\gb)\gb_H$ as a subspace 
of $\cU(\gb)/ \cU(\gb)\gb_H$, which is naturally a module for $A/A_H$, hence for $A_Z$. 
In particular, we can speak of the $\ga_Z$-weights of an element in  $ \cU_I(\gb)/ \cU(\gb)\gb_H$. 
Recall that $\ga_S=\ga_{Z,E}\subset \ga_I$  for all $I\subset S$. 

\par
Let $I\subset S$ and $(\ga_I^*)^+$ be the cone of elements $\lambda\in \ga_I^*$ such that 
$$\displaystyle \lambda(X)\leqslant 0, \quad  X\in \ga_I^-\,.$$
Let $u_S\in \cU_S(\gb)/\cU(\gb)\gb_H$ and $\displaystyle u_S=\sum_{\lambda\in \ga_I^*}u_{S,\lambda}$ be its decomposition (in $\cU(\gb)/\cU(\gb)\gb_H$) into $\ga_I$-eigenvectors. Let $\bW_I(u_S)$ be the set of $\lambda\in \ga_I^*$ such that $u_{S,\lambda}\neq 0$. Then there exists a unique minimal subset $\bW_I(u_S)_{max}$ of $\bW_I(u_S)$ such that
$$\displaystyle \conv(\bW_I(u_S)_{max}+(\ga_I^*)^+)=\conv(\bW_I(u_S)+(\ga_I^*)^+)\,,$$
where $\conv(D)$ denotes the convex hull of a subset $D\subset \ga_I^*$ (indeed, $\bW_I(u_S)_{max}$ is just the set of extremal points of $\conv(\bW_I(u_S)+(\ga_I^*)^+)$; this follows from a version of the Krein--Milman theorem for convex subsets invariant by a cone, see, e.g., \cite{gl}).

\begin{lem}\label{appC lem 2}
Let $\lambda_{max}\in \bW_I(u_S)_{max}$. Then $u_{S,\lambda_{max}}\in \cU_I(\gb)/\cU(\gb)\gb_H$.
\end{lem}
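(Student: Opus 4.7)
The plan is to prove the claim by an asymptotic argument: conjugate a lift of $u_S$ by $a_t := \exp(t\bs(X))$ for $X \in \ga_I^{--}$ and extract the leading $\ga_I$-weight component as $t\to\infty$, exploiting the Grassmannian convergence $\Ad(a_t)\gh \to \gh_I$ from \eqref{eq-Gr-limit}. First I would pick a lift $\tilde u_S \in \cU_S(\gb)$ of $u_S$ and decompose it into $\ga_I$-weight spaces inside $\cU(\gb)$ as $\tilde u_S = \sum_\lambda \tilde u_{S,\lambda}$. Since $\Ad(A_I)$ preserves both $\cU(\gb)$ (as $\bs(\ga_I)\subseteq \ga \subseteq \gb$) and $\cU(\gb)\gb_H$ (as $\bs(\ga_I)\subseteq \ga_L$ commutes with $\ga_H+\gm_H$), reducing this decomposition modulo $\cU(\gb)\gb_H$ recovers the prescribed $u_S=\sum_\lambda u_{S,\lambda}$. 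The extremality of $\lambda_{max}$ in $\conv(\bW_I(u_S)+(\ga_I^*)^+)$ provides, via a standard Minkowski separation, an $X\in \ga_I^{--}$ with $\lambda_{max}(X) > \lambda(X)$ for every $\lambda \in \bW_I(u_S)\setminus \{\lambda_{max}\}$.

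Next I would invoke (the proof of) Lemma~\ref{lem-h-I}: the Lie algebra $\gh_I$ is spanned as a vector space by $\gl\cap \gh$ together with the elements $Y^I_{-\alpha}:=X_{-\alpha}+T_I(X_{-\alpha})$, $\alpha\in \Sigma_\gu$, $X_{-\alpha}\in \gg^{-\alpha}$. To each such generator $Y$ I would attach a companion $\tilde Y\in \gh$ and a positive scalar $c(t)$ so that $c(t)\Ad(a_t)\tilde Y\to Y$ in $\gg$: for $Y\in \gl\cap\gh$, take $\tilde Y:=Y$ and $c(t):=1$ (since $\bs(\ga_I)\subseteq \ga_L$ centralizes $\gl$, whence $\Ad(a_t)Y=Y$); for $Y=Y^I_{-\alpha}$, take $\tilde Y:= X_{-\alpha}+T(X_{-\alpha})\in \gh$ and $c(t):=e^{t\alpha(X)}$, for which a direct expansion gives
$$c(t)\Ad(a_t)\tilde Y \;=\; X_{-\alpha}+\sum_{\beta} e^{t(\alpha+\beta)(X)}X_{\alpha,\beta}\;\xrightarrow[t\to\infty]{}\; Y^I_{-\alpha},$$
because $\alpha+\beta\in \cM\subseteq \N_0[S]$ forces $(\alpha+\beta)(X)\leq 0$ with equality precisely when $\alpha+\beta\in \langle I\rangle$.

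The final step is a limit identity. I would form
$$v_t \;:=\; c(t)\,e^{-t\lambda_{max}(X)}\,\Ad(a_t)(\tilde Y\tilde u_S)\;=\;\bigl(c(t)\Ad(a_t)\tilde Y\bigr)\bigl(e^{-t\lambda_{max}(X)}\Ad(a_t)\tilde u_S\bigr).$$
By the previous step the first factor tends to $Y$, while the separation property of $X$ forces $e^{-t\lambda_{max}(X)}\Ad(a_t)\tilde u_S\to \tilde u_{S,\lambda_{max}}$ in $\cU(\gb)$; hence $v_t\to Y\tilde u_{S,\lambda_{max}}$ in $\cU(\gg)$. On the other hand, as $\tilde Y\in \gh$ and $\tilde u_S\in \cU_S(\gb)=\cU_0(\gb)$, Remark~\ref{rem appD}(a) yields $\tilde Y\tilde u_S\in \cU(\gg)\gh$; thus $v_t\in \cU(\gg)\Ad(a_t)\gh$ with uniformly bounded PBW degree (since $\Ad(a_t)$ preserves the filtration). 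The hard part, which I expect to be the main obstacle, is the following Grassmannian-continuity property of left ideals: \emph{if $V_t\to V$ in $\Gr_d(\gg)$ and $u_t\in \cU(\gg)V_t$ is a family of uniformly bounded PBW degree converging to $u\in \cU(\gg)$, then $u\in \cU(\gg)V$.} This can be obtained by noting, via PBW, that $\cU(\gg)V_t\cap \cU(\gg)_{\leq n}$ has constant dimension for $V_t$ in a neighborhood of $V$, choosing a continuously varying basis of $V_t$ near $V$, expanding $u_t$ against it through a fixed PBW complement to produce coefficients in a fixed finite-dimensional subspace of $\cU(\gg)$, and passing to the limit. Applied with $V_t=\Ad(a_t)\gh\to \gh_I$, this forces $Y\tilde u_{S,\lambda_{max}}\in \cU(\gg)\gh_I$ for each generator $Y$ of $\gh_I$, and hence for all $Y\in\gh_I$. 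Therefore $\tilde u_{S,\lambda_{max}}\in \cU_I(\gg)\cap \cU(\gb)=\cU_I(\gb)$, establishing the claim.
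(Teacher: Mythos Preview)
Your approach is essentially the paper's: extract $\tilde u_{S,\lambda_{max}}$ as the rescaled limit $e^{-t\lambda_{max}(X)}e^{t\ad X}\tilde u_S$ along a separating $X\in\ga_I^{--}$, and then transport the $\cU_S$-membership to $\cU_I$-membership through the Grassmannian convergence $e^{t\ad X}\gh\to\gh_I$. The paper packages the last step as ``$\lim_{t\to\infty} e^{t\ad X}\cU_S(\gg)_{\leq n}$ exists in $\Gr(\cU(\gg)_{\leq n})$ and lies in $\cU_I(\gg)_{\leq n}$'', whereas you unwind this by verifying $Y\tilde u_{S,\lambda_{max}}\in\cU(\gg)\gh_I$ generator by generator via your left-ideal continuity; these are the same argument, yours simply being the explicit form of the paper's one-line claim.

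One small oversight: after lifting $u_S$ to an arbitrary $\tilde u_S\in\cU_S(\gb)$ and \emph{then} decomposing into $\ga_I$-weights, the weight set of $\tilde u_S$ can strictly contain $\bW_I(u_S)$ (the surplus components lying in $\cU(\gb)\gb_H$). Your separating $X$, chosen only against $\bW_I(u_S)$, then does not guarantee that $e^{-t\lambda_{max}(X)}\Ad(a_t)\tilde u_S\to\tilde u_{S,\lambda_{max}}$. The fix is exactly what the paper does: lift each $u_{S,\lambda}$ individually to an $\ga_I$-eigenvector of weight $\lambda$ in $\cU(\gb)$, set it to zero when $u_{S,\lambda}=0$, and take $\tilde u_S$ to be the sum. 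This lift still lies in $\cU_S(\gb)$ (any lift does, since $\cU(\gb)\gb_H\subset\cU_S(\gb)$) and has weight set exactly $\bW_I(u_S)$, after which your argument runs unchanged.
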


\begin{proof} 
Choose, for every $\lambda\in \ga_I^*$, a lift $\widetilde{u}_{S,\lambda}\in \cU_S(\gb)$ of $u_{S,\lambda}$, which is again an $\ga_I$-eigenvector of weight $\lambda$ and with $\widetilde{u}_{S,\lambda}=0$ if $u_{S,\lambda}=0$. Set
$$\displaystyle \widetilde{u}_S=\sum_{\lambda\in \ga_I^*}\widetilde{u}_{S,\lambda}$$
(a lift of $u_S$). Then we want to show that $\widetilde{u}_{S,\lambda_{max}}\in \cU_I(\gb)$. By the choice of $\lambda_{max}$, there exists $X\in \ga_I^{--}$ such that $\lambda(X)<\lambda_{max}(X)$ for every $\lambda\in \ga_I^*$ with $\widetilde{u}_{S,\lambda}\neq 0$ and $\lambda\neq \lambda_{max}$. Therefore, we have
$$\displaystyle \lim\limits_{t\to \infty} e^{-t\lambda_{max}(X)}e^{t\ad X}\widetilde{u}_{S}=\widetilde{u}_{S,\lambda_{max}}\,.$$
Since $\lim\limits_{t\to \infty}e^{t\ad X}\gh=\gh_I$ in the Grassmannian $\Gr(\gg)$, we easily check that for every $n\geqslant 0$ the limit $\lim\limits_{t\to \infty} e^{t\ad X}\cU_S(\gg)_{\leqslant n}$ in the Grassmannian $\Gr(\cU(\gg)_{\leqslant n})$ (which always exists) is a subspace of $\cU_I(\gg)_{\leqslant n}$. Since $\widetilde{u}_{S}\in \cU_S(\gg)$, this shows that $\widetilde{u}_{S,\lambda_{max}}\in \cU_I(\gg)\cap \cU(\gb)=\cU_I(\gb)$. 
\end{proof}

Notice that, for every $I\subset S$, we have a morphism $\cZ(\gg)\to \D(Z_I)$ induced by the ``right" action of $\cZ(\gg)$ on smooth functions on $Z_I$. We can now state the main theorem of this appendix.

\begin{theo}\label{lem-defmu_0}
For every $u_S\in \cU_S(\gb)/\cU(\gb)\gb_H$, the limit
\begin{equation}\label{eq limitmuI}
\displaystyle u_I=\lim\limits_{t\to \infty}e^{t\ad X}u_S
\end{equation}
exists in $\cU(\gb)/\cU(\gb)\gb_H$ for every $X\in \ga_I^{--}$ and is independent of $X$. The map $u\mapsto u_I$ induces an injective morphism of algebras
$$
\begin{array}{rcl}
\displaystyle \mu_I:\cU_S(\gb)/\cU(\gb)\gb_H=\D(Z_S)&\longrightarrow& \D(Z_I)=\cU_I(\gb)/\cU(\gb)\gb_H.
\end{array}$$
Moreover, the following assertions hold:
\begin{enumerate}[(i)]
\item \begin{enumerate}[(a)]
\item the $\ga_Z$-weights of $u_S$ are  non-positive on $\ga_Z^-$,
\item the $\ga_Z$-weights of $u_I-u_S$ are negative on $\ga_I^{--}$.
\end{enumerate}
\item The morphism $\mu_I$ fits into commutative squares
$$\displaystyle \xymatrix{ \cZ(\gg) \ar@{=}[r] \ar[d] & \cZ(\gg) \ar[d] \\ \D(Z_S) \ar[r] & \D(Z_I)}\quad \textrm{and}\quad\displaystyle \xymatrix{ S(\ga_S) \ar[r] \ar[d] & S(\ga_I) \ar[d] \\  \D(Z_S) \ar[r] & \D(Z_I)}\,,$$
where the vertical arrows in the first and second diagrams are the natural ones.
\end{enumerate}
\end{theo}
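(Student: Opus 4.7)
The plan is to reduce all four assertions to two ingredients: the extremality statement of Lemma~\ref{appC lem 2}, and the description $\D(Z_\emptyset)\simeq S(\ga_Z)\otimes\D(M/M_H)$ obtained in Subsection~\ref{subsect-algpre}. I will first prove~(i)(a), then use the $\ga_Z$-weight decomposition of $u_S$ to define $u_I$ as a limit and deduce its independence from $X$ together with~(i)(b), then show $u_I\in\D(Z_I)$, then establish both the algebra morphism property and injectivity, and finally check the commutative diagrams~(ii).

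For~(i)(a) I decompose $u_S=\sum_\lambda u_{S,\lambda}$ into $\ga_Z$-weight vectors in $\cU(\gb)/\cU(\gb)\gb_H$ and apply Lemma~\ref{appC lem 2} with the parameter $I$ of that lemma taken to be $\emptyset$, so that its cone becomes $(\ga_Z^*)^+=\{\lambda\in\ga_Z^*\mid\lambda(X)\le 0,\,X\in\ga_Z^-\}$. Each extremal weight $\lambda_{\max}\in\bW_\emptyset(u_S)_{max}$ then yields an element $u_{S,\lambda_{\max}}\in\cU_\emptyset(\gb)/\cU(\gb)\gb_H=\D(Z_\emptyset)$. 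But the isomorphism $\D(Z_\emptyset)\simeq S(\ga_Z)\otimes\D(M/M_H)$ of Subsection~\ref{subsect-algpre} is realised through lifts into $\cU(\ga)\otimes\cU(\gm)^{M_H}\subset\cU(\ga+\gm)$, a subspace on which $\ad(\ga_Z)$ acts by $0$ since $[\ga_Z,\ga]=[\ga_Z,\gm]=0$. Every element of $\D(Z_\emptyset)$ thus has $\ga_Z$-weight $0$ in $\cU(\gb)/\cU(\gb)\gb_H$, which forces $\lambda_{\max}=0$. Consequently $\bW_\emptyset(u_S)_{max}\subseteq\{0\}$ and therefore $\bW_\emptyset(u_S)\subset(\ga_Z^*)^+$, which is~(i)(a).

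Once~(i)(a) is available, existence of the limit~\eqref{eq limitmuI} is immediate: since $\ga_I^{--}\subset\ga_Z^-$ we have $\lambda(X)\le 0$ for every $\lambda\in\bW_\emptyset(u_S)$ and every $X\in\ga_I^{--}$, so $e^{t\lambda(X)}$ tends to $0$ or $1$ according as $\lambda(X)<0$ or $\lambda(X)=0$. Because $\ga_I^{--}$ is open in the vector space $\ga_I$ and $\lambda\le 0$ there, the equality $\lambda(X_0)=0$ for a single $X_0\in\ga_I^{--}$ forces $\lambda|_{\ga_I}=0$ by a short linearity argument; this gives both independence of $X$ and the explicit formula $u_I=\sum_{\lambda|_{\ga_I}=0}u_{S,\lambda}$. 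Statement~(i)(b) is then immediate since the terms dropped have $\lambda|_{\ga_I}\ne 0$, hence $\lambda<0$ strictly on $\ga_I^{--}$. To see $u_I\in\cU_I(\gb)/\cU(\gb)\gb_H$, I use the Grassmannian convergence $\lim_{t\to+\infty}e^{t\ad X}\gh=\gh_I$ for $X\in\ga_I^{--}$ (already invoked in the proof of Lemma~\ref{appC lem 2}) to obtain $\lim_{t\to+\infty}e^{t\ad X}\cU_S(\gg)_{\le n}\subseteq\cU_I(\gg)_{\le n}$ in the relevant Grassmannian, and combine this with the $\ad(\ga_Z)$-stability of $\cU(\gb)$ to conclude $u_I\in\cU_I(\gg)\cap\cU(\gb)=\cU_I(\gb)$.

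The algebra morphism property follows from the automorphism identity $e^{t\ad X}(\widetilde u\,\widetilde v)=(e^{t\ad X}\widetilde u)(e^{t\ad X}\widetilde v)$ for lifts $\widetilde u,\widetilde v\in\cU_S(\gb)$ by passing to limits component by component in the $\ga_Z$-weight decomposition. For injectivity, if $\mu_I(u_S)=0$ then every $u_{S,\lambda}$ with $\lambda|_{\ga_I}=0$ vanishes; in particular the $\ga_Z$-weight-zero component $u_{S,0}=\mu_\emptyset(u_S)$ vanishes. Rerunning the argument of paragraph two, the only possible extremal $\ga_Z$-weight would have to be $0$, which is now excluded from $\bW_\emptyset(u_S)$; salience of $(\ga_Z^*)^+$ then forces $\bW_\emptyset(u_S)=\vid$ and hence $u_S=0$. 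The two commutative diagrams in~(ii) are immediate: both the image of $\cZ(\gg)$ (whose elements are $\Ad G$-invariant) and the image of $\ga_S\subset\ga_Z$ (abelian) consist of $\ga_Z$-weight-zero elements of $\cU(\gb)/\cU(\gb)\gb_H$, so $\mu_I$ acts as the identity on their respective images. The technical heart of the argument is thus~(i)(a); everything else unwinds cleanly once Lemma~\ref{appC lem 2} is applied with $I=\emptyset$ against the horospherical structure of $\D(Z_\emptyset)$.
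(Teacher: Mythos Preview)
Your argument for~(i)(a), the existence and $X$-independence of the limit, (i)(b), the algebra morphism property, injectivity, and the second square of~(ii) is correct and closely parallels the paper's proof. The paper applies Lemmas~\ref{appC lem 1} and~\ref{appC lem 2} directly with the given $I$ (using that $S(\ga_I)$ is central in $\D(Z_I)$) rather than first specializing to $I=\emptyset$; this yields $u_I\in\D(Z_I)$ immediately from Lemma~\ref{appC lem 2} without redoing the Grassmannian step, but otherwise the two routes coincide.

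The gap is in your treatment of the first square of~(ii). You assert that the image of $\cZ(\gg)$ in $\D(Z_S)\subset\cU(\gb)/\cU(\gb)\gb_H$ consists of $\ga_Z$-weight-zero elements because $z\in\cZ(\gg)$ is $\Ad G$-invariant. This is false: the projection of $\cU(\gg)$ onto $\cU(\gb)$ along $\cU(\gg)\gh$ is \emph{not} $\ad(\ga_Z)$-equivariant, since $\ga_Z$ does not normalize $\gh$ in general. Already for $\gg=\mathfrak{sl}_2(\R)$ with standard triple $(\mathsf{h},\mathsf{e},\mathsf{f})$ and $\gh=\R(\mathsf{e}-\mathsf{f})$, the Casimir $\Omega=\tfrac{1}{2}\mathsf{h}^2+\mathsf{h}+2\mathsf{f}\mathsf{e}$ projects to $\tfrac{1}{2}\mathsf{h}^2-\mathsf{h}+2\mathsf{e}^2\in\cU(\gb)$, and the summand $2\mathsf{e}^2$ has nonzero $\ga_Z$-weight $2\alpha$; thus $\mu_\emptyset$ genuinely drops this term rather than acting as the identity. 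What must be shown is that $\mu_I(z_S)$ coincides with the image $z_I$ of $z$ in $\D(Z_I)$, which is computed via the \emph{different} decomposition $\gg=\gb+\gh_I$. The paper does this by choosing a lift $\widetilde z_S\in\cU(\gb)$ of $z_S$ for which $\widetilde z_I:=\lim_t e^{t\ad X}\widetilde z_S$ exists, then using $e^{t\ad X}z=z$ to see that $\widetilde z^I:=z-\widetilde z_I=\lim_t e^{t\ad X}\widetilde z^S$ exists as well, where $\widetilde z^S:=z-\widetilde z_S\in\cU(\gg)\gh$; the Grassmannian convergence $e^{t\ad X}\gh\to\gh_I$ then forces $\widetilde z^I\in\cU(\gg)\gh_I$, so that $\widetilde z_I$ simultaneously represents $\mu_I(z_S)$ and the image of $z$ in $\D(Z_I)$.
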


\begin{proof}
By Lemma \ref{appC lem 1} (applied to $Z_I$ instead of $Z$) and Lemma \ref{appC lem 2}, we see that, for any nonzero $u_S\in \cU_S(\gb)/\cU(\gb)\gb_H$, we have $\bW_I(u_S)_{max}=\{ 0\}$ (in particular, $u_{S,0}\neq 0$). This implies that the limit in \eqref{eq limitmuI} exists, is independent of $X$ and is nonzero if $u_S\neq 0$. This readily implies that $\mu_I$ is a monomorphism of algebras. Moving on to (i), we deduce (a) and (b)  from the fact that the limit \eqref{eq limitmuI} exists. 

\par The second square of assertion (ii) is commutative since the image of $S(\ga_S)$ in $\cU(\gb)/\cU(\gb)\gb_H$ is obviously in the $0$-weight space of $\ga_I$. It only remains to show that the first square of (ii) is commutative. Let $z\in \cZ(\gg)$. Let $\widetilde{z}_S\in \cU(\gb)$ and $\widetilde{z}^S\in \cU(\gg)\gh$ be such that $z=\widetilde{z}_S+\widetilde{z}^S$. Then $\widetilde{z}_S\in \cU_S(\gb)$ and through our identification $\D(Z_S)\simeq \cU_S(\gb)/\cU(\gb)\gb_H$, $z$ gets mapped to the image $z_S$ of $\widetilde{z}_S$ in $\cU_S(\gb)/\cU(\gb)\gb_H$. By (i), up to translating $\widetilde{z}_S$ by an element of $\cU(\gb)\gb_H$, we may assume that the limit
$$\displaystyle \widetilde{z}_I=\lim\limits_{t\to \infty}e^{t\ad X}\widetilde{z}_S$$
exists in $\cU(\gb)$ for every $X\in \ga_I^{--}$ and that it is independent of $X$. Moreover, $\widetilde{z}_I\in \cU_I(\gb)$ and its image $z_I$ in $\cU_I(\gb)/\cU(\gb)\gb_H$ coincides with the image of $z_S$ by $\mu_I$. As $z$ is fixed by any inner automorphism, the limit
$$\displaystyle \widetilde{z}^I=\lim\limits_{t\to \infty}e^{t\ad X}\widetilde{z}^S$$
also exists in $\cU(\gg)$ for every $X\in \ga_I^{--}$, is independent of $X$ and $z=\widetilde{z}_I+\widetilde{z}^I$. Since $\widetilde{z}^S\in \cU(\gg)\gh$ and $\lim\limits_{t\to \infty} e^{t \ad X} \gh=\gh_I$ in the Grassmannian $\Gr(\gg)$, we have $\widetilde{z}^I\in \cU(\gg)\gh_I$. Therefore, by definition of the identification $\D(Z_I)\simeq \cU_I(\gb)/\cU(\gb)\gb_H$, $z_I$ is also the image of $z$ in $\D(Z_I)$. The commutativity of the first square follows.
\end{proof}

\noindent P. Delorme, Universit\'e d'Aix-Marseille, I2M, UMR 7373, 13453
Marseille, France, {\tt patrick.delorme@univ-amu.fr}\\
\\
\noindent B. Kr\"otz, Universit\"at Paderborn, Institut f\"ur Mathematik, Warburger Str. 100, 33098 Paderborn, Germany, 
{\tt bkroetz@gmx.de}\\
\\ 
\noindent S. Souaifi, Universit\'e de Strasbourg, IRMA, UMR 7501, 7 rue Ren\'e Descartes, 67084 Strasbourg Cedex, France, {\tt souaifi@math.unistra.fr} \\
\\
\noindent R. Beuzart-Plessis, Aix Marseille Univ, CNRS, Centrale Marseille, I2M, Marseille, France
{\tt rbeuzart@gmail.com}

\end{document}